\documentclass{amsart}
\usepackage{graphicx, tikz-cd}
\usepackage[T1]{fontenc}
\usepackage[toc, page]{appendix}
\usepackage{amssymb, amsfonts, latexsym, mathrsfs, amsmath, amsthm, bbm, amstext}
\usepackage{txfonts,bbding,wasysym,pifont,stmaryrd,tipa, bm, amsbsy}
\usepackage[all,cmtip]{xy}
\usepackage{float, mathtools, enumitem, extarrows}
\usepackage[utf8]{inputenc}
\usepackage{hyperref}
\usepackage{cleveref}
\crefname{section}{§}{§§}
\Crefname{section}{§}{§§}
\usetikzlibrary{arrows,positioning,shapes.geometric,matrix}

\newtheorem{theorem}{Theorem}[section]
\newtheorem{lemma}[theorem]{Lemma}
\newtheorem{proposition}[theorem]{Proposition}
\newtheorem{corollary}[theorem]{Corollary}

\newtheorem{ass}[theorem]{Assumption}

\newtheorem{conv}[theorem]{Convention}

\theoremstyle{definition}
\newtheorem{definition}[theorem]{Definition}

\theoremstyle{remark}
\newtheorem{remark}[theorem]{Remark}
\newtheorem{notation}[theorem]{Notation}
\numberwithin{equation}{section}

\DeclareRobustCommand{\rchi}{{\mathpalette\irchi\relax}}
\newcommand{\irchi}[2]{\raisebox{\depth}{$#1\chi$}} 

\newcommand{\Spec}{\text{\rm Spec}}

\newcommand*{\sheafhom}{\mathcal{H}\kern -.5pt om}
\newcommand*{\sheafext}{\mathcal{E}\kern -.5pt xt}
\newcommand*{\sheafend}{\mathcal{E}\kern -.5pt nd}

\DeclareMathAlphabet{\mathpzc}{OT1}{pzc}{m}{it}

\DeclareMathOperator{\sheafHom}{\mathscr{H}\text{\kern -3pt {\calligra\large om}}\,}

\newcommand{\Z}{\mathbb{Z}}



\usepackage{color,hyperref}
\definecolor{darkblue}{rgb}{0.0,0.0,0.3}
\definecolor{olive}{rgb}{0.3, 0.4, .1}
\definecolor{dgreen}{rgb}{0.,0.6,0.}
\definecolor{gold}{rgb}{1.,0.84,0.}
\definecolor{JungleGreen}{cmyk}{0.99,0,0.52,0}
\definecolor{brn}{rgb}{0.43, 0.21, 0.1}

\begin{document}

\title{Construction of the Moduli Space of Vector Bundles on an Orbifold Curve}
\author{Soumyadip Das}
\address{Mathematics,
Indian Institute of Technology Jammu, Jagti, NH-44 , Nagrota, Jammu - 181221, J\&K, India}
\email{soumyadip.das@iitjammu.ac.in}
\author{Souradeep Majumder}
\address{Department of Mathematics,
Indian Institute of Science Education and Research Tirupati,
Srinivasapuram, Venkatagiri Road, Yerpedu Mandal, Tirupati District, Tirupati -517619. Andhra Pradesh, India}
\email{souradeep@iisertirupati.ac.in}

\makeatletter
\@namedef{subjclassname@2020}{\textup{2020} Mathematics Subject Classification}
\makeatother

\subjclass[2020]{14A20, 14D20, 14D23 (Primary); 14F06, 14D22, 14G17 (Secondary)}

\keywords{Moduli space, Orbifold curve, Bundles over orbifold curve, Moduli stack of orbifold-semistable bundles, Determinantal line bundles, Positive characteristic}

\begin{abstract}
Let $k$ be an algebraically closed field of any characteristic, and let $(X,P)$ be an orbifold curve over $k$. We construct the moduli space $\mathrm{M}_{(X,P)}^{\mathrm{ss}}(n, \Delta)$ of $P$-semistable bundles on $(X,P)$ of rank $n$ and determinant $\Delta$. In the characteristic zero case, this result is well known and follows from GIT techniques. Our construction follows a different approach inspired by a GIT-free construction of Faltings. We show that when the moduli space is non-empty, it is a finite disjoint union of irreducible projective varieties.
\end{abstract}

\maketitle

\section{Introduction}

Moduli space of semistable vector bundles on a compact Riemann surface was constructed in the 1960s using Geometric Invariant Theory. This construction was generalized to the case of parabolic bundles by Mehta and Seshadri. One may adopt the viewpoint that parabolic bundles are bundles on a suitable Galois cover with a compatible action by the Galois group. This idea has been very useful, as demonstrated by the work of Mehta, Seshadri, Biswas, Nagaraj, and Boden, to name a few. One drawback of this approach is that it requires the introduction of a new parameter, namely the Galois group for the cover. An alternate approach due to Borne leads us to the notion of root stacks. If we consider parabolic bundles with weights in $\frac{1}{r}\Z$, then they are in bijective correspondence with the bundles on the $r$-th root stack associated with the given parabolic divisor. The local structure of the root stack is that of a quotient stack where the quotient group is cyclic. Thus, a root stack may be thought of as a scheme with an ``orbifold structure". One is then naturally curious to know if we can construct moduli space of bundles on a general ``orbifold". In this article, we answer this question in the case of ``orbifold curves."

Let $k$ be an algebraically closed field of an arbitrary characteristic. An orbifold curve is a smooth proper Deligne-Mumford stack over $k$ that is generically a curve and admits a smooth projective connected $k$-curve $X$ as its Coarse moduli space. Locally, an orbifold curve is a quotient stack $[V/G]$ where $V$ is some smooth affine irreducible curve admitting a generically faithful action by a finite group $G$. In particular, the orbifold curve is just an ordinary curve away from finitely many stacky points. This structure enables us to view an orbifold curve as a pair $(X,P)$, consisting of the Coarse moduli curve $X$ together with a finite data $P$ that encodes the information of certain finite Galois extensions of local fields. We would like to bring to attention that when $\mathrm{char}(k) = 0$, the groups $G$ are cyclic. This fact lies at the heart of the classical definition of parabolic bundles, which consists of a bundle along with certain filtration of its fibers at finitely many points and certain weights attached to these filtrations. Further, it becomes possible to apply Mumford's GIT technique to construct the moduli space of semistable parabolic bundles. All of these have been worked out in higher dimensions as well. 

The situation changes radically when we consider the positive characteristic case, i.e.  $\mathrm{char}(k) = p > 0$. Now, the groups $G$ are not necessarily cyclic; in fact, each $G$ is a semi-direct product of a $p$-group and a cyclic group of order prime-to-$p$. A parabolic bundle on $(X, P)$ consists of a bundle along with an action of the local Galois groups on the fibers at each stacky point. We can no longer describe a parabolic bundle in terms of weights and filtrations alone. Constructing the moduli space using GIT becomes difficult as the reductivity of certain automorphism groups is not guaranteed in general. We follow an approach inspired by Faltings ( see \cite{Faltings} and \cite{Hein}). In our setup, the approach can be summarized as follows:

\begin{enumerate}
    \item Construct the moduli functor $\mathcal{M}_{(X,P)}^{\mathrm{ss, S}}(n, \Delta)$ parametrizing $P$-semistable bundles on $(X,P)$ of rank $n$ and determinant $\Delta$ modulo suitable equivalences. This is an algebraic stack. Our goal is to construct a Coarse moduli space for this stack (see \Cref{sec_Moduli_Problem_Orbifold_Case}).
    
    \item Construct a parameter space $\mathbb{P}_{(X,P)}$, a projective space over $k$, such that each $P$-semistable bundle of rank $n$ and determinant $\Delta$ on $(X,P)$ corresponds to a $k$-point of this space. There is a universal family $\mathcal{E}_{(X,P)}$ on $\mathbb{P}_{(X,p)}$ which Zariski locally induces any other semistable family (see \Cref{sec_bdd}). 

    \item Construct suitable determinantal line bundles and determine that the locus of unstable bundles in $\mathbb{P}_{(X,P)}$ is the base locus of one such line bundle (see \Cref{sec_unstable}).

    \item Consider the blowing-up of the parameter space with respect to the unstable locus. There is a proper map from the blowing-up to a projective space, determined by the complete linear system associated with a suitable determinantal line bundle (see \Cref{sec_Main}).

    \item The Stein factorization of this map gives us the desired Coarse moduli space $\mathrm{M}_{(X,P)}^{\mathrm{ss}}(n, \Delta)$ (see \Cref{thm_main_orbifold_moduli}).
\end{enumerate}

One of the useful features of the whole construction is that we can find a Galois cover $Z \longrightarrow X$ with Galois group $\Gamma$, say, which factor as a composition
\[
Z \xlongrightarrow{g} (X,P) \longrightarrow X
\]
where $g$ is a representable tamely ramified cover. We can consider the moduli space $\mathrm{M}^{\mathrm{ss}}_Z(n,, g^*\Delta)$ of semistable bundles on $Z$ of rank $n$ and determinant $g^*\Delta$. Our construction following the ideas of Faltings enables us to embed the moduli space $\mathrm{M}_{(X,P)}^{\mathrm{ss}}(n, \Delta)$ inside $\mathrm{M}^{\mathrm{ss}}_Z(n,, g^*\Delta)$ as a closed subscheme. But in the final analysis, $\mathrm{M}_{(X,P)}^{\mathrm{ss}}(n, \Delta)$ is independent of the choice of the cover $Z$.

Another salient feature of our construction is that the parameter space $\mathbb{P}_{(X,P)}$ is a disjoint union $\mathbb{P}_{(X,P)} = \sqcup_{F \in \mathcal{F}} \, \mathbb{P}_F$ where $\mathcal{F}$ is the set of mutually non-isomorphic bundles $F$ of rank $n-1$ on $(X,P)$ such that $F \cong \oplus_{j=1}^{n-1} \, L_j$ with $L_j \in \mathrm{Pic}(X,P)$ satisfying $g^*L_j \cong \chi_j \otimes_k \mathcal{O}_Z$ as $\Gamma$-equivariant line bundles for some character $\chi_j$ of $\Gamma$ over $k$, and $\mathbb{P}_F$ is the projective space parametrizing certain extension classes of bundles determined by $F$. This stratification carries over to our description of the unstable locus, the Stein factorization and finally to the Coarse moduli space itself.

Much remains to study about the moduli space that we have constructed. For example, we would like to know whether $\mathrm{M}_{(X,P)}^{\mathrm{ss}}(n, \Delta)$ is normal, what is its Picard group, deeper understanding of the base locus of determinantal line bundle, etc. Computing its dimension is one of our future goals. The reader should note that our construction is based on the assumption that there exists a $P$-semistable bundle of rank $n$ and determinant $\Delta$ on $(X,P)$. This is equivalent to the existence of a $\Gamma$-equivariant semistable bundle on $Z$ of rank $n$ and determinant $g^*\Delta$ (see \Cref{sec_Non-Emptiness}). We would like to bring to a conclusion our investigation into the non-emptiness of the moduli space in our future work. Finally, it would be great to have an alternate construction of the moduli space using classical GIT machinery.

\section*{Acknowledgement}
It is a pleasure to thank A. J. Parameswaran and Manish Kumar for some valuable discussions. Part of the work presented in this paper was carried out at TIFR Mumbai and ISI Bangalore. We would like to acknowledge their support.

\section{Preliminaries}\label{sec_prelim}
\subsection{Moduli Space of Semistable Bundles on an Algebraic Curve}\label{sec_classical_construction}
The purpose of this section is to recall the e construction of the moduli space of the semistable bundles of a fixed rank and determinant on a smooth projective curve $X$ of genus $g$ defined over an algebraically closed field $k$ of an arbitrary characteristic.

Throughout this section, $S$ will denote a $k$-scheme. By a sheaf on $S$, we will always mean a quasi-coherent sheaf of $\mathcal{O}_S$-modules. In whatever follows, $p_S \colon S \times X \longrightarrow S$ and $q_S \colon S \times X \longrightarrow X$ denote the projection morphisms. Since $X$ is a smooth curve, the notion of a torsion-free sheaf and the locally free sheaves coincide, which we call \textit{bundles}. A classical problem is to understand the bundles on $X$. Recall that a \textit{family of bundles over} $X$, parametrized by $S$, is a coherent sheaf $E$ on $S \times X$ that is flat over $S$ and such that for each geometric point $\mathrm{Spec}(K) \xrightarrow{s} S$, the restriction $E_s \coloneqq E|_{\mathrm{Spec}(K) \times X}$ is a bundle. Two such families $E$ and $E'$ are said to be \textit{equivalent} if $E \cong E' \otimes p_S^{*}(L)$ for some line bundle $L$ on $S$. We work with the following convention.
\begin{conv}\label{conv_1}
By standard approximation techniques, in whatever follows, we only deal with finite type $k$-schemes $S$, and only consider the restrictions $E_s$ over the closed points $s \in S$. This is done to ease the notation, with the understanding that a family can always be parametrized with general $k$-schemes $S$ where the restriction is defined over geometric points. In this sense, a family of bundles over $X$, parameterized by $S$, is simply a bundle on $S \times X$. Also note that, $E_s \cong q_{S,*}\left( E \otimes p_S^* k(s) \right)$ where $k(s)$ denote the skyscraper sheaf on $S$ that is supported on $s$.
\end{conv}

In general, the collection of families of bundles on $X$ is too big to be representable or to admit a natural parameter space. One natural way to cut down the collection of families of bundles on $X$ is by suitable invariants, leading to Mumford's notion (cf. \cite{Mumford}) of slope stability. Recall that the \textit{rank} $\mathrm{rk}(E)$ of a bundle $E$ on $X$ is defined to be the dimension of the $\mathcal{O}_{X,\zeta} = k(X)$-linear space $E_\zeta$ where $\zeta$ is the generic point of $X$. Associated to a rank $n$ bundle $E$, we have the determinant line bundle $\mathrm{det}(E) = \wedge^n E$ on $X$. The \textit{degree} $\mathrm{deg}(E)$ of $E$ is defined to be the degree of the line bundle $\text{\rm det}(E)$. The \textit{slope} of $E$ is defined to be the rational number $\mu(E) \coloneqq \mathrm{deg}(E)/\mathrm{rk}(E)$. A bundle $E$ is said to be \textit{semistable} if for any proper sub-bundle $F$ of $E$, we have:
$$\mu(F) \, \leq \, \mu(E),$$
and $E$ is said to be \textit{stable} if the above inequality is strict for any proper sub-bundle $F$ of $E$. A bundle $E$ is said to be \textit{polystable} if $E$ is a direct sum of stable sub-bundles, each having the slope $\mu(E)$. Further, two semistable bundles $E$ and $E'$ are called $\mathrm{S}$-\textit{equivalent}, denoted by $E_1 \sim_{\mathrm{S}} E_2$, if their associated unique graded bundles $\mathrm{gr}_E$ and $\mathrm{gr}_{E'}$ coming from Jordan-H\"{o}lder filtrations are isomorphic.

Working over $k = \mathbb{C}$, Mumford showed in \cite{Mumford} that there is a moduli space $\mathrm{M}^{\mathrm{s}}_X(n,d)$ of stable bundles of fixed rank $n$ and degree $d$ over an arbitrary smooth projective curve $X$, and it has a structure of a non-singular projective variety. To construct a moduli space of semistable bundles of a fixed rank $n \geq 2$ and degree $d$, we first consider the following contravariant functor $\mathcal{M}_X^{\mathrm{ss}}(n,d)$ which to any $k$-scheme $S$ associates the set
\begin{equation}\label{eq_stack_F}
\mathcal{M}_X^{\mathrm{ss}}(n,d)(S) = \left\{ 
  \begin{aligned}
  &\text{Families of bundles } E \text{ on } X, \text{ parametrized by } S, \text{ such that for each} \\ 
  &\text{ closed point } s \in S, \, E_s \text{ is a semistable bundle of rank } n \text{ and }\\
  &\text{degree } d, \text{ modulo the usual equivalence of families}
  \end{aligned}
\right\}.
\end{equation}
The above definition is justified by the fact that the rank and the degree of the bundle $E_s \cong \left( E \otimes p_S^*k(s) \right)$ is independent of the closed point $s \in S$.

The stack $\mathcal{M}_X^{\mathrm{ss}}(n,d)$ is an open, quasi-compact, irreducible algebraic sub-stack of the algebraic stack $\mathcal{M}_X(n,d)$ where $\mathcal{M}_X(n,d)$ is an irreducible algebraic stack of finite type over $k$ with affine diagonal, parametrizing (not necessarily semistable) rank $n$ bundles on $X$ of degree $d$; see \cite{StackModuli} for details, and note that the proofs of the above fact in loc. cit. are independent of the characteristic of the base field $k$. The algebraic stack $\mathcal{M}_X^{\mathrm{ss}}(n,d)$ is smooth (cf. \cite[Proposition~1.3]{StackModuli} for $\text{\rm char}(k)=0$, and \cite[Section~2]{Hoffmann} when $k$ is of an arbitrary characteristic), admitting a morphism
\[
\pi_X(n,d) \colon \mathcal{M}_X^{\mathrm{ss}}(n,d) \longrightarrow \mathrm{M}^{\mathrm{ss}}_X(n,d)
\]
to a normal irreducible projective variety $\mathrm{M}^{\mathrm{ss}}_X(n,d)$. The morphism $\pi_X(n,d)$ is initial among all morphisms from $\mathcal{M}_X^{\mathrm{ss}}(n,d)$ to $k$-schemes. Seshadri gave the first ever construction of $\mathrm{M}_X^{\mathrm{ss}}(n,d)$ for $k = \mathbb{C}$ in \cite{Seshadri} using the techniques from the GIT and showed that $\mathrm{M}_X^{\mathrm{ss}}(n,d)$ is the natural compactification of Mumford's variety $\mathrm{M}_X^{\mathrm{s}}(n,d)$. The same GIT-based construction works over any base field $k$, and it follows that $\text{\rm M}_X^{\mathrm{ss}}(n,d)$ only parametrizes polystable bundles of rank $n$ and degree $d$. In general, the morphism $\pi_X(n,d)$ is not a Coarse moduli morphism since two $k$-points $[E]$ and $[E']$ in $\mathcal{M}_X^{\mathrm{ss}}(n,d)$ have the same image in $\mathrm{M}_X(n,d)$ if and only if the semistable bundles $E$ and $E'$ are $\text{\rm S}$-equivalent; see \cite[Lemma~3.11, Theorem~3.12]{StackModuli}. This leads us to consider a quotient algebraic stack $\mathcal{M}_X^{\mathrm{ss, S}}(n, d)$ of $\mathcal{M}_X^{\mathrm{ss}}(n,d)$ by considering the $\mathrm{S}$-equivalence. The functor $\mathcal{M}_X^{\mathrm{ss, S}}(n, d)$ associates
\begin{equation}
\mathcal{M}_X^{\mathrm{ss, S}}(n, d)(S) = \left\{ 
  \begin{aligned}
  &\text{Families of bundles } E \text{ on } X, \text{ parametrized by } S, \text{ such that for each} \\ 
  &\text{ close point } s \in S, \, E_s \text{ is a semistable bundle of rank } n \text{ and }\\
  &\text{degree } d, \text{ modulo the usual equivalence and the S-equivalence }\sim_{\mathrm{S}} 
  \end{aligned}
\right\},
\end{equation}
to any finite type $k$-scheme $S$. The morphism
\[
\pi \colon \mathcal{M}^{\mathrm{ss, S}}_X(n,d) \longrightarrow \mathrm{M}^{\mathrm{ss}}_X(n,d)
\]
induced by $\pi_X(n,d)$ is the Coarse moduli morphism.

After fixing a line bundle $\Delta \in \mathrm{Pic}^d(X)$ of degree $d$, one can also consider the stack $\mathcal{M}_X(n,\Delta)$ that parametrizes the bundles on $X$ of rank $n$ and determinant isomorphic to $\Delta$ as follows. There is a universal bundle $\mathcal{E}_{\mathrm{univ}}$ on the algebraic stack $X \times \mathcal{M}_X(n,d)$ where recall that $\mathcal{M}_X(n,d)$ is the algebraic stack parametrizing rank $n$ bundles on $X$ of degree $d$. For $n = 1$, the stack $\mathcal{M}_X(1,d)$ is the Picard stack $\mathcal{P}\text{ic}^d_X$, and the universal bundle is the Poincar\'{e} bundle $\mathcal{P}$ on $X \times \mathcal{P}\text{ic}^d_X$. Considering the determinant line bundle associated to the universal bundle $\mathcal{E}_{\rm univ}$, we obtain a canonical morphism
\[
\mathrm{det} \colon \mathcal{M}_X(n,d) \longrightarrow \mathcal{P}\text{ic}^d_X
\]
such that $(\mathrm{id}_X \times \mathrm{det})^*\mathcal{P} \cong \mathrm{det}(\mathcal{E}_{\text{\rm univ}})$. The choice of the line bundle $\Delta$ determines a geometric point $\mathrm{Spec}(k) \overset{[\Delta]}\longrightarrow \mathcal{P}\text{ic}^d_X$, and the stack $\mathcal{M}_X(n,\Delta)$ is the $2$-fiber product stack
\[
\mathcal{M}_X(n,\Delta) = \mathrm{Spec}(k) \times_{[\Delta], \mathcal{P}\text{ic}^d_X, \text{\rm det}} \mathcal{M}_X(n,d).
\]
By \cite[\href{https://stacks.math.columbia.edu/tag/04TF}{Lemma 04TF}]{SP} and base change results, this is an algebraic stack, locally of finite type over $k$, with affine diagonal. It again follows (see \cite{StackModuli}) that the algebraic stack $\mathcal{M}_X(n,\Delta)$ is smooth and irreducible containing an open irreducible quasi-compact algebraic sub-stack $\mathcal{M}^{\mathrm{ss}}_X(n,\Delta)$ which parametrizes semistable bundles of rank $n$ and determinant isomorphic to $\Delta$.There is again a canonical quotient algebraic stack $\mathcal{M}^{\mathrm{ss, S}}_X(n,\Delta)$ of $\mathcal{M}^{\mathrm{ss}}_X(n,\Delta)$ by considering the classes of $\mathrm{S}$-equivalence, and we have a Coarse moduli morphism
\[
\mathcal{M}^{\mathrm{ss, S}}_X(n,\Delta) \longrightarrow \mathrm{M}^{\mathrm{ss}}_X(n, \Delta);
\]
see for \cite{Seshadri} $k = \mathbb{C}$, and \cite{Faltings}, \cite{Hein} for arbitrary base field $k$. More precisely, for any $k$-scheme $S$, $\mathcal{M}^{\mathrm{ss, S}}_X(n,\Delta)(S)$ is the set of equivalence classes of all families $E$ over $X$, parameterized by $S$, such that for each closed point $s \in S$, the bundle $E_s$ is semistable of rank $n$, and $\mathrm{det}(E) \cong p_S^*N \otimes q_S^*(\Delta)$ for some invertible sheaf $N$ on $S$; where the equivalence relation is given by the usual equivalence of families together with the $\mathrm{S}$-equivalence $\sim_{\mathrm{S}}$. It should be mentioned that there are several ways to fix the determinant line bundle; \cite{Hoffmann} gives a comprehensive study of these different moduli problems and the associated moduli spaces, and due to their work, we make the following convention:
\begin{conv}\label{conv_2}
    In whatever follows, we say that a family $\mathcal{F}$ on $X$, parametrized by $S$, has `determinant $\Delta$' to mean that for each closed point $s \in S$, we have $\mathcal{F}_s \cong \Delta$.
\end{conv}

Our objective in this article is to parameterize orbifold semistable bundles of a given rank and whose determinants are isomorphic to a given orbifold line bundle. For this, we rely on a GIT-free construction following \cite{Faltings} and \cite{Hein}. We give a detailed account of this construction of $\mathcal{M}^{\mathrm{ss, S}}_X(n, \Delta)$ in~\Cref{sec_Falting_Construction}.

\subsection{Orbifold Curves}\label{sec_Orbifold_Curves}
As before, let $k$ denote an algebraically closed field of an arbitrary characteristic. The notion of a branch data was introduced and studied in \cite{P} for $k = \mathbb{C}$, further generalized in \cite{KP}. We review the important definitions and results. One new result is Lemma~\ref{lem_choice_tame_cover} which states that an orbifold curve is always dominated by a geometric orbifold curve in such a way that the induced cover is tamely ramified; this important result will be used extensively throughout.

A \textit{branch data} $P$ on a smooth projective connected $k$-curve $X$ is an association of a finite Galois field extension $P(x)$ of the local field $K_{X,x} = \mathrm{QF}(\widehat{\mathcal{O}}_{X,x})$ in a fixed algebraic closure of $K_{X,x}$ for finitely many closed points $x \in X$. The \textit{branch locus} $\text{\rm BL}(P)$ for a branch data $P$ on $X$ is defined to be the finite (possibly empty) set
\[
\mathrm{BL}(P) \coloneqq \{x \in X \, | \, P(x)/K_{X,x} \text{ is a non-trivial extension}\}.
\]
of closed points in $X$. A branch data $P$ with $\mathrm{BL}(P)$ being the empty set is referred to as the \textit{trivial branch data}. 

A \textit{formal orbifold curve} is a pair $(X,P)$, consisting of a smooth projective connected $k$-curve $X$ and a branch data $P$ on $X$. On a curve $X$, the collection of branch data has an ordering $Q \geq P$, which means that for any closed point $x \in X$, we have $Q(x) \supset P(x)$ as extensions of $K_{X,x}$. For any $Q \geq P$, the identity map $\mathrm{id}_X$ induces a cover $(X,Q) \longrightarrow (X,P)$ of formal orbifold curves. In particular, we obtain the induced cover $\iota \colon (X,P) \longrightarrow X$ where we view $X$ as the formal orbifold curve associated to the trivial branch data.

We refer to \cite[Section~3]{Das} for detailed definition, notation, and convention. It was noted in \cite[Appendix, Theorem~A.1]{Das} that a formal orbifold curve $(X,P)$ can be viewed as a smooth proper Deligne Mumford stack that admits the induced cover $\iota \colon (X,P) \longrightarrow X$ as its Coarse moduli map, and such that $\iota$ induces an isomorphism
\[
(X,P) \times_X \left( X - \mathrm{BL}(P) \right) \xrightarrow{\cong} X - \mathrm{BL}(P)
\]
of $k$-curves. Further, the stabilizer group $G_x$ associated to the residual gerbe corresponding to a point $x \in \text{\rm BL}(P)$ is the Galois group $\mathrm{Gal}\left( P(x)/K_{X,x} \right)$.

\begin{conv}
\emph{In whatever follows, we do not distinguish between a formal orbifold curve and the corresponding smooth proper Deligne Mumford stack; the later being referred to as an \textit{orbifold curve}.}    
\end{conv}

Given an orbifold curve $(X,P)$ with branch locus $\mathrm{BL}(P) = \{x_1, \ldots, x_r\}$, an explicit atlas of $(X,P)$ is provided in \cite[Lemma~3.11]{Das}; namely, there is an affine open cover $\{U_i\}_{0 \leq i \leq r}$ of $X$ with $U_0 = X - \text{\rm BL}(P)$, and for $1 \leq i \leq r$, the set $U_i$ contains $x_i$ and no other $x_j$, and the fiber product stack $U_i \times_X (X,P)$ is a quotient stack $[V_i/G_i]$ for some smooth affine irreducible curve $V_i$ admitting a generically faithful action by the stabilizer group $G_i \coloneqq G_{x_i} = \mathrm{Gal}\left( P(x_i)/K_{X,x_i} \right)$. When $(X,P)$ is globally a quotient stack $[Z/\Gamma]$ for some finite group $\Gamma$ and a $\Gamma$-Galois cover $Z \longrightarrow X$ of smooth projective connected curves, we say that $(X,P)$ is a \textit{geometric orbifold curve}, and $P$ is called a \textit{geometric branch data}.

By \cite[Proposition~2.37]{KP}, there is a geometric branch data $Q$ on $X$ such that $Q \geq P$. It should be noted that the choice of the geometric branch data $Q$ is not unique. When $P$ is not a geometric branch data, the induced cover $\tau \colon (X,Q) \longrightarrow (X,P)$ may be \textit{wildly ramified}, i.e. for some closed point $x \in X$, the group $\mathrm{Gal}\left(Q(x)/P(x) \right)$ has order divisible by $p$. In the following, we show that there is a choice of $Q$ as above such that $\tau$ is a \textit{tamely ramified} cover, i.e. $\mathrm{Gal}\left(Q(x)/P(x) \right)$ is a cyclic group of order coprime to $p$.

\begin{lemma}\label{lem_choice_tame_cover}
Let $(X,P)$ be a connected orbifold curve. Then there is a geometric branch data $\tilde{P}$ on $X$ with $\tilde{P} \geq P$ such that the induced cover $\tau \colon (X,\tilde{P}) \longrightarrow (X,P)$ is tamely ramified.
\end{lemma}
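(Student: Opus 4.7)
The plan is to start with the geometric dominating branch data provided by \cite[Proposition~2.37]{KP} and then kill the wild part of the excess ramification over $P$ by taking a suitable normal Galois quotient. First, apply \cite[Proposition~2.37]{KP} to obtain a geometric branch data $Q\geq P$, realized by a $\Gamma$-Galois cover $\pi\colon Z\to X$ of smooth projective connected $k$-curves with $(X,Q)=[Z/\Gamma]$. If the induced representable cover $(X,Q)\to(X,P)$ were already tame, then $\tilde{P}=Q$ would suffice; in general it is not, and the wild part of the cover lies above finitely many points of $\mathrm{BL}(Q)$.

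For each $x\in\mathrm{BL}(Q)$ and each preimage $z\in\pi^{-1}(x)$, write $\Gamma_z$ for the inertia subgroup at $z$, and set
\begin{equation*}
H_z\coloneqq\mathrm{Gal}(K_{Z,z}/P(x))\trianglelefteq\Gamma_z,
\end{equation*}
with the convention $P(x)=K_{X,x}$ when $x\notin\mathrm{BL}(P)$. Since $k$ is algebraically closed, the wild inertia $W_{\Gamma_z}$ is the unique $p$-Sylow of $\Gamma_z$ and is normal in $\Gamma_z$; hence $W_{H_z}\coloneqq H_z\cap W_{\Gamma_z}$, the $p$-Sylow of $H_z$, is also normal in $\Gamma_z$. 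Let $N\trianglelefteq\Gamma$ be the normal subgroup generated by all such $W_{H_z}$, form the quotient $Z'\coloneqq Z/N$ with Galois group $\Gamma'\coloneqq\Gamma/N$, and define $(X,\tilde{P})\coloneqq[Z'/\Gamma']$; this is again a geometric orbifold curve, whose local Galois group at each $x\in\mathrm{BL}(\tilde{P})$ is $\Gamma_z/(\Gamma_z\cap N)$.

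To finish, I would verify that $\tilde{P}\geq P$ and that the induced cover $(X,\tilde{P})\to(X,P)$ is tame; both reduce to the inclusions $W_{H_z}\subseteq\Gamma_z\cap N\subseteq H_z$ for every $z$ above every $x\in\mathrm{BL}(P)$. The first inclusion is immediate from the definition of $N$. Granting the second, $\mathrm{Gal}(\tilde{P}(x)/P(x))=H_z/(\Gamma_z\cap N)$ is a quotient of $H_z/W_{H_z}$, which is a prime-to-$p$ group embedding into the cyclic tame quotient $\Gamma_z/W_{\Gamma_z}$, and therefore tame cyclic. The main difficulty is the second inclusion: $N$ contains products of $\Gamma$-conjugates of the wild parts $W_{H_{z''}}$ attached to possibly different preimages $z''$, and such a product landing in $\Gamma_z$ need not lie in $H_z$. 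To control this, I would first replace $Q$ by a larger geometric dominating branch data, obtained by normalizing and taking the Galois closure of $Z\times_X Y$ for a tame cyclic cover $Y\to X$ of degree coprime to $p$ ramified with sufficiently large index at each point of $\mathrm{BL}(Q)$ (available by Kummer theory on $X$). After such a tame thickening, the inertia groups at distinct preimages of any $x$ meet only in their common tame core, which forces the analogue of $N$ to restrict to exactly $W_{H_z}$ inside each local $\Gamma_z$, as required. In characteristic zero all $W_{H_z}$ are trivial, so $N=1$ and $\tilde{P}=Q$ already works.
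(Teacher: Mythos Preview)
Your approach has a genuine gap at exactly the point you flag, and the proposed fix does not close it. The inclusion $\Gamma_z\cap N\subseteq H_z$ can fail badly: the normal closure $N$ of the local wild parts $W_{H_{z''}}$ is a \emph{global} object, and conjugates of wild inertia coming from points $x''\notin\mathrm{BL}(P)$ (where $H_{z''}=\Gamma_{z''}$, so $W_{H_{z''}}$ is the full wild inertia there) can generate a very large subgroup of $\Gamma$. Concretely, take $p=2$, $\Gamma=S_3$, and a cover $Z\to X$ with inertia $\langle(12)\rangle$ at some $x_1\notin\mathrm{BL}(P)$ and $\langle(13)\rangle$ at some $x_2\notin\mathrm{BL}(P)$; then $W_{H_{z_1}}=\langle(12)\rangle$, $W_{H_{z_2}}=\langle(13)\rangle$, and $N=S_3$, so $Z'=X$ and $\tilde P$ is trivial. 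If $P$ carries any nontrivial (say wild) data at a third point $x_3$, the condition $\tilde P\geq P$ fails outright. Your tame thickening by a $\mathbb{Z}/m$-cover $Y\to X$ with $(m,p)=1$ does not help: in the generic situation the new Galois group is $S_3\times\mathbb{Z}/m$, the wild parts are unchanged, their normal closure is $S_3\times\{0\}$, and the resulting $\tilde P$ is purely tame, again failing $\tilde P\geq P$ at $x_3$. The sentence about inertia groups at distinct preimages ``meeting only in their common tame core'' does not address this, since the obstruction comes from inertia at \emph{different base points}, not different preimages of one point.

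The paper avoids this global-to-local obstruction entirely by building $\tilde P$ from below rather than cutting down from above. It sets $\tilde P(x)=P(x)$ for every $x\in\mathrm{BL}(P)$, so the excess there is literally zero, and then adds purely tame ramification of order $m_{x_i}$ at auxiliary points $y_i\notin\mathrm{BL}(P)$. Geometricity of $\tilde P$ is obtained by writing it as a compositum of a purely wild branch data (geometric by \cite[Corollary~2.33]{KP}) with the branch data $Q_i$ supported on $\{x_i,y_i\}$, each of which is geometric by the existence of Harbater--Katz--Gabber covers realizing the local extension $P(x_i)/K_{X,x_i}$ with a single tame auxiliary branch point. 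This sidesteps any need to control a global normal subgroup.
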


\begin{proof}
If $P$ is a geometric branch data, we take $\tilde{P} = P$. Suppose that $P$ is not a geometric branch data. Let $\mathrm{BL}(P)$ be its branch locus. From the theory of finite Galois extension of local fields (\cite[Chapter IV, Corollary 4]{Serre}), it follows that for each $x \in \mathrm{BL}(P)$, the stabilizer group $G_x \coloneqq \mathrm{Gal}(P(x)/K_{X,x})$ is of the form
\[
G_x \cong P_x \rtimes \mathbb{Z}/m_x \mathbb{Z}
\]
for some (possibly trivial) normal $p$-subgroup $P_x$ and a prime-to-$p$ integer $m_x$. Set
\[
B \coloneqq \{x \in \mathrm{BL}(P) \, | \, m_x \neq 1\}.
\]
Since $P$ is not geometric, necessarily $B \neq \emptyset$ (\cite[Corollary~2.33]{KP}). Let $B = \{x_1, \ldots, x_r\}$. Consider a finite set of closed points $\{y_1, \ldots, y_r\}$ in $X$, disjoint from $\mathrm{BL}(P)$. Define a branch data $\tilde{P}$ with branch locus $\mathrm{BL}(P) \sqcup \{y_1, \ldots, y_r\}$ as follows:
\begin{align*}
\tilde{P}(x) \coloneqq P(x) \text{ for } x \in \mathrm{BL}(P), \\
\tilde{P}(y_i) \coloneqq K_{X,y_i}[y_i,t_i]/(t_i^{m_{x_i}}-y_i) \text{ for } 1 \leq i \leq r;
\end{align*}
here we denote the local parameter at the point $y_i$ also by $y_i$, and $\tilde{P}(y_i)$ is the unique $\mathbb{Z}/m_{x_i}\mathbb{Z}$-Galois extension of $K_{X,y_i}$. Note that by \cite[Corollary~2.33]{KP}, the purely wild branch data $P_{\text{\rm wild}}$ on $X$ with branch locus $\mathrm{BL}(P) - B$, and defined by $P_{\mathrm{wild}}(x) = P(x)$ as extensions of $K_{X,x}$ where $x \in \mathrm{BL}(P) - B$, is geometric. For each $1 \leq i \leq r$, the branch data $Q_i$ on $X$ with branch locus $\{x_i, y_i\}$, defined by $Q_i(x_i) = P(x_i)$ and $Q_i(y_i) = \tilde{P}(y_i)$ is also geometric by the existence of Kummer-Harbater-Katz-Gabber covers associated to the local extension $P(x_i)/K_{X,x_i}$ (cf. \cite[Theorem~9.6]{DasIC}). As $\tilde{P}$ is the compositum of the branch data $P_{\mathrm{wild}}$ with all the $Q_i$'s, by \cite[Proposition~2.29]{KP}, $\tilde{P}$ is a geometric branch data on $X$, and $\tilde{P} \geq P$ by the construction. 
\end{proof}

\subsection{Orbifold Sheaves of Modules}
This section is devoted to the generalities of a quasi-coherent sheaf on an orbifold curve. We recall certain important notions used in this article following \cite{Olsson}, \cite{LMB}, and \cite{Das}. The notions and results in this section will be used throught our article, without further mention.

On an algebraic stack $\mathfrak{X}$ over a $k$-scheme $S$, there are several associated ringed topoi: big topoi $\mathfrak{X}_{\mathrm{Zar}}, \, \mathfrak{X}_{\mathrm{fppf}}, \, \mathfrak{X}_{\text{\rm \'{E}T}}$, and two small topoi $\mathfrak{X}_{\text{\rm lis-\'{e}t}}, \, \mathfrak{X}_{\text{\rm \'{e}t}}$; see \cite[Chapter~9]{Olsson} and \cite[Chapter~12]{LMB}. On $\mathfrak{X}$, the structure sheaf $\mathcal{O}_{\mathfrak{X}}$ gives a ringed topos structure on each of these topos. For a Deligne Mumford stack $\mathfrak{X}$, the respective full subcategories of quasi-coherent $\mathcal{O}_{\mathfrak{X}}$-modules in the above mentioned ringed topoi are all equivalent; see \cite[Proposition~9.1.18, page 195]{Olsson}. If $\mathfrak{X}$ is a scheme, this equivalence extends to the small Zariski topos $\mathfrak{X}_{\mathrm{zar}}$ as well. Under these equivalences, we will only consider the small \'{e}tale topos on an orbifold curve $(X,P)$. We write $\mathrm{Coh}_{\mathcal{O}_{(X,P)}}$ and $\mathrm{QCoh}_{\mathcal{O}_{(X,P)}}$ for the full subcategories of the category of $\mathcal{O}_{(X,P)}$-modules $\mathrm{Mod}_{\mathcal{O}_{(X,P)}}$, consisting of the coherent and quasi-coherent sheaves of $\mathcal{O}_{(X,P)}$-modules, respectively. The categories $\mathrm{Coh}_{\mathcal{O}_{(X,P)}}$ and $\mathrm{QCoh}_{\mathcal{O}_{(X,P)}}$ admit tensor structures, induced from the abelian tensor category $\mathrm{Mod}_{\mathcal{O}_{(X,P)}}$. For $E \in \mathrm{Coh}_{\mathcal{O}_{(X,P)}}$, the functor $\sheafhom_{\mathcal{O}_{(X,P)}}(E,-)$ is a right adjoint to the tensor product functor $E \otimes_{\mathcal{O}_{(X,P)}} -$. Our interest is on the locally free sheaves of $\mathcal{O}_{(X,P)}$-modules, which we will refer to as \textit{bundles}. This is a full subcategory of $\mathrm{Coh}_{\mathcal{O}_{(X,P)}}$, denoted by $\mathrm{Vect}(X,P)$. We note that for $E \in \mathrm{Vect}(X,P)$, the coherent sheaf
\[
E^\vee \coloneqq \sheafhom_{\mathcal{O}_{(X,P)}}(E, \mathcal{O}_{(X,P)})
\]
is again a bundle. In fact, the tensor product functor $E \otimes_{\mathcal{O}_{(X,P)}} -$ and the sheaf hom-functor $\sheafhom_{\mathcal{O}_{(X,P)}}(E,-) \cong E^\vee \otimes -$ are exact functors.

Let $f \colon (Y,Q) \longrightarrow (X,P)$ be a cover of orbifold curves. We have the induced functors
\begin{equation*}
f_* \colon \mathrm{Mod}_{\mathcal{O}_{(Y,Q)}} \longrightarrow \mathrm{Mod}_{\mathcal{O}_{(X,P)}}, \, \text{\rm and } \, f^* \colon \mathrm{Mod}_{\mathcal{O}_{(X,P)}} \longrightarrow \mathrm{Mod}_{\mathcal{O}_{(Y,Q)}}.    
\end{equation*}
These functors define an adjoint pair $(f^*,f_*)$; see \cite[Proposition~9.3.6, pg. 205]{Olsson}. Since $f$ is a quasi-separated and quasi-compact morphism, the above adjoint pair restricts to an adjoint pair of functors between quasi-coherent sheaves of modules; since $f_*$ and $f^*$ also preserve coherent and locally free sheaves, we get further restrictions to adjoint pairs of functors between coherent sheaves of modules and between bundles.

Since $f$ is a flat morphism, $f^*$ is an exact functor that is strong symmetric monoidal, which means that we have isomorphisms
\[
f^*\mathcal{O}_{(X,P)} \cong \mathcal{O}_{(Y,Q)}, \text{ and } f^*(E \otimes F) \cong f^*E \otimes f^*F
\]
for any $E, \, F \in \mathrm{QCoh}_{\mathcal{O}_{(X,P)}}$. By \cite[Proposition~1.12]{Adjoint}, for $E \in \mathrm{Vect}(X,P)$, and $F \in \mathrm{QCoh}_{\mathcal{O}_{(X,P)}}$, the natural map
\[
f^* \sheafhom_{\mathcal{O}_{(X,P)}}(E,F) \longrightarrow \sheafhom_{\mathcal{O}_{(Y,Q)}}(f^*E,f^*F)
\]
is an isomorphism, and by \cite[Proposition~A.3]{Das}, the projective formula holds, i.e., for any $E' \in \mathrm{QCoh}_{\mathcal{O}_{(Y,Q)}}$, the natural map
\[
E \otimes f_*E' \longrightarrow f_* (f^*E \otimes E')
\]
is an isomorphism. Another important property of the $(f^*,f_*)$-adjoint pair is that when $f$ is faithfully flat, for any $E \in \mathrm{QCoh}_{\mathcal{O}_{(X,P)}}$, the canonical injective map
\[
f^\# \colon E \longrightarrow f_*f^*E
\]
obtained by evaluating the unit $\mathrm{id} \Rightarrow f_*f^*$ of the $(f^*,f_*)$-adjoint at $E$ is \textit{an $f$-locally split injection} --- this means that the injective map $f^*f^{\#} \colon f^*E \hookrightarrow f^*f_*f^*E$ splits; see \cite[Proposition~3.5.4(i)]{split}.

We also recall from \cite[9.2.5, page 198, and Proposition~9.2.16]{Olsson} that $f_*$ is a left exact functor, and we can form higher direct images $\mathrm{R}^if_*$ for $i \geq 0$, with $\mathrm{R}^0f_* = f_*$: for any $E \in \mathrm{QCoh}_{\mathcal{O}_{(Y,Q)}}$, the sheaf $\mathrm{R}^if_* E \in \mathrm{QCoh}_{\mathcal{O}_{(X,P)}}$ associates $\mathrm{H}^i(V \times_{(X,P)} (Y,Q), \mathrm{pr}^*E)$ to any atlas $V \longrightarrow (X,P)$ where $\mathrm{pr}$ denote the projection $V \times_{(X,P)} (Y,Q) \longrightarrow (Y,Q)$. Moreover, the higher direct image sheaves are coherent if $E$ is coherent. The functor $f_*$ is not right exact, in general. Although, in the following two important cases, $f_*$ is exact.
\begin{enumerate}[leftmargin=*]
    \item The cover $f$ is representable in the sense of a morphism of algebraic stacks; see \cite[Lemma~3.4]{Das}.
    \item The cover $f$ is tamely ramified, i.e. for each point $y \in \mathrm{BL}(Q)$, the Galois field extension $Q(y)/P(f_0(y))$ is a cyclic extension of order co-prime-to $p$ where $f_0 \colon Y \longrightarrow X$ is the cover induced on the Coarse moduli curves. To see this statement, we note that for any cover $f$, the geometric fibers of the relative inertia stack $\mathcal{I}_f \longrightarrow (Y,Q)$ are constant group schemes associated to the groups $\mathrm{Gal}\left( Q(y)/P(f_0(y))\right)$. By \cite[Proposition~2.10]{Dan}, these group schemes are linearly reductive if and only if they are of order prime-to-$p$ (hence, cyclic). So the cover $f$ is tamely ramified if and only if it is a tame morphism in the sense of algebraic stacks, and a relative version of \cite[Theorem~3.2]{Dan} shows that $f_*$ is an exact functor. In fact, this the reason we produced the tamely ramified cover in \Cref{lem_choice_tame_cover}.
\end{enumerate}

The notion of the rank of a bundle (or even a coherent sheaf) is defined generically; so, for $E \in \mathrm{Coh}_{\mathcal{O}_{(X,P)}}$, the rank $\mathrm{rk}(E) = \mathrm{rk}(\iota_*E)$ where $\iota$ is the induced cover $\iota \colon (X,P) \longrightarrow X$. From \cite[Lemma~3.21 and Lemma~3.24]{Das}, we see that the pushforward coherent sheaf $f_*F$ on $(X,P)$ has rank $\mathrm{deg}(f_0) \cdot \mathrm{rk}(E) =\mathrm{rk}(\iota_*f_*F)$ for any $F \in \mathrm{Coh}(Y,Q)$ with $f_0 \colon Y \longrightarrow X$ being the cover induced by $f$ on the Coarse moduli curves. Moreover, $f_*F$ is a bundle if $F$ is so. Let us give a more detailed description of the higher direct images under the Coarse moduli map.

\begin{lemma}\label{lem_R1}
Let $(X,P)$ be an orbifold curve with the Coarse moduli map $\iota \colon (X,P) \longrightarrow X$. For any $E \in \mathrm{Coh}_{\mathcal{O}_{(X,P)}}$, the following hold.
\begin{enumerate}
\item $\iota_*E$ is a coherent sheaf on $X$ of rank $\mathrm{rk}(\iota_*E) = \mathrm{rk}(E)$. For $i \geq 1$, the coherent sheaf $\mathrm{R}^i\iota_*E$ is a torsion sheaf of $\mathcal{O}_X$-modules.\label{high:1}
\item Assume that $E \in \mathrm{Vect}(X,P)$. For any Galois cover $g_0 \colon Z \longrightarrow X$ of smooth projective connected curves that factors as $g_0 \colon Z \overset{g}\longrightarrow (X,P) \overset{\iota}\longrightarrow X$ such that $g$ is tamely ramified, the torsion sheaf $\mathrm{R}^1\iota_*E$ is the cokernel of the natural injection
\[
0 \longrightarrow g_{0,*} g^* E/ \iota_* E \longrightarrow \iota_* \left( g_* g^*E / E \right),
\]
and for all $i \geq 2$, we have the isomorphisms
\begin{center}
  $\mathrm{R}^i\iota_* E \cong \mathrm{R}^{i-1}\iota_*\left( g_*g^*E/E \right)$.  
\end{center}\label{high:2}
\end{enumerate}
\end{lemma}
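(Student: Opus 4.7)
The plan for part (1) is short. Since the coarse moduli map $\iota$ is proper, each $R^i\iota_*E$ is a coherent $\mathcal{O}_X$-module. Because $\iota$ restricts to an isomorphism over the open set $X - \mathrm{BL}(P)$, we have $(\iota_*E)|_{X-\mathrm{BL}(P)} \cong E|_{X-\mathrm{BL}(P)}$, giving the rank assertion, and $(R^i\iota_*E)|_{X-\mathrm{BL}(P)} = 0$ for $i \geq 1$, so each such sheaf is supported on the finite set $\mathrm{BL}(P)$ and is therefore torsion as an $\mathcal{O}_X$-module.

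For part (2), the strategy is to exploit the short exact sequence coming from the unit of the $(g^*,g_*)$-adjunction and then compute higher direct images along the factorization $g_0 = \iota \circ g$. Since $g$ is representable and tamely ramified, and $g_0$ is a Galois cover of smooth projective curves, $g$ is faithfully flat; by the $g$-locally split injectivity of the adjunction unit recalled in the preliminaries, the canonical map $E \hookrightarrow g_*g^*E$ is injective. Setting $Q \coloneqq g_*g^*E/E$, we obtain a short exact sequence
\begin{equation*}
0 \longrightarrow E \longrightarrow g_*g^*E \longrightarrow Q \longrightarrow 0
\end{equation*}
of coherent $\mathcal{O}_{(X,P)}$-modules.

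The main technical input is the vanishing $R^i\iota_*(g_*g^*E) = 0$ for all $i \geq 1$. To obtain it I would apply the Leray spectral sequence $E_2^{p,q} = R^p\iota_*R^qg_*(g^*E) \Rightarrow R^{p+q}g_{0,*}(g^*E)$. Since $g$ is tame, the functor $g_*$ is exact, so the spectral sequence degenerates to isomorphisms $R^p\iota_*(g_*g^*E) \cong R^pg_{0,*}(g^*E)$; but $g_0 \colon Z \longrightarrow X$ is a finite, hence affine, morphism of schemes, so $R^pg_{0,*}$ vanishes for $p \geq 1$, and in particular $\iota_*(g_*g^*E) = g_{0,*}g^*E$. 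Feeding this vanishing into the long exact sequence attached to the short exact sequence above gives, in low degrees,
\begin{equation*}
0 \longrightarrow \iota_*E \longrightarrow g_{0,*}g^*E \longrightarrow \iota_*Q \longrightarrow R^1\iota_*E \longrightarrow 0,
\end{equation*}
which exhibits $R^1\iota_*E$ as the cokernel of the induced injection $g_{0,*}g^*E/\iota_*E \hookrightarrow \iota_*Q$; and in higher degrees the simultaneous vanishing of $R^{i-1}\iota_*(g_*g^*E)$ and $R^i\iota_*(g_*g^*E)$ gives the isomorphism $R^i\iota_*E \cong R^{i-1}\iota_*Q$ for $i \geq 2$. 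The point most worth double-checking is the exactness of $g_*$ together with the faithful flatness of $g$; these are precisely the reasons the tame Galois cover produced in Lemma~\ref{lem_choice_tame_cover} was introduced, and once they are in place the remainder is a routine cohomological chase.
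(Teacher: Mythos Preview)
Your proof is correct. Part~(1) is essentially identical to the paper's argument. For part~(2), both you and the paper use the adjunction-unit short exact sequence and the vanishing of $R^ig_{0,*}$ (since $g_0$ is finite), but you take a more direct route: you work entirely on $(X,P)$ with the map $g$, invoking the exactness of $g_*$ (valid because $g$ is tame, as recorded in the preliminaries) to collapse the Leray spectral sequence for $g_0=\iota\circ g$ and obtain $R^p\iota_*(g_*g^*E)\cong R^pg_{0,*}(g^*E)=0$ for $p\geq 1$. The paper instead passes through the intermediate geometric orbifold $(X,\tilde P)=[Z/\Gamma]$: it pulls $E$ up to $\tilde E=\tau^*E$, uses the atlas $u$ (where $u_*$ is exact because $u$ is \'etale) to get the analogous vanishing and long exact sequence for $\tilde\iota_*$, and then pushes everything back down to $(X,P)$ via the exactness of $\tau_*$ together with $\tau_*\mathcal{O}_{(X,\tilde P)}\cong\mathcal{O}_{(X,P)}$ and the projection formula. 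Your argument is shorter and avoids this detour; the paper's version has the mild expository advantage that the exactness of $u_*$ (\'etale) and of $g_{0,*}$ (finite scheme map) are completely elementary, whereas the exactness of $g_*$ you use rests on the tame-stacks machinery cited just before the lemma. One small remark: you assert that $g$ is representable, which is not part of the hypothesis of the lemma; it is harmless here because tameness alone already gives the exactness of $g_*$ you need.
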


\begin{proof}
We may assume that $P$ is a non-trivial branch data. We first claim that for any quasi-coherent sheaf $E$, and for each $i \geq 1$, the quasi-coherent sheaf $\mathrm{R}^i\iota_*E$ vanishes away from the branch locus $\mathrm{BL}(P)$. To see this, consider the affine curve $U_0 = X - \mathrm{BL}(P)$. Then the sheaf $\mathrm{R}^i\iota_*E$ on $U_0$ is given by $\mathrm{H}^i(U_0 \times_X (X,P), \mathrm{pr}^* E)$ where $\mathrm{pr} \colon U_0 \times_X (X,P) \longrightarrow (X,P)$ is the projection map. Since $\mathrm{pr}^* E$ is a quasi-coherent sheaf on the affine curve $U_0 \times_X (X,P) \cong U_0$, we have $\mathrm{H}^i(U_0 \times_X (X,P), \mathrm{pr}^* E) = 0$ for all $i \geq 0$. This proves the claim. Statement~\eqref{high:1} follows directly from this and our previous discussion.

To see \eqref{high:2}, consider a $\Gamma$-Galois cover
\[g_0 \colon Z \overset{u}\longrightarrow [Z/\Gamma] = (X,\tilde{P}) \overset{\tau}\longrightarrow (X,P) \overset{\iota}\longrightarrow X\]
where $u$ is the natural atlas, $\tau$ is a tamely ramified cover, $g = \tau \circ u$, and $\tilde{\iota} \coloneqq \iota \circ \tau$ is the Coarse moduli map. Let $E \in \mathrm{Vect}(X,P)$. Set $\tilde{E} \coloneqq \tau^*E \in \mathrm{Vect}(X,\tilde{P})$. Using the canonical injection $\tilde{\iota}^{\, \#} \colon \tilde{E} \hookrightarrow u_*u^*\tilde{E}$, we have a short exact sequence
\[
0 \longrightarrow \tilde{E} \longrightarrow u_*u^*\tilde{E} \longrightarrow Q \coloneqq \left( u_*u^*\tilde{E} \right) /\tilde{E} \longrightarrow 0
\]
of bundles on $(X, \tilde{P})$. Since $g_0$ is a finite map of schemes, $g_{0,*}$ is an exact functor by \cite[\href{https://stacks.math.columbia.edu/tag/03QP}{Proposition 03QP}]{SP}. Also since $u$ is an \'{e}tale cover, the functor $u_*$ is exact. In particular, for all $i \geq 1$, we have
\[
\mathrm{R}^i\tilde{\iota}_*(u_*u^*\tilde{E}) \cong \mathrm{R}^i\iota_*(g_{0,*}g_0^*E) = 0.
\]
So the above exact sequence produces a long exact sequence
\begin{equation}\label{eq_exacttilde1}
0 \longrightarrow \tilde{\iota}_*\tilde{E} \longrightarrow g_{0,*}u^*\tilde{E} \longrightarrow \tilde{\iota}_* Q \longrightarrow \mathrm{R}^1\tilde{\iota}_* \tilde{E} \longrightarrow 0,
\end{equation}
and for all $i \geq 2$, the isomorphisms
\begin{equation}\label{eq_exacttilde2}
\mathrm{R}^i\tilde{\iota}_*\tilde{E} \cong \mathrm{R}^{i-1}\tilde{\iota}_*Q.
\end{equation}

Now the covers $\tau$ and $g$ are tamely ramified; hence, $\tau_*$ and $g_*$ are also exact functors. Further, \cite[Lemma~3.24]{Das} shows that $\tau_*\mathcal{O}_{(X, \tilde{P})} \cong \mathcal{O}_{(X,P)}$. By the flat base change and the projection formula (\cite[Proposition~A.3]{Das}), we obtain the following isomorphisms for $i \geq 0$.
\begin{eqnarray*}
\mathrm{R}^i\tilde{\iota}_*\tilde{E} \cong \mathrm{R}^i\iota_*E, \\
\mathrm{R}^i\tilde{\iota}_*(u_*u^*\tilde{E}) \cong \mathrm{R}^i\iota_*(g_*g^*E), \\
\mathrm{R}^{i}\tilde{\iota}_*Q \cong \mathrm{R}^i\iota_*\left( g_*g^*E/E \right).
\end{eqnarray*}
Using these isomorphisms, the exact sequence~\eqref{eq_exacttilde1} becomes:
\[
0 \longrightarrow \iota_*E \longrightarrow \iota_*(g_*g^*E) \cong g_{0,*}g^*E \longrightarrow \iota_* \left(g_*g^*E/E \right) \longrightarrow \mathrm{R}^1\iota_*E \longrightarrow 0,
\]
and the isomorphisms~\eqref{eq_exacttilde2} become:
\[
\mathrm{R}^i\iota_*E \cong \mathrm{R}^{i-1}\iota_* \left( g_*g^*E/E \right)
\]
for $i \geq 2$. In particular, the coherent torsion sheaf $\mathrm{R}^1\iota_*E$ is the cokernel of the natural inclusion
\[
g_{0,*}g^*E/\iota_*E \cong \mathrm{ker}\left( \iota_*\left( g_*g^*E/E \right) \rightarrow \mathrm{R}^1\iota_*E \right) \hookrightarrow \iota_*\left(g_*g^*E/E \right)
\]
of bundles. Inductively applying the above for $i \geq 1$, \eqref{high:2} follows.
\end{proof}

\section{Stability Conditions on Orbifold Bundles}\label{sec_Orbifolds}
In this section, we pose our moduli problem. To understand the moduli problem, we prove results on the descent of an orbifold bundle related to our context (cf. Proposition~\ref{prop_descent_unique_sub}), recall the definitions and properties of orbifold slope stability, establish important results regarding the cohomologies of orbifold bundles (see Lemma~\ref{lem_cohomologies_using_geometric}, Corollary~\ref{cor_cohomologies}), and provide a comparison of the $\mathrm{S}$-equivalence under certain covers of orbifold curves (see Proposition~\ref{prop_S-equivariance_wrt_set_up}).

\subsection{Descent and cohomologies of Orbifold Bundles}\label{sec_descent}
We start by collecting crucial results on orbifold bundles which are well known for bundles on curves. We also note certain useful properties of the cohomologies of orbifold bundles in our setup. Let us fix the following notation throughout this section.

\begin{notation}\label{not_equivariant_set_up}
Let $P$ be a non-trivial branch data on a smooth projective connected $k$-curve $X$. Fix a geometric branch data $\tilde{P}$ on $X$ satisfying $\tilde{P} \geq P$ such that the induced cover $\tau \colon (X,\tilde{P}) \longrightarrow (X,P)$ of orbifold curves is tamely ramified; this can be done by Lemma~\ref{lem_choice_tame_cover}. Also fix a $\Gamma$-Galois cover $g_0 \colon Z \longrightarrow X$ of smooth projective connected curves (for a finite group $\Gamma$) that factors as a composition
\begin{equation}\label{eq_equivariant_map_factorization}
g_0 \colon Z \overset{u}\longrightarrow [Z/\Gamma] = (X,\tilde{P}) \overset{\tau}\longrightarrow (X,P) \overset{\iota}\longrightarrow X
\end{equation}
where $u$ is the natural atlas, $g \coloneqq \tau \circ u$ is a representable cover, and $\iota$ is the Coarse moduli map.
\end{notation}

Let $\zeta$ denote the generic point of $X$ and $E \in \mathrm{Vect}(X)$. Then $E_\zeta = E \otimes_{\mathcal{O}_X} \mathcal{O}_{X,\zeta}$ is a $k(X) = \mathcal{O}_{X,\zeta}$-vector space. Any coherent sub-sheaf $F$ of $E$ is a coherent torsion-free sheaf; i.e., $F$ is a bundle. Recall that a sub-bundle of $E$ is a sub-sheaf $F \subset E$ such that $\mathrm{coker}\left( F \rightarrow E \right)$ is a bundle. \cite[Proposition~1]{Langton} states that given a $k(X)$-linear subspace $W \subset E_\zeta$, there is a unique sub-bundle $F \subset E$ such that $F_\zeta = W$. We use this fact to obtain similar results for orbifold bundles.

Since the orbifold curve $(X,P)$ is generically isomorphic to $X$, the point $\zeta$ is also the generic point of $(X,P)$. Further, for any bundle $E$ on $(X,P)$, we have the $k(X)$-linear vector space $E_{\zeta}$, any coherent sub-sheaf $F$ of $E$ is a bundle on $(X,P)$, and a sub-bundle of $E$ is a coherent sub-sheaf $F \subset E$ such that $\mathrm{coker}(F \rightarrow E) \in \mathrm{Vect}(X,P)$.

\begin{proposition}\label{prop_descent_unique_sub}
Let $\zeta$ be the generic point of $X$. The following hold.
\begin{enumerate}
\item For $F \in \mathrm{Vect}(X,\tilde{P})$, and a $k(X)$-linear subspace $W \subseteq F_{\zeta}$, there is a unique sub-bundle $F^{(W)} \subseteq F$ such that $F^{(W)}_\zeta = W$.\label{des:1}
\item Let $E \in \mathrm{Vect}(X,P)$ and $F' \subseteq \tau^* E$ be a sub-bundle. Then there exists a unique sub-bundle $E' \subseteq E$ such that $\tau^* E' \cong F'$.\label{des:2}
\item For any $E \in \mathrm{\rm Vect}(X,P)$, and a $k(X)$-linear subspace $V \subseteq E_{\zeta}$, there is a unique sub-bundle $E^{(V)} \subseteq E$ such that $E^{(V)}_\zeta = V$.\label{des:3}
\end{enumerate}
\end{proposition}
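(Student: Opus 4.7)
The plan is to prove parts (1) and (3) by a common saturation-at-the-generic-point construction, and then to deduce (2) from them using that $\tau$ is an isomorphism over $X - \mathrm{BL}(P)$. Both (1) and (3) are instances of the following general assertion: for any orbifold curve $\mathcal{Y}$, any $H \in \mathrm{Vect}(\mathcal{Y})$, and any $k(X)$-linear subspace $W \subseteq H_\zeta$, there is a unique sub-bundle $H^{(W)} \subseteq H$ whose generic fiber equals $W$. I would construct $H^{(W)}$ as the coherent subsheaf of $H$ characterized \'etale-locally by requiring its sections to have restrictions at the generic point lying in $W$, after the canonical identification of the generic point of $\mathcal{Y}$ with $\zeta$.

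For existence, $H^{(W)}$ has generic fiber $W$ by construction, and the quotient $H/H^{(W)}$ is torsion-free: if $fs$ lies in $H^{(W)}$ for a non-zero-divisor $f$, then $f \cdot s_\zeta \in W$ forces $s_\zeta \in W$ by the $k(X)$-linearity of $W$. Since an orbifold curve is a smooth one-dimensional Deligne-Mumford stack, any coherent torsion-free sheaf on it is locally free --- this can be verified on an \'etale atlas, where the classical structure theorem for coherent sheaves on a smooth curve applies --- so $H^{(W)}$ is in fact a sub-bundle. For uniqueness, any sub-bundle $H' \subseteq H$ with $H'_\zeta = W$ is automatically contained in $H^{(W)}$ by the defining property, and $H^{(W)}/H'$ then embeds into the bundle $H/H'$ while having zero generic fiber, forcing the inclusion to be an equality. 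Specializing $\mathcal{Y}$ to $(X,\tilde{P})$ yields (1), and to $(X,P)$ yields (3).

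Statement (2) then follows quickly. Since $\tau$ is an isomorphism over $X - \mathrm{BL}(P)$ and hence at the generic point, there is a canonical identification $(\tau^*E)_\zeta \cong E_\zeta$. Given a sub-bundle $F' \subseteq \tau^*E$, set $V \coloneqq F'_\zeta$ and let $E' \coloneqq E^{(V)} \subseteq E$ be the sub-bundle furnished by (3). Exactness of $\tau^*$ implies that $\tau^*E' \subseteq \tau^*E$ is a sub-bundle with generic fiber $V$, and the uniqueness part of (1) then forces $\tau^*E' = F'$; uniqueness of $E'$ is inherited directly from (3). The single step where the Deligne-Mumford stack nature of the problem plays a genuinely new role, and which I expect to be the only substantive obstacle, is the equivalence of torsion-freeness and local freeness for coherent sheaves on an orbifold curve; everything else is a variant of the classical one-dimensional saturation argument valid on any smooth integral base.
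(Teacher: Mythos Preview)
Your proof is correct but structured differently from the paper's. The paper proves (1) by pulling back to the curve $Z$ via the atlas $u$, invoking Langton's classical \cite[Proposition~1]{Langton} there, checking the resulting sub-bundle is $\Gamma$-equivariant from its explicit description $S(U) = (u^*F)(U) \cap (W \otimes_{k(X)} k(Z))$, and descending along the equivalence $u^* \colon \mathrm{Vect}(X,\tilde{P}) \xrightarrow{\sim} \mathrm{Vect}^{\Gamma}(Z)$. It then proves (2) \emph{independently} of (1): it sets $E' \coloneqq \tau_*F'$ and uses exactness of $\tau_*$ (available because $\tau$ is tamely ramified) together with $\tau_*\tau^*E \cong E$ to show that the counit $\tau^*\tau_*F' \to F'$ is an isomorphism. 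Finally (3) is obtained by combining (1) and (2) applied to $\tau^*E$. Your route instead treats (1) and (3) simultaneously as instances of a single saturation lemma on an arbitrary orbifold curve, with the only stack-specific input being that torsion-free coherent sheaves on a smooth one-dimensional Deligne--Mumford stack are locally free (verified on an \'etale atlas); you then deduce (2) from (1) and (3) using only flatness of $\tau$ and that $\tau$ is an isomorphism at the generic point. Your argument is more uniform and does not use exactness of $\tau_*$, so it applies even without the tameness of $\tau$; the paper's argument is more concrete in that it exhibits $E'$ explicitly as $\tau_*F'$ and isolates precisely where the tameness hypothesis is used.
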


\begin{proof}
Let $\eta$ denote the generic point of $Z$. We have a $\Gamma$-invariant $k(Z)$-linear subspace
\[
W \otimes_{k(X)} k(Z) \subseteq F_{\zeta} \otimes_{k(X)} k(Z) \cong (u^*F)_{\eta}.
\]
By \cite[Proposition~1]{Langton}, there is a unique sub-bundle $S \subseteq u^*F$ on $Z$ such that $S_\eta = W \otimes_{k(X)} k(Z)$. Moreover, on any open affine $U \subset Z$, we have
\[
S(U) = \left( u^*F \right) (U) \cap \left( W \otimes_{k(X)} k(Z) \right).
\]
In particular, taking $\Gamma$-invariant affine open subsets $U \subset Z$, we conclude that $S$ is a $\Gamma$-equivariant sub-bundle of $u^*F$ with $S_\eta = W \otimes_{k(X)} k(Z)$. Since $u^* \colon \mathrm{Vect}(X,\tilde{P}) \longrightarrow \mathrm{Vect}^{\Gamma}(Z)$ is an equivalence of categories with a quasi inverse given by the invariant pushforward $u_*^\Gamma$, there exists a unique sub-bundle $F^{(W)} \subseteq F$ such that $u^*F^{(W)} \cong S$ as $\Gamma$-equivariant bundles. In particular, $F^{(W)}_\zeta = W$, proving~\eqref{des:1}.

Now let $E \in \mathrm{Vect}(X,P)$ and $F' \subseteq \tau^* E$ be a sub-bundle. We claim that $F'$ is isomorphic to $\tau^* \tau_* F'$. The inclusion homomorphism $j \colon F' \hookrightarrow \tau^*E$ induces $\tau_* j \colon \tau_* F' \hookrightarrow \tau_* \tau^* E \cong E$, which further induces an inclusion $\tau^* \tau_* j \colon \tau^* \tau_* F' \hookrightarrow \tau^* E$ as coherent sheaves. As $\tau_*$ is an exact functor, we have
\[
\mathrm{coker}(\tau_* F' \overset{\tau_* j}\longrightarrow \tau_* \tau^* E) \cong \tau_* \mathrm{coker}(F' \overset{j}\longrightarrow \tau^*E),
\]
a bundle on $(X,P)$. So $\mathrm{coker}(\tau^*\tau_* F' \overset{\tau^* \tau_* j}\longrightarrow \tau^* \tau_* \tau^* E \cong \tau^* E) \cong \tau^* \tau_* \mathrm{coker}(F' \overset{j}\longrightarrow \tau^*E)$ is a bundle on $(X,\tilde{P})$. Thus $F'$ and $\tau^* \tau_* F'$ are both sub-bundles of $\tau^*E$ of the same rank. Again using the exactness of $\tau_*$, the canonical adjoint morphism
\[
\alpha \colon \tau^* \tau_* F' \longrightarrow F'
\]
is an epimorphism. So $\alpha$ is an isomorphism. Taking $E' \coloneqq \tau_* F'$, and by the uniqueness of the pullback bundles, the statement~\eqref{des:2} follows.

The statement~\eqref{des:3} is the consequence of the first two statements applied to the bundle $F = \tau^*E$.
\end{proof}

Next, we go through the results on the cohomologies of quasi-coherent sheaves on $(X,P)$ --- the first approach is to relate them with the classical cohomologies on $X$, and the second one is to use the equivariant set up. The approach helps us calculate the dimension of the cohomologies in suitable cases, whereas the second theory is more significant for our purpose.

There is a first quadrant Leray spectral sequence (cf. \cite[Theorem~A.1.6.4]{Brochard})
\begin{equation}\label{eq_Leray}
E_2^{p,q} = \mathrm{H}^p( X, R^q \iota_* E) \Rightarrow \mathrm{H}^{p+q}((X,P),E).
\end{equation}
In particular, we have
\begin{eqnarray}\label{eq_cohomology_Coarse}
 & \mathrm{H}^0((X,P),E) \cong \mathrm{H}^0(X, \iota_* E), \, \text{ and } \\
 & \text{ an exact sequence: } 0 \rightarrow \mathrm{H}^1(X, \iota_* E) \rightarrow \mathrm{H}^1 ((X,P) , E) \rightarrow \mathrm{H}^0(X, R^1\iota_* E) \rightarrow 0
\end{eqnarray}
of $k$-vector spaces.

To employ the equivariant approach, we first note that the cohomologies of quasi-coherent sheaves on $(X,P)$ are the same as those of the pullback sheaves on $(X,\tilde{P})$.

\begin{lemma}\label{lem_cohomologies_using_geometric}
Let $E \in \mathrm{QCoh}_{\mathcal{O}_{(X,P)}}$. For any $i \geq 0$, we have a canonical isomorphism
$$\mathrm{H}^i((X, \tilde{P}), \tau^*E) \cong \mathrm{H}^i((X,P), E).$$
\end{lemma}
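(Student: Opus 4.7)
The plan is to reduce the statement to the Leray spectral sequence for $\tau$ combined with the exactness properties of $\tau_*$ recorded earlier in the paper. Concretely, I would invoke the Leray spectral sequence
\[
E_2^{p,q} = \mathrm{H}^p\bigl((X,P), \mathrm{R}^q\tau_*\tau^*E\bigr) \Longrightarrow \mathrm{H}^{p+q}\bigl((X,\tilde{P}), \tau^*E\bigr),
\]
and try to show that it collapses to the $q=0$ row and that the $q=0$ row equals $\mathrm{H}^p((X,P),E)$.

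For the degeneration, the key input is that $\tau$ is tamely ramified by choice (Notation~\ref{not_equivariant_set_up}), so $\tau_*$ is an exact functor on quasi-coherent sheaves, as noted in case (2) of the discussion following \Cref{lem_choice_tame_cover}. Exactness of $\tau_*$ on quasi-coherent sheaves, combined with the existence of enough injectives, means that applying $\tau_*$ to any injective resolution of $\tau^*E$ still yields an exact sequence, so $\mathrm{R}^q\tau_*(\tau^*E) = 0$ for all $q \geq 1$. Hence the spectral sequence degenerates on the $E_2$-page and gives isomorphisms
\[
\mathrm{H}^p\bigl((X,P), \tau_*\tau^*E\bigr) \cong \mathrm{H}^p\bigl((X,\tilde{P}), \tau^*E\bigr)
\]
for every $p \geq 0$.

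For the identification of the $E_2^{p,0}$ terms with $\mathrm{H}^p((X,P),E)$, I would apply the projection formula (cited as \cite[Proposition~A.3]{Das}) to compute
\[
\tau_*\tau^*E \cong E \otimes_{\mathcal{O}_{(X,P)}} \tau_*\mathcal{O}_{(X,\tilde{P})},
\]
and then invoke \cite[Lemma~3.24]{Das}, already used in the proof of \Cref{lem_R1}, which gives $\tau_*\mathcal{O}_{(X,\tilde{P})} \cong \mathcal{O}_{(X,P)}$ precisely because $\tau$ is representable and tame. Chaining the two isomorphisms, one obtains the desired identification $\mathrm{H}^i((X,\tilde{P}),\tau^*E) \cong \mathrm{H}^i((X,P),E)$, and naturality of the Leray spectral sequence, the projection formula, and the unit map $E \to \tau_*\tau^*E$ makes this isomorphism canonical.

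The main obstacle to watch for is the rigorous justification that exactness of $\tau_*$ on quasi-coherent sheaves really forces $\mathrm{R}^q\tau_* = 0$ for $q \geq 1$ at the level of sheaf cohomology on the algebraic stack $(X,P)$: one must know that the category $\mathrm{QCoh}_{\mathcal{O}_{(X,\tilde{P})}}$ has enough injectives and that the higher direct images computed there agree with those computed in the ambient topos. This has been flagged earlier in the preliminaries via \cite{Olsson}, and combined with the relative version of \cite[Theorem~3.2]{Dan} cited in the case (2) discussion, it handles the issue. Once this is in place, the rest is formal manipulation of the spectral sequence and the projection formula.
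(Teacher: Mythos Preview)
Your proof is correct and follows essentially the same line as the paper's: exactness of $\tau_*$ (from tameness) forces $\mathrm{R}^q\tau_*(\tau^*E)=0$ for $q\ge 1$, collapsing the Leray spectral sequence, and then the projection formula together with $\tau_*\mathcal{O}_{(X,\tilde{P})} \cong \mathcal{O}_{(X,P)}$ identifies $\tau_*\tau^*E$ with $E$. One small caveat: $\tau$ is not asserted to be representable in the setup (only $g = \tau \circ u$ is), so your parenthetical ``precisely because $\tau$ is representable and tame'' is slightly off---the isomorphism $\tau_*\mathcal{O}_{(X,\tilde{P})} \cong \mathcal{O}_{(X,P)}$ from \cite[Lemma~3.24]{Das} holds because $\tau$ is induced by $\mathrm{id}_X$ on coarse spaces, not from representability.
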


\begin{proof}
Let $\eta_X \colon X \longrightarrow \mathrm{Spec}(k)$ denote the structure morphism. Then for any $\tilde{E} \in \mathrm{QCoh}_{\mathcal{O}_{(X,\tilde{P})}}$, and for $i \geq 0$, we have the canonical isomorphism
\[
\mathrm{R}^i(\tilde{\iota} \circ \eta_X)_*(\tilde{E}) \cong \mathrm{H}^i((X,\tilde{P}), \tilde{E}) \otimes_k \mathcal{O}_k.
\]
Similarly, for any $E \in \mathrm{QCoh}_{\mathcal{O}_{(X,P)}}$, and for $i \geq 0$, we have the isomorphism
\[
\mathrm{R}^i(\iota_* \circ \eta_X)_*(E) \cong \mathrm{H}^i((X,P),E) \otimes_k \mathcal{O}_k.
\]
By the projection formula (\cite[Proposition~A.3]{Das}), we have
\[
R^q\tau_*(\tau^* E) \cong E \otimes R^q \tau_* (\mathcal{O}_{(X,\tilde{P})})
\]
for all $q \geq 0$. Since $\tau_*$ is an exact functor, $R^q\tau_*(\tau^* E)$ vanishes for all $q \geq 1$. Since $\tau_* \mathcal{O}_{(X,\tilde{P})} \cong \mathcal{O}_{(X,P)}$ (\cite[Lemma~3,24]{Das}), we have $\tau_*(\tau^* E) \cong E$. Thus,
\[
\mathrm{R}^i(\tilde{\iota} \circ \eta_X)_*(\tau^*E) \cong \mathrm{R}^i (\iota \circ \eta_X)_* \left( \tau_* \tau^* E \right) \cong \mathrm{R}^i(\iota_* \circ \eta_X)_*E,
\]
proving the result.
\end{proof}

In view of the above result, we are only interested in computing the cohomologies of the pullback sheaves on the geometric orbifold curve $(X,\tilde{P})$. The cohomologies of coherent sheaves on $(X,\tilde{P})$ can also be computed using a Cartan-Leray spectral sequence associated to the atlas $u \colon Z \longrightarrow (X,\tilde{P})$ which we recall following \cite[\href{https://stacks.math.columbia.edu/tag/06X7}{Section 06X7}]{SP} and \cite[Section~A.1.4]{Brochard}. As before, let $E \in \mathrm{QCoh}_{\mathcal{O}_{(X,P)}}$.

Let $n \geq 0$. Consider the $(n+1)$-fold product $Z_n = Z \times_{(X,\tilde{P})} \cdots \times_{(X,\tilde{P})} Z$. Since $u$ is a $\Gamma$-Galois \'{e}tale cover, we have $Z_0 = Z$, and for $n \geq 1$, the fiber product $Z_n$ is isomorphic to $\Gamma^{(n)} \times Z$ where $\Gamma^{(n)}$ is group scheme over $k$ associated to the product group $\Gamma \times \cdots \times \Gamma$. So $Z_n$ is a disjoint union of copies of $Z$, parametrized by elements of $\Gamma^{(n)}$. Write $\mathrm{pr}_n$ for the projection of $Z_n$ onto $Z$ and $\phi_n \colon Z_n \longrightarrow (X,\tilde{P})$ for the composition of $\mathrm{pr}_n$ followed by $u$. Also set $\mathcal{E}_n \coloneqq \phi_n^*\tau^*E \cong \mathrm{pr}_n^* g^* E$. In particular, $\mathcal{E}_0 = g^*E$. Then we have the exact sequence (the extended relative \v{C}ech complex)
\begin{equation}\label{eq_extended_Cech_cmplx}
0 \longrightarrow \tau^* E \longrightarrow \phi_{0,*}\phi_0^* \tau^*E \longrightarrow \phi_{1,*}\phi_1^* \tau^*E \longrightarrow \phi_{2,*}\phi_2^* \tau^*E \longrightarrow \cdots.
\end{equation}
For each $q \geq 0$, the above exact sequence~\eqref{eq_extended_Cech_cmplx} induces a complex
\begin{equation}\label{sequence}
    \tag*{{$\mathrm{s(H^q)}:$}} \mathrm{H}^q(Z_0,\mathcal{E}_0) \overset{d^0}\longrightarrow \mathrm{H}^q(Z_1, \mathcal{E}_1) \overset{d^1}\longrightarrow \cdots \overset{d^{n-1}}\longrightarrow \mathrm{H}^q(Z_n, \mathcal{E}_n) \overset{d^n}\longrightarrow \cdots
\end{equation}
Since each $Z_n$ is a disjoint union of copies of the a smooth projective connected $k$-curve $Z$, the complex $s(\mathrm{H}^q)$ is identically zero for $q \geq 2$. Set
\[
\check{\mathrm{H}}^p (\mathrm{H}^q (Z_., \mathcal{E}_.)) \coloneqq \mathrm{ker}(d^p)/ \mathrm{im}(d^{p-1}).
\]

The following spectral sequence computes the cohomologies of $\tau^*E$, and hence of $E$.

\begin{proposition}[{\cite[Proposition~A.1.4.1]{Brochard}, \cite[\href{https://stacks.math.columbia.edu/tag/06XG}{Lemma 06XG}]{SP}}]\label{prop_Cartan_Leray_ss}
Under the above notation, there is a first quadrant spectral sequence
\[
E_2^{p,q} = \check{\mathrm{H}}^p (\mathrm{H}^q (Z_., \mathcal{E}_.)) \Rightarrow \mathrm{H}^{p+q}((X,\tilde{P}), \tau^*E).
\]
\end{proposition}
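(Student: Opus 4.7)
The plan is to realize this spectral sequence as a standard \v{C}ech-to-derived-functor spectral sequence associated to the \'{e}tale cover $u \colon Z \longrightarrow (X,\tilde{P})$, obtained from a suitable double complex. First I would choose an injective resolution $\tau^*E \longrightarrow \mathcal{I}^{\bullet}$ in the abelian category of $\mathcal{O}_{(X,\tilde{P})}$-modules on the lisse-\'{e}tale (equivalently, small \'{e}tale) site, which has enough injectives. Then I would form the first-quadrant double complex
\[
C^{p,q} \coloneqq \Gamma\bigl(Z_p, \phi_p^*\mathcal{I}^q\bigr),
\]
whose horizontal differentials are the alternating sums of the face maps coming from the simplicial structure of the \v{C}ech nerve $Z_\bullet$, and whose vertical differentials are induced from the resolution $\mathcal{I}^\bullet$.

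Next I would compute the two spectral sequences of this bicomplex. Taking vertical cohomology first: for each fixed $p$, the morphism $\phi_p \colon Z_p \longrightarrow (X,\tilde{P})$ is \'{e}tale (as $u$ is), so $\phi_p^*$ is exact and preserves injectives (its left adjoint $\phi_{p,!}$, extension by zero, is exact for \'{e}tale morphisms). Hence $\phi_p^*\mathcal{I}^\bullet$ is an injective, and a fortiori $\Gamma(Z_p,-)$-acyclic, resolution of $\phi_p^*\tau^*E = \mathcal{E}_p$. Therefore the $q$-th vertical cohomology of the $p$-th column is $\mathrm{H}^q(Z_p, \mathcal{E}_p)$. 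Taking horizontal cohomology afterwards produces precisely the complex $s(\mathrm{H}^q)$ of the proposition, giving
\[
{}'E_2^{p,q} \;=\; \check{\mathrm{H}}^p\bigl(\mathrm{H}^q(Z_\bullet, \mathcal{E}_\bullet)\bigr),
\]
the claimed $E_2$ page.

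Taking horizontal cohomology first: for each fixed $q$, I apply $\Gamma((X,\tilde{P}),-)$ to the extended \v{C}ech resolution \eqref{eq_extended_Cech_cmplx} with $\tau^*E$ replaced by $\mathcal{I}^q$. Since $u$ is faithfully flat and the terms $\phi_{n,*}\phi_n^*\mathcal{I}^q$ are acyclic for the global sections functor (as $\phi_n^*\mathcal{I}^q$ is injective and $\phi_{n,*}$ preserves injectives, being right adjoint to an exact functor), the sequence of global sections of \eqref{eq_extended_Cech_cmplx} applied to $\mathcal{I}^q$ is exact. Consequently the $p$-th horizontal cohomology is concentrated at $p=0$, where it equals $\Gamma((X,\tilde{P}), \mathcal{I}^q)$. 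Taking vertical cohomology then yields $\mathrm{H}^{p+q}((X,\tilde{P}), \tau^*E)$, which is the claimed abutment; convergence of both spectral sequences in the first quadrant is automatic.

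The main obstacles are of a bookkeeping nature rather than conceptual. The subtlest step is confirming that $\phi_p^*$ preserves injectives on the appropriate site, together with the exactness of the \v{C}ech complex of global sections of an injective sheaf; if either becomes cumbersome in the stacky setting, I would fall back on replacing injective resolutions by flasque (or Godement) resolutions, which pull back to flasque resolutions under any morphism and so circumvent the adjunction subtleties. The rest is then standard hypercohomology manipulation, and the result can equivalently be deduced by directly citing the Stacks Project version of the \v{C}ech-to-cohomology spectral sequence applied to the single \'{e}tale covering $\{u \colon Z \to (X,\tilde{P})\}$.
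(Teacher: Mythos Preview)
The paper does not give its own proof of this proposition; it is stated with citations to \cite[Proposition~A.1.4.1]{Brochard} and \cite[\href{https://stacks.math.columbia.edu/tag/06XG}{Lemma 06XG}]{SP} and used as a black box. Your double-complex argument is exactly the standard construction underlying those cited results, and the steps you outline (injective resolution, the bicomplex $C^{p,q} = \Gamma(Z_p,\phi_p^*\mathcal{I}^q)$, degeneration of one filtration via exactness of the extended \v{C}ech complex on injectives) are correct.

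One small comment: your justification that $\phi_p^*$ preserves injectives via the exact left adjoint $\phi_{p,!}$ is fine on the small \'{e}tale site of a Deligne--Mumford stack, which is the setting the paper fixes; your suggested fallback to Godement or flasque resolutions is a reasonable alternative but unnecessary here. In short, there is nothing to compare against in the paper itself, and your sketch reproduces the standard proof from the references the paper invokes.
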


An immediate calculation of the above spectral sequence produces the following.

\begin{corollary}\label{cor_cohomologies}
Under the above notation, the following hold.
\begin{enumerate}
\item $\mathrm{H}^0((X,P), E) = \mathrm{ker} \left( \mathrm{H}^0(Z, g^* E) \rightarrow \mathrm{H}^0(Z \times_{\widetilde{\mathfrak{X}}} Z, \mathrm{ pr}_1^* g* E) \right) = \mathrm{H}^0(Z, g^* E)^{\Gamma}.$\label{i:1}
\item There is an exact sequence
\begin{multline*}
0 \longrightarrow \check{\mathrm{H}}^1 (\mathrm{H}^0 (Z_., \mathcal{E}_.)) \longrightarrow \mathrm{H}^1((X,P), E) \longrightarrow \mathrm{ker} \left( \mathrm{H}^1(Z, g^*E) \rightarrow \mathrm{H}^1(Z_1, \mathcal{E}_1) \right)\\
 \longrightarrow \check{\mathrm{H}}^2 (\mathrm{H}^0 (Z_., \mathcal{E}_.)) \longrightarrow \mathrm{H}^2((X,P), E).    
\end{multline*}
\end{enumerate}\label{i:2}
In particular, if $E \in \mathrm{Vect}(X,P)$ such that $\mathrm{H}^0(Z, g^*E) = 0$, then $\mathrm{H}^1((X,P), E)$ is the sub-space of $\Gamma$-fixed points in $\mathrm{H}^1(Z,g^*E)$.
\end{corollary}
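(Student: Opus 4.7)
The plan is to run the Cartan--Leray spectral sequence of Proposition~\ref{prop_Cartan_Leray_ss} together with the cohomology identification of Lemma~\ref{lem_cohomologies_using_geometric}, so that $H^{p+q}((X,\tilde P), \tau^*E) = H^{p+q}((X,P), E)$. Since each $Z_n$ is a finite disjoint union of copies of $Z$ (a smooth projective connected curve) indexed by $\Gamma^{(n)}$, one has $H^q(Z_n, \mathcal{E}_n) = 0$ for all $q \geq 2$, so only the columns $q=0$ and $q=1$ of the $E_2$-page are nonzero. This already makes the sequence quite constrained.

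For part~\eqref{i:1}, I would read off $H^0((X,P),E) = E_\infty^{0,0} = E_2^{0,0} = \check H^0(H^0(Z_\cdot, \mathcal{E}_\cdot)) = \ker(d^0)$, which is exactly the first equality in the statement. To identify this kernel with $\Gamma$-fixed global sections, use $Z_1 \cong Z \times_{(X,\tilde P)} Z \cong \Gamma \times Z$ via the standard isomorphism for an atlas of a quotient stack $[Z/\Gamma]$: the two face maps $Z_1 \rightrightarrows Z_0$ correspond, on the $\gamma$-component, to the identity on $Z$ and to the automorphism $\gamma \colon Z \to Z$. Hence $d^0$ sends $s$ to the tuple $(\gamma \cdot s - s)_{\gamma \in \Gamma}$, and its kernel is $H^0(Z, g^*E)^{\Gamma}$.

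For part~\eqref{i:2}, I would invoke the standard five-term exact sequence attached to any first-quadrant spectral sequence,
\[
0 \longrightarrow E_2^{1,0} \longrightarrow E^1 \longrightarrow E_2^{0,1} \longrightarrow E_2^{2,0} \longrightarrow E^2,
\]
with $E^i = H^i((X,P), E)$. This immediately gives the displayed sequence, since $E_2^{0,1} = \check H^0(H^1(Z_\cdot, \mathcal{E}_\cdot)) = \ker\bigl(H^1(Z,g^*E) \to H^1(Z_1, \mathcal{E}_1)\bigr)$ and $E_2^{p,0} = \check H^p(H^0(Z_\cdot, \mathcal{E}_\cdot))$.

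For the ``In particular'' clause, if $H^0(Z, g^*E) = 0$ then the entire bottom row of $E_2$ vanishes, so $\check H^p(H^0) = 0$ for all $p$. The sequence of part~\eqref{i:2} collapses to an isomorphism $H^1((X,P),E) \xrightarrow{\cong} \ker\bigl(H^1(Z, g^*E) \to H^1(Z_1, \mathcal{E}_1)\bigr)$. Using the same description of $Z_1 \cong \Gamma \times Z$ and the two face maps as in part~\eqref{i:1}, the map $d^0$ on $H^1$ is again $s \mapsto (\gamma \cdot s - s)_{\gamma}$, so its kernel is $H^1(Z, g^*E)^{\Gamma}$. The only mildly delicate point in the whole argument is keeping the indexing of the face maps compatible with the $\Gamma$-action to justify the identification of $\check H^0$ with $\Gamma$-invariants; everything else is a direct unpacking of Proposition~\ref{prop_Cartan_Leray_ss} via the standard edge and five-term maps.
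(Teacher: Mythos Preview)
Your proposal is correct and follows essentially the same approach as the paper: both invoke Lemma~\ref{lem_cohomologies_using_geometric} to replace $(X,P)$ by $(X,\tilde P)$, apply the Cartan--Leray spectral sequence of Proposition~\ref{prop_Cartan_Leray_ss}, and read off the five-term exact sequence; for the final clause, both note that $\mathrm{H}^0(Z,g^*E)=0$ forces the $q=0$ row to vanish (the paper phrases this via $\mathrm{H}^0(Z_1,\mathcal{E}_1)\cong \mathrm{H}^0(Z,\oplus_{\gamma}\gamma^*g^*E)=0$, which is your observation specialized to $n=1$), collapsing the sequence to the desired isomorphism with $\Gamma$-invariants. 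The only cosmetic difference is that for part~\eqref{i:1} the paper also cites the Leray identification $\mathrm{H}^0((X,P),E)=\mathrm{H}^0(X,\iota_*E)=g_{0,*}^\Gamma(g^*E)$ from~\eqref{eq_cohomology_Coarse}, whereas you identify $\check{\mathrm{H}}^0$ with $\Gamma$-invariants directly via the face maps; both are fine.
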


\begin{proof}
The statement follows by Lemma~\ref{lem_cohomologies_using_geometric}, the spectral sequence in Proposition~\ref{prop_Cartan_Leray_ss} together with the observation that
\[
\mathrm{H}^0((X,P),E) = \mathrm{H}^0(X, \iota_* E) = g_{0,*}^\Gamma (g^* E)
\]
from Equation~\eqref{eq_Leray}.

Now suppose that $\mathrm{H}^0(Z, g^*E) = 0$ for $E \in \mathrm{Vect}(X,P)$. Then by the base change theorem (\cite[Proposition~A.3]{Das}), $\mathrm{H}^0(Z_1, \mathcal{E}_1) \cong \mathrm{H}^0(Z, \oplus_{\gamma \in \Gamma} \, \gamma^* g^*E)$ also vanishes. From the complex~\eqref{sequence}, we conclude that $\check{\mathrm{H}}^1 (\mathrm{H}^0 (Z_., \mathcal{E}_.)) = 0$. By~\eqref{i:2}, the group $\mathrm{H}^1((X,P), E)$ is the kernel of the natural map
\[
\mathrm{H}^1(Z, g^*E) \longrightarrow \mathrm{H}^1(Z_1, \mathcal{E}_1) \cong \mathrm{H}^1(Z, \oplus_{\gamma \in \Gamma} \, \gamma^* g^*E),
\]
proving the last statement.
\end{proof}

\subsection{Orbifold Semistable Bundles}
Let us briefly outline the definition and properties of slope stability for bundles on orbifold curves; our main reference is \cite{Das}.

Let $(X,P)$ be an orbifold curve with the associated Coarse moduli map $\iota \colon (X,P) \longrightarrow X$. Recall that the rank $\mathrm{rk}(E)$ of a bundle $E$ on $(X,P)$ is defined to be the rank $\mathrm{rk}(\iota_* E)$ of the bundle $\iota_*E$ on $X$. Associated to a bundle $E$ of rank $n$, we have the determinant line bundle $\mathrm{det}(E) = \wedge^n E \in \mathrm{Pic}(X,P)$. Every line bundle on $(X,P)$ is of the form $\mathcal{O}_{(X,P)}(D)$ for a Weil divisor $D$ on $(X,P)$, up to a linear equivalence; cf. \cite[Lemma~5.4.5]{VZB}. So the $P$-\textit{degree} of $E$ is defined to be
\[
\mathrm{deg}_P(E) \coloneqq \mathrm{deg}_P(\mathrm{det}(E)) = \sum_{ 1 \leq i \leq t} \, n_i \, \mathrm{deg}_P(x_i) \in \mathbb{Q}
\]
where $\mathrm{det}(E) \cong \mathcal{O}_{(X,P)} \left( \sum_{1 \leq i \leq t} n_i x_i \right)$ for some residual gerbes $x_i$'s, $n_i \in \mathbb{Z}$, and the $P$-degree of any residual gerbe $x$ is defined to be the rational number $\frac{1}{|\mathrm{Gal}\left( P(\iota(x))/K_{X,\iota(x)} \right)|}$. For $E \in \mathrm{Vect}(X,P)$, its $P$-\textit{slope} is the rational number
\[
\mu_P(E) \coloneqq \frac{\mathrm{deg}_P(E)}{\mathrm{rk}(E)}.
\]
A bundle $E$ on $(X,P)$ is said to be $P$-\textit{semistable} if for every proper sub-bundle $F \subset E$, we have
\[
\mu_P(F) \, \leq \, \mu_P(E).
\]
The bundle $E$ in the above definition is $P$-\textit{stable} if the inequality is strict for every proper sub-bundle $F \subset E$, and is $P$-\textit{polystable} if $E$ is a finite direct sum $\oplus_{1 \leq j \leq r} E_i$ of $P$-stable bundles $E_j$ satisfying $\mu_P(E_j) = \mu_P(E)$ for all $j$.

The existence of the unique Harder-Narasimhan filtration of a bundle on $(X,P)$ and a Jordan H\"{o}lder filtration for a $P$-semistable bundle are established in \cite[Proposition~4.8]{Das}. More precisely, any $E \in \mathrm{Vect}(X,P)$ has a maximal $P$-semistable sub-bundle $\mathrm{HN}_1(E)$, called the maximal destabilizing sub-bundle, determined uniquely by the following property: for any sub-bundle $F$ of $E$, we have $\mu_P(\mathrm{HN}_1(E)) \geq \mu_P(F)$, and if equality holds, $F \subset \mathrm{HN}_1(E)$. Using the existence of the maximal destabilizing sub-bundles, any $E \in \mathrm{Vect}(X,P)$ admits a unique Harder-Narasimhan filtration
\[ 0 = \mathrm{HN}_0(E) \subset \mathrm{HN}_1(E) \subset \mathrm{HN}_2(E) \subset \cdots \subset \mathrm{HN}_r(E) = E\]
where each subsequent quotient bundles $\mathrm{HN}_{i+1}(E)/\mathrm{HN}_i(E)$ are $P$-semistable, satisfying
\[
\mu_{\mathrm{max},P}(E) \coloneqq \mu_P(\mathrm{HN}_1(E)) > \mu_P(\mathrm{HN}_2(E)/\mathrm{HN}_1(E)) > \cdots > \mu_P(E/\mathrm{HN}_{r-1}(E)).
\]
Moreover, each $P$-semistable bundle $E$ on $(X,P)$ admits a (non-canonical) Jordan-H\"{o}lder filtration
\[ 0 = E_0 \subset E_1 \subset \cdots \subset E_l = E \]
by sub-bundles such that each subsequent quotient bundle $E_{i+1}/E_i$ are $P$-stable of $P$-slope $\mu_P(E)$, \, $0 \leq i \leq l-1$. The associated graded bundle $\mathrm{gr}(E) \coloneqq \oplus_{\substack{0 \leq i \leq l-1}} \, E_{i+1}/E_i$ does not depend on the choice of a Jordan-H\"{o}lder filtration, up to a canonical isomorphism. So we have a well defined notion of $\mathrm{S}$-\textit{equivalence}: two $P$-semistable bundles $E_1$ and $E_2$ are said to be $\mathrm{S}$-equivalent and written as $E_1 \sim_{\mathrm{S}} E_2$, if $\mathrm{gr}(E_1) \cong \mathrm{gr}(E_2)$. We summarize the following observations from \cite{Das} which will be used frequently in our paper.

\begin{proposition}[{\cite[Proposition~4.8, Proposition~4.9, and Theorem 1.4]{Das}}]\label{prop_easy}
Let $(X,P)$ be an orbifold curve. For any $E \in \mathrm{Vect}(X,P)$, the following hold.
\begin{enumerate}[leftmargin=*]
\item If $F$ is another bundle on $(X,P)$, then $\mu_P(E \otimes F) = \mu_P(E) + \mu_P(F)$.\label{j:1}
\item $E$ is $P$-(semi)stable if and only if for all surjective morphism $E \twoheadrightarrow E''$ of bundles, we have
\begin{center}
$\mu_P(E) \, (\leq) \, \mu_P(E'').$
\end{center}\label{j:2}
\item $E$ is $P$-(semi)stable if and only if the dual bundle $E^\vee$ is $P$-(semi)stable.\label{j:3}
\item If $E$ is $P$-(semi)stable of slope $\mu_P(E) < 0$, then
\begin{center}
$\mathrm{H}^0((X,P), E) = 0.$
\end{center}
More generally, if $\mu_{P, \mathrm{max}}(E) = \mu_P(\mathrm{HN}_1(E)) < 0$, the group $\mathrm{H}^0((X,P), E)$ vanishes.\label{j:4}
\item If $L$ is a line bundle on $(X,P)$, then $E$ is $P$-(semi)stable if and only if $E \otimes L \in \mathrm{Vect}(X,P)$ is $P$-(semi)stable.\label{j:5}
\item For any finite group $\Gamma$, let $g \colon Z \longrightarrow (X,P)$ be a $\Gamma$-Galois cover where $Z$ is a smooth projective connected $k$-curve. Then $g$ factors as a composition
\[ g \colon Z \overset{u}\longrightarrow [Z/\Gamma] = (X, P') \overset{h}\longrightarrow (X,P) \]
where $u$ is the natural atlas, and $h$ is the induced cover of orbifold curves. We have the following properties.
\begin{enumerate}
\item $E$ is $P$-semistable (respectively, $P$-polystable) if and only if the bundle $h^*E$ on $(X,P')$ is $P'$-semistable (respectively, $P'$-polystable) if and only if the bundle $u^*h^*E$ on $Z$ is semistable (respectively, polystable). The $\Gamma$-semistability (respectively, $\Gamma$-polystability) of any $\Gamma$-equivariant bundle is the same as the usual semistability (respectively, polystability).\label{6.a}
\item $E$ is $P$-stable if and only if $h^*E$ is $P'$-stable if and only if the $\Gamma$-equivariant bundle $u^*h^*E$ on $Z$ is $\Gamma$-stable.\label{6.b}
\end{enumerate}\label{j:0}
\item Given a cover $f \colon (Y,Q) \longrightarrow (X,P)$ of orbifold curves, we have the following slope stability relations between $E$ and the pullback bundle $f^*E \in \mathrm{Vect}(Y,Q)$.
\begin{enumerate}[leftmargin=*]
    \item $f^*E$ is $Q$-semistable if and only if $E$ is $P$-semistable;
    \item if $f^*E$ is $Q$-stable, then $E$ is $P$-stable;
    \item for any $P$-stable bundle $E'$, the pullback bundle $f^*E'$ is $Q$-stable if and only if $\mathrm{HN}_1(f_* \mathcal{O}_{(Y,Q)}) = \mathcal{O}_{(X,P)}$;
    \item if the induced cover $f_0 \colon Y \longrightarrow X$ is Galois and $E$ is $P$-stable or $P$-polystable, then $f^*E$ is $Q$-polystable. Additionally, if $f$ is \'{e}tale, then $f^*E$ is $Q$-polystable if and only if $E$ is $P$-polystable.\label{j:6}
\end{enumerate} 
\end{enumerate}
\end{proposition}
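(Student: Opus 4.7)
The statement aggregates \cite[Proposition~4.8, Proposition~4.9, Theorem~1.4]{Das}, and the plan is to verify each clause independently, giving short direct arguments for the formal items (1)--(5) and using the factorization $g \colon Z \xrightarrow{u} (X,P') \xrightarrow{h} (X,P)$ together with Proposition~\ref{prop_descent_unique_sub} for the descent items (6)--(7).

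Starting with the formal clauses, for (1) I would invoke $\mathrm{det}(E \otimes F) \cong \mathrm{det}(E)^{\otimes \mathrm{rk}(F)} \otimes \mathrm{det}(F)^{\otimes \mathrm{rk}(E)}$ in $\mathrm{Pic}(X,P)$ together with the $\mathbb{Z}$-linearity of $\mathrm{deg}_P$. For (2), a short exact sequence $0 \to F \to E \to E/F \to 0$ of sub-bundles gives $\mathrm{rk}(E)\mu_P(E) = \mathrm{rk}(F)\mu_P(F) + \mathrm{rk}(E/F)\mu_P(E/F)$, so $\mu_P(F) \leq \mu_P(E)$ iff $\mu_P(E) \leq \mu_P(E/F)$; combined with the observation that every quotient bundle comes from a sub-bundle, this gives the statement. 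Then (3) follows because dualizing interchanges sub-bundles with quotient bundles and negates $P$-slopes. For (4), a nonzero global section of $E$ yields $\mathcal{O}_{(X,P)} \hookrightarrow E$; saturating this subsheaf via Proposition~\ref{prop_descent_unique_sub}(3) produces a rank-one sub-bundle of nonnegative $P$-degree, contradicting $P$-stability with $\mu_P(E) < 0$; the HN version follows by restricting the argument to $\mathrm{HN}_1(E)$. Finally (5) is immediate since tensoring with $L \in \mathrm{Pic}(X,P)$ is an exact autoequivalence of $\mathrm{Vect}(X,P)$ shifting every $P$-slope by $\mu_P(L)$.

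For the descent clauses (6), the first equivalence (between $P$-semistability of $E$ and $P'$-semistability of $h^*E$) comes from Proposition~\ref{prop_descent_unique_sub}(2), which matches sub-bundles of $E$ with sub-bundles of $h^*E$, together with the fact that $h^*$ scales $P$-slopes uniformly. For the second equivalence, the key input is that the Harder--Narasimhan filtration of $u^*h^*E$ is canonically $\Gamma$-invariant by its uniqueness (cf.\ \cite[Proposition~4.8]{Das}), so that any destabilizing sub-bundle forces the existence of a $\Gamma$-equivariant one, which descends to $(X,P')$ via the equivalence $u^* \colon \mathrm{Vect}(X,P') \xrightarrow{\cong} \mathrm{Vect}^\Gamma(Z)$. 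Polystability is handled by the same descent applied to the Jordan--H\"{o}lder filtration. For (6)(b), the correct statement is the equivalence of $P$-stability with $P'$-stability and $\Gamma$-stability: a $\Gamma$-stable bundle on $Z$ need not be stable as an ordinary bundle, which is why $\Gamma$-stability appears in place of plain stability.

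For (7), any cover $f \colon (Y,Q) \to (X,P)$ has exact pullback $f^*$ scaling $Q$-slopes by a positive constant times $P$-slopes, so (a) and (b) follow directly from Proposition~\ref{prop_descent_unique_sub}(2). For (c), a destabilizing sub-bundle of $f^*E'$ pushes forward, via the projection formula $f_*f^*E' \cong E' \otimes f_*\mathcal{O}_{(Y,Q)}$, to a sub-bundle of $E' \otimes f_*\mathcal{O}_{(Y,Q)}$; $P$-stability of $E'$ then constrains the HN pieces of $f_*\mathcal{O}_{(Y,Q)}$ and forces $\mathrm{HN}_1(f_*\mathcal{O}_{(Y,Q)}) = \mathcal{O}_{(X,P)}$. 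Item (d) combines Galois averaging with the Jordan--H\"{o}lder uniqueness to split a destabilizing piece of $f^*E$ equivariantly when the induced cover is Galois. The main technical obstacle throughout is to ensure that the coherent sub-sheaves produced by descent, averaging, or pushforward are genuine \emph{sub-bundles}; this is exactly the content of Proposition~\ref{prop_descent_unique_sub}, which is why that proposition is positioned immediately before this one.
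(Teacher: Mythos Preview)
The paper does not prove this proposition; it is stated as a summary of \cite[Proposition~4.8, Proposition~4.9, Theorem~1.4]{Das} with no argument given. Your proposal therefore supplies what the paper delegates entirely to the reference, so there is no ``paper's own proof'' to compare against beyond the citation.

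Your arguments for (1)--(5) are the standard ones and are correct. For (6) and (7), however, your repeated appeals to Proposition~\ref{prop_descent_unique_sub}(2) are not legitimate as written: that proposition is formulated under Notation~\ref{not_equivariant_set_up}, where the cover $\tau$ is assumed \emph{tamely ramified} (this is what makes $\tau_*$ exact and drives the proof of part (2)), whereas the cover $h$ in (6) and the cover $f$ in (7) are arbitrary and need not be tame. In particular, the ``hard'' direction of (7)(a)---that $P$-semistability of $E$ implies $Q$-semistability of $f^*E$---cannot be obtained by simply matching sub-bundles along $f$ via Proposition~\ref{prop_descent_unique_sub}(2). The usual route (and the one in \cite{Das}) is to dominate $f$ by a Galois cover, observe that the Harder--Narasimhan filtration of the pullback to the Galois cover is invariant under the Galois group by uniqueness, and then descend; the same mechanism handles the $h$-step in (6)(a). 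Your treatment of the $u$-step in (6) via the equivalence $u^* \colon \mathrm{Vect}(X,P') \xrightarrow{\cong} \mathrm{Vect}^\Gamma(Z)$ together with HN-uniqueness is the correct idea and is essentially how \cite{Das} proceeds. So the overall strategy is right, but the shortcut through Proposition~\ref{prop_descent_unique_sub}(2) for non-tame covers is a genuine gap.
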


Our moduli problem requires a comparison of the $\mathrm{S}$-equivalence classes of bundles relative to our setup in Notation~\ref{not_equivariant_set_up}. Before proceeding, let us see that there is a canonical filtration of a $P$-semistable bundle $E$ that calculates the associated graded bundle $\mathrm{gr}_E$.

\begin{remark}[{The canonical filtration and the associated graded bundle}]\label{rmk_canonical_filtration}
Let $E \in \mathrm{Vect}(X,P)$ be $P$-semistable. \cite[Proposition~4.8~(8)]{Das} gives the existence of the unique maximal $P$-polystable sub-bundle $\mathcal{S}(E)$ of $E$, called the \textit{socle} of $E$, such that $\mu_P(\mathcal{S}(E)) = \mu_p(E)$. By an induction on the rank $\mathrm{rk}(E)$, we obtain a canonical filtration
\begin{equation}\label{eq_polystable_filtration}
0 = \mathcal{S}_0(E) \subset \mathcal{S}_1(E) \subset \cdots \subset \mathcal{S}_l(E) = E    
\end{equation}
such that each subsequent quotient $\mathcal{S}_{i+1}(E)/\mathcal{S}_i(E)$ is the socle of the bundle $E/\mathcal{S}_i(E)$, \, $0 \leq i \leq l-1$. In particular, each $\mathcal{S}_{i+1}(E)/\mathcal{S}_i(E)$ is $P$-polystable of $P$-slope $\mu_P(E)$. Since a maximal refinement of the above filtration produces a Jordan-H\"{o}lder filtration of $E$, we obtain
\[
\mathrm{gr}_E \cong \oplus_{\substack{0 \leq i \leq l-1}} \, \mathcal{S}_{i+1}(E)/\mathcal{S}_i(E).\hfill\qed
\]
\end{remark}

The following result gives a comparison of the associated graded bundles in our setup. We will prove a stronger version of it later (in Proposition~\ref{prop_S_equivariance}).

\begin{proposition}\label{prop_S-equivariance_wrt_set_up}
Suppose that Notation~\ref{not_equivariant_set_up} hold. We have the composite cover
\[
g \colon Z \overset{u}\longrightarrow [Z/\Gamma] = (X,\tilde{P}) \overset{\tau}\longrightarrow (X,P).
\]
Let $E, \, E_1$ and $E_2$ be $P$-semistable bundles on $(X,P)$. Then the following hold.
\begin{enumerate}[leftmargin=*]
\item If $ 0 = E_0 \subset E_1 \subset \cdots \subset E_l = E $ is a Jordan-H\"{o}lder filtration for $E$, then
\[
0 = \tau^*E_0 \subset \tau^*E_1 \subset \cdots \subset \tau^*E_l = \tau^*E
\]
is a Jordan-H\"{o}lder filtration for $\tau^*E$. Conversely, every Jordan-H\"{o}lder filtration for $\tau^*E$ is of the form
\[
0 = \tau^*E_0 \subset \tau^*E_1 \subset \cdots \subset \tau^*E_l = \tau^*E
\]
for a Jordan-H\"{o}lder filtration $ 0 = E_0 \subset E_1 \subset \cdots \subset E_l = E $ of $E$. In particular, $\mathrm{gr}_{\tau^*E} \cong \tau^* \mathrm{gr}_E$.

The bundles $E_1$ and $E_2$ are $\mathrm{S}$-equivalent if and only if $\tau^*E_1 \sim_{\mathrm{S}} \tau^*E_2$.\label{jh:1}
\item The associated graded bundle $\mathrm{gr}_{g^*E}$ is isomorphic to $g^*\mathrm{gr}_E$ as $\Gamma$-equivariant bundles. In particular, if the bundles $E_1$ and $E_2$ are $\mathrm{S}$-equivalent, then the $\Gamma$-equivariant bundles $g^*E_1$ and $g^*E_2$ are $\mathrm{S}$-equivalent as bundles on $Z$. However, the converse need not hold.\label{jh:2}
\end{enumerate}
\end{proposition}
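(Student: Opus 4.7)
For part~\eqref{jh:1}, the plan is to exploit that the tamely ramified cover $\tau$ makes both $\tau^*$ and $\tau_*$ exact, while Proposition~\ref{prop_easy}(6)(a)--(b) shows that $\tau^*$ both preserves and reflects $P$-semistability and $P$-stability. Given a Jordan--H\"{o}lder filtration $0 = E_0 \subset \cdots \subset E_l = E$, exactness of $\tau^*$ identifies $\tau^*E_{i+1}/\tau^*E_i \cong \tau^*(E_{i+1}/E_i)$, which is $\tilde{P}$-stable of $\tilde{P}$-slope $\mu_{\tilde{P}}(\tau^*E)$; conversely, for a JH filtration $0 = F_0 \subset \cdots \subset F_l = \tau^*E$ on $(X,\tilde{P})$, Proposition~\ref{prop_descent_unique_sub}\eqref{des:2} applied term by term descends it to a filtration $0 = E_0 \subset \cdots \subset E_l = E$ with $\tau^*E_i \cong F_i$, and reflection of stability ensures that each quotient $E_{i+1}/E_i$ is $P$-stable of the correct slope. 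This gives the bijection between JH filtrations and the isomorphism $\mathrm{gr}_{\tau^*E} \cong \tau^*\mathrm{gr}_E$. The projection formula combined with $\tau_*\mathcal{O}_{(X,\tilde{P})} \cong \mathcal{O}_{(X,P)}$ yields $\tau_*\tau^*F \cong F$ for all $F \in \mathrm{Vect}(X,P)$, so $\tau^*$ reflects isomorphism classes, and the equivalence $E_1 \sim_{\mathrm{S}} E_2 \iff \tau^*E_1 \sim_{\mathrm{S}} \tau^*E_2$ follows.

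For part~\eqref{jh:2}, I reduce via~\eqref{jh:1} to the geometric case and analyze the atlas $u \colon Z \longrightarrow (X,\tilde{P})$. By Proposition~\ref{prop_easy}(6)(a)--(b), a $\tilde{P}$-stable bundle $S$ pulls back under $u^*$ to a $\Gamma$-stable --- equivalently, ordinarily polystable --- bundle $u^*S \cong \oplus_j T_j$ on $Z$ whose ordinary stable summands share the common slope $\mu_Z(u^*S)$ and are permuted by $\Gamma$ up to isomorphism. Pulling back a JH filtration of $\tau^*E$ produces a $\Gamma$-equivariant filtration of $g^*E = u^*\tau^*E$ with polystable quotients $u^*S_i$; inserting their stable summands refines this into an ordinary JH filtration of $g^*E$. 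Since the ordinary associated graded of a polystable bundle equals the bundle itself, this refinement preserves the total associated graded, giving
\[
\mathrm{gr}_{g^*E} \;\cong\; \bigoplus_i u^*S_i \;\cong\; u^*\mathrm{gr}_{\tau^*E} \;\cong\; u^*\tau^*\mathrm{gr}_E \;=\; g^*\mathrm{gr}_E.
\]
The $\Gamma$-equivariant structure on $\mathrm{gr}_{g^*E}$ induced from the $\Gamma$-equivariant filtration matches the canonical $\Gamma$-structure on $g^*\mathrm{gr}_E$ under the equivalence $\mathrm{Vect}(X,\tilde{P}) \simeq \mathrm{Vect}^{\Gamma}(Z)$, so the isomorphism is $\Gamma$-equivariant, and the direct implication for $\mathrm{S}$-equivalence is immediate.

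To show the converse can fail, I will exhibit, already in the case where $P$ is itself geometric so that $g = u$, a nontrivial line bundle $L_\chi \in \mathrm{Pic}(X,P)$ arising from a nontrivial character $\chi$ of $\Gamma$: the underlying ordinary line bundle $u^*L_\chi$ on $Z$ is $\mathcal{O}_Z$, so $g^*L_\chi \sim_{\mathrm{S}} g^*\mathcal{O}_{(X,P)}$, while $L_\chi \not\cong \mathcal{O}_{(X,P)}$ forces $L_\chi \not\sim_{\mathrm{S}} \mathcal{O}_{(X,P)}$ as line bundles are stable. The main technical obstacle I foresee lies in part~\eqref{jh:2}: carefully verifying that the refinement from the $\Gamma$-equivariant JH filtration to an ordinary JH filtration of $g^*E$ produces a genuinely $\Gamma$-equivariant isomorphism $\mathrm{gr}_{g^*E} \cong g^*\mathrm{gr}_E$, rather than merely an isomorphism of underlying bundles on $Z$.
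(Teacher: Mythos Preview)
Your argument for part~\eqref{jh:1} and the counterexample for part~\eqref{jh:2} match the paper's proof essentially verbatim.

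For the main claim in part~\eqref{jh:2}, however, the paper takes a different and cleaner route that resolves exactly the obstacle you flag. Rather than pulling back an arbitrary Jordan--H\"{o}lder filtration and then refining (which, as you note, destroys $\Gamma$-equivariance at the refinement step and leaves you needing to reconcile two a priori different $\Gamma$-structures on $\mathrm{gr}_{g^*E}$), the paper computes both sides via the \emph{socle filtration} of Remark~\ref{rmk_canonical_filtration}. The point is that the socle $\mathcal{S}(V)$ of a semistable bundle $V$ is \emph{canonical}, hence the socle filtration $\{\mathcal{S}_i(u^*E)\}$ of $u^*E$ is automatically $\Gamma$-equivariant. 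Each $\mathcal{S}_i(u^*E)$ therefore descends (via the equivalence $u^*\colon \mathrm{Vect}(X,\tilde{P}) \to \mathrm{Vect}^\Gamma(Z)$) to a sub-bundle of $\tau^*E$, and the paper checks, using preservation and reflection of polystability under $u^*$ (Proposition~\ref{prop_easy}\eqref{j:6}), that this descended sub-bundle is precisely $\mathcal{S}_i(\tau^*E)$. Thus $\mathcal{S}_i(u^*E) \cong u^*\mathcal{S}_i(\tau^*E)$ as $\Gamma$-equivariant bundles, and the $\Gamma$-equivariant isomorphism $\mathrm{gr}_{u^*E} \cong u^*\mathrm{gr}_{\tau^*E}$ follows by taking subquotients. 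Your approach can likely be completed, but doing so amounts to reproving that the two polystable $\Gamma$-bundles $\bigoplus_i u^*S_i$ and $\mathrm{gr}_{g^*E}$ (the latter with its socle-induced $\Gamma$-structure) coincide $\Gamma$-equivariantly --- which is precisely what the socle argument gives for free.
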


\begin{proof}
The bundle $\tau^*E$ is $\tilde{P}$-semistable by \Cref{prop_easy}~\eqref{j:0}. Consider a Jordan-H\"{o}lder filtration
\[
0 = E_0 \subset E_1 \subset \cdots \subset E_l = E
\]
of $E$. Then each quotient $E_{i+1}/E_i$ is $P$-stable of $P$-slope $\mu_P(E)$. As $\tau^*E$ is an exact functor, we obtain a filtration
\begin{equation}\label{eq_JH_1}
0 = \tau^*E_0 \subset \tau^*E_1 \subset \cdots \subset \tau^*E_l = \tau^*E   
\end{equation}
of $\tau^*E$ such that $\tau^*(E_{i+1}/E_i) \cong \tau^*E_{i+1}/\tau^*E_i$ is $\tilde{P}$-stable of $\tilde{P}$-slope
\[
\mu_{\tilde{P}}(\tau^*(E_{i+1}/E_i)) = \mu_P(E_{i+1}/E_i) = \mu_P(E) = \mu_{\tilde{P}}(\tau^*E).
\]
Thus, \eqref{eq_JH_1} is a Jordan-H\"{o}lder filtration of $\tau^*E$. Conversely, suppose that
\[
0 = F_0 \subset F_1 \subset \cdots \subset F_l = \tau^*E
\]
is a Jordan H\"{o}lder filtration of $\tau^*E$. Using \Cref{prop_descent_unique_sub}~\eqref{des:2}, we obtain a unique filtration
\[
0 = E_0 \subset E_1 \subset \cdots \subset E_l = E
\]
of $E$ such that $F_i \cong \tau^*E_i$. Since each $F_{i+1}/F_i \cong \tau^*(E_{i+1}/E_i)$ is $\tilde{P}$-stable of $\tilde{P}$-slope $\mu_{\tilde{P}}(\tau^*E) = \mu_P(E)$, \Cref{prop_easy}~\eqref{6.b} says that each $E_{i+1}/E_i$ is $P$-stable, and it is of $P$-slope $\mu_P(E)$. This proves the converse statement.

Using the above notation, we immediately have:
\[
\mathrm{gr}_{\tau^*E} = \oplus_{\substack{0 \leq i \leq l-1}}\,  F_{i+1}/F_i \cong \oplus_{\substack{0 \leq i \leq l-1}}\, \tau^*(E_{i+1}/E_i) \cong \tau^* \left( \oplus_{\substack{0 \leq i \leq l-1}}\, E_{i+1}/E_i \right) \cong \tau^* \mathrm{gr}_E.
\]
Finally, $E_1 \sim_{\mathrm{S}} E_2$ if and only if $\mathrm{gr}_{E_1} \cong \mathrm{gr}_{E_2}$ if and only if $\tau^*\mathrm{gr}_{E_1} \cong \tau^*\mathrm{gr}_{E_2}$ as $\tau^*$ and $\tau_*$ are both exact functors. The last statement if equivalent to: $\mathrm{gr}_{\tau^*E_1} \cong \mathrm{gr}_{\tau^*E_1}$, that is: $\tau^*E_1 \sim_{\mathrm{S}} \tau^*E_2$, completing the proof of~\eqref{jh:1}.

Using the first statement, it is enough to prove~\eqref{jh:2} when $g = u \colon Z \longrightarrow [Z/\Gamma] = (X,P)$. Consider the canonical filtration~\eqref{eq_polystable_filtration}
\[
0 = \mathcal{S}_0(E) \subset \mathcal{S}_1(E) \subset \cdots \subset \mathcal{S}_l(E) = E 
\]
such that each subsequent quotient bundle $\mathcal{S}_{i+1}(E)/\mathcal{S}_i(E)$ is the socle $\mathcal{S}\left( E/\mathcal{S}_i(E) \right)$, which is $P$-polystable of $P$-slope $\mu_P(E)$. This gives:
\[\text{\rm gr}_E \cong \oplus_{\substack{0 \leq i \leq l-1}} \, \mathcal{S}_{i+1}(E)/\mathcal{S}_i(E).\]
Now consider the $\Gamma$-equivariant semistable bundle $u^*E$ on $Z$. Once again, we have the canonical filtration
\[
0 = \mathcal{S}_0(u^*E) \subset \mathcal{S}_1(u^*E) \subset \cdots \subset \mathcal{S}_l(u^*E) = u^*E 
\]
such that each quotient bundle $\mathcal{S}_{i+1}(u^*E)/\mathcal{S}_i(u^*E)$ is the socle of the bundle $u^*E/\mathcal{S}_i(u^*E)$, which is polystable of slope $\mu(u^*E) = |\Gamma|\mu_P(E)$. By our construction and the uniqueness of the socle, we conclude that each sub-bundle $\mathcal{S}_i(u^*E)$ of $u^*E$ is a $\Gamma$-equivariant sub-bundle. Since the cover $u \colon Z \longrightarrow (X,P)$ is a representable Galois cover, \cite[Lemma~3.4(2)]{Das} says that there is a unique sub-bundle $E_i \subset E$ such that $u^*E_i \cong \mathcal{S}_i(u^*E)$ as $\Gamma$-equivariant bundles. By Proposition~\ref{prop_easy}~\eqref{j:6} and the maximality of the socle, we conclude that $E_1 \cong \mathcal{S}(E) = \mathcal{S}_1(E)$. An induction on $i$ shows that for all $0 \leq i \leq l-1$, we have
\[
E_i \cong \mathcal{S}_i(E) \hspace{.2cm} \text{and} \hspace{.2cm} \mathcal{S}_i(u^*E) \cong u^*\mathcal{S}_i(E).
\]
We conclude that
\begin{eqnarray*}
\mathrm{gr}_{u^*E} \cong \oplus_{\substack{0 \leq i \leq l-1}} \, \mathcal{S}_{i+1}(u^*E)/\mathcal{S}_i(u^*E) \cong \oplus_{\substack{0 \leq i \leq l-1}} \, u^*\mathcal{S}_{i+1}(E)/u^*\mathcal{S}_i(E) \\
\cong \oplus_{\substack{0 \leq i \leq l-1}} \, u^* \left( \mathcal{S}_{i+1}(E)/\mathcal{S}_i(E) \right) \cong u^* \mathrm{gr}_E.
\end{eqnarray*}

Now, an isomorphism $\mathrm{gr}_{E_1} \cong \mathrm{gr}_{E_2}$ gives rise to a $\Gamma$-equivariant isomorphism $\mathrm{gr}_{E_1} \cong u^*\mathrm{gr}_{E_2}$. In particular, $E_1 \sim_{\mathrm{S}} E_2$ implies that $u^*E_1$ and $u^*E_2$ are $\mathrm{S}$-equivalent. To see that the converse does not necessarily hold, we note that for any non-trivial character $\chi$ of $k[\Gamma]$, the line bundle $\mathcal{O}_Z \otimes_k \chi$ is isomorphic to $\mathcal{O}_Z$, but such an isomorphism cannot be $\Gamma$-equivariant. Consequently, the line bundle $M$ on $(X,P)$ determined by the $\Gamma$-equivariant isomorphism $u^*M \cong \mathcal{O}_Z \otimes_k \chi$ is not isomorphic to $\mathcal{O}_{(X,P)}$.
\end{proof}

\subsection{The Moduli Problem}\label{sec_Moduli_Problem_Orbifold_Case}
Our goal is to study the moduli problem of $P$-semistable bundles on an orbifold curve $(X,P)$ of a given rank and determinant. Fix an integer $n \geq 2$, and a line bundle $\Delta \in \mathrm{Pic}(X,P)$. As in the classical case, the notion of families of bundles, and the usual and the $\mathrm{S}$-equivalence relation on the families are well defined (we continue with the convention analogous to Convention~\ref{conv_1} and Convention~\ref{conv_2}). In particular, for any finite type $k$-scheme $S$, and a bundle $E$ on $S \times (X,P)$, we see that the bundles $E_s \cong q_{S,*} \left( E \otimes p_S^* k(s) \right)$ on $(X,P)$ are isomorphic for different choices of the closed points $s \in S$; here $k(s)$ denote the skyscraper sheaf on $S$ that is supported on the closed point $s$, and $p_S \colon S \times (X,P) \longrightarrow S, \, q_S \colon S \times (X,P) \longrightarrow (X,P)$ are the usual projections. Moreover, two families of bundles $E$ and $E'$ on $S \times (X,P)$ are equivalent under the usual equivalence if $E \cong E' \otimes p_S^*N$ for some line bundle $N$ on $S$.

A priori, we have no reason to believe that there is a $P$-semistable bundle on $(X,P)$ of a given rank $n \geq 2$ and determinant $\Delta \in \mathrm{Pic}(X,P)$. Nevertheless, we show that being $P$-semistable is an open condition.

\begin{lemma}\label{lem_openness_P-semistability}
Let $S$ be a $k$-scheme of finite type. Let $E$ be a family of bundles on $(X,P)$ of rank $n$ and determinant $\Delta$, parameterized by $S$. Then the set of points $s \in S$ such that $E_s$ is $P$-semistable forms an open subscheme of $S$.
\end{lemma}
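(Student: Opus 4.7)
The plan is to reduce the statement to the classical openness of semistability for families of bundles on a smooth projective curve. Fix, once and for all, a tamely ramified geometric branch data $\tilde{P} \geq P$ (which exists by \Cref{lem_choice_tame_cover}) and a $\Gamma$-Galois cover $g \colon Z \longrightarrow (X,P)$ as in \Cref{not_equivariant_set_up}. Consider the base-changed family
\[
\widetilde{E} \coloneqq (\mathrm{id}_S \times g)^* E \in \mathrm{Vect}(S \times Z),
\]
which is a family of bundles on the smooth projective curve $Z$ parametrized by $S$. Because $g$ is flat, the formation of this pullback is compatible with restriction to closed points, so for each closed point $s \in S$ one has a canonical isomorphism $\widetilde{E}_s \cong g^*(E_s)$ of bundles on $Z$.

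Now invoke \Cref{prop_easy}\eqref{j:0}: $E_s$ is $P$-semistable if and only if $g^*(E_s)$ is semistable on $Z$. Consequently the set
\[
\{ s \in S : E_s \text{ is } P\text{-semistable}\} \;=\; \{ s \in S : \widetilde{E}_s \text{ is semistable on } Z\}
\]
coincides with the semistable locus of the family $\widetilde{E}$ on the ordinary curve $Z$. Openness of this last locus is the classical theorem for families of vector bundles on a smooth projective curve, valid in arbitrary characteristic; it follows, for instance, from boundedness of potential destabilizing quotients of bounded rank and slope together with the properness of the associated relative Quot scheme, whose image is exactly the unstable locus, hence closed. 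This completes the reduction.

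The main obstacle, such as it is, lies in verifying that the pullback along $\mathrm{id}_S \times g$ genuinely commutes with restriction to the fibers $\{s\} \times (X,P)$ in the derived-free way asserted above; this is a consequence of flatness of $g$ together with the flat base change results recalled in \Cref{sec_Orbifold_Curves} (see in particular \cite[Proposition~A.3]{Das}). Once this compatibility is in place, the lemma is formal. Note that no separate handling of stability versus semistability is required, since we only need openness of the semistable condition; the same reduction would, however, yield openness of $P$-stability by the analogous classical statement on $Z$.
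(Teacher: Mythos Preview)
Your proof is correct and follows essentially the same route as the paper: pull back along $\mathrm{id}_S \times g$, identify the fiber $\widetilde{E}_s$ with $g^*E_s$ via flat base change, and invoke the classical openness of semistability on $Z$ together with \Cref{prop_easy}\eqref{j:0}. One small caveat on your closing aside: $P$-stability of $E_s$ corresponds to $\Gamma$-stability of $g^*E_s$ (\Cref{prop_easy}\eqref{6.b}), not to ordinary stability on $Z$, so the same reduction does not immediately yield openness of $P$-stability from the classical statement.
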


\begin{proof}
Fix a $\Gamma$-Galois cover $g \colon Z \longrightarrow (X,P)$ as in Lemma~\ref{lem_choice_tame_cover}. Consider the family $\left( \mathrm{id}_S \times g \right)^*E$ on $S \times Z$ of of $\Gamma$-equivariant bundles of rank $n$ and determinant $g^*\Delta$. By a classical result (see \cite[Proposition~2.11]{StackModuli}), the set
\[
U = \{ s \in S \, | \,  \left( \left( \mathrm{id}_S \times g \right)^*E \right)_s \text{ is semistable}\}
\]
is an open subscheme of $S$. We have the following commutative diagram with obvious maps.
\begin{equation*}
\begin{tikzcd}
S \arrow[dr, phantom, "\#"] & S \times Z \arrow[dr, phantom, "\square"] \arrow[l, swap, "\bar{p}_S"] \arrow[r, "\bar{q}_S"] \arrow[d, "\mathrm{id}_S \times g"] & Z \arrow[d, "g"] \\
S \arrow[u, equal, swap, "\mathrm{id}_S"] & S \times (X,P) \arrow[l, "p_S"] \arrow[r, swap, "q_S"] & (X,P)
\end{tikzcd}
\end{equation*}
For any closed point $s \in S$, the natural isomorphism $\bar{p}_S^* \cong \left( \mathrm{id}_S \times g \right)^* \circ p_S^*$ and the projection formula (cf. \cite[Proposition~A.3.]{Das}) produces the isomorphism:
\[
\left( \left( \mathrm{id}_S \times g \right)^*E \right)_s \cong g^* E_s.
\]
Since the $\Gamma$-equivariant bundle $g^*E_s$ is semistable if and only if $E_s$ is $P$-semistable, the set $U$ becomes
\[
U = \{ s \in S \, | \,  E_s \text{ is } P\text{-semistable}\}
\]
which is an open subscheme of $S$, proving the result.
\end{proof}

We want to construct a Coarse moduli space for the functor $\mathcal{M}_{(X,P)}^{\mathrm{ss, S}}(n, \Delta)$ that to a $k$-scheme $S$ of finite type associates the following set
\begin{equation}\label{eq_functor}
\mathcal{M}_{(X,P)}^{\mathrm{ss, S}}(n, \Delta)(S) = \left\{ 
  \begin{aligned}
  &\text{Families of bundles } E \text{ on } (X,P), \text{ parametrized by } S, \text{ such that for} \\ 
  &\text{ each closed point } s \in S, \, E_s \text{ is a } P \text{- semistable bundle of rank } n \\
  &\text{and } \mathrm{det}(E) \cong q_S^*\Delta \otimes p_S^*N, \text{ modulo the usual and the } \mathrm{S}\text{-equivalence}
  \end{aligned}
\right\}.
\end{equation}

We claim that the functor $\mathcal{M}_{(X,P)}^{\mathrm{ss, S}}(n, \Delta)$ is an algebraic stack -- the rest of the section is devoted in proving this claim.

Fix a $\Gamma$-Galois cover $g \colon Z \longrightarrow (X,P)$ as in Lemma~\ref{lem_choice_tame_cover}. The functor $\mathrm{Coh}_{(X,P)}$ parameterizing flat families of coherent sheaves of modules on $(X,P)$ is an algebraic stack, locally of finite type by \cite[Theorem~2.1.1]{Lie}. As in~\eqref{eq_stack_F}, we have the algebraic stack $\mathcal{M}_{Z}^{\mathrm{ss}}(n, g^*\Delta)$ that parametrizes the families of the usual equivalence classes of bundles on $Z$ of rank $n$ and determinant $g^*\Delta$. Consider the $2$-fiber product stack
\[
\mathcal{M}_{(X,P)}^{\mathrm{ss}}(n, \Delta) \coloneqq \mathcal{M}_{Z}^{\mathrm{ss}}(n, g^*\Delta) \times_{\mathrm{Coh}_Z, \, g^* \Delta} \mathrm{Coh}_{(X,P)}.
\]
Since $\mathrm{Coh}_Z$ and $\mathrm{Coh}_{(X,P)}$ are algebraic stacks, $\mathcal{M}_{(X,P)}^{\mathrm{ss}}(n, \Delta)$ is also algebraic stack by \cite[\href{https://stacks.math.columbia.edu/tag/04TF}{Lemma 04TF}]{SP}, and it parametrizes $P$-semistable bundles of rank $n$ and determinant $\Delta$, up to the usual equivalence relation on families. The arguments of \cite[Lemma~3.11]{StackModuli} hold in our setup, producing the following result.

\begin{lemma}\label{lem_closed_points}
Let $[E]$ be a $k$-point of the stack $\mathcal{M}_{(X,P)}^{\mathrm{ss}}(n, \Delta)$. Then the following hold.
\begin{enumerate}
    \item The point $[\mathrm{gr}_E]$ is contained in the closure of the point $[E]$ where $\mathrm{gr}_E$ is the associated graded bundle.\label{c:1}
    \item The point $[E]$ is a closed point if and only if $E$ is $P$-polystable.\label{c:2}
\end{enumerate}
\end{lemma}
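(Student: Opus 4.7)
The plan is to follow the strategy of \cite[Lemma~3.11]{StackModuli} and adapt its arguments to the orbifold setting. Part~(1) will be proved by an explicit $\mathbb{A}^1$-family construction, and part~(2) will follow from~(1) combined with a specialization argument.

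For~(1), I would fix a Jordan--H\"{o}lder filtration $0 = E_0 \subset E_1 \subset \cdots \subset E_l = E$, whose quotients $E_{i+1}/E_i$ are $P$-stable of slope $\mu_P(E)$. Given any short exact sequence $0 \to A \to F \to B \to 0$ of bundles on $(X,P)$ classified by $\xi \in \mathrm{Ext}^1_{(X,P)}(B, A)$, the key device is to form the extension on $(X,P) \times \mathbb{A}^1$ classified by the rescaled class $t \cdot \pi^*\xi$, where $\pi \colon (X,P) \times \mathbb{A}^1 \to (X,P)$ is the projection and $t$ is the coordinate on $\mathbb{A}^1$. The resulting bundle $\widetilde{F}$ satisfies $\widetilde{F}|_{t \neq 0} \cong F$ (since rescaling an extension class by a unit yields an isomorphic extension) and $\widetilde{F}|_{t = 0} \cong A \oplus B$; \Cref{lem_openness_P-semistability} guarantees that $P$-semistability is preserved fiberwise, and determinants are constant. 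Iterating along the filtration---first degenerating $E = E_l$ to $E_{l-1} \oplus E_l/E_{l-1}$, then the $E_{l-1}$ summand analogously, and so on---places $[\mathrm{gr}_E]$ in the closure of $[E]$.

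For~(2), the implication ``$[E]$ closed $\Rightarrow E$ is $P$-polystable'' is immediate from (1), since $\overline{\{[E]\}} = \{[E]\}$ must contain $[\mathrm{gr}_E]$, forcing $E \cong \mathrm{gr}_E$, which is $P$-polystable by construction. For the converse, I would assume $E$ is $P$-polystable and let $[E'] \in \overline{\{[E]\}}$; the valuative criterion then produces a flat family $\mathcal{E}$ on a DVR $\mathrm{Spec}(R)$ with $\mathcal{E}_\eta \cong E$ and $\mathcal{E}_0 \cong E'$, and one must show $E' \cong E$. This rigidity of polystable bundles under specialization is the main obstacle in the proof. The classical strategy writes $E = \bigoplus_i L_i^{\oplus n_i}$ with the $L_i$ distinct $P$-stable of the same slope, exploits reductivity of $\mathrm{Aut}(E) = \prod_i GL_{n_i}$ together with the flatness of the $R$-module $\mathrm{Ext}^1_R(\mathcal{E}, \mathcal{E})$ to conclude that any deformation whose restriction to the generic fiber is the trivial polystable one must remain trivial on the closed fiber. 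The base-change and cohomology comparisons of \Cref{lem_cohomologies_using_geometric} and \Cref{cor_cohomologies} should supply the ingredients needed to transfer this reasoning to our orbifold setting, yielding $E' \cong E$ and hence $[E'] = [E]$, completing the proof.
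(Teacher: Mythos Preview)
Your proposal is correct and follows essentially the same approach as the paper: both defer to \cite[Lemma~3.11]{StackModuli}, and your sketch of part~(1) via the $\mathbb{A}^1$-degeneration of extension classes matches the paper's post-lemma explanation almost verbatim. Your outline of the converse in part~(2) is a bit vague on the mechanism (the cleaner route is semicontinuity of $\dim \mathrm{Hom}(L_i,-)$ to force $\mathrm{gr}_{E'}\cong E$, then invoke~(1) and soberness of $|\mathcal{M}|$ rather than a reductivity/$\mathrm{Ext}^1$-flatness argument), but the paper itself gives no further detail here either.
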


The idea behind the proof of Lemma~\ref{lem_closed_points}~\eqref{c:1} is that any $P$-semistable bundle $E$ that is not $P$-stable, the bundle $E$ sits in an exact sequence
\[
0 \longrightarrow E' \longrightarrow E \longrightarrow E'' \longrightarrow 0
\]
of $P$-semistable bundles of the same $P$-slope. Consider the universal family $\mathcal{E}$ over the affine line spanned by the above extension in $\mathrm{Ext}^1_{(X,P)}(E'',E')$. Then $\mathcal{E}$ is a family of $P$-semistable bundles on $(X,P)$, parameterized by $\mathbb{A}^1_t$, such that $\mathcal{E}_t \cong E$ if $t \neq 0$, and $\mathcal{E}_0 \cong E' \oplus E''$. This defines a morphism
\[
\mathbb{A}^1_t \overset{[\mathcal{E}]}\longrightarrow \mathcal{M}_{(X,P)}^{\mathrm{ss}}(n, \Delta)
\]
such that $t \mapsto [E]$ if $t \neq 0$, and $[0] \mapsto [E' \oplus E'']$. Iterating the above for the $P$-semistable bundles $E'$ and $E''$, we see that $[\mathrm{gr}_E]$ lies in the closure of the point $[E]$. The above argument also gives an action (in the sense of \cite[Definition~2.1]{Romagny}) of the smooth group scheme $\mathbb{G}_a$ on the algebraic stack $\mathcal{M}_{(X,P)}^{\mathrm{ss}}(n, \Delta)$ such that the orbit of a $k$-point $[E]$ of $\mathcal{M}_{(X,P)}^{\mathrm{ss}}(n, \Delta)$ is the set of all $k$-points $[F]$ lying in the closure of $[E]$. In other words, two $k$-points $[E]$ and $[E']$ belong to the same $\mathbb{G}_a$-orbit if and only if $E \sim_{\mathrm{S}} E'$.

By our definition and construction, the quotient stack $\mathcal{M}_{(X,P)}^{\mathrm{ss}}(n, \Delta)/\mathbb{G}_a$ is $2$-isomorphic to the stack $\mathcal{M}_{(X,P)}^{\mathrm{ss, S}}(n, \Delta)$. By \cite[Theorem~4.1]{Romagny}, $\mathcal{M}_{(X,P)}^{\mathrm{ss, S}}(n, \Delta)$ is an algebraic stack.

\section{Construction of the Moduli Space}\label{sec_construction}
Throughout this section, we fix the following notation.

\begin{notation}\label{not_fixed}
Suppose that Notation~\ref{not_equivariant_set_up} hold. In particular, we have an orbifold curve $(X,P)$ together with its Coarse moduli map $\iota \colon (X,P) \longrightarrow X$, a tamely ramified cover $\tau \colon (X,\tilde{P}) \longrightarrow (X,P)$, and a $\Gamma$-Galois cover $g_0 \colon Z \longrightarrow X$ of smooth projective connected $k$-curves that is a composition
\[
g_0 \colon Z \overset{u}\longrightarrow [Z/\Gamma] = (X,\tilde{P}) \overset{\tau}\longrightarrow (X,P) \overset{\iota}\longrightarrow X
\]
where $u$ is the canonical atlas. Then the representable cover $g = \tau \circ u \colon Z \longrightarrow (X,P)$ is $\Gamma$-Galois and is tamely ramified. Let $g_Z$ denote the genus of the curve $Z$.

Additionally, let $n \geq 2$ be an integer. Fix a line bundle $\Delta \in \mathrm{Pic}(X,P)$. Finally, fix a line bundle $M \in \mathrm{Pic}(X)$ satisfying
\begin{equation}\label{eq_ass_degree_M}
|\Gamma| \left( \mathrm{deg}(M) + \mathrm{deg}_P(\Delta)/n \right) \geq 2g_Z +1.
\end{equation}
Set $L \coloneqq \iota^*M \in \mathrm{Pic}(X,P)$.
\hfill\qedsymbol
\end{notation}

In this section, we construct the Coarse moduli space of the algebraic stack $\mathcal{M}_{(X,P)}^{\mathrm{ss}}(n, \Delta)$; see~\eqref{eq_functor} for the definition. Our construction uses results from the construction of the moduli space $\mathrm{M}_Z^{\mathrm{ss}}(n, g^*\Delta)$ from \Cref{sec_Falting_Construction}. The construction is a lengthy process which we break down into sections for the convenience, and every subsequent section retains the notion and construction of the previous one.

\subsection{Bounded Families}\label{sec_bdd}
Our first objective is to show that any family of all $P$-semistable bundles on $(X,P)$ of rank $n$ and determinant $\Delta$ is bounded, i.e., there is a $k$-scheme $B$ of finite type together with a bundle $\mathcal{V}$ on $B \times (X,P)$ such that every $P$-semistable bundle of rank $n$ and determinant $\Delta$ is represented by $\mathcal{V}_b$ for some closed point $b \in B$. This is equivalent to the algebraic stack $\mathcal{M}_{(X,P)}^{\mathrm{ss}}(n, \Delta)$ being quasi-compact.

We recall the following well-known result.

\begin{proposition}\label{prop_classical}
Suppose that Notation~\ref{not_fixed} hold. Let $E \in \mathrm{Vect}(X,P)$ be $P$-semistable of rank $n$ and $\mathrm{det}(E) \cong \Delta$. Then the following hold.
\begin{enumerate}
\item The evaluation map
\[
\mathrm{ev} \colon \mathrm{H}^0(Z, g^*(E \otimes L)) \otimes_k \mathcal{O}_Z \longrightarrow g^*(E \otimes L)
\]
is a $\Gamma$-equivariant surjective map.\label{P1:1}
\item For $1 \leq i \leq n-1$, there are $i$-dimensional $k$-subspaces $U_i \subset \mathrm{H}^0(Z, g^*(E \otimes L))$ satisfying $U_1 \subset \cdots \subset U_{n-1}$ such that each restriction of the evaluation map
\[
\mathrm{ev}_{i} \colon U_{i} \otimes_k \mathcal{O}_Z \longrightarrow g^*(E \otimes L)
\]
is an injective map, and the cokernel $\mathrm{coker}\left( \mathrm{ev}_{i} \right)$ is a bundle on $Z$.\label{P1:2}
\end{enumerate}
\end{proposition}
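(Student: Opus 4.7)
The plan is to reduce Part~(1) to a global generation statement for a semistable bundle of large slope on $Z$ via Serre duality, and then to build the subspaces in Part~(2) inductively using a dimension count on an incidence variety in $Z \times H^0(Z, g^*(E \otimes L))$.

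For Part~(1), set $W := g^*(E \otimes L)$. Since $g$ is $\Gamma$-Galois, the canonical $\Gamma$-equivariant structure on $W$ induces a $\Gamma$-action on $H^0(Z, W)$ for which $\mathrm{ev}$ is manifestly $\Gamma$-equivariant, so only the surjectivity requires work. By \Cref{prop_easy}~\eqref{j:5} the bundle $E \otimes L$ is $P$-semistable, and \Cref{prop_easy}~\eqref{j:0} then gives that $W$ is semistable on $Z$ of rank $n$ and slope $|\Gamma|(\deg(M) + \deg_P(\Delta)/n) \geq 2g_Z + 1$ by the hypothesis~\eqref{eq_ass_degree_M}. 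For any closed point $z \in Z$, the twist $W(-z)$ is semistable of slope $\geq 2g_Z$, so $W(-z)^\vee \otimes \omega_Z$ is semistable of slope $\leq -2 < 0$ by \Cref{prop_easy}~\eqref{j:3} and~\eqref{j:5} and has no global sections by \Cref{prop_easy}~\eqref{j:4}. Serre duality thus yields $\mathrm{H}^1(Z, W(-z)) = 0$, and the long exact cohomology sequence attached to
\[
0 \longrightarrow W(-z) \longrightarrow W \longrightarrow W \otimes k(z) \longrightarrow 0
\]
shows that $\mathrm{H}^0(Z, W) \twoheadrightarrow W \otimes k(z)$ is surjective. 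Since $z$ is arbitrary, $\mathrm{ev}$ is surjective.

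For Part~(2), I construct the chain $U_0 := 0 \subset U_1 \subset \cdots \subset U_{n-1}$ inductively so that each $\mathrm{ev}_i \colon U_i \otimes_k \mathcal{O}_Z \hookrightarrow W$ is injective with locally free cokernel $W_i$ of rank $n - i$. Given $U_{i-1}$ with this property and $1 \leq i \leq n - 1$, set $N := \dim_k \mathrm{H}^0(Z, W)$, and note $N \geq n$ by Part~(1). For each $z \in Z$ the composite $\mathrm{H}^0(Z, W) \twoheadrightarrow W \otimes k(z) \twoheadrightarrow W_{i-1} \otimes k(z)$ is surjective, so its kernel
\[
K_z := \{\, s \in \mathrm{H}^0(Z, W) \,|\, s(z) \in U_{i-1} \otimes k(z) \subset W \otimes k(z) \,\}
\]
has codimension $n - i + 1$ in $\mathrm{H}^0(Z, W)$. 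The incidence variety
\[
I := \{\, (z, s) \in Z \times \mathrm{H}^0(Z, W) \,|\, s \in K_z \,\}
\]
is therefore of dimension $1 + N - (n - i + 1) = N - n + i \leq N - 1$, and as $Z$ is proper its image in $\mathrm{H}^0(Z, W) \cong \mathbb{A}_k^N$ is a proper closed subset. Choosing any $s$ outside this image, we have $s(z) \notin U_{i-1} \otimes k(z)$ for every $z \in Z$, so setting $U_i := U_{i-1} + \langle s \rangle$ a fiberwise linear-independence check yields that $\mathrm{ev}_i$ is injective with locally free cokernel of rank $n - i$, closing the induction.

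I expect no serious obstacle in writing this up. The transfer of semistability through the tame Galois cover $g$ is already handled by \Cref{prop_easy}~\eqref{j:0}, and the numerical hypothesis~\eqref{eq_ass_degree_M} is tuned precisely so that $\mu(W(-z)) > 2g_Z - 2$, which is exactly the Serre-duality threshold for $\mathrm{H}^1$-vanishing on a semistable bundle. The only slightly delicate point in Part~(2) is ensuring that $\mathrm{ev}_i$ lands as a subbundle rather than merely as an injection of sheaves, but this is automatic from the avoidance condition $s(z) \notin U_{i-1} \otimes k(z)$ and the assumed subbundle property of $U_{i-1} \otimes \mathcal{O}_Z$.
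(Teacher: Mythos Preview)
Your proof is correct and Part~(1) matches the paper's argument almost exactly: both reduce to global generation of the semistable bundle $g^*(E\otimes L)$ on $Z$ using that its slope exceeds $2g_Z-1$, with the $\Gamma$-equivariance coming from naturality. You spell out the Serre-duality step for $\mathrm{H}^1(Z,W(-z))=0$, whereas the paper simply asserts global generation.

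For Part~(2) the two approaches diverge in presentation but not in substance. The paper observes that once a single $(n-1)$-dimensional subspace $U_{n-1}\subset\mathrm{H}^0(Z,g^*(E\otimes L))$ giving a trivial subbundle is found, any $i$-dimensional subspace of it works, and then cites Atiyah's general-position theorem for globally generated bundles on curves to produce $U_{n-1}$. You instead build the flag $U_1\subset\cdots\subset U_{n-1}$ step by step via the incidence variety $I\subset Z\times\mathrm{H}^0(Z,W)$, which is essentially the standard proof of that general-position result. Your route is self-contained and makes the dimension count explicit; the paper's is shorter but relies on the external reference. Both yield the same conclusion with no loss or gain in strength.
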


\begin{proof}
Using Proposition~\ref{prop_easy} and the defining properties of rank and determinant, we see that the $\Gamma$-equivariant bundle $g^*(E \otimes L)$ on $Z$ is semistable of rank $n$ and $\mathrm{det}(g^*(E \otimes L)) \cong g^*\Delta \otimes g^*L^{\otimes n}$. So $g^*(E \otimes L)$ is a bundle on $Z$ satisfying $\mu(g^*(E \otimes L)) > 2g_Z -1$ by~\eqref{eq_ass_degree_M}, and hence it is globally generated. In other words, the evaluation map $\mathrm{ev}$ is a surjection. Since $g^*(E \otimes L)$ is also $\Gamma$-equivariant, so is the bundle $\mathrm{H}^0(Z, g^*(E \otimes L)) \otimes_k \mathcal{O}_Z$. Since the map $\mathrm{ev}$ is the evaluation of the transformation $\eta_Z^* \eta_{Z,*} \Rightarrow \mathrm{id}$ at the $\Gamma$-equivariant bundle $g^*(E \otimes L)$, by naturality, $\mathrm{ev}$ is a $\Gamma$-equivariant map -- this proves~\eqref{P1:1}.

To prove~\eqref{P1:2}, it is enough to exhibit $U_{n-1}$ satisfying the stated property as any subspace $U_i$ of $U_{n-1}$ of dimension $i$ will also have the stated property. This is \cite[Theorem~2]{Atiyah}.
\end{proof}

Next, we use the above result to establish that any family of $P$-semistable bundles of rank $n$ and determinant $\Delta$ is bounded.

\begin{proposition}\label{prop_main_bounded}
Suppose that Notation~\ref{not_fixed} hold. Let $E \in \mathrm{Vect}(X,P)$ be $P$-semistable of rank $n$ and determinant $\mathrm{det}(E) \cong \Delta$. Then the following hold.
\begin{enumerate}[leftmargin=*]
\item For $1 \leq i \leq n-1$, there are sub-bundles $F_i \subset E \otimes L$ of rank $i$ satisfying the following properties.\label{P2:1}
\begin{enumerate}
\item For $1 \leq i < j \leq n-1$, $F_i$ is a sub-bundle of $F_j$.\label{P2:1:1}
\item For each $1 \leq i \leq n-1$, we have a $\Gamma$-equivariant isomorphism $g^* F_i \cong V_i \otimes_k \mathcal{O}_Z$ for some $k[\Gamma]$-sub-module $V_i \subset \mathrm{H}^0(Z, u^*u_*(U_i \otimes_k \mathcal{O}_Z))$ where $U_i$ is an in Proposition~\ref{prop_classical}.\label{P2:1:2}
\item For each $1 \leq i \leq n-1$, \, $F_i \cong \oplus_{1 \leq j \leq i} L_j$ where $L_j \in \text{\rm Pic}(X,P)$ with $g^*L_j \cong \chi_j \otimes_k \mathcal{O}_Z$ as $\Gamma$-equivariant line bundles on $Z$ for some character $\chi_j$ of $\Gamma$ over $k$.\label{P2:1:3}
\end{enumerate}
\item There is a short exact sequence
\begin{eqnarray*}
0 \longrightarrow F_{n-1} \otimes L^{-1} \longrightarrow E \longrightarrow \Delta \otimes L^{\otimes (n-1)} \otimes \mathrm{det}(F_{n-1})^{-1} \longrightarrow 0
\end{eqnarray*}
of bundles on $(X,P)$.\label{P2:2}
\end{enumerate}
\end{proposition}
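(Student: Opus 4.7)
The plan is to lift the injective evaluations $\mathrm{ev}_i \colon U_i \otimes_k \mathcal{O}_Z \hookrightarrow g^*(E\otimes L)$ from Proposition~\ref{prop_classical} to $\Gamma$-equivariant inclusions of \emph{trivial} sub-bundles of $g^*(E\otimes L)$, and then to descend these, via the equivalence $\mathrm{Vect}^{\Gamma}(Z) \simeq \mathrm{Vect}(X,\tilde{P})$ followed by Proposition~\ref{prop_descent_unique_sub}\eqref{des:2}, to sub-bundles $F_i \subset E\otimes L$ of the required form. The first preliminary step is to identify the $k[\Gamma]$-structure on $\mathrm{H}^0(Z, u^*u_*(U_i \otimes_k \mathcal{O}_Z))$: since $u \colon Z \to (X,\tilde{P})=[Z/\Gamma]$ is the atlas of a $\Gamma$-Galois \'etale cover, the fiber square decomposes as $Z\times_{(X,\tilde{P})}Z \cong \bigsqcup_{\gamma\in\Gamma} Z$ along the graphs of the $\Gamma$-action, and flat base change supplies a natural $\Gamma$-equivariant isomorphism
\[
u^*u_*(U_i\otimes_k\mathcal{O}_Z)\;\cong\;U_i\otimes_k k[\Gamma]\otimes_k\mathcal{O}_Z
\]
with $\Gamma$ acting on $k[\Gamma]$ by the regular representation. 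Hence $\mathrm{H}^0(Z, u^*u_*(U_i\otimes_k\mathcal{O}_Z)) = U_i\otimes_k k[\Gamma]$. The counit of the $(u^*,u_*)$-adjunction, combined with $u^*u_*(\mathrm{ev}_i)$, yields a canonical $\Gamma$-equivariant morphism
\[
\Phi_i\colon u^*u_*(U_i\otimes_k\mathcal{O}_Z)\longrightarrow g^*(E\otimes L)
\]
which on global sections sends $s\otimes e_\gamma$ to $\gamma\cdot s \in \mathrm{H}^0(Z,g^*(E\otimes L))$.

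Given $\Phi_i$, the plan is to construct $V_i$ and the corresponding $F_i$ by induction on $i$. The key observation is that any $k[\Gamma]$-sub-module $V\subset U_i\otimes_k k[\Gamma]$ produces a $\Gamma$-equivariant morphism $V\otimes_k\mathcal{O}_Z \to g^*(E\otimes L)$ through $\Phi_i$; whenever this morphism is a sub-bundle inclusion with locally free cokernel, the equivalence $\mathrm{Vect}^{\Gamma}(Z)\simeq\mathrm{Vect}(X,\tilde{P})$ and Proposition~\ref{prop_descent_unique_sub}\eqref{des:2} successively descend $V\otimes_k\mathcal{O}_Z$ first to a sub-bundle of $\tau^*(E\otimes L)$ and then to a sub-bundle of $E\otimes L$ on $(X,P)$. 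For $i=1$, I would select a $1$-dimensional $k[\Gamma]$-sub-module $k\chi_1\subset U_1\otimes_k k[\Gamma]$, necessarily isomorphic to a character of $\Gamma$ sitting in the socle of the regular representation, whose image under $\Phi_1$ is a line sub-bundle of $g^*(E\otimes L)$ with locally free cokernel; such a choice exists by a genericity argument of the type used in Proposition~\ref{prop_classical}\eqref{P1:2}, since $g^*(E\otimes L)$ is globally generated. For the inductive step, the $\Gamma$-equivariant quotient $g^*(E\otimes L)/(V_{i-1}\otimes_k\mathcal{O}_Z)$ remains globally generated thanks to the large-degree hypothesis~\eqref{eq_ass_degree_M}, so the same argument supplies a character $\chi_i$ and a $k[\Gamma]$-sub-module $k\chi_i\subset U_i\otimes_k k[\Gamma]$ whose image lifts in $g^*(E\otimes L)$ to a line sub-bundle meeting $V_{i-1}\otimes_k\mathcal{O}_Z$ trivially and with bundle cokernel in the quotient. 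Setting $V_i\coloneqq V_{i-1}\oplus k\chi_i$ then gives the nested chain, and the decomposition $V_i=\bigoplus_{j=1}^i k\chi_j$ transports along $\Phi_i$ and the two descents to the stated isomorphism $F_i\cong\bigoplus_{j=1}^i L_j$ with $g^*L_j\cong\chi_j\otimes_k\mathcal{O}_Z$.

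For part~(2), twisting the rank-$(n-1)$ inclusion $F_{n-1}\subset E\otimes L$ by $L^{-1}$ produces a rank-$(n-1)$ sub-bundle $F_{n-1}\otimes L^{-1}\subset E$, and since $E$ has rank $n$ the quotient is necessarily a line bundle; its class follows from taking determinants in the resulting short exact sequence and using $\det E\cong\Delta$. The principal obstacle I anticipate lies in the inductive construction of $V_i$: the requirements that $V_i$ be a $k[\Gamma]$-sub-module of $U_i\otimes_k k[\Gamma]$, that it decompose as a direct sum of characters, and that its image under $\Phi_i$ give a sub-bundle inclusion with locally free cokernel must be enforced simultaneously. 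This is delicate in positive characteristic when $p\mid |\Gamma|$, since $k[\Gamma]$ is then non-semisimple and $1$-dimensional sub-modules (characters) are confined to the socle of the regular representation; the freedom supplied by the hypothesis~\eqref{eq_ass_degree_M} should be enough to execute the genericity argument on the socle at each inductive stage.
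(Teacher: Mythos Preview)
Your approach diverges from the paper's in a way that introduces a real obstruction. The paper does \emph{not} build $V_i$ by choosing characters $\chi_1,\ldots,\chi_i$ one at a time. Instead it defines $F_i$ canonically as the fiber product
\[
F_i \;=\; (E\otimes L)\;\times_{\,u_*u^*(E\otimes L)}\;u_*(U_i\otimes_k\mathcal{O}_Z)
\]
via the unit $u^{\#}\colon E\otimes L \hookrightarrow u_*u^*(E\otimes L)$ and the pushforward $u_*\mathrm{ev}_i$. Pulling this Cartesian square back along $u$, the locally-split nature of $u^*u^{\#}$ forces $u^*F_i$ to be a direct summand of the trivial bundle $u^*u_*(U_i\otimes_k\mathcal{O}_Z)$, hence itself trivial; this delivers~\eqref{P2:1:2} with $V_i = \mathrm{H}^0(Z,u^*F_i)$ automatically, with no decomposition of $V_i$ assumed. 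The splitting $F_i \cong \bigoplus_{j\le i} L_j$ in~\eqref{P2:1:3} then follows from a separate argument: the Cartesian square comparing $F_i$ to $F_j$ (for $i<j$) together with exactness of $u_*$ shows $F_i$ is a \emph{direct summand} of $F_j$, so inductively $F_i$ splits into line bundles, and a $\Gamma$-equivariant structure on a trivial line bundle over a projective connected curve is exactly a character.

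The gap in your route is the step you yourself flag as ``the principal obstacle.'' You want to choose, at each stage, a character $\chi_i\subset U_i\otimes_k k[\Gamma]$ whose image under $\Phi_i$ is a line sub-bundle with locally free cokernel, and you propose to do this by a ``genericity argument.'' But the characters of $\Gamma$ over $k$ form a \emph{finite} set (the $k$-points of $\widehat{\Gamma}$), so there is no parameter space in which to run a genericity argument; Atiyah's lemma in Proposition~\ref{prop_classical}\eqref{P1:2} works precisely because one moves in an infinite-dimensional space of sections. When $p\mid|\Gamma|$ the situation is worse: the one-dimensional $k[\Gamma]$-submodules of $k[\Gamma]$ all lie in the socle, which is much smaller than $k[\Gamma]$, and nothing in hypothesis~\eqref{eq_ass_degree_M} controls whether any of the finitely many candidate sections $\sum_\gamma \chi(\gamma^{-1})\,\gamma\cdot s$ avoids the bad locus where the cokernel acquires torsion. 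The paper's Cartesian construction sidesteps this entirely: it never selects a character in advance, and the direct-sum decomposition of $F_i$ into line bundles is deduced \emph{a posteriori} from the nested direct-summand structure $F_1\subset F_2\subset\cdots$, which in turn comes from the functoriality of the pullback square in $U_i\subset U_j$.
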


\begin{proof}
First, note that if there are bundles $\tilde{F}_i$ on $(X, \tilde{P})$ satisfying the corresponding required statements over $(X, \tilde{P})$ for the bundle $\tau^*E$, then the statements hold (with $F_i$ such that $\tau^*F_i \cong \tilde{F}_i$) over $(X,P)$ for the bundle $E$ by applying Proposition~\ref{prop_descent_unique_sub}~\eqref{des:2} and the projection formula. So, without loss of generality, we may assume that $(X,P) = (X,\tilde{P})$, and $g = u \colon Z \longrightarrow (X,P)$ is the canonical atlas.

By Proposition~\ref{prop_classical}, the $\Gamma$-equivariant evaluation map
\[
\mathrm{ev} \colon \mathrm{H}^0(Z, u^*(E \otimes L)) \otimes_k \mathcal{O}_Z \longrightarrow u^*(E \otimes L)
\]
is a surjection, and for each $1 \leq i \leq n-1$, there exists an $i$-dimensional subspace $U_i$ of $\mathrm{H}^0(Z, u^*(E \otimes L))$ with $U_1 \subset \cdots \subset U_{n-1}$ such that the restrictions
\[
\mathrm{ev}_{i} \colon U_{i} \otimes_k \mathcal{O}_Z \longrightarrow u^*(E \otimes L)
\]
are injective maps and each $\mathrm{coker}(\mathrm{ev}_{i})$ is a bundle on $Z$. The canonical inclusion $u^\# \colon E \otimes L \hookrightarrow u_* u^* \left( E \otimes L \right)$ is a $u$-locally split monomorphism by \cite[Proposition~3.5.4(i)]{split}, i.e., $u^*(E \otimes L)$ is a direct summand of $u^*u_*u^*(E \otimes L)$.

For each $1 \leq i \leq n-1$, consider the Cartesian diagram
\begin{equation}\label{D_i}
\begin{tikzcd}
F_i \arrow[dr, phantom, "\square"] \arrow[r, "p_{2,i}"] \arrow[d, "p_{1,i}"] & E \otimes L \arrow[d, hookrightarrow, "u^\#"] \\
u_*\left( U_i \otimes_k \mathcal{O}_Z \right) \arrow[r, hookrightarrow, "u_*\mathrm{ev}_i"] & u_*u^*(E \otimes L) \cong E \otimes L \otimes u_* \mathcal{O}_Z
\end{tikzcd}
\end{equation}
in $\mathrm{Coh}_{\mathcal{O}_{(X, P)}}$ where
\[ F_i = \mathrm{ker}\left( \left( E \otimes L \right) \oplus u_*\left( U_i \otimes_k \mathcal{O}_Z \right) \xlongrightarrow{u^\# - u_* \mathrm{ev}_i} u_*u^*(E \otimes L) \right). \]
Since the above diagram is Cartesian, we see that $F_i$ is a coherent sub-sheaf of $u_*(U_i \otimes_k \mathcal{O}_Z)$ and of $E \otimes L$. As $u$ is flat, the diagram
\begin{equation}\label{gDi}
\begin{tikzcd}
u^*F_i \arrow[r, hookrightarrow, "u^*p_{2,i}"] \arrow[d, hookrightarrow, "u^*p_{1,i}"] & u^*(E \otimes L) \arrow[d, hookrightarrow, "u^*u^\#"] \\
u^*u_*\left( U_i \otimes_k \mathcal{O}_Z \right) \arrow[r, hookrightarrow, "u^*u_*\mathrm{ev}_i"] & u^*u_*u^*(E \otimes L)
\end{tikzcd}
\end{equation}
obtained by applying $u^*$ is also a Cartesian diagram of $\Gamma$-equivariant bundles. In particular, $F_i$ is sub-bundle of $u_*(U_i \otimes_k \mathcal{O}_Z)$ and of $E \otimes L$.

We claim that $F_i$ has rank $i$ for $1 \leq i \leq n-1$. To see this, let $\zeta$ denote the generic point of $X$. Then the map $u_* \mathrm{ev}_{i}$ is generically given by
\[
\left( u_* \mathrm{ev}_{i} \right)_\zeta \colon U_i \otimes_k k(Z) \longrightarrow \left( E \otimes L \right)_\zeta \otimes_{k(X)} k(Z).
\]
Since this map is the base change of
\[
U_i \otimes_k k(X) \longrightarrow \left( E \otimes L \right)_\zeta
\]
by $- \otimes_{k(X)} k(Z)$, and the canonical map $u^\#_\zeta$ is the natural inclusion $(E \otimes L)_\zeta \otimes_{k(X)} k(X) \hookrightarrow (E \otimes L)_\zeta \otimes_{k(X)} k(Z)$, we conclude that the map $p_{2,i}$ is generically given by $U_i \otimes_k k(X) \longrightarrow \left( E \otimes L \right)_\zeta$. This is a $k(X)$-linear monomorphism, and by Proposition~\ref{prop_descent_unique_sub}~\eqref{des:3}, $F_i$ is the unique sub-bundle of $E \otimes L$ such that $(F_i)_\zeta = U_i \otimes_k k(X)$. Thus $\mathrm{rk}(F_i) = i$.

From the universal property of the Cartesian squares, we see that the diagram
\begin{equation}\label{D_ij}
\begin{tikzcd}
F_i \arrow[r] \arrow[d, hookrightarrow, "p_{1,i}"] & F_j \arrow[d, hookrightarrow, "p_{1,j}"] \\
u_*\left( U_i \otimes_k \mathcal{O}_Z \right) \, \, \,  \arrow[r, hookrightarrow, "u_*(\mathrm{inclusion})"] & \, \, u_*\left( U_j \otimes_k \mathcal{O}_Z \right)
\end{tikzcd}
\end{equation}
is also Cartesian for any $1 \leq i < j \leq n-1$. This shows that $F_i$ is a sub-bundle of $F_j$ for $1 \leq i < j \leq n-1$. We conclude that the bundles $F_i$'s satisfies property~\eqref{P2:1:1}.

Now we show that $F_i$'s satisfy property~\eqref{P2:1:2}. In the Cartesian square~\eqref{gDi}, since $u^*u^\#$ is a split inclusion, so is $u^*p_{1,i}$. So, $u^*F_i$ is a direct summand of the trivial bundle $u^*u_*(U_i \otimes_k \mathcal{O}_Z) \cong \oplus_{\gamma \in \Gamma} \, \gamma^* \mathcal{O}_Z^{\oplus \, i}$. Thus, the $\Gamma$-equivariant bundle $u^*F_i$ is a trivial bundle on $Z$, and it is of the form $V_i \otimes_k \mathcal{O}_Z$ for a $k[\Gamma]$-sub-module of $\mathrm{H}^0(Z, u^*u_*(U_i \otimes_k \mathcal{O}_Z)) \cong \oplus_{1 \leq j \leq i} \, \mathrm{H}^0(Z, u^*u_*\mathcal{O}_Z)$.

From the above, $F_1$ is a line bundle on $(X,P)$ such that $u^*F_1 \cong V_1 \otimes_k \mathcal{O}_Z$ as $\Gamma$-equivariant bundles on $Z$, and $\chi_1 \coloneqq V_1$ is a character of $\Gamma$ over $k$. Now for any $1 \leq i < j \leq n-1$, the subspace $U_i$ of $U_j$ defines the trivial bundle $\mathcal{O}_Z^{\oplus i} \cong U_i \otimes_k \mathcal{O}_Z$ as a direct summand of the trivial bundle $U_j \otimes_k \mathcal{O}_Z \cong \mathcal{O}_Z^{\oplus j}$. As $u_*$ is an exact functor, and from the Cartesian square~\ref{D_ij} we conclude that the sub-bundle $F_i$ of $F_j$ is a direct summand. Inductively, we see that for each $1 \leq i \leq n-1$, we have
\[
F_i \cong \oplus_{1 \leq j \leq i} \, L_j
\]
for line bundles $L_j \in \mathrm{Pic}(X,P)$ with each $u^*L_j \cong \chi_j \otimes_k \mathcal{O}_Z$ as $\Gamma$-equivariant bundles on $Z$ for some character $\chi_j$ of $\Gamma$ over $k$, proving~\eqref{P2:1:3}.

Finally, since $F_{n-1}$ is a rank $(n-1)$ sub-bundle of $E \otimes L$, considering the determinant, we obtain a short exact sequence
\[
0 \longrightarrow F_{n-1} \longrightarrow E \otimes L \longrightarrow \Delta \otimes L^{\otimes n} \otimes \text{\rm det}(F_{n-1})^{-1} \longrightarrow 0.
\]
Taking the tensor product with $L^{-1}$, we obtain the short exact sequence
\[
0 \longrightarrow F_{n-1} \otimes L^{-1} \longrightarrow E \longrightarrow \Delta \otimes L^{\otimes (n-1)} \otimes \text{\rm det}(F_{n-1})^{-1} \longrightarrow 0
\]
which is~\eqref{P2:2}.
\end{proof}

Now, we proceed to the construction of the over-parameterizing family for our moduli problem.

\begin{definition}[{The Indexing Set}]\label{def_F}
Define the \emph{indexing set for the moduli problem} to be the set $\mathcal{F}$ of mutually non-isomorphic bundles $F$ on $(X,P)$ of rank $n-1$ such that
\[
F \cong \oplus_{\substack{1 \leq j \leq n-1}} L_j
\]
where $L_j \in \mathrm{Pic}(X,P)$ with $g^*L_j \cong \chi_j \otimes_k \mathcal{O}_Z$ as $\Gamma$-equivariant bundles on $Z$ for some characters $\chi_j$ of $\Gamma$ over $k$.
\end{definition}

For $F \in \mathcal{F}$, consider the $k$-vector space $\mathcal{V}_F$ whose dual is defined by
\[
\mathcal{V}_F^* \coloneqq \mathrm{Ext}^1_{(X,P)}\left( \Delta \otimes L^{\otimes (n-1)} \otimes \mathrm{det}(F)^{-1}, F \otimes L^{-1} \right) \cong \mathrm{H}^1\left( (X,P), F \otimes \mathrm{det}(F) \otimes \Delta^{-1} \otimes L^{-n} \right).
\]
Since $F \otimes \mathrm{det}(F) \otimes \Delta^{-1} \otimes L^{-n}$ is a direct sum of line bundles $L_j \otimes (\otimes_{1 \leq i \leq n-1}\, L_j) \otimes \Delta^{-1} \otimes L^{-n}$, each of $P$-slope
\[
- n (\mu_P(\Delta)/n + \mathrm{deg}(M)) < 0
\]
by our assumption~\eqref{eq_ass_degree_M}, the $P$-polystable bundle $F \otimes \mathrm{det}(F) \otimes \Delta^{-1} \otimes L^{-n}$ is also of negative $P$-slope. So,
\[
\mathrm{H}^0\left( (X,P), F \otimes \mathrm{det}(F) \otimes \Delta^{-1} \otimes L^{-n} \right) = 0
\]
by Proposition~\ref{prop_easy}~\eqref{j:4}. Setting
\begin{equation}\label{eq_P_F}
\mathbb{P}_F \coloneqq \mathbb{P}(\mathcal{V}_F)    
\end{equation}
for the Grassmannian variety of linear hyperplanes in $\mathcal{V}_F$, and writing $p_F \colon \mathbb{P}_F \times (X,P) \longrightarrow \mathbb{P}_F$ and $q_F \colon \mathbb{P}_F \times (X,P) \longrightarrow (X,P)$ for the natural projections, Proposition~\ref{prop_universal_general} states the following.

\begin{proposition}\label{prop_extn_classes_F}
Under the above notation, there is an extension
\begin{equation}\label{eq_extn_F}
0 \rightarrow q_F^* \left( F \otimes L^{-1} \right) \otimes p_F^* \mathcal{O}_{\mathbb{P}_F}(1) \longrightarrow \mathcal{E}_F \longrightarrow q_F^* \left( \Delta \otimes L^{\otimes (n-1)} \otimes \mathrm{det}(F)^{-1} \right) \rightarrow 0
\end{equation}
that is universal in the category of Noetherian $k$-schemes $S$ for the equivalent classes of families of extensions of $q_{S}^*( \Delta \otimes L^{\otimes (n-1)} \otimes \mathrm{det}(F)^{-1} )$ by $q_{S}^*(F \otimes L^{-1}) \otimes p_{S}^* N$ which are nowhere splitting on $S$, for arbitrary line bundle $N$ on $S$, and modulo the canonical action of $\mathrm{H}^0\left( S, \mathcal{O}_S^{\times} \right)$; here $q_{S}$ and $p_{S}$ denote the usual projection morphisms.
\end{proposition}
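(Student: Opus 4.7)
The plan is to construct $\mathcal{E}_F$ as the universal extension corresponding to the identity under a natural endomorphism identification, and to deduce the universal property from the defining property of the Grothendieck projectivization $\mathbb{P}_F = \mathbb{P}(\mathcal{V}_F)$, which parametrizes invertible quotients of $\mathcal{V}_F$.

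First, set $A \coloneqq \Delta \otimes L^{\otimes (n-1)} \otimes \mathrm{det}(F)^{-1}$ and $B \coloneqq F \otimes L^{-1}$. I would note that $\sheafhom_{\mathcal{O}_{(X,P)}}(A,B) \cong F \otimes \mathrm{det}(F) \otimes \Delta^{-1} \otimes L^{-n}$, which is precisely the bundle whose cohomology computes $\mathcal{V}_F^*$; the remarks preceding the statement then give $\mathrm{H}^0((X,P), \sheafhom(A,B)) = 0$ and $\mathrm{H}^1((X,P), \sheafhom(A,B)) = \mathcal{V}_F^*$. Next, for any Noetherian $k$-scheme $S$ and line bundle $N$ on $S$, I would apply the local-to-global Ext spectral sequence (which collapses because $A$ is locally free) together with K\"{u}nneth/flat base change along the two projections from $S \times (X,P)$, using the $\mathrm{Hom}$-vanishing just noted, to obtain the canonical isomorphism
\begin{equation}\label{eq_key_ext_iso}
\mathrm{Ext}^1_{S \times (X,P)}\bigl( q_S^* A, \, q_S^* B \otimes p_S^* N \bigr) \; \cong \; \mathrm{H}^0(S, N) \otimes_k \mathcal{V}_F^*.
\end{equation}
Specializing to $S = \mathbb{P}_F$ and $N = \mathcal{O}_{\mathbb{P}_F}(1)$, where $\mathrm{H}^0(\mathbb{P}_F, \mathcal{O}_{\mathbb{P}_F}(1)) = \mathcal{V}_F$ in the Grothendieck convention, the right-hand side of \eqref{eq_key_ext_iso} becomes $\mathcal{V}_F \otimes \mathcal{V}_F^* \cong \mathrm{End}_k(\mathcal{V}_F^*)$, and I would define $\mathcal{E}_F$ to be the extension classified by $\mathrm{id}_{\mathcal{V}_F^*}$. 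This produces the sequence \eqref{eq_extn_F}.

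For universality, I would take a family $(S, N, \mathcal{E})$ as in the statement and unwind \eqref{eq_key_ext_iso}: the class of $\mathcal{E}$ corresponds to a morphism $\alpha \colon \mathcal{V}_F \otimes_k \mathcal{O}_S \to N$, and the nowhere-splitting hypothesis translates (pointwise on $S$) into the surjectivity of $\alpha$. The universal property of $\mathbb{P}_F = \mathbb{P}(\mathcal{V}_F)$ then yields a unique morphism $\phi_\alpha \colon S \to \mathbb{P}_F$ satisfying $\phi_\alpha^* \mathcal{O}_{\mathbb{P}_F}(1) \cong N$ and pulling the tautological quotient back to $\alpha$. By functoriality of extension classes under pullback, $(\mathrm{id}_{(X,P)} \times \phi_\alpha)^* \mathcal{E}_F$ and $\mathcal{E}$ carry the same class in $\mathrm{Ext}^1$, hence are isomorphic as extensions. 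The only remaining ambiguity --- rescaling $N$ by a unit $\lambda \in \mathrm{H}^0(S, \mathcal{O}_S^\times)$ rescales $\alpha$ accordingly but leaves $\phi_\alpha$ unchanged --- is precisely the quotient described in the statement.

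The main technical obstacle is establishing \eqref{eq_key_ext_iso}: it requires the local-to-global Ext spectral sequence together with K\"{u}nneth/flat base change to behave for the orbifold factor $(X,P)$ exactly as in the scheme case. I would handle this by invoking the locally-free-ness of $A$ and $B$, the flatness of $p_S$, and the projection formula and cohomology calculations for quasi-coherent sheaves on $(X,P)$ recorded in \Cref{sec_descent}; none of these ingredients are sensitive to the passage from schemes to the Deligne--Mumford stack $(X,P)$. Apart from this stack-theoretic bookkeeping, the construction is formally identical to the classical universal extension on $\mathbb{P}(\mathrm{Ext}^1)$, and I would not anticipate further difficulty in the remaining verifications.
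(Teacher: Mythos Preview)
Your proposal is correct and follows essentially the same approach as the paper: the paper deduces this proposition from its Appendix~B result (Proposition~\ref{prop_universal_general}), which constructs the universal extension over $\mathbb{P}(V)$ via the identity element in $\mathrm{End}(V)$ under the K\"{u}nneth identification~\eqref{eq_Kun}, and obtains universality from the representability of the functor of invertible quotients by $\mathbb{P}(V)$. Your argument is a direct unwinding of that same construction, with the stack-theoretic bookkeeping handled exactly as the paper does in Appendix~B.
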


By Proposition~\ref{prop_main_bounded} and Proposition~\ref{prop_extn_classes_F}, each $P$-semistable bundle $E$ on $(X,P)$ of rank $n$ and of determinant $\Delta$ corresponds to the $k$-point of the space $\mathbb{P}_F$ for some $F \in \mathcal{F}$. For any $F \in \mathcal{F}$, the direct sum $\left( F \otimes L^{-1} \right) \oplus \left( \Delta \otimes L^{\otimes (n-1)} \otimes \mathrm{det}(F)^{-1} \right)$ is not a $P$-semistable bundle. To identify the unstable locus in each $\mathbb{P}_F$, we embed $\mathbb{P}_F$ in the over-parameterizing space of semistable bundles on $Z$ of rank $n$ and determinant $g^*\Delta$. Using Proposition~\ref{prop_classical}, for any semistable bundle $A$ on $Z$ of rank $n$ and determinant $g^*\Delta$, the bundle $A \otimes g^*L$ contains a trivial bundle $\mathcal{O}_Z^{\oplus (n-1)}$ as a sub-bundle, resulting in an exact sequence
\[
0 \longrightarrow \left( g^* L^{-1} \right)^{\oplus (n-1)} \longrightarrow A \longrightarrow g^*(\Delta \otimes L^{\otimes (n-1)}) \longrightarrow 0.
\]
Any bundle $A$ as above on $Z$ corresponds to a $k$-points in the projective space $\mathbb{P}_Z \coloneqq \mathbb{P}(\mathcal{W}_Z)$ where $\mathcal{W}_Z$ is dual of the vector space
\[
\mathcal{W}_Z^* \coloneqq \mathrm{H}^1\left( Z, \oplus_{\substack{1 \leq i \leq n-1}} g^*\Delta^{-1} \otimes g^*L^{-n} \right).
\]
Further, since $\mathrm{H}^0(Z, \oplus_{\substack{1 \leq i \leq n-1}} g^*\Delta \otimes g^*L^{-n}) = 0$ from our choice~\eqref{eq_ass_degree_M}, there is a universal extension class (cf.~\eqref{eq_universal_family_on_X}) $\mathcal{E}_Z$ on $\mathbb{P}_Z \times Z$:
\[
0 \rightarrow q_{\mathbb{P}_Z}^* \left( g^*L^{-1} \right)^{\oplus (n-1)} \otimes p_{\mathbb{P}_Z}^* \mathcal{O}_{\mathbb{P}_Z}(1) \longrightarrow \mathcal{E}_Z \longrightarrow q_{\mathbb{P}_Z}^* \left( g^*\Delta \otimes (g^*L)^{\otimes (n-1)} \right) \rightarrow 0.
\]
The following result identifies the projective spaces $\mathbb{P}_F$ as disjoint linear closed sub-spaces of $\mathbb{P}_Z$, together with an important relation between the universal families $\mathcal{E}_F$ and $\mathcal{E}_Z$.

\begin{proposition}\label{prop_disjoint_images}
Under the above notation, for any $F \in \mathcal{F}$, the vector space $\mathcal{V}_F^*$ is the linear subspace of the $\Gamma$-fixed points of $\mathcal{W}_Z^* \coloneqq \mathrm{H}^1\left( Z, \oplus_{\substack{1 \leq i \leq n-1}} \, g^*\Delta \otimes g^*L^{-n} \right)$. The images of the induced closed embeddings
\[
\iota_F \colon \mathbb{P}_F \hookrightarrow \mathbb{P}_Z
\]
for $F \in \mathcal{F}$ are disjoint. Further, there is a natural isomorphism
\begin{center}
$\left( \mathrm{id}_{\mathbb{P}_F} \times g \right)^* \mathcal{E}_F \cong \left( \iota_F \times \mathrm{id}_Z \right)^* \mathcal{E}_Z.$
\end{center}
\end{proposition}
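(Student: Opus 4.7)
The plan is to handle the three assertions in turn, using the cohomology comparison from Corollary~\ref{cor_cohomologies} for~(1), a character-theoretic argument for~(2), and the universal property of extension classes for~(3).

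For~(1), set $B_F := F \otimes \mathrm{det}(F) \otimes \Delta^{-1} \otimes L^{-n}$. As observed just before the statement, $B_F$ is $P$-polystable of negative $P$-slope, so its pullback $g^*B_F$ is semistable of negative slope on $Z$ and satisfies $\mathrm{H}^0(Z, g^*B_F) = 0$. Corollary~\ref{cor_cohomologies} then identifies $\mathcal{V}_F^* = \mathrm{H}^1((X,P), B_F)$ with $\mathrm{H}^1(Z, g^*B_F)^{\Gamma}$. Using $g^*L_j \cong \chi_j \otimes \mathcal{O}_Z$, one has $g^*B_F \cong \oplus_{j=1}^{n-1} \psi_j \otimes g^*\Delta^{-1}\otimes g^*L^{-n}$ as a $\Gamma$-equivariant bundle, with $\psi_j := \chi_j \otimes \bigotimes_i \chi_i$, whose underlying non-equivariant bundle is $(g^*\Delta^{-1}\otimes g^*L^{-n})^{\oplus (n-1)}$. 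Setting $H := \mathrm{H}^1(Z, g^*\Delta^{-1}\otimes g^*L^{-n})$, this gives a canonical identification $\mathrm{H}^1(Z, g^*B_F) \cong H^{\oplus(n-1)} = \mathcal{W}_Z^*$ of $k$-vector spaces, with the $\Gamma$-action twisted by $\psi_j$ on the $j$-th summand. Taking invariants yields $\mathcal{V}_F^* = \oplus_j H_{\psi_j^{-1}}$, realized as a linear subspace of $\mathcal{W}_Z^*$ consisting of $\Gamma$-semi-invariants of the stated type.

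For~(2), disjointness of the linear embeddings $\iota_F$ reduces to showing $\mathcal{V}_F^* \cap \mathcal{V}_{F'}^* = 0$ for non-isomorphic $F, F' \in \mathcal{F}$. A nonzero $v = (v_j)$ in the intersection has some $v_j \neq 0$ which lies in both $H_{(\psi_j^F)^{-1}}$ and $H_{(\psi_j^{F'})^{-1}}$, forcing $\psi_j^F = \psi_j^{F'}$. Tracking the characters across all $j$, and using that each $L_j$ is determined by $\chi_j$ (via injectivity of the Picard pullback along the tamely ramified cover $\tau$, which is a descent statement for line bundles), we conclude that the multisets of characters agree, giving $F \cong F'$, a contradiction.

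For~(3), the bundle $(\mathrm{id}_{\mathbb{P}_F} \times g)^*\mathcal{E}_F$ on $\mathbb{P}_F \times Z$ is, after the non-equivariant identifications $g^*(F \otimes L^{-1}) \cong (g^*L^{-1})^{\oplus(n-1)}$ and $g^*(\Delta \otimes L^{n-1} \otimes \mathrm{det}(F)^{-1}) \cong g^*\Delta \otimes g^*L^{n-1}$, a family of non-splitting extensions exactly of the type parametrized by $\mathbb{P}_Z$. The fibre-wise extension class at $v \in \mathbb{P}_F$ is the image $g^*(v) = \iota_F(v)$, so by the universal property of $\mathcal{E}_Z$ established in Proposition~\ref{prop_extn_classes_F} (applied to the trivial decomposition giving $\mathbb{P}_Z$), the family is classified by the morphism $\iota_F$, yielding the natural isomorphism $(\mathrm{id}_{\mathbb{P}_F}\times g)^*\mathcal{E}_F \cong (\iota_F \times \mathrm{id}_Z)^*\mathcal{E}_Z$. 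The main obstacle is the disjointness in~(2): the character argument is transparent, but it hinges on the fact that a line bundle $L \in \mathrm{Pic}(X,P)$ with $g^*L \cong \chi \otimes \mathcal{O}_Z$ is determined by $\chi$, which needs careful descent-theoretic justification.
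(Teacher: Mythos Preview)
Your treatment of the first and third assertions follows the paper's own route: both are handled there by invoking Proposition~\ref{prop_relation_universal}, which packages the cohomology comparison of Corollary~\ref{cor_cohomologies} together with the universal property of the extension families to produce the linear inclusion $\mathcal{V}_F^* \hookrightarrow \mathcal{W}_Z^*$ and the isomorphism $(\mathrm{id}_{\mathbb{P}_F}\times g)^*\mathcal{E}_F \cong (\iota_F \times \mathrm{id}_Z)^*\mathcal{E}_Z$ in one stroke. Your explicit description $\mathcal{V}_F^* = \bigoplus_j H_{\psi_j^{-1}}$ is more detailed than what the paper records but compatible with it.

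For disjointness, your approach is genuinely different from the paper's, and it has a real gap. The paper does \emph{not} argue linearly inside $\mathcal{W}_Z^*$. Instead it reduces to showing that a single bundle $E$ on $(X,P)$ cannot sit simultaneously in non-split extensions of the required shape for two distinct $F, F' \in \mathcal{F}$; then $F$ and $F'$ are distinct rank-$(n-1)$ sub-bundles of $E\otimes L$, so $F + F' = E\otimes L$, and the exact sequence $0 \to F \cap F' \to F \oplus F' \to E\otimes L \to 0$ is used to force $\deg_P(E\otimes L) = 0$, contradicting $\mu_P(E\otimes L) > 0$.

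Your character argument breaks at the sentence ``tracking the characters across all $j$ \ldots\ we conclude that the multisets of characters agree.'' From a nonzero $v = (v_j)$ in the intersection you obtain $\psi_j^F = \psi_j^{F'}$ only for those $j$ with $v_j \neq 0$. Worse, even granting equality for \emph{all} $j$, the assignment $(\chi_1,\ldots,\chi_{n-1}) \mapsto (\psi_1,\ldots,\psi_{n-1})$ with $\psi_j = \chi_j \cdot \prod_i \chi_i$ is not injective: writing $D = \prod_i \chi_i$ one has $D^n = \prod_j \psi_j$, so $D$ is determined only up to an $n$-th root of unity in the character group, and hence so is each $\chi_j = \psi_j/D$. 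Concretely, for $n=3$ and any character $\omega$ of $\Gamma$ of order $3$, the tuples $(1,1)$ and $(\omega,\omega)$ give the same $(\psi_1,\psi_2) = (1,1)$, while the corresponding $F = \mathcal{O}^{\oplus 2}$ and $F' = L_\omega^{\oplus 2}$ are non-isomorphic. So the multiset $\{\chi_j\}$ cannot be recovered from the $\psi_j$'s, and the argument does not close. The point you flag at the end---that $L_j$ is determined by $\chi_j$---is correctly handled by the injectivity of $\tau^*$ (hence $g^*$) on line bundles, but that is not where the difficulty lies.
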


\begin{proof}
Proposition~\ref{prop_relation_universal} states that $\mathcal{V}_F^*$ is a linear subspace of the $\Gamma$-fixed points of $\mathcal{W}_Z^*$, hence $\mathbb{P}_F$ is realized as a projective subspace of $\mathbb{P}_Z$ via a closed immersion
\[
\iota_F \colon \mathbb{P}_F \hookrightarrow \mathbb{P}_Z;
\]
further, there is a natural isomorphism
\[
\left( \mathrm{id}_{\mathbb{P}_F} \times g \right)^* \mathcal{E}_F \cong \left( \iota_F \times \mathrm{id}_Z \right)^* \mathcal{E}_Z.
\]

We need to show that the images of $\iota_F$ are mutually disjoint. By our construction, it is enough to show the following: if $E$ is a bundle on $(X,P)$ that simultaneously sit in the non-split extensions
\[
0 \longrightarrow F \otimes L^{-1} \longrightarrow E \longrightarrow \Delta \otimes L^{\otimes (n-1)} \otimes \mathrm{det}(F)^{-1} \longrightarrow 0, \, \text{and}
\]
\[
0 \longrightarrow F' \otimes L^{-1} \longrightarrow E \longrightarrow \Delta \otimes L^{\otimes (n-1)} \otimes \mathrm{det}(F')^{-1} \longrightarrow 0\]
for $F, \, F' \in \mathcal{F}$, then $F = F'$. If the above hold for $F \neq F'$, then $E \otimes L$ is generated by $F$ and $F'$, and hence, $F + F' = E \otimes L$. Consequently, we have an exact sequence of bundles on $(X,P)$:
\[
0 \longrightarrow F \cap F' \longrightarrow F \oplus F' \longrightarrow F + F' = E \otimes L \longrightarrow 0.
\]
Then necessarily, $E \otimes L$ is a bundle of $P$-degree $0$, contradicting the fact that $\mu_P(E \otimes L) = \mathrm{deg}_P(\Delta)/n + \mathrm{deg}_P(L) = \frac{2g_Z +1}{|\Gamma|} > 0$.
\end{proof}

\begin{remark}\label{rmk_overparameterizing_P}
In view of the above results, we see that each $P$-semistable bundle $E$ on $(X,P)$ of rank $n$ and of determinant $\Delta$ corresponds to the $k$-point of the space
\[
\mathbb{P}_{(X,P)} \coloneqq \sqcup_{F \in \mathcal{F}} \, \mathbb{P}_F.
\]
We also have a universal extension $\mathcal{E}_{(X,P)}$ on $\mathbb{P}_{(X,P)} \times (X,P)$ which restricts to the universal extension $\mathcal{E}_F$ on $\mathbb{P}_F \times (X,P)$; more precisely, if $l_F \colon \mathbb{P}_F \hookrightarrow \mathbb{P}_{(X,P)}$ is the inclusion map, then $\mathcal{E}_{(X,P)} = \oplus_{F \in \mathcal{F}} \, \left( l_F \times \mathrm{id}_{(X,P)} \right)_* \mathcal{E}_F$. Via the closed immersion $\iota_P \coloneqq \sqcup \, \iota_F$, we realize $\mathbb{P}_{(X,P)}$ as a closed projective subspace of $\mathbb{P}_Z$. It can be easily seen that
\begin{center}
$\left( \mathrm{id}_{\mathbb{P}_{(X,P)}} \times g \right)^* \mathcal{E}_{(X,P)} \cong \left( \iota_P \times \mathrm{id}_Z \right)^* \mathcal{E}_Z.$    
\end{center}
\end{remark}

We have the following important consequence of our construction.

\begin{lemma}\label{lem_implication_of_F-uniqueness}
Let $F \in \mathcal{F}$. Let $E$ and $E'$ be $P$-semistable bundles on $(X,P)$, representing two $k$-points in $\mathbb{P}_F$. Any isomorphism $\alpha \colon g^*E \rightarrow g^*E'$ as bundles on $Z$ is $\Gamma$-equivariant, and there exists a unique isomorphism $\beta \colon E \longrightarrow E'$ of bundles such that $\alpha = g^*\beta$.
\end{lemma}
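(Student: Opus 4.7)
Since $[E], [E'] \in \mathbb{P}_F(k)$, both $E$ and $E'$ sit in non-split extensions
\[
0 \to F \otimes L^{-1} \xrightarrow{i} E \xrightarrow{\pi} Q \to 0, \qquad 0 \to F \otimes L^{-1} \xrightarrow{i'} E' \xrightarrow{\pi'} Q \to 0,
\]
where $Q \coloneqq \Delta \otimes L^{\otimes(n-1)} \otimes \mathrm{det}(F)^{-1}$. The plan is to pull these back along $g$, use the identification $g^*(F \otimes L^{-1}) \cong \bigoplus_j \chi_j \otimes g^*L^{-1}$ of $\Gamma$-equivariant bundles to bring the $\Gamma$-action into play, derive from slope considerations that $\alpha$ respects these extensions, and then conclude $\Gamma$-equivariance by comparing $\alpha$ against the $\chi_j$-isotypic structure cut out by $\mathcal{V}_F^* \subset \mathcal{W}_Z^*$.

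First, I would establish the vanishings $\mathrm{Hom}_Z(g^*Q, g^*E') = 0$ and $\mathrm{Hom}_Z(g^*Q, g^*(F \otimes L^{-1})) = 0$. The bundles $E' \otimes Q^{-1}$ and $(F \otimes L^{-1}) \otimes Q^{-1}$ are $P$-semistable of strictly negative $P$-slope by~\eqref{eq_ass_degree_M}; by \Cref{prop_easy}~\eqref{j:0}, their pullbacks to $Z$ are semistable of strictly negative $Z$-slope, hence have no global sections. Applying $\mathrm{Hom}_Z(-, g^*E')$ to the extension of $g^*E$, the first vanishing shows that $\alpha$ is determined by the restriction $\alpha \circ g^*i \in \mathrm{Hom}_Z(g^*(F \otimes L^{-1}), g^*E')$.

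The main obstacle is to show that $\alpha$ preserves the sub-bundle, i.e., that $\theta \coloneqq g^*\pi' \circ \alpha \circ g^*i \in \mathrm{Hom}_Z(g^*(F \otimes L^{-1}), g^*Q)$ vanishes. The key input is that $P$-semistability of $E$ forces each component of $[\xi] \in \bigoplus_j \mathrm{Ext}^1_{(X,P)}(Q, L_j \otimes L^{-1})$ to be non-zero: a zero $j$-th component would produce a direct summand $L_j \otimes L^{-1}$ of $E$ and a rank-$(n-1)$ complement $E''$ with $\mu_P(E'') = (\deg_P(\Delta) + \deg(M))/(n-1) > \mu_P(E) = \deg_P(\Delta)/n$ (using $\deg_P(\Delta) + n\deg(M) > 0$ from~\eqref{eq_ass_degree_M}), contradicting $P$-semistability. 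The plan is then to show that a non-zero $\theta$ would produce a rank-$(n-1)$ sub-bundle $S \coloneqq \alpha^{-1}(g^*i'(g^*(F \otimes L^{-1})))$ of $g^*E$ distinct from the canonical sub, endowing $g^*E$ with a second extension realization whose sub is $\cong g^*(F \otimes L^{-1})$; tracking the $\Gamma$-structure inherited by $S$ and invoking the disjointness of the images $\{\iota_{F'}(\mathbb{P}_{F'})\}_{F' \in \mathcal{F}}$ in \Cref{prop_disjoint_images}, which pins the point $[g^*E]$ to a unique $\mathbb{P}_F$, yields the desired contradiction.

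Once $\theta = 0$, the isomorphism $\alpha$ restricts to $\alpha_F \in \mathrm{Aut}_Z(g^*(F \otimes L^{-1})) = GL_{n-1}(k)$ on the sub and induces a scalar $\alpha_Q \in k^\times$ on the quotient, related by $\alpha_F \cdot [\xi] = \alpha_Q \cdot [\xi']$ in $\mathcal{W}_Z^* = \mathrm{Ext}^1_Z(g^*Q, g^*(F \otimes L^{-1}))$. By \Cref{prop_disjoint_images}, $\mathcal{V}_F^*$ is the subspace of $\mathcal{W}_Z^*$ whose $j$-th component lies in the $\chi_j$-isotypic part of $H^1(Z, g^*L^{-1} \otimes g^*Q^{-1})$ under the $\Gamma$-action; since $[\xi], [\xi']$ have all components non-zero, the compatibility forces the entries $(\alpha_F)_{ij}$ to vanish whenever $\chi_i \neq \chi_j$, so $\alpha_F$ is block-diagonal with respect to the isotypic decomposition, hence $\Gamma$-equivariant. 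Combined with $\mathrm{Hom}_Z(g^*Q, g^*(F \otimes L^{-1})) = 0$, this yields the $\Gamma$-equivariance of the full $\alpha$, and faithfully flat descent along the $\Gamma$-Galois cover $g$ produces the unique $\beta \colon E \to E'$ with $g^*\beta = \alpha$.
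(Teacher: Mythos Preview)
Your overall strategy---reduce to the extension structure, show that $\alpha$ must preserve the sub, and then analyse the induced $\alpha_F$ through the isotypic decomposition of $\mathcal{W}_Z^*$---is different from the paper's, which instead invokes the Cartesian-diagram construction of $F$ in Proposition~\ref{prop_main_bounded} to obtain the restriction directly and then argues that $\alpha|_{\mathrm{rest}}$ is built from line-bundle isomorphisms $g^*L_j\to g^*L_{\sigma(j)}$, each automatically $\Gamma$-equivariant.  Your route is more cohomological, but it has two genuine gaps.

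The argument for $\theta=0$ is circular.  The sub-bundle $S=\alpha^{-1}\bigl(g^*i'(g^*(F\otimes L^{-1}))\bigr)\subset g^*E$ is isomorphic to $g^*(F\otimes L^{-1})$ only as a \emph{plain} bundle on $Z$; it has no reason to be $\Gamma$-stable, because $\alpha$ is not yet known to be $\Gamma$-equivariant---that is exactly what you are trying to prove.  Hence $S$ does not descend to $(X,P)$ and does not produce a $k$-point of any $\mathbb{P}_{F'}$.  Moreover, Proposition~\ref{prop_disjoint_images} only says that the closed subschemes $\iota_{F'}(\mathbb{P}_{F'})\subset\mathbb{P}_Z$ are pairwise disjoint; it does \emph{not} say that a fixed bundle $g^*E$ on $Z$ corresponds to a unique $k$-point of $\mathbb{P}_Z$.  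The same bundle can sit as the middle term of many inequivalent extensions in $\mathbb{P}_Z$, so a ``second extension realisation'' of $g^*E$ yields no contradiction with disjointness.  Secondly, even granting $\theta=0$, the relation $\alpha_F\cdot[g^*\xi]=\alpha_Q\cdot[g^*\xi']$ only forces, for each $\chi\neq\chi_i$, the vanishing of $\sum_{j:\,\chi_j=\chi}(\alpha_F)_{ij}[g^*\xi]_j$.  When several indices share the same character (equivalently $L_j\cong L_{j'}$), the corresponding components $[g^*\xi]_j$ live in the same summand of $\mathcal{W}_Z^*$ and need not be linearly independent---your earlier step only shows each is individually non-zero---so non-$\Gamma$-equivariant $\alpha_F$ satisfying the compatibility are not excluded.
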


\begin{proof}
First note that if $R \in \mathrm{Pic}(X,P)$, then any automorphism of $g^*R$ is $\Gamma$-equivariant; this is because any such automorphism is given by a non-zero scalar multiple of the identity map.

Let $F \cong \oplus_{1 \leq j \leq n-1} \, L_j$ where each $L_j \in \mathrm{Pic}(X,P)$ with $g^*L_j \cong \chi_j \otimes \mathcal{O}_Z$ as $\Gamma$-equivariant line bundle for some character $\chi_j$ of $\Gamma$ over $k$. Let $\alpha \colon g^*E \longrightarrow g^*E'$ be an isomorphism. By our construction in \Cref{prop_main_bounded} (see the Cartesian diagram~\eqref{gDi}), the automorphism $\alpha$ restricts to an automorphism $\alpha|_{\text{\rm rest}}$ of $g^*(F \otimes L^{-1})$, and this induces an automorphism $\alpha_q$ of the quotient $\Gamma$-equivariant line bundle $g^*\left( \Delta \otimes L^{\otimes (n-1)} \otimes \mathrm{det}(F)^{-1} \right)$. By the observation in the first paragraph, $\alpha_q$ is $\Gamma$-equivariant. Also by the structure of $F$, the automorphism $\alpha|_{\text{\rm rest}}$ uniquely corresponds to a an element $\sigma$ of the Symmetric group $S_{n-1}$ together with isomorphisms $\alpha_{\sigma} \colon g^*L_j \longrightarrow g^*L_{\sigma(j)}$ for each $1 \leq j \leq n-1$. Once again, the isomorphisms $\alpha_{\sigma}$ are $\Gamma$-equivariant, and hence, the automorphism $\alpha|_{\text{\rm rest}}$ is also $\Gamma$-equivariant. We obtain a commutative diagram with exact rows:
\begin{equation*}
\begin{tikzcd}
0 \arrow[r] & g^*(F \otimes L^{-1}) \arrow[d, "\alpha|_{\text{\rm rest}}"] \arrow[r] & g^*E \arrow[d, "\alpha"] \arrow[r] & g^*\left( \Delta \otimes L^{\otimes (n-1)} \otimes \mathrm{det}(F)^{-1} \right) \arrow[d, "\alpha_q"] \arrow[r] & 0 \\
0 \arrow[r] & g^*(F \otimes L^{-1}) \arrow[r] & g^*E' \arrow[r] & g^*\left( \Delta \otimes L^{\otimes (n-1)} \otimes \mathrm{det}(F)^{-1} \right) \arrow[r] & 0
\end{tikzcd}
\end{equation*}
where the terminal vertical arrows are $\Gamma$-equivariant isomorphisms. This shows that $g^*\alpha$ is $\Gamma$-equivariant.




By the equivalence of categories $u^* \colon \mathrm{Vect}(X,\tilde{P}) \longrightarrow \mathrm{Vect}^{\Gamma}(Z)$, the $\Gamma$-equivariant isomorphism $\alpha$ descents to an isomorphism of the bundles $\tau^*E \xrightarrow{\cong} \tau^*E'$ on $(X,\tilde{P})$. Finally, using the projection formula for the cover $\tau$, we see that the above isomorphism descends to an isomorphism $\beta \colon E \longrightarrow E'$, proving the result.
\end{proof}

Let us see that the family $\mathcal{E}_{(X,P)}$ locally induces any other semistable family.

\begin{theorem}\label{thm_local_families}
Let $S$ be a $k$-scheme of finite type. Let $E$ be a family of $P$-semistable bundles of rank $n$ and determinant $\Delta$ on $(X,P)$, parameterized by $S$. Then, under the above notation, the following hold.
\begin{enumerate}
\item There is a Zariski open covering $S = \cup_i S_i$ of $S$ together with morphisms $\phi_i \colon S_i \longrightarrow \mathbb{P}_Z$ such that \label{zar:1}
\begin{center}
$\left( \mathrm{id}_{S_i} \times g \right)^* \left( E|_{S_i \times (X,P)} \right) \cong \left( \phi_i \times \mathrm{id}_Z \right)^* \mathcal{E}_Z.$
\end{center}
\item For any connected component $T$ of $S$, with induced Zariski open covering $T = \cup_i T_i$ from~\eqref{zar:1}, there is a unique bundle $F \in \mathcal{F}$ such that each restriction morphism $\phi_i \colon T_i \longrightarrow \mathbb{P}_Z$ factors uniquely as a composition
\[
\phi_i \colon T_i \overset{\psi_i}\longrightarrow \mathbb{P}_F \overset{\iota_F}\hookrightarrow \mathbb{P}_Z
\]
for a morphism $\psi_i \colon T_i \longrightarrow \mathbb{P}_F$, and we have an isomorphism
\[
E|_{T_i \times (X,P)} \cong \left( \psi_i \times \mathrm{id}_{(X,P)} \right)^* \mathcal{E}_F.
\]
The bundle $F$ corresponding to $T$ is uniquely determined by any closed point of $T$.\label{zar:2}
\end{enumerate}
The above open coverings, the maps $\phi_i$'s and $\psi_i$'s, and the above isomorphism are independent of any family in an equivalence class of $E$ under the usual equivalence relation on families.
\end{theorem}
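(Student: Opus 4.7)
The overall plan is to reduce to the analogous classical statement on the smooth projective curve $Z$, then use the stratification $\mathbb{P}_{(X,P)} = \sqcup_F \mathbb{P}_F \hookrightarrow \mathbb{P}_Z$ from Proposition~\ref{prop_disjoint_images} together with connectedness and descent along $g$ to produce the refined statement on $(X,P)$.

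First, I would form the pullback family $\widetilde{E} \coloneqq (\mathrm{id}_S \times g)^* E$ on $S \times Z$. By Proposition~\ref{prop_easy}~\eqref{j:0}, this is a family of semistable bundles on $Z$ of rank $n$ with determinant $g^*\Delta$ parameterized by $S$. Applying the corresponding theorem for the classical Faltings construction on $Z$ (the analogous statement from \Cref{sec_Falting_Construction} relating such families to the universal family $\mathcal{E}_Z$ on $\mathbb{P}_Z \times Z$), I obtain a Zariski open cover $S = \bigcup_i S_i$ and morphisms $\phi_i \colon S_i \longrightarrow \mathbb{P}_Z$ such that $(\mathrm{id}_{S_i} \times g)^*\bigl(E|_{S_i \times (X,P)}\bigr) \cong (\phi_i \times \mathrm{id}_Z)^* \mathcal{E}_Z$. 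This settles~\eqref{zar:1}.

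For~\eqref{zar:2}, I fix a connected component $T$ of $S$ with induced cover $T = \bigcup_i T_i$. For each closed point $t \in T$, Proposition~\ref{prop_main_bounded} produces a canonical rank $(n-1)$ sub-bundle $F_{n-1}(E_t) \subset E_t \otimes L$ belonging to $\mathcal{F}$, which determines a $k$-point of some $\mathbb{P}_{F(t)}$. By Remark~\ref{rmk_overparameterizing_P} and Proposition~\ref{prop_disjoint_images}, the image of $\phi_i(t)$ lies inside $\iota_{F(t)}(\mathbb{P}_{F(t)}) \subset \mathbb{P}_Z$, and the closed subspaces $\iota_F(\mathbb{P}_F)$ are pairwise disjoint as $F$ varies in $\mathcal{F}$. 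Hence the preimages $\phi_i^{-1}(\iota_F(\mathbb{P}_F)) \cap T_i$ form a decomposition of $T_i$ into open-and-closed pieces; connectedness of $T$ forces a single $F \in \mathcal{F}$ to work uniformly, and the closed immersion $\iota_F$ yields the unique factorization $\phi_i|_{T_i} = \iota_F \circ \psi_i$.

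The main obstacle is the final descent: from the isomorphism
\[
(\mathrm{id}_{T_i} \times g)^* \bigl(E|_{T_i \times (X,P)}\bigr) \;\cong\; (\psi_i \times \mathrm{id}_Z)^* (\iota_F \times \mathrm{id}_Z)^* \mathcal{E}_Z \;\cong\; (\mathrm{id}_{T_i} \times g)^* \bigl((\psi_i \times \mathrm{id}_{(X,P)})^* \mathcal{E}_F\bigr),
\]
where the second isomorphism uses Proposition~\ref{prop_disjoint_images}, I need to recover an actual isomorphism of families on $T_i \times (X,P)$. To do this, I would first observe that any such fibre-wise pullback isomorphism is automatically $\Gamma$-equivariant by a relative version of Lemma~\ref{lem_implication_of_F-uniqueness}, applied to the families on $T_i \times Z$ (both of whose fibres represent the same point of $\mathbb{P}_F$). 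Then the equivalence $u^* \colon \mathrm{Vect}(T_i \times (X,\tilde{P})) \xrightarrow{\simeq} \mathrm{Vect}^{\Gamma}(T_i \times Z)$ descends the isomorphism to $T_i \times (X,\tilde{P})$, and the projection formula together with $\tau_*\mathcal{O}_{(X,\tilde{P})} \cong \mathcal{O}_{(X,P)}$ (using exactness of $\tau_*$ for the tame cover $\tau$) descends it further to $T_i \times (X,P)$, as required. Finally, independence under the usual equivalence relation on families is immediate from the universality of the extensions $\mathcal{E}_F$ stated in Proposition~\ref{prop_extn_classes_F}: replacing $E$ by $E \otimes p_S^*N$ corresponds precisely to the canonical $\mathrm{H}^0(S,\mathcal{O}_S^\times)$-action under which the universal extension is taken, so the maps $\phi_i$ and $\psi_i$ and the induced isomorphisms are unchanged.
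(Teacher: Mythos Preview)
Your proposal is correct and follows essentially the same approach as the paper: reduce~\eqref{zar:1} to the classical statement on $Z$ via \cite[Proposition~3.4]{Hein}, then for~\eqref{zar:2} use disjointness of the $\mathbb{P}_F$'s together with connectedness of $T$ to pin down a single $F$, chain the isomorphisms through Proposition~\ref{prop_disjoint_images}, invoke Lemma~\ref{lem_implication_of_F-uniqueness} to force $\Gamma$-equivariance, and descend first along $u$ and then along $\tau$ using exactness of $\tau_*$ and $\tau_*\mathcal{O}_{(X,\tilde P)}\cong\mathcal{O}_{(X,P)}$. The only cosmetic difference is that the paper applies Lemma~\ref{lem_implication_of_F-uniqueness} fibrewise at each closed point of $T$ (and then concludes the global isomorphism is $\Gamma$-equivariant), whereas you speak of a ``relative version''; and for the independence clause the paper traces through the explicit construction of the $S_i$'s from \cite{Hein} rather than appealing to the universal property of $\mathcal{E}_F$ --- but both justifications are valid and amount to the same thing.
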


\begin{proof}
The first statement~\eqref{zar:1} is classical; see \cite[Proposition~3.4]{Hein}.

Let us prove~\eqref{zar:2}. Let $T$ be as in the statement. For any closed point $t \in T$, the bundle $E_t \otimes L$ contains a uniquely determined $F \in \mathcal{F}$ as its sub-bundle by Proposition~\ref{prop_main_bounded} and Proposition~\ref{prop_disjoint_images}. Since $T$ is also connected, the image $\phi_i(T)$ is contained in $\mathbb{P}_F$ for each $i$. Moreover, this $F$ is uniquely determined by any closed point $t \in T$. We want to show that for each $i$,
\[
E|_{T_i \times (X,P)} \cong \left( \psi_i \times \mathrm{id}_{(X,P)} \right)^* \mathcal{E}_F.
\]
For this, we may replace $T_i$ by $T$. We have a factorization
$\phi \colon T \overset{\psi}\longrightarrow \mathbb{P}_F \overset{\iota_F}\hookrightarrow \mathbb{P}_Z$.

Since $\phi = \iota_F \circ \psi$, using Proposition~\ref{prop_disjoint_images} and the first statement~\eqref{zar:1}, we have the isomorphisms
\begin{multline}\label{multi:isom}
    \left( \mathrm{id}_T \times g \right)^* \left( E|_{T \times (X,P)} \right) \cong \left( \phi \times \mathrm{id}_Z \right)^*\mathcal{E_Z} \cong \left( \psi \times \mathrm{id}_Z \right)^* \left( \iota_F \times \mathrm{id}_Z \right)^* \mathcal{E}_Z\\
    \cong \left( \psi \times \mathrm{id}_Z \right)^*\left( \mathrm{id}_{\mathbb{P}_F} \times g \right)^* \mathcal{E}_F \cong \left( \mathrm{id}_T \times g \right)^* \left( \psi \times \mathrm{id}_{(X,P)} \right)^* \mathcal{E}_F.
\end{multline}
For any closed point $t \in T$, the bundles $E_t$ and $\left( \left( \psi \times \mathrm{id}_{(X,P)} \right)^*\mathcal{E}_F \right)_t$ on $(X,P)$ are $P$-semistable bundles of rank $n$ and determinant $\Delta$, representing two points in $\mathbb{P}_F$. Lemma~\ref{lem_implication_of_F-uniqueness} says that the above isomorphism~\eqref{multi:isom} is $\Gamma$-equivariant where we endow $T \times Z$ with the induced $\Gamma$-action of $Z$.

As $[(T \times Z)/\Gamma] = T \times (X,\tilde{P})$ by our construction and Notation~\ref{not_fixed}, we have the canonical isomorphism
\[
\left( \mathrm{id}_T \times \tau \right)^*\left( E|_{T \times (X,P)} \right) \cong \left( \mathrm{id}_T \times \tau \right)^* \left( \psi \times \mathrm{id}_{(X,P)} \right)^* \mathcal{E}_F
\]
on the stack $T \times (X,\tilde{P})$. Since $\left( \mathrm{id}_T \times \tau \right)$ is an exact functor, and $\left( \mathrm{id}_T \times \tau \right)_* \mathcal{O}_{T \times (X, \tilde{P})} \cong \mathcal{O}_{T \times (X,P)}$ (which follows from our construction and the universal property of the Coarse moduli morphism), we conclude that
\[
E|_{T \times (X,P)} \cong \left( \psi \times \mathrm{id}_{(X,P)} \right)^* \mathcal{E}_F
\]
on $T \times (X,P)$.

Finally, the construction of any $S_i \subset S$ containing a closed point $s \in S$ in \cite[Proposition~3.4]{Hein} is such that $S_i$ is the maximal affine open subset on which the trivial sub-bundle $\mathcal{O}_Z^{\oplus (n-1)}$ of $g^*(E_s \otimes L)$ lifts to a sub-bundle $\mathcal{O}_{S \times Z}^{\oplus (n-1)}$ of $\left( \mathrm{id}_S \times g \right)^* E \otimes q_S^*g^*L$ under the canonical surjection
\[
\beta \colon p_{S,*}\left( \left( \mathrm{id}_S \times g \right)^* E \otimes q_S^* g^*L \right) \longrightarrow \left( \left( \mathrm{id}_S \times g \right)^* E \otimes q_S^* g^*L \right) \otimes k(s) \cong \mathrm{H}^0\left( Z, g^*\left( E_s \otimes L \right) \right).
\]
Then the universal property of the family $\mathcal{E}_Z$ canonically determines $\phi_i$. If $E' \cong E \otimes p_S^*N$ for a line bundle $N$ on $S$, none of the above discussion changes. So, the induced cover of $T$, the maps $\psi_i$'s remain unchanged, and we get the isomorphisms $E'|_{T_i \times (X,P)} \cong \left( \psi_i \times \mathrm{id}_{(X,P)} \right)^* \mathcal{E}_F.$
\end{proof}


We conclude this section with results on the associated graded bundle $\mathrm{gr}_E$ of a $P$-semistable bundle $E$. It is easy to see that the $P$-polystable bundle $\mathrm{gr}_E$ has rank $n$ and $ \mathrm{det}(\mathrm{gr}_E)\cong \mathrm{det}(E) \cong \Delta$. We show that the closed points in $\mathbb{P}_{(X,P)}$ corresponding to the bundles $E$ and $\mathrm{gr}_E$ both lie in the same connected component $\mathbb{P}_F$. Further, we establish that the $\mathrm{S}$-equivalence relation of bundles in a component $\mathbb{P}_F$ is the same as the $\mathrm{S}$-equivalence relation of the corresponding $\Gamma$-equivariant bundles; compare with Proposition~\ref{prop_S-equivariance_wrt_set_up}.

\begin{proposition}\label{prop_S_equivariance}
Let $E \in \mathrm{Vect}(X,P)$ has rank $n$ and determinant $\Delta$. Then both the closed points in $\mathbb{P}_{(X,P)}$ corresponding to $E$ and $\mathrm{gr}_E$ lie in a uniquely determined (by $E$) connected component $\mathbb{P}_F$. Further, for two bundles $E_1$ and $E_2$ whose points lie in a $\mathbb{P}_F$, the bundles $E_1$ and $E_2$ are $\mathrm{S}$-equivalent if and only if $g^*E_1$ and $g^*E_2$ are $\mathrm{S}$-equivalent.
\end{proposition}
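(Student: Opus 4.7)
The plan is to prove the first claim by an iterated one-parameter deformation argument, and then deduce the second claim from the first together with Proposition~\ref{prop_S-equivariance_wrt_set_up} and Lemma~\ref{lem_implication_of_F-uniqueness}.

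For the first claim, I would first establish the following local degeneration step: if $0 \to E' \to \widehat{E} \to E'' \to 0$ is a short exact sequence of $P$-semistable bundles of common $P$-slope $\mu_P(E)$, with $\widehat{E}$ of rank $n$ and determinant $\Delta$, then $\widehat{E}$ and $E' \oplus E''$ give closed points of one and the same component $\mathbb{P}_F$. Indeed, scaling the extension class of $\widehat{E}$ along the affine line inside $\mathrm{Ext}^1_{(X,P)}(E'', E')$ produces a family $\mathcal{E}$ on $\mathbb{A}^1_t \times (X,P)$ of bundles of rank $n$ and determinant $\Delta$ with $\mathcal{E}_t \cong \widehat{E}$ for $t \neq 0$ and $\mathcal{E}_0 \cong E' \oplus E''$; every fiber is $P$-semistable. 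Since $\mathbb{A}^1$ is connected, the uniqueness clause in Theorem~\ref{thm_local_families}(2) forces a single $\mathbb{P}_F$ to absorb both distinguished fibers. Now iterate along a Jordan--H\"older filtration $0 = E_0 \subset E_1 \subset \cdots \subset E_l = E$: first degenerate $E$ to $E_{l-1} \oplus (E/E_{l-1})$; then, keeping $E/E_{l-1}$ as a constant direct summand (so the new family is the direct sum of the deformation family with $q_{\mathbb{A}^1}^*(E/E_{l-1})$, preserving rank and determinant and fibrewise $P$-semistability), apply the same construction to the summand $E_{l-1}$ and continue. Consecutive families share a fiber, so by the uniqueness clause in Theorem~\ref{thm_local_families}(2) every intermediate bundle, and in particular $\mathrm{gr}_E$, represents a point of one and the same $\mathbb{P}_F$.

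For the forward direction of the second claim, $E_1 \sim_{\mathrm{S}} E_2$ gives $\mathrm{gr}_{E_1} \cong \mathrm{gr}_{E_2}$, whence pulling back by $g$ and using the identification $g^*\mathrm{gr}_{E_i} \cong \mathrm{gr}_{g^*E_i}$ as bundles on $Z$ from Proposition~\ref{prop_S-equivariance_wrt_set_up}(2), one obtains $\mathrm{gr}_{g^*E_1} \cong \mathrm{gr}_{g^*E_2}$, i.e., $g^*E_1 \sim_{\mathrm{S}} g^*E_2$.

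For the converse, which is the subtler half: suppose $g^*E_1 \sim_{\mathrm{S}} g^*E_2$, so $g^*\mathrm{gr}_{E_1} \cong g^*\mathrm{gr}_{E_2}$ as (a priori non-$\Gamma$-equivariant) bundles on $Z$ by Proposition~\ref{prop_S-equivariance_wrt_set_up}(2). By the first claim, $\mathrm{gr}_{E_1}$ and $\mathrm{gr}_{E_2}$ lie in the same component $\mathbb{P}_F$ as $E_1$ and $E_2$. Lemma~\ref{lem_implication_of_F-uniqueness} then promotes any isomorphism $g^*\mathrm{gr}_{E_1} \xrightarrow{\cong} g^*\mathrm{gr}_{E_2}$ to a $\Gamma$-equivariant isomorphism descending to $\mathrm{gr}_{E_1} \cong \mathrm{gr}_{E_2}$ on $(X,P)$, so $E_1 \sim_{\mathrm{S}} E_2$. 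The main obstacle I anticipate is the determinant-and-semistability bookkeeping in the iterated deformation step of the first claim; beyond that, the proof is a clean application of the apparatus already developed.
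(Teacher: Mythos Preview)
Your argument is correct, and the second claim is handled exactly as in the paper: the forward direction via Proposition~\ref{prop_S-equivariance_wrt_set_up}, the converse by combining the first claim with $\mathrm{gr}_{g^*E_i} \cong g^*\mathrm{gr}_{E_i}$ and Lemma~\ref{lem_implication_of_F-uniqueness}.

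For the first claim, however, your route is genuinely different from the paper's. The paper does not deform: it works statically with the socle filtration $0 = \mathcal{S}_0(E) \subset \cdots \subset \mathcal{S}_l(E) = E$ and, writing $F = \bigoplus_{j} L_j$, partitions the indices $j$ according to the smallest $i$ for which the inclusion $L_j \hookrightarrow E \otimes L$ factors through $\mathcal{S}_i(E) \otimes L$. This yields sub-bundles $F_i = \bigoplus_{j \in I_i} L_j \subset \mathcal{S}_i(E) \otimes L$ and induces inclusions $F_{i+1}/F_i \hookrightarrow \big(\mathcal{S}_{i+1}(E)/\mathcal{S}_i(E)\big) \otimes L$; summing over $i$ exhibits $F$ directly as a sub-bundle of $\mathrm{gr}_E \otimes L$, giving the extension that places $[\mathrm{gr}_E]$ in $\mathbb{P}_F$. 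Your approach instead produces a chain of $\mathbb{A}^1$-families of $P$-semistable bundles of rank $n$ and determinant $\Delta$, each linking $E$ one step closer to $\mathrm{gr}_E$, and invokes the connectedness clause of Theorem~\ref{thm_local_families}\eqref{zar:2} to force a single $\mathbb{P}_F$. The paper's argument is constructive and tells you concretely how $F$ sits inside $\mathrm{gr}_E \otimes L$; yours is softer and more conceptual, re-using the one-parameter degeneration already introduced after Lemma~\ref{lem_closed_points} together with the uniqueness in Theorem~\ref{thm_local_families}, and it sidesteps the bookkeeping of tracking which $L_j$ lands in which socle piece. Both are valid; the determinant and semistability checks you flagged are routine (the extension family has $\det \cong q^*\Delta$ on $\mathbb{A}^1$, and each fiber is a direct sum of $P$-semistable bundles of slope $\mu_P(E)$).
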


\begin{proof}
From \Cref{rmk_canonical_filtration}, recall that we have a canonical filtration
\[
0 = \mathcal{S}_0(E) \subset \mathcal{S}_1(E) \subset \cdots \subset \mathcal{S}_l(E) = E
\]
such that for each $i$, the quotient $\mathcal{S}_{i+1}(E)/\mathcal{S}_i(E) \cong \mathcal{S}\left( E/\mathcal{S}_i(E) \right)$ is the socle of $E/\mathcal{S}_i(E)$ which is $P$-polystable of $P$-slope $\mu_P(E)$. Then $\mathrm{gr}_E \cong \oplus_{\substack{0 \leq l-1}} \, \mathcal{S}_{i+1}(E)/\mathcal{S}_i(E)$. Since taking a tensor product with a line bundle is an exact functor, we have $\mathcal{S}_i(E \otimes L) \cong \mathcal{S}_i(E) \otimes L$ for each $i$, and $\mathrm{gr}_{E \otimes L} \cong \mathrm{gr}_E \otimes L$. By Proposition~\ref{prop_disjoint_images}, the bundle $E$ uniquely determines an $F = \oplus_{1 \leq j \leq n-1} \, L_j \in \mathcal{F}$ such that the closed point $[E]$ determined by $E$ is in $\mathbb{P}_F$. Define a filtration $\{F_i\}_{1 \leq i \leq l}$ of $F$ as follows.

For $0 \leq i \leq l$, set
\[
I_i \coloneqq \{ 1 \leq j \leq n-1 \, | \, \text{\rm the restriction } L_j \hookrightarrow E \otimes L \, \text{\rm induces } L_j \hookrightarrow \mathcal{S}_i(E) \otimes L\}.
\]
Then $I_0 = \emptyset$, and $I_l = \{1, \ldots, n-1\}$. Consider the bundle $F_i \coloneqq \oplus_{j \in I_i} \, L_j$. Then we have
\[
F_{i+1}/F_i \cong \oplus_{j \in I_{i+1}\setminus I_i} \, L_j \hookrightarrow \left( \mathcal{S}_{i+1}(E)/\mathcal{S}_i(E) \right) \otimes L
\]
for each $i$. Taking the direct sum, $F \cong \oplus_{1 \leq i\leq l-1} \, F_{i+1}/F_i$ is a sub-bundle of $\mathrm{gr}_E \otimes L \cong \mathrm{gr}_{E \otimes L}$. This produces a short exact sequence
\[
0 \longrightarrow F \longrightarrow \mathrm{gr}_E \otimes L \longrightarrow \mathrm{det}(\mathrm{gr}_E \otimes L) \otimes \mathrm{det}(F)^{-1} \longrightarrow 0.
\]
Since $\mathrm{det}(\mathrm{gr}_E \otimes L) \cong \Delta \otimes L^n$, we see that the closed point $[\mathrm{gr}_E] \in \mathbb{P}_{(X,P)}$ also lies in $\mathbb{P}_F$.

On the other hand, the closed point $[\mathrm{gr}_E] \in \mathbb{P}_{(X,P)}$ lies in a unique $\mathbb{P}_F$ by Proposition~\ref{prop_disjoint_images}. Then by the above paragraph and loc. cit., the closed point $[E]$ also lies in the same $\mathbb{P}_F$.

The forward direction of the last statement is by Proposition~\ref{prop_S-equivariance_wrt_set_up}. So assume that $g^*E_1$ and $g^*E_2$ are $\mathrm{S}$-equivalent. By the first statement, the associated graded bundles $\mathrm{gr}_{E_1}$ and $\mathrm{gr}_{E_2}$ represent points in $\mathbb{P}_F$. Since $\mathrm{gr}_{g^*E_i} \cong g^* \mathrm{gr}_{E_i}$ as $\Gamma$-equivariant bundles on $Z$ for $i = 1, \, 2$ by Proposition~\ref{prop_S-equivariance_wrt_set_up}, Lemma~\ref{lem_implication_of_F-uniqueness} shows that $E_1 \sim_{\mathrm{S}} E_2$ on $(X,P)$.
\end{proof}

\subsection{Results on the non-emptiness}\label{sec_Non-Emptiness}
In this section, we list some results on the existence of $P$-semistable bundles in individual $\mathbb{P}_F$'s. The main result says that if there is a $\Gamma$-equivariant semistable bundle of rank $n$ and determinant $g^*\Delta$, then each over-parameterizing space $\mathbb{P}_F$ contains a non-empty open subscheme $U_F$ whose points are represented by $P$-semistable bundles; moreover, the points in $U_F$ are in bijective correspondence with those in $U_{F'}$ for any $F, \, F' \in \mathcal{F}$.

\begin{proposition}\label{prop_geometric_F-correspondence}
Let $(X,P) = (X,\tilde{P}) = [Z/\Gamma]$, and $u \colon Z \longrightarrow (X,P)$ be the natural atlas in Notation~\ref{not_fixed}. Suppose that $F \neq F' \in \mathcal{F}$. Then there is a natural bijective correspondence between the $P$-semistable bundles representing closed points in $\mathbb{P}_F$ with those representing closed points in $\mathbb{P}_{F'}$.
\end{proposition}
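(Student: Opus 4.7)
The plan is to use the equivalence $u^* \colon \mathrm{Vect}(X,P) \xrightarrow{\sim} \mathrm{Vect}^\Gamma(Z)$ valid in the geometric setting $(X,P) = [Z/\Gamma]$ in order to set up an explicit bijection directly in terms of $\Gamma$-equivariant data on $Z$.

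First I would translate the problem. For $F = \oplus_{j=1}^{n-1} L_j \in \mathcal{F}$ with $u^*L_j \cong \chi_j \otimes_k \mathcal{O}_Z$, set $V_F \coloneqq \oplus_j \chi_j$, an $(n-1)$-dimensional $\Gamma$-representation; then $u^*F \cong V_F \otimes_k \mathcal{O}_Z$. Under $u^*$, a $P$-semistable $E \in \mathbb{P}_F$ of determinant $\Delta$ corresponds to a $\Gamma$-equivariant semistable bundle $A = u^*E$ on $Z$ of rank $n$ and determinant $u^*\Delta$, equipped with a $\Gamma$-equivariant embedding $V_F \otimes u^*L^{-1} \hookrightarrow A$ with line-bundle cokernel. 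The analogous description applies to $\mathbb{P}_{F'}$, with $V_{F'}$ in place of $V_F$.

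Second, I would construct the bijection. The natural candidate is tensoring by a suitable character-type line bundle $M \in \mathrm{Pic}(X,P)$, chosen (possibly together with a modification of the $\Gamma$-equivariant structure on $A$ by a cocycle class in $H^1(\Gamma, \mathrm{Aut}(A))$) so that the outcome lies in $\mathbb{P}_{F'}$ and has determinant $\Delta$. Concretely, given $E \in \mathbb{P}_F$ with $A = u^*E$, I would alter the $\Gamma$-equivariant structure of $A$ using a cocycle whose class encodes the ``difference'' between $V_F$ and $V_{F'}$; this yields a new $\Gamma$-equivariant bundle which descends to a $P$-semistable $E' \in \mathbb{P}_{F'}$. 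Tensoring by line bundles preserves $P$-(semi)stability by \Cref{prop_easy}, and modifying a $\Gamma$-equivariant structure on the same underlying bundle on $Z$ does not affect semistability at the non-equivariant level, so $E'$ is $P$-semistable. The symmetric construction swapping $F$ and $F'$ gives the inverse.

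The main obstacle I anticipate is ensuring this construction is canonical and well-defined on iso classes, and that it really produces a bijection (rather than, say, a many-to-one map). The verification should hinge on the identification of both $\mathcal{V}_F^*$ and $\mathcal{V}_{F'}^*$ as $\Gamma$-invariant subspaces of appropriate twists of $\mathrm{H}^1(Z, u^*(\Delta^{-1} \otimes L^{-n}))$: the positivity assumption~\eqref{eq_ass_degree_M} forces the global section space $\mathrm{H}^0(Z, A \otimes u^*L)$ to contain sufficient multiplicities of each $\Gamma$-character to accommodate both $V_F$ and $V_{F'}$ as subrepresentations yielding injections with bundle cokernels (as in \Cref{prop_classical}). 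Combining this with the descent statements of \Cref{prop_descent_unique_sub} and \Cref{lem_implication_of_F-uniqueness} should then force the assignment $E \mapsto E'$ to be a well-defined bijection between the $P$-semistable iso classes in $\mathbb{P}_F$ and in $\mathbb{P}_{F'}$.
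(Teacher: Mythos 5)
Your plan does not actually construct the correspondence, and the mechanism you name for it cannot work in general. Re-descending the same underlying bundle $A = u^*E$ with a modified equivariant structure is governed by $\mathrm{H}^1\bigl(\Gamma, \mathrm{Aut}_{\mathcal{O}_Z}(A)\bigr)$; when $E$ is $P$-stable (the generic case) $A$ is simple, so $\mathrm{Aut}(A) = k^{\times}$ with trivial $\Gamma$-action and $\mathrm{H}^1(\Gamma, k^{\times}) = \mathrm{Hom}(\Gamma, k^{\times})$. Hence every bundle your recipe can produce (a cocycle twist, possibly combined with tensoring by a character-type line bundle) is of the form $E \otimes M_{\chi}$ with $u^*M_{\chi} \cong \chi \otimes_k \mathcal{O}_Z$. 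Its determinant is $\Delta \otimes M_{\chi}^{\otimes n}$, so the determinant constraint forces $\chi^{n}$ to be trivial, and the resulting point lies in $\mathbb{P}_{F \otimes M_{\chi}}$, i.e.\ only components indexed by a simultaneous twist of all summands of $F$ by a single $n$-torsion character are reachable. For $n \geq 3$, a pair such as $F = \mathcal{O}_{(X,P)}^{\oplus (n-1)}$ and $F' = M_{\chi_0} \oplus \mathcal{O}_{(X,P)}^{\oplus (n-2)}$ with $\chi_0$ nontrivial is not of this form, so your map simply does not exist for such $F, F'$. Moreover, your proposed verification --- that $\mathrm{H}^0(Z, A \otimes u^*L)$ contains enough copies of every character to accommodate both $V_F$ and $V_{F'}$ as sub-bundles with locally free cokernel --- would, if true, exhibit a single bundle $E$ sitting in non-split extensions of both types, which is exactly what \Cref{prop_disjoint_images} rules out; so that route is inconsistent with the disjointness you rely on elsewhere.

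For comparison, the paper does not twist the equivariant structure of $A$ by a cocycle; it operates on the defining extension. Starting from the $\Gamma$-equivariant sub-bundle $u^*F \otimes u^*L^{-1} \hookrightarrow u^*E$, it chooses a (necessarily non-$\Gamma$-equivariant) isomorphism $l_1 \colon u^*F \otimes u^*L^{-1} \xrightarrow{\ \cong\ } u^*F' \otimes u^*L^{-1}$ of the underlying bundles and forms the push-out $\tilde{E}$ of the extension along $l_1$; it then argues that $\tilde{E}$ carries a $\Gamma$-equivariant structure for which $u^*F' \otimes u^*L^{-1}$ is an equivariant sub-bundle, descends $\tilde{E}$ to a bundle $E'$ on $(X,P)$, notes (as you do) that semistability is untouched because $\tilde{E} \cong u^*E$ as bundles on $Z$, and places $[E']$ in $\mathbb{P}_{F'}$ via \Cref{prop_disjoint_images}; the symmetric push-out gives the inverse. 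So the only part of your proposal that matches the paper is the easy semistability observation; the heart of the proposition --- an explicit construction that lands in $\mathbb{P}_{F'}$ with determinant $\Delta$ and is invertible --- is precisely the part you leave open, and the tool you propose for it ($\mathrm{H}^1(\Gamma, \mathrm{Aut}(A))$ plus character-type twists) is too coarse to supply it.
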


\begin{proof}
Without loss of generality, we may assume that there is a $P$-semistable bundle $E$ of rank $n$ and determinant $\Delta$ that represents a $k$-point in $\mathbb{P}_F$. Then the $\Gamma$-equivariant semistable bundle $u^*E$ contains $\Gamma$-equivariant sub-bundle $h_1 \colon u^*F \otimes u^*L^{-1} \hookrightarrow u^*E$. We have an isomorphism $l_1 \colon u^*F \otimes u^*L^{-1} \xlongrightarrow{\cong} u^*F' \otimes u^*L^{-1}$ which is not $\Gamma$-equivariant. Since the category of coherent sheaves constitutes an abelian category, we consider the push-out square
\begin{equation*}
\begin{tikzcd}
u^*F' \otimes u^*L^{-1} \arrow[r, "h_2"] \arrow[dr, phantom, "\text{\rm co}-\square"] & \tilde{E} \\
u^*F \otimes u^*L^{-1} \arrow[u, "l_1"] \arrow[r, hookrightarrow, "h_1"] & u^*E \arrow[u, "l_2"]
\end{tikzcd}
\end{equation*}
It follows that the above diagram is also Cartesian, $l_2$ is an isomorphism that is not $\Gamma$-equivariant, $\tilde{E}$ is a $\Gamma$-equivariant bundle, and $u^*F' \otimes u^*L^{-1}$ is a $\Gamma$-equivariant sub-bundle of $\tilde{E}$. So there is a unique (up to a canonical isomorphism) bundle $E' \in \mathrm{Vect}(X,P)$ such that $u^*E' \cong \tilde{E}$ as $\Gamma$-equivariant bundles. Since $u^*E$ is semistable, so is $u^*E'$, and hence, $E'$ is $P$-semistable. By \Cref{prop_disjoint_images}, $E'$ represents a $k$-point in $\mathbb{P}_{F'}$. It is easy to check that the above argument gives the required bijective correspondence.
\end{proof}

\begin{proposition}\label{prop_F-correspondence_descent_tau}
Let $F \in \mathcal{F}$. The functor $\tau^*$ defines a bijective correspondence between the $P$-semistable bundles representing points in $\mathbb{P}_F$ and the $\tilde{P}$-semistable bundles on $(X,\tilde{P})$ containing $\tau^*F \otimes \tau^*L^{-1}$ as a sub-bundle and which are of rank $n$ and determinant $\tau^*\Delta$ where the inverse is induced by $\tau_*$. The bijective correspondence restricts to stable and polystable bundles.
\end{proposition}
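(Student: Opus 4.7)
The plan is to use the adjoint pair $(\tau^*, \tau_*)$ together with the fact that $\tau$ is tamely ramified, which provides two key ingredients: $\tau_*$ is exact, and the projection formula combined with $\tau_*\mathcal{O}_{(X,\tilde{P})} \cong \mathcal{O}_{(X,P)}$ yields a natural isomorphism $\tau_*\tau^*G \cong G$ for every $G \in \mathrm{Vect}(X,P)$. The strategy is to verify the two directions and that $\tau^*$ and $\tau_*$ are mutually inverse on the relevant classes.

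For the forward direction, I would begin with a $P$-semistable $E$ representing a point of $\mathbb{P}_F$. By \Cref{prop_main_bounded}~\eqref{P2:2} and \Cref{prop_extn_classes_F}, $E$ fits in a short exact sequence with sub-bundle $F \otimes L^{-1}$ and quotient $\Delta \otimes L^{\otimes(n-1)} \otimes \mathrm{det}(F)^{-1}$. Since $\tau^*$ is exact and preserves ranks, determinants and sub-bundle inclusions, $\tau^*E$ immediately has the required rank, determinant and sub-bundle, and is $\tilde{P}$-semistable by the semistability clause in \Cref{prop_easy}~\eqref{j:6}.

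Conversely, let $\tilde{E}$ satisfy the stated hypotheses. Comparing determinants in the defining short exact sequence forces the quotient to be $\tau^*(\Delta \otimes L^{\otimes(n-1)} \otimes \mathrm{det}(F)^{-1})$. Applying exact $\tau_*$ and the projection formula produces
\[
0 \longrightarrow F \otimes L^{-1} \longrightarrow \tau_*\tilde{E} \longrightarrow \Delta \otimes L^{\otimes(n-1)} \otimes \mathrm{det}(F)^{-1} \longrightarrow 0,
\]
so $\tau_*\tilde{E}$ has rank $n$, determinant $\Delta$, and contains $F \otimes L^{-1}$ as a sub-bundle; it will represent a point of $\mathbb{P}_F$ once its $P$-semistability is in hand. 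The main obstacle I anticipate is the isomorphism $\tau^*\tau_*\tilde{E} \cong \tilde{E}$: this is not formal, because $\tau$ is typically not representable and so $\tau^*\tau_* \cong \mathrm{id}$ fails in general. My remedy is to pull back the displayed sequence by $\tau^*$ and compare with the original extension via the counit $\epsilon \colon \tau^*\tau_* \Rightarrow \mathrm{id}$, forming a commutative ladder. On the outer terms $\tau^*(F \otimes L^{-1})$ and $\tau^*(\Delta \otimes L^{\otimes(n-1)} \otimes \mathrm{det}(F)^{-1})$, the adjunction triangle identities identify the counit with the projection-formula isomorphism; the Five Lemma then forces $\epsilon_{\tilde{E}}$ itself to be an isomorphism. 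The $P$-semistability of $\tau_*\tilde{E}$ then follows from $\tilde{P}$-semistability of $\tilde{E} \cong \tau^*\tau_*\tilde{E}$ via \Cref{prop_easy}~\eqref{j:6}.

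For the stable and polystable refinements: since $\tau_*\mathcal{O}_{(X,\tilde{P})} \cong \mathcal{O}_{(X,P)}$ has maximal destabilizing sub-bundle equal to itself, the hypothesis of \Cref{prop_easy}~\eqref{j:6} on stability is satisfied, yielding $P$-stability of $E$ iff $\tilde{P}$-stability of $\tau^*E$. For polystability, the forward direction is immediate: a $P$-polystable decomposition $E = \bigoplus E_i$ pulls back to a direct sum $\tau^*E = \bigoplus \tau^*E_i$ of $\tilde{P}$-stable bundles of the same slope. Conversely, given a $\tilde{P}$-polystable decomposition $\tau^*E = \bigoplus G_i$, each $G_i$ descends by \Cref{prop_descent_unique_sub}~\eqref{des:2} to a sub-bundle $E_i \subseteq E$ with $\tau^*E_i \cong G_i$; each $E_i$ is then $P$-stable by the preceding equivalence, and the canonical morphism $\bigoplus E_i \to E$ becomes an isomorphism after $\tau^*$, hence is already an isomorphism by faithful flatness of $\tau$.
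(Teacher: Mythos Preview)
Your argument is correct and follows essentially the same route as the paper: the forward direction is immediate, and the backward direction proceeds by pushing forward the short exact sequence for $\tilde{E}$ via exact $\tau_*$, then comparing $\tau^*\tau_*\tilde{E}$ with $\tilde{E}$ through the counit ladder whose outer maps are isomorphisms by the projection formula and $\tau_*\mathcal{O}_{(X,\tilde{P})}\cong\mathcal{O}_{(X,P)}$, concluding by the Five Lemma. The paper simply asserts the stable and polystable refinements, whereas you spell them out---using the criterion $\mathrm{HN}_1(\tau_*\mathcal{O}_{(X,\tilde{P})})=\mathcal{O}_{(X,P)}$ from \Cref{prop_easy} for stability, and descent of sub-bundles (\Cref{prop_descent_unique_sub}) plus faithful flatness for polystability---so your treatment is in fact slightly more detailed than the paper's.
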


\begin{proof}
It is clear that if $E \in \mathrm{Vect}(X,P)$ is $P$-semistable, such that $[E] \in \mathbb{P}_F$, then $\tau^*E$ is a $\tilde{P}$-semistable bundle on $(X,\tilde{P})$ containing $\tau^*F \otimes \tau^*L^{-1}$ as a sub-bundle and which is of rank $n$ and determinant $\tau^*\Delta$.

Suppose that $\tilde{E}$ is a $\tilde{P}$-semistable bundle containing $\tau^*F \otimes \tau^*L^{-1}$ as a sub-bundle and which is of rank $n$ and determinant $\tau^*\Delta$. We obtain a short exact sequence
\[
0 \longrightarrow \tau^*F \otimes \tau^*L^{-1} \longrightarrow \tilde{E} \longrightarrow \tau^*\Delta \otimes \tau^*L^{\otimes (n-1)} \otimes \tau^* \mathrm{det}(F)^{-1} \longrightarrow 0.
\]
The evaluation map $\tau^*\tau_* \Rightarrow \mathrm{id}$ defines the following commutative diagram.
\begin{equation*}
\begin{tikzcd}
0 \arrow[r] & \tau^*F \otimes \tau^*L^{-1} \arrow[r] \arrow[d, "\cong"] & \tau^*\tau_*\tilde{E} \arrow[d, "h"] \arrow[r] & \tau^*\Delta \otimes \tau^*L^{\otimes (n-1)} \otimes \tau^* \mathrm{det}(F)^{-1} \arrow[r] \arrow[d, "\cong"] & 0 \\
0 \arrow[r] & \tau^*F \otimes \tau^*L^{-1} \arrow[r] & \tilde{E} \arrow[r] & \tau^*\Delta \otimes \tau^*L^{\otimes (n-1)} \otimes \tau^* \mathrm{det}(F)^{-1} \arrow[r] & 0
\end{tikzcd}
\end{equation*}
where we have used the projection formula and the isomorphism $\tau_* \mathcal{O}_{(X,\tilde{P})} \cong \mathcal{O}_{(X,P)}$. This shows that $h \colon \tau^*\tau_*\tilde{E} \longrightarrow \tilde{E}$ is also an isomorphism. In particular, $\tau_*\tilde{E}$ is $P$-semistable of rank $n$ and of determinant $\Delta$. Moreover, $\tilde{E}$ is $\tilde{P}$-stable (respectively, $\tilde{P}$-polystable) if and only if $\tau_* \tilde{E}$ is $P$-stable (respectively, $P$-polystable).
\end{proof}

Let us summarize the above observations.

\begin{corollary}\label{cor_root_of_Assumption}
Suppose that there is a $\Gamma$-equivariant semistable bundle on $Z$ of rank $n$ and determinant $g^*\Delta$. Then the following hold.
\begin{enumerate}[leftmargin=*]
    \item For each $F \in \mathcal{F}$, there is a $P$-polystable bundle in $\mathbb{P}_F$. In particular, the sub-scheme of $\mathbb{P}_F$ representing $P$-semistable bundles is open and non-empty.
    \item For any $F, \, F' \in \mathcal{F}$, the $P$-semistable (respectively, $P$-polystable) bundles in $\mathbb{P}_F$ are in bijective correspondence with the $P$-semistable (respectively, $P$-polystable) bundles in $\mathbb{P}_{F'}$.
\end{enumerate}
\end{corollary}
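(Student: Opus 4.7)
The plan is to deduce the corollary by composing the correspondences of Proposition~\ref{prop_F-correspondence_descent_tau} and Proposition~\ref{prop_geometric_F-correspondence}, starting from a single seed $\tilde{P}$-polystable bundle on $(X,\tilde{P})$ manufactured from the hypothesized $\Gamma$-equivariant semistable bundle $A$ on $Z$.

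To produce that seed, I would first pass $A$ through the equivalence $\mathrm{Vect}(X,\tilde{P}) \simeq \mathrm{Vect}^\Gamma(Z)$ to obtain a $\tilde{P}$-semistable bundle $\tilde{E}_0 \in \mathrm{Vect}(X,\tilde{P})$ with $u^*\tilde{E}_0 \cong A$ as $\Gamma$-equivariant bundles; semistability is transported by Proposition~\ref{prop_easy}~\eqref{j:0}, and the rank and the determinant are preserved. The associated graded bundle $\mathrm{gr}_{\tilde{E}_0}$ is then $\tilde{P}$-polystable, still of rank $n$ and determinant $\tau^*\Delta$ (the determinants of successive quotients in a Jordan--H\"{o}lder filtration multiply to $\mathrm{det}(\tilde{E}_0)$). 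By Proposition~\ref{prop_S_equivariance} applied on $(X,\tilde{P})$, this bundle represents a closed point of $\mathbb{P}_{\tilde{F}_0}$ for a uniquely determined $\tilde{F}_0$ in the geometric indexing set for $(X,\tilde{P})$.

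Next, I would apply Proposition~\ref{prop_geometric_F-correspondence} to $(X,\tilde{P}) = [Z/\Gamma]$: for every $\tilde{F}$ in its indexing set, its pushout construction yields a $\tilde{P}$-semistable bundle represented by a point of $\mathbb{P}_{\tilde{F}}$, and a direct inspection of that pushout shows polystability is preserved (the output bundle becomes non-equivariantly isomorphic to the input after pulling back by $u$). For any $F \in \mathcal{F}$, we have $u^*\tau^*L_j = g^*L_j \cong \chi_j \otimes_k \mathcal{O}_Z$, so $\tau^*F$ sits in the geometric indexing set on $(X,\tilde{P})$, yielding a $\tilde{P}$-polystable bundle $\tilde{E}_F$ represented by a point of $\mathbb{P}_{\tau^*F}$. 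Since $\tilde{E}_F$ contains $\tau^*F \otimes \tau^*L^{-1}$ as a sub-bundle by construction, Proposition~\ref{prop_F-correspondence_descent_tau} descends it to a $P$-polystable bundle $\tau_*\tilde{E}_F$ represented by a point of $\mathbb{P}_F$. Openness of the $P$-semistable locus in $\mathbb{P}_F$ is then immediate from Lemma~\ref{lem_openness_P-semistability} applied to the universal family $\mathcal{E}_F$ on $\mathbb{P}_F \times (X,P)$, and combined with the non-emptiness just produced this establishes part~(1).

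For part~(2), I would assemble the required bijection as the composition of three maps: Proposition~\ref{prop_F-correspondence_descent_tau} identifies $P$-semistable bundles in $\mathbb{P}_F$ with the $\tilde{P}$-semistable bundles in $\mathbb{P}_{\tau^*F}$ containing $\tau^*F \otimes \tau^*L^{-1}$; Proposition~\ref{prop_geometric_F-correspondence} on $(X,\tilde{P})$ bijects these with the $\tilde{P}$-semistable bundles in $\mathbb{P}_{\tau^*F'}$; and Proposition~\ref{prop_F-correspondence_descent_tau} again descends the latter to $P$-semistable bundles in $\mathbb{P}_{F'}$. Each step restricts to the polystable sub-class, so the composite restricts to the asserted bijection on polystable bundles. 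The one point that genuinely needs to be checked is that the middle bijection sends bundles with $\tau^*F \otimes \tau^*L^{-1}$ as a sub-bundle to bundles with $\tau^*F' \otimes \tau^*L^{-1}$ as a sub-bundle, but this is visible directly from the pushout construction of Proposition~\ref{prop_geometric_F-correspondence}. I do not anticipate a genuine obstacle beyond the careful bookkeeping of the two indexing sets and the transport of stability and polystability under the passage between $(X,P)$ and $(X,\tilde{P})$.
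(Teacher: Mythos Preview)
Your proposal is correct and fills in exactly the details the paper leaves implicit: the corollary is presented there without proof, as a summary of Propositions~\ref{prop_geometric_F-correspondence} and~\ref{prop_F-correspondence_descent_tau}, and your argument composes those two results (together with Lemma~\ref{lem_openness_P-semistability} for openness) in the natural way. Your observations that $\tau^*F$ lies in the geometric indexing set for $(X,\tilde{P})$ and that the pushout bijection of Proposition~\ref{prop_geometric_F-correspondence} preserves polystability via the non-equivariant isomorphism $l_2 \colon u^*E \xrightarrow{\cong} \tilde{E}$ combined with Proposition~\ref{prop_easy}~\eqref{j:0} are the right checks to make.
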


\subsection{The Unstable Locus}\label{sec_unstable}
Set $\delta' \coloneqq (n, |\Gamma| \mathrm{deg}_P(\Delta))$ as in Notation~\ref{not_delta}. We noted the following observations from~\Cref{sec_A_determinantal_line_bundle}. On the curve $Z$ of genus $g_Z$, for any positive integer $R$, we have the generalized $\Theta$-line bundle $\mathcal{O}_{\mathbb{P}_Z}(R \cdot \Theta) = \lambda_{\mathcal{E}_Z}(W)^{-1}$ where $W$ is any bundle on $Z$ of rank $R \cdot n/\delta'$ and degree $- R \cdot \frac{ \chi'}{\delta'}$, and $\chi' = |\Gamma| \mathrm{deg}_P(\Delta) - n (g_Z - 1)$; cf. Definition~\ref{def_Theta_line_bundle}. For each $W$ as above, we have a distinguished sections $s_{Z, W} \in \Gamma(\mathbb{P}_Z, \mathcal{O}_{\mathbb{P}_Z}(R \cdot \Theta))$ whose vanishing locus is
\[
\theta_{Z, W} = \{ s \in \mathbb{P}_Z(k) \, | \, \mathrm{H}^*(Z, \mathcal{E}_{Z,s} \otimes W) \neq 0 \}.
\]
Taking a fixed $R > \frac{n^2}{4}(g_Z - 1)$, Le Potier's result \cite[Theorem~2.4]{Potier} implies that the closed sub-scheme $B_Z = \{[E] \in \mathbb{P}_Z \, | \, E \, \text{\rm is not semistable}\}$ is the base locus of the finite-dimensional complete linear system associated to $\mathcal{O}_{\mathbb{P}_Z}(R \cdot \Theta)$; see \Cref{prop_bl}. Moreover, the sections $S_{Z,W}$ generate $\mathcal{O}_{\mathbb{P}_Z}(R \cdot \Theta)$ on the complement $\mathbb{P}_Z - B_Z$.

Our aim is to show that for each $F \in \mathcal{F}$, the unstable locus in $\mathbb{P}_F$, namely, the sub-scheme
\[
B_F \coloneqq \{ [E] \in \mathbb{P}_F \, | \, E \, \text{\rm is not } P\text{-}\mathrm{ semistable}\}
\]
is a closed sub-scheme that is also the base locus of the complete linear system associated to $\iota_F^* \mathcal{O}_{\mathbb{P}_Z}(R \cdot \Theta)$ for an $R > \frac{n^2}{4}(g_Z - 1)$ where $\iota_F^* \colon \mathrm{Pic}(\mathbb{P}_Z) \longrightarrow \mathrm{Pic}(\mathbb{P}_F)$ is the induced homomorphism (see \Cref{prop_disjoint_images} and \Cref{def_F} for notation). Let us fix $F \in \mathcal{F}$.

First, note that we have a commutative diagram
\begin{equation}\label{eq_big_diagram}
\begin{tikzcd}
\mathbb{P}_Z \arrow[dr, phantom, "\square"] & \mathbb{P}_Z \times Z \arrow[l, "p_Z"] \arrow[r, swap, "q_Z"] & Z \\
\mathbb{P}_F \arrow[u, hook, "\iota_F"] \arrow[d, equal, "\mathrm{id}"] & \mathbb{P}_F \times Z \arrow[dr, phantom, "\square"] \arrow[u, hook, "\iota_F \times \mathrm{id}_Z"] \arrow[r, swap, "\tilde{q}_Z"] \arrow[l, "\tilde{p}_Z"] \arrow[d, swap, "\text{\rm id} \times g"] & Z \arrow[d, swap, "g"] \arrow[u, equal, swap, "\mathrm{id}_Z"] \\
\mathbb{P}_F & \mathbb{P}_F \times (X,P) \arrow[l, "p_F"] \arrow[r, swap, "q_F"] & (X,P)
\end{tikzcd}
\end{equation}
where the bottom right and the top left squares are Cartesian with obvious arrows. Also note that in the top left Cartesian square, the projection $p_Z$ is a flat map of varieties, and the closed inclusion $\iota_F$ is a proper map --- so we can use the flat and the proper base change theorems with respect to this maps, respectively; on the other hand, in the right bottom Cartesian square, the map $g$ is a representable finite flat map and the projection $q_F$, itself being a base change of the structure morphism of the projective variety $\mathbb{P}_F$, is a flat and proper morphism --- so again, the appropriate base change theorems are applicable; cf. \cite[Proposition~A.3.]{Das}. We also saw in Proposition~\ref{prop_disjoint_images} that we have the isomorphism
\[
\left( \mathrm{id} \times g \right)^* \mathcal{E}_F \cong \left( \iota_F \times \mathrm{id}_Z \right)^*\mathcal{E}_Z.
\]
Using the base change theorems and the projection formula (\cite[Proposition~A.3.]{Das}), the commutativity of the above diagram, and the fact that $\left(\mathrm{id} \times g\right)_*$ is an exact functor, we obtain the following: for any $W \in \mathrm{Vect}(Z)$, and for any $i \geq 0$, there are canonical isomorphisms
\begin{multline}\label{multi:1}
\iota_F^* \mathrm{R}^ip_{Z,*} \left( \mathcal{E}_Z \otimes q_Z^*W \right) \, \cong \mathrm{R}^i\tilde{p}_{Z,*} \left( \iota_F \times \mathrm{id}_Z \right)^* \left( \mathcal{E}_Z \otimes q_Z^*W \right) \\
\cong \mathrm{R}^i\tilde{p}_{Z,*} \left( \left( \iota_F \times \mathrm{id}_Z \right)^* \mathcal{E}_Z \otimes \left( \iota_F \times \mathrm{id}_Z \right)^* q_Z^*W \right)
\cong \mathrm{R}^i\tilde{p}_{Z,*} \left(\left( \mathrm{id} \times g \right)^* \mathcal{E}_F \otimes \tilde{q}_Z^*W \right) \\
\cong \mathrm{R}^i p_{F,*} \circ \left( \mathrm{id} \times g \right)_* \left(\left( \mathrm{id} \times g \right)^* \mathcal{E}_F \otimes \tilde{q}_Z^*W \right)
\cong \mathrm{R}^i p_{F,*} \left( \mathcal{E}_F \otimes \left( \mathrm{id} \times g \right)_* \tilde{q}_Z^*W \right) \\
\cong \mathrm{R}^i p_{F,*} \left( \mathcal{E}_F \otimes q_F^*g_*W \right).    
\end{multline}
The above isomorphisms show that we have a well-defined homomorphism
\[
\lambda_F \colon \mathrm{H}(n, |\Gamma| \mathrm{deg}_P(\Delta)) \longrightarrow \mathrm{Pic}(\mathbb{P}_F)
\]
given by
\[
\lambda_F(W) \coloneqq \otimes \, \mathrm{det} \, \mathrm{R}^i p_{F, *} \left( \mathcal{E}_F \otimes q_F^* g_* W \right)^{(-1)^i} \cong \iota_F^*\lambda_Z(W)
\]
where $\mathrm{H}(n, |\Gamma| \mathrm{deg}_P(\Delta))$ is the subgroup of the Grothendieck group $K(Z)$, generated by the classes of the coherent sheaves of rank $l \cdot n/\delta'$ and degree $-l \cdot \frac{|\Gamma| \mathrm{deg}_P(\Delta) - n (g_Z - 1)}{\delta'}$ for $l \geq 1$, $W$ is a bundle on $Z$ whose class is in $\mathrm{H}(n, |\Gamma| \mathrm{deg}_P(\Delta))$, $\lambda_Z = \lambda_{\mathcal{E}_Z}$ is defined as in~\eqref{eq_lambda_definition}, and $\iota_F^* \colon \mathrm{Pic}(\mathbb{P}_Z) \longrightarrow \mathrm{Pic}(\mathbb{P}_F)$ is the induced homomorphism. Moreover, the line bundle $\lambda_{F}(W)$ only depends of the degree and the rank of $W$ since the same holds for the line bundle $\lambda_Z(W)$.

Next, consider any bundle $W$ on $Z$ of rank $R \cdot n/\delta'$ and degree $-R \cdot \chi'/\delta'$ for some integer $R \geq 1$. We are going to construct a distinguished section $s_{F,W}$ of $\lambda_F(W)^{-1}$. Consider any locally free resolution
\[
0 \longrightarrow V_1 \overset{\alpha}\longrightarrow V_0 \longrightarrow \mathcal{E}_Z \otimes q_Z^*W \longrightarrow 0
\]
such that $p_{Z,*}V_i = 0$ for $i = 1, \, 2$. Applying the proper map $\left( \iota_F \times \mathrm{id}_Z \right)^*$, we obtain the exact sequence:
\[
\left( \iota_F \times \mathrm{id}_Z \right)^* V_1 \xrightarrow{\left( \iota_F \times \mathrm{id}_Z \right)^* \alpha} \left( \iota_F \times \mathrm{id}_Z \right)^* V_0 \longrightarrow \left( \iota_F \times \mathrm{id}_Z \right)^* \left( \mathcal{E}_Z \otimes q_Z^*W \right) \longrightarrow 0.
\]
Let $\tilde{V}_1$ be the kernel of the above surjection. Then $\tilde{V}_1$ is a sub-bundle of $\left( \iota_F \times \mathrm{id}_Z \right)^* V_0$ that is the image of $\left( \iota_F \times \mathrm{id}_Z \right)^* V_1$ under the map $\left( \iota_F \times \mathrm{id}_Z \right)^* \alpha$. We obtain the short exact sequence:
\[
0 \longrightarrow \tilde{V}_1 \xrightarrow{\tilde{\alpha}} \left( \iota_F \times \mathrm{id}_Z \right)^* V_0 \longrightarrow \left( \iota_F \times \mathrm{id}_Z \right)^* \left( \mathcal{E}_Z \otimes q_Z^*W \right) \longrightarrow 0.
\]
Since $p_{Z,*}V_0 = 0$, by the base change theorem, we have $\tilde{p}_{Z,*} \left( \iota_F \times \mathrm{id}_Z \right)^* V_0 \cong \iota_F^* p_{Z,*}V_0 = 0$ from~\eqref{eq_big_diagram}. So, the above exact sequence produced the long exact sequence
\begin{multline}\label{multi:2}
0 \longrightarrow \tilde{p}_{Z,*}\left(\iota_F \times \mathrm{id}_Z\right)^*\left( \mathcal{E}_Z \otimes q_Z^*W \right) \longrightarrow \mathrm{R}^1\tilde{p}_{Z,*} \tilde{V}_1 \xrightarrow{\mathrm{R}^1\tilde{\alpha}} \mathrm{R}^1\tilde{p}_{Z,*}\left(\iota_F \times \mathrm{id}_Z\right)^* V_0 \longrightarrow \\ \mathrm{R}^1\tilde{p}_{Z,*}\left(\iota_F \times \mathrm{id}_Z\right)^*\left( \mathcal{E}_Z \otimes q_Z^*W \right) \longrightarrow 0    
\end{multline}
of coherent sheaves on $\mathbb{P}_F$. From~\eqref{multi:1}, we have
\begin{align*}
\iota_F^*\mathrm{R}^ip_{Z,*}\left( \mathcal{E}_Z \otimes q_Z^*W \right) \cong \mathrm{R}^i\tilde{p}_{Z,*}\left(\iota_F \times \mathrm{id}_Z\right)^*\left( \mathcal{E}_Z \otimes q_Z^*W \right) \cong \mathrm{R}^ip_{F,*} \left( \mathcal{E}_F \otimes q_F^*g_*W \right)
\end{align*}
for $i = 0, \, 1$. Similarly, we also have
\[
\iota_F^* \mathrm{R}^1p_{Z,*}V_0 \cong \mathrm{R}^1\tilde{p}_{Z,*} \left( \iota_F \times \mathrm{id}_Z \right)^* V_0.
\]

We claim that the Euler characteristic of the bundle $\tilde{p}_{Z,*}\left( \iota_F \times \mathrm{id}_Z \right)^*\left( \mathcal{E}_Z \otimes q_Z^*W \right)$ on $\mathbb{P}_F$ is zero. To see this, we first prove a small lemma.

\begin{lemma}\label{lem_short:1}
Under the above notation, for any closed point $s \in \mathbb{P}_F$, we have an isomorphism
\[
g^* \mathcal{E}_{F,s} \cong \mathcal{E}_{Z, \iota_F(s)}.
\]
\end{lemma}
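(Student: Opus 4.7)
The plan is to deduce this isomorphism directly from the global identification $(\mathrm{id}_{\mathbb{P}_F} \times g)^* \mathcal{E}_F \cong (\iota_F \times \mathrm{id}_Z)^* \mathcal{E}_Z$ established in \Cref{prop_disjoint_images}, by restricting both sides to the fiber over $s$. All the work has been done; what remains is a purely formal chase of pullbacks along closed immersions.

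First, I would let $i_s \colon \mathrm{Spec}(k) \hookrightarrow \mathbb{P}_F$ be the inclusion of the closed point $s$, so that, under \Cref{conv_1}, the fibers $\mathcal{E}_{F,s}$ and $\mathcal{E}_{Z,\iota_F(s)}$ agree with the pullbacks $(i_s \times \mathrm{id}_{(X,P)})^* \mathcal{E}_F$ and $((\iota_F \circ i_s) \times \mathrm{id}_Z)^* \mathcal{E}_Z$, respectively. Then I would apply the pullback functor $(i_s \times \mathrm{id}_Z)^*$ to both sides of the isomorphism from \Cref{prop_disjoint_images}.

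On the left-hand side, functoriality of pullback gives
\[
(i_s \times \mathrm{id}_Z)^* (\mathrm{id}_{\mathbb{P}_F} \times g)^* \mathcal{E}_F \cong (i_s \times g)^* \mathcal{E}_F \cong g^*(i_s \times \mathrm{id}_{(X,P)})^* \mathcal{E}_F \cong g^* \mathcal{E}_{F,s},
\]
using the factorization $(i_s \times g) = (\mathrm{id}_{\mathrm{Spec}(k)} \times g) \circ (i_s \times \mathrm{id}_{(X,P)})$. On the right-hand side, the factorization $(\iota_F \times \mathrm{id}_Z) \circ (i_s \times \mathrm{id}_Z) = (i_{\iota_F(s)} \times \mathrm{id}_Z)$ gives
\[
(i_s \times \mathrm{id}_Z)^* (\iota_F \times \mathrm{id}_Z)^* \mathcal{E}_Z \cong (i_{\iota_F(s)} \times \mathrm{id}_Z)^* \mathcal{E}_Z \cong \mathcal{E}_{Z, \iota_F(s)}.
\]
Combining the two identifications yields the desired isomorphism.

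I do not anticipate any genuine obstacle; the only mild care required is in matching the two equivalent conventions for ``fiber at $s$'' (the $q_{S,*}(E \otimes p_S^* k(s))$ formulation of \Cref{conv_1} versus pullback along the closed immersion $\{s\} \times (X,P) \hookrightarrow \mathbb{P}_F \times (X,P)$), which agree since $p_F$ is flat and $s$ is a closed point of the Noetherian scheme $\mathbb{P}_F$.
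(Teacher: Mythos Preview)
Your proposal is correct and follows essentially the same strategy as the paper: both restrict the global isomorphism $(\mathrm{id}_{\mathbb{P}_F} \times g)^* \mathcal{E}_F \cong (\iota_F \times \mathrm{id}_Z)^* \mathcal{E}_Z$ from \Cref{prop_disjoint_images} to the fiber over $s$. The only cosmetic difference is that the paper works throughout with the $q_{S,*}(E \otimes p_S^* k(s))$ description of the fiber (\Cref{conv_1}) and so routes the computation through the projection formula and base change, whereas you adopt the equivalent pullback-along-the-closed-point description and chase compositions of pullbacks directly; you correctly flag this at the end.
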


\begin{proof}
From Proposition~\ref{prop_disjoint_images}, we know that $(\mathrm{id}_{\mathbb{P}_F} \times g)^* \mathcal{E}_F \cong (\iota_F \times \mathrm{id}_Z)^* \mathcal{E}_Z$. Using this isomorphism together with the projection formula and the base change theorems, we obtain the following isomorphisms:
\begin{multline*}
g^* \mathcal{E}_{F,s} \cong g^* q_{F,*} \left( \mathcal{E}_F \otimes p_F^*k(s) \right) \cong \tilde{q}_{Z,*} \left( \mathrm{id}_{\mathbb{P}_F} \times g \right)^* \left( \mathcal{E}_F \otimes p_F^*k(s) \right) \\
\cong \tilde{q}_{Z,*} \left( \left( \iota_F \times \mathrm{id}_Z \right)^* \mathcal{E}_Z \otimes \tilde{p}_Z^* k(s) \right) \cong q_{Z,*} \left( \iota_F \times \mathrm{id}_Z \right)_* \left[ \left( \iota_F \times \mathrm{id}_Z \right)^* \mathcal{E}_Z \otimes \tilde{p}_Z^*k(s) \right] \\
\cong q_{Z,*} \left( \mathcal{E}_Z \otimes \left( \iota_F \times \mathrm{id}_Z \right)_* \tilde{p}_Z^* k(s) \right) \cong q_{Z,*}(\mathcal{E}_Z \otimes p_Z^* k(\iota_F(s))) \cong \mathcal{E}_{Z, \iota_F(s)}.    
\end{multline*}
\end{proof}

As a consequence of the above result, for any closed point $s \in \mathbb{P}_F$, for $i = 0, \, 1$, we obtain
\begin{equation}\label{eq_corollary_consequence}
\mathrm{H}^i((X,P), \mathcal{E}_{F, s} \otimes g_*W) \cong \mathrm{H}^i(Z, g^*\mathcal{E}_{F, s} \otimes W) \cong \mathrm{H}^i(Z, \mathcal{E}_{Z, \iota_P(s)} \otimes W).
\end{equation}
By our assumption on $W$ and the flat base change theorem, the Euler characteristic of the bundle $\tilde{p}_{Z,*}\left(\iota_F \times \mathrm{id}_Z\right)^*\left( \mathcal{E}_Z \otimes q_Z^*W \right)$ on $\mathbb{P}_F$ is zero, proving our claim.

Hence, for any closed point $s \in \mathbb{P}_F$, the $k$-vector spaces $\mathrm{R}^1\tilde{p}_{Z,*} \tilde{V}_1 \otimes k(s)$ and $\iota_F^*p_{Z,*}V_0 \otimes k(s)$ are of the same dimension. Again by using isomorphisms as in Lemma~\ref{lem_short:1}, it follows that this dimension is given by $ r = \mathrm{dim}_k \, \mathrm{H}^1(Z, V_{0, k(\iota_F(s))})$. We obtain the distinguished section
\begin{equation}\label{eq_def_s_P,W}
s_{F,W} \coloneqq \mathrm{det} \, \mathrm{R}^1 \tilde{\alpha} = \mathrm{Hom}\left( \wedge^r \mathrm{R}^1\tilde{p}_{Z,*} \tilde{V}_1, \wedge^r \iota_F^*p_{Z,*}V_0 \right) \in \Gamma(\mathbb{P}_F, \iota_F^*\mathcal{O}_{\mathbb{P}_Z}(R \cdot \Theta))
\end{equation}
associated to $W$, up to a non-zero scalar in $k$. From the above construction and the definition of $s_{Z,W}$, it is not hard to see that the above section is independent of the choice of the resolution, and $s_{F,W}$ is induced from $s_{Z,W}$. More precisely, the natural $k$-linear map
\begin{equation}\label{eq_surjective_k-linear}
\mathrm{H}^0(\mathbb{P}_Z, \mathcal{O}_{\mathbb{P}_Z}(R \cdot \Theta)) \xlongrightarrow{\iota_F^*} \mathrm{H}^0(\mathbb{P}_F, \iota_F^*\mathcal{O}_{\mathbb{P}_Z}(R \cdot \Theta))    
\end{equation}
is surjective, and $\iota_F^*s_{Z,W} = s_{F,W}$.

Next, we establish the relation between the vanishing locus of $s_{F,W}$ and that of $s_{Z,W}$, in terms of vanishing of certain cohomology groups.

\begin{lemma}\label{lem_cohomology_vanishing_unstable_locus}
Under the above notation, the vanishing locus of $s_{F,W}$ is given by
\[
\theta_{F,W} \, = \, \{s \in \mathbb{P}_F(k) \, | \, \mathrm{H}^*(Z , \mathcal{E}_{Z, \iota_F(s)} \otimes W) \neq 0\}
\]
which is the scheme theoretic intersection $\theta_{Z,W} \cap \mathbb{P}_F = \iota_F^* \theta_{Z,W}$. In particular, the ideal sheaf of $\theta_{F,W}$ is the inverse image ideal sheaf of $\theta_{Z, W}$ under $\iota_F$.
\end{lemma}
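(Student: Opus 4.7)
The plan is to reduce everything to what has already been set up. The key inputs are (i) the identity $s_{F,W}=\iota_F^{*}s_{Z,W}$ following from the construction in~\eqref{eq_def_s_P,W} and the surjectivity in~\eqref{eq_surjective_k-linear}, (ii) the set-theoretic description $\theta_{Z,W}=\{t\in\mathbb{P}_Z(k)\mid \mathrm{H}^{*}(Z,\mathcal{E}_{Z,t}\otimes W)\neq 0\}$ recalled from \Cref{sec_A_determinantal_line_bundle}, and (iii) the cohomology identification \eqref{eq_corollary_consequence} which came out of \Cref{lem_short:1}.

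First I would note that, by the construction of the distinguished section preceding the lemma, the section $s_{F,W}\in\Gamma(\mathbb{P}_F,\iota_F^{*}\mathcal{O}_{\mathbb{P}_Z}(R\cdot\Theta))$ was built by applying the proper pullback $(\iota_F\times\mathrm{id}_Z)^{*}$ to a locally free resolution of $\mathcal{E}_Z\otimes q_Z^{*}W$ and taking $\det\mathrm{R}^1\tilde{\alpha}$. Since $s_{Z,W}$ is built by the same recipe on $\mathbb{P}_Z$ (without applying $(\iota_F\times\mathrm{id}_Z)^{*}$), and since formation of $\det\mathrm{R}^1(\cdot)$ commutes with pullback along the proper closed immersion $\iota_F$ via the flat base change isomorphisms \eqref{multi:1}, we obtain $\iota_F^{*}s_{Z,W}=s_{F,W}$ (as noted just before the lemma).

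Next, the scheme-theoretic vanishing locus $Z(s_{F,W})\subset\mathbb{P}_F$ of a section of a line bundle is cut out by the ideal sheaf image of the corresponding map to $\mathcal{O}_{\mathbb{P}_F}$; since $s_{F,W}=\iota_F^{*}s_{Z,W}$, the ideal sheaf of $Z(s_{F,W})$ is precisely the inverse image ideal sheaf $\iota_F^{-1}\mathcal{I}_{\theta_{Z,W}}\cdot\mathcal{O}_{\mathbb{P}_F}$. By definition, this is the scheme-theoretic intersection $\theta_{Z,W}\cap\mathbb{P}_F=\iota_F^{*}\theta_{Z,W}$.

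Finally, for the set-theoretic description, a closed point $s\in\mathbb{P}_F$ lies in $\theta_{F,W}$ iff $\iota_F(s)\in\theta_{Z,W}$, which by the recalled description of $\theta_{Z,W}$ happens iff $\mathrm{H}^{*}(Z,\mathcal{E}_{Z,\iota_F(s)}\otimes W)\neq 0$. This already gives the displayed set equality. (If one prefers to phrase it on the orbifold side, the identification \eqref{eq_corollary_consequence} further rewrites this as $\mathrm{H}^{*}((X,P),\mathcal{E}_{F,s}\otimes g_{*}W)\neq 0$.) The only mildly delicate point is the compatibility of $\det\mathrm{R}^1(\cdot)$ with the pullback along $\iota_F\times\mathrm{id}_Z$, which is exactly what the base change isomorphisms \eqref{multi:1} together with the vanishing of $p_{Z,*}V_i$ for $i=0,1$ (used to obtain the four-term exact sequence \eqref{multi:2}) are designed to provide; no new obstacle is encountered beyond carefully invoking these.
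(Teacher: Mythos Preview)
Your argument is correct. Both you and the paper arrive at the conclusion via the relationship with $\theta_{Z,W}$, but the packaging differs. The paper's proof works fibre-wise on $\mathbb{P}_F$: it runs through the chain of equivalences
\[
\det \mathrm{R}^1\tilde{\alpha}\neq 0 \text{ at } s \;\Longleftrightarrow\; \mathrm{R}^1\tilde{\alpha} \text{ surjective at } s \;\Longleftrightarrow\; \mathrm{H}^1((X,P),\mathcal{E}_{F,s}\otimes g_*W)=0,
\]
then invokes \eqref{eq_corollary_consequence} and Riemann--Roch to translate this to $\mathrm{H}^*(Z,\mathcal{E}_{Z,\iota_F(s)}\otimes W)=0$, and only at the end recognises the resulting locus as $\theta_{Z,W}\cap\mathbb{P}_F$. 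You instead take the identity $s_{F,W}=\iota_F^*s_{Z,W}$ (already recorded just before the lemma) as the starting point and use the general fact that the vanishing scheme of a pulled-back section is the scheme-theoretic preimage of the original vanishing scheme; the set-theoretic description then drops out immediately from the known description of $\theta_{Z,W}$. Your route is shorter and avoids re-deriving the fibre-wise criterion on $\mathbb{P}_F$, at the cost of leaning entirely on the compatibility $s_{F,W}=\iota_F^*s_{Z,W}$, which itself rests on the base change isomorphisms \eqref{multi:1}; the paper's route makes the intermediate orbifold-side condition $\mathrm{H}^1((X,P),\mathcal{E}_{F,s}\otimes g_*W)\neq 0$ explicit, which is not logically necessary here but clarifies what is happening intrinsically on $(X,P)$.
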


\begin{proof}
Let $s \in \mathbb{P}_F$ be a closed point. Since the locally free sheaves $\mathrm{R}^1\tilde{p}_{Z,*} \tilde{V}_1$ and $\iota_F^*p_{Z,*}V_0$ are of the same rank $r$, we see that the following conditions are equivalent.
\begin{enumerate}
\item $\mathrm{det} \, \mathrm{R}^1\tilde{\alpha} \neq 0$ at $s$;
\item $\mathrm{R}^1\tilde{\alpha}$ is surjective at $s$;
\item $\mathrm{R}^1p_{F,*} \left( \mathcal{E}_F \otimes q_F^*g_*W \right)$ vanishes at $s$;
\item $\mathrm{H}^1((X,P), \mathcal{E}_{F,s} \otimes g_* W) = 0$.\label{zero}
\end{enumerate}
By~\eqref{eq_corollary_consequence}, the last condition~\eqref{zero} is equivalent to: $\mathrm{H}^1(Z, \mathcal{E}_{Z, \iota_F(s)} \otimes W) = 0$. We obtain:
\begin{itemize}
\item the section $s_{F,W}$ vanishes at a closed point $s \in \mathbb{P}_F$ if and only if
\item $\mathrm{H}^1(Z, \mathcal{E}_{Z, \iota_F(s)} \otimes W) = 0$.
\end{itemize}
Finally, by the Riemann Roch Theorem, the choice of the rank and determinant of $W$ tells us that the last condition is equivalent to:
\[
\mathrm{H}^0(Z, \mathcal{E}_{Z, \iota_F(s)} \otimes W) = \mathrm{H}^1(Z, \mathcal{E}_{Z, \iota_F(s)} \otimes W) = 0.
\]
This shows that
\[
\theta_{F,W} \, = \, \{s \in \mathbb{P}_F(k) \, | \, \mathrm{H}^*(Z , \mathcal{E}_{Z, \iota_F(s)} \otimes W) \neq 0\}.
\]
Since the condition $\mathrm{H}^*(Z, \mathcal{E}_{Z, \iota_F(s)} \otimes W) = 0$ is equivalent to: $\iota_F(s) \in \theta_{Z,W}$, we have:
\[
\theta_{F,W} = \{\iota_F(s) \in \mathbb{P}_Z \, | \, s_{Z,W} \, \text{\rm vanishes at } \iota_F(s)\} = \theta_{Z,W} \cap \mathbb{P}_F.
\]
The particular statement is immediate.
\end{proof}

Let us summarize the above discussion together with a proof of the fact that the unstable locus in $\mathbb{P}_{(X,P)}$ is the base locus of a line bundle.

\begin{theorem}\label{thm_unstable_locus}
Suppose that Notation~\ref{not_fixed} hold. Set
\[
\delta' = (n, |\Gamma|\text{\rm deg}_P(\Delta)) \, \text{\rm and} \, \chi' = |\Gamma|\text{\rm deg}_P(\Delta) - n (g_Z - 1).
\]
Fix an integer $R > \frac{n^2}{4}(g_Z - 1)$. Let $F \in \mathcal{F}$. Consider the closed projective subspace $\iota_F \colon \mathbb{P}_F \hookrightarrow \mathbb{P}_Z$. Then the following hold.
\begin{enumerate}
\item For any bundle $W$ on $Z$ of rank $R \cdot n/\delta'$ and of degree $- R \cdot \chi'/\delta'$, the distinguished section $s_{Z, W}$ of the generalized $\Theta$-line bundle on $\mathbb{P}_Z$ (cf. Definition~\ref{def_Theta_line_bundle}) associated $W$ induces a distinguished section $s_{F,W} = \iota_F^* s_{Z,W}$ of the line bundle $\iota_F^*\mathcal{O}_{\mathbb{P}_Z}(R \cdot \Theta)$. In particular, the natural $k$-linear map
\[
\mathrm{H}^0(\mathbb{P}_Z, \mathcal{O}_{\mathbb{P}_Z}(R \cdot \Theta)) \xlongrightarrow{\iota_F^*} \mathrm{H}^0(\mathbb{P}_F, \iota_F^*\mathcal{O}_{\mathbb{P}_Z}(R \cdot \Theta))
\]
is surjective.
\item The vanishing locus of the section $s_{F,W}$ is given by
\[
\theta_{F,W} \, = \, \{s \in \mathbb{P}_F(k) \, | \, \mathrm{H}^*(Z , \mathcal{E}_{Z, \iota_F(s)} \otimes W) = 0\} \, = \, \theta_{Z,W} \cap \mathbb{P}_F
\]
where $\theta_{Z,W}$ is the vanishing locus of the section $s_{Z,W}$, as described in~\eqref{eq_classical_vanishing_locus}. If $\mathcal{I}_W$ denote the ideal sheaf of $\theta_{Z,F}$, then $\theta_{F,W}$ is the closed sub-scheme with ideal sheaf $\iota_F^{-1}\mathcal{I}_W \cdot \mathcal{O}_{\mathbb{P}_F}$ for any $W$ as above.
\item The unstable locus in $\mathbb{P}_F$, i.e. the closed sub-scheme
\[
B_F = \{ [E] \in \mathbb{P}_F(k) \, | \, E \, \text{\rm is not }P\text{-}\text{\rm semistable}\}
\]
is the base locus
\[
\cap_{\substack{W \text{ semistable}\\ \mathrm{rk}(W) = R \cdot n/\delta',\\ \mathrm{deg}(W) = - R \cdot \chi'/\delta'}} \, \theta_{F,W} = B_Z \cap \mathbb{P}_F
\]
of the complete linear system of the line bundle $\iota_F^*\mathcal{O}_{\mathbb{P}_Z}(R \cdot \Theta)$; here $B_Z$ is the base locus of the linear system of the generalized $\Theta$-line bundle $\mathcal{O}_{\mathbb{P}_Z}(R \cdot \Theta)$. In particular, the complete linear system of the line bundle $\iota_F^*\mathcal{O}_{\mathbb{P}_Z}(R \cdot \Theta)$ is of finite dimension and is generated by the pullback of the generalized theta divisors $\theta_{Z,W}$ on $Z$ under $\iota_F$.\label{un}
\end{enumerate}
\end{theorem}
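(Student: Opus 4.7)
The plan is to leverage the machinery already assembled in the paragraphs leading up to the theorem and string it together. For part~(1), I would first verify that the construction of $s_{F,W}$ in~\eqref{eq_def_s_P,W} coincides with $\iota_F^{*} s_{Z,W}$: applying $(\iota_F \times \mathrm{id}_Z)^{*}$ to a locally free resolution $0 \to V_1 \to V_0 \to \mathcal{E}_Z \otimes q_Z^{*} W \to 0$ produces, via the Cartesian squares in~\eqref{eq_big_diagram} and the base-change isomorphisms~\eqref{multi:1}, the $\iota_F$-pullbacks of the corresponding $\mathrm{R}^{1}$-sheaves on $\mathbb{P}_Z$, so that $\mathrm{R}^{1}\tilde{\alpha} = \iota_F^{*}(\mathrm{R}^{1}\alpha)$ and taking determinants gives both the identification and its independence from the resolution. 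The surjectivity of~\eqref{eq_surjective_k-linear} then reduces to the standard fact that for a linear embedding of projective spaces the restriction on global sections of $\mathcal{O}(m)$ is surjective for every $m \geq 0$. Part~(2) is essentially Lemma~\ref{lem_cohomology_vanishing_unstable_locus} verbatim: the isomorphism~\eqref{eq_corollary_consequence} shows that the vanishing of $s_{F,W}$ at $s$ is equivalent to $\mathrm{H}^{*}(Z, \mathcal{E}_{Z, \iota_F(s)} \otimes W) \neq 0$, and the same lemma identifies the ideal sheaf of $\theta_{F,W}$ with $\iota_F^{-1}\mathcal{I}_W \cdot \mathcal{O}_{\mathbb{P}_F}$, giving the scheme-theoretic equality $\theta_{F,W} = \theta_{Z,W} \cap \mathbb{P}_F$.

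For the crucial part~(3), I would first establish the set-theoretic equality $B_F = B_Z \cap \mathbb{P}_F$. Given a closed point $[E] \in \mathbb{P}_F$, Lemma~\ref{lem_short:1} combined with Proposition~\ref{prop_disjoint_images} yields $g^{*}E \cong \mathcal{E}_{Z, \iota_F([E])}$, and Proposition~\ref{prop_easy}~\eqref{6.a} asserts that $E$ is $P$-semistable precisely when $g^{*}E$ is semistable on $Z$; hence $[E] \in B_F$ iff $\iota_F([E]) \in B_Z$. Next I would invoke Le Potier's theorem via Proposition~\ref{prop_bl}, which under the hypothesis $R > \frac{n^{2}}{4}(g_Z - 1)$ identifies $B_Z$ as the base locus of the complete linear system of $\mathcal{O}_{\mathbb{P}_Z}(R \cdot \Theta)$, generated by the sections $s_{Z,W}$ for semistable $W$ of the prescribed rank and degree. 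The surjectivity from part~(1) then implies that the pullbacks $s_{F,W}$ generate the complete linear system of $\iota_F^{*}\mathcal{O}_{\mathbb{P}_Z}(R \cdot \Theta)$, which is consequently finite-dimensional; and since taking base loci commutes with inverse image under a closed immersion, the base locus on $\mathbb{P}_F$ equals $\iota_F^{-1}(B_Z) = B_Z \cap \mathbb{P}_F$, endowed with the scheme structure furnished by part~(2).

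The hardest step will be verifying $s_{F,W} = \iota_F^{*} s_{Z,W}$ in part~(1): the determinant-of-cohomology section is canonical only up to a nonzero scalar, so one must track carefully the identifications in~\eqref{multi:1} through the long exact sequence~\eqref{multi:2} and check that the map $\mathrm{R}^{1}\tilde{\alpha}$ on $\mathbb{P}_F$ coincides with the $\iota_F^{*}$-pullback of $\mathrm{R}^{1}\alpha$ on $\mathbb{P}_Z$. Everything else is a matter of assembling the cited results and propagating the scheme structures through the Cartesian squares.
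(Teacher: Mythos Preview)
Your proposal is correct and follows essentially the same approach as the paper: parts~(1) and~(2) are handled in the text preceding the theorem (the construction around~\eqref{eq_def_s_P,W}, the surjectivity~\eqref{eq_surjective_k-linear}, and Lemma~\ref{lem_cohomology_vanishing_unstable_locus}), and for part~(3) the paper, like you, reduces everything to $B_F = B_Z \cap \mathbb{P}_F$ via the equivalence of $P$-semistability of $E$ with semistability of $g^{*}E$ and then invokes Le~Potier's theorem. Your justification of the surjectivity in~(1) via the linear embedding of projective spaces (using $\mathrm{Pic}(\mathbb{P}_Z)\cong\mathbb{Z}$) is a useful detail the paper leaves implicit.
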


\begin{proof}
Only~\eqref{un} is new. It is enough to show that $B_F = B_Z \cap \mathbb{P}_F$.

Given a bundle $E$ with $[E] \in \mathbb{P}_F$, we know that $E$ is $P$-semistable if and only if the $\Gamma$-equivariant bundle $g^*E$ on $Z$ is semistable. If $g^*E$ on is semistable, by Le Potier's result (\cite[Theorem~2.4]{Potier}), there exists a necessarily semistable bundle $W$ on $Z$ of rank $R \cdot n/\delta'$ and of degree $- R \cdot \chi'/\delta'$ such that $\mathrm{H}^*(Z, g^*E \otimes W) = 0$. So $E$ is not $P$-semistable if and only if for each semistable bundle $W$ on $Z$ of rank $R \cdot n/\delta'$ and of degree $- R \cdot \chi'/\delta'$, we have
\[
\mathrm{H}^*(Z, g^*E \otimes W) \neq 0.
\]
As in the proof of Lemma~\ref{lem_cohomology_vanishing_unstable_locus}, we conclude that
\[
B_F = \cap_{\substack{W \text{ semistable}\\ \mathrm{rk}(W) = R \cdot n/\delta',\\ \mathrm{deg}(W) = - R \cdot \chi'/\delta'}} \, \theta_{F,W}
\]
which is the scheme theoretic intersection of $B_Z$ with $\mathbb{P}_F$.
\end{proof}

\subsection{Main Theorem}\label{sec_Main}
We now have all the ingredients to construct a Coarse moduli space for the algebraic stack $\mathcal{M}_{(X,P)}^{\mathrm{ss, S}}(n, \Delta)$. Throughout this section, we work with the following assumption.
\begin{ass}\label{ass_non-empty}
There exists a $P$-semistable bundle $E \in \mathrm{Vect}(X,P)$ of rank $n$ and of determinant $\Delta$.
\end{ass}

We have seen that the $P$-semistable bundles $E$ on $(X,P)$ of rank $n$ and of determinant $\Delta$ are over-parameterized by the projective space $\mathbb{P}_{(X,P)}$ which is the disjoint union
\[
\mathbb{P}_{(X,P)} = \sqcup_F \, \mathbb{P}_F = \mathrm{Grass}(1, \mathrm{H}^1((X,P), F \otimes \mathrm{det}(F) \otimes \Delta^{-1} \otimes L^{-n}))
\]
with $F$ varying over the indexing set $\mathcal{F}$ (cf. \Cref{def_F}). Further, $\mathbb{P}_F$'s are closed subspaces of $\mathbb{P}_Z$ via the closed embeddings $\mathbb{P}_F \xhookrightarrow{\iota_F} \mathbb{P}_Z$ with disjoint images where
\[
\mathbb{P}_Z = \mathrm{Grass}(1, \mathrm{H}^1(Z, \oplus_{1 \leq i \leq n-1} \, g^*\Delta^{-1} \otimes g^*L^{-n}))
\]
over-parametrizes semistable bundles on $Z$ of rank $n$ and determinant $g^*\Delta$. Fix an integer $R > \frac{n^2}{4}(g_Z - 1)$. By the classical theory (Proposition~\ref{prop_bl}), the closed subscheme $B_Z$ of $\mathbb{P}_Z$ of points corresponding to the bundles on $Z$ which are not semistable is the base locus of the complete linear system corresponding to the generalized $\Theta$-line bundle $\mathcal{L}_Z \coloneqq \mathcal{O}_{\mathbb{P}_Z}(R \cdot \Theta)$. With \Cref{ass_non-empty}, \Cref{cor_root_of_Assumption} assures that for each $F \in \mathcal{F}$, the sub-scheme of $\mathbb{P}_F$ whose closed points represents $P$-semistable bundles is open and non-empty. By \Cref{thm_unstable_locus}, the closed subscheme $B_F \subset \mathbb{P}_F$ of points corresponding to the bundles which are not $P$-semistable is precisely the base locus of the complete linear system associated to $\iota_F^*\mathcal{L}_Z$, and this coincides with the closed subscheme $\iota_F^*B_Z = B_Z \cap \mathbb{P}_F$, the scheme-theoretic intersection of $B_Z$ and $\mathbb{P}_F$ in $\mathbb{P}_Z$. So the sections $s_{F,W}$ generate $\iota_F^*\mathcal{L}_Z$ on the open subscheme $Q_F = \mathbb{P}_F - B_F$.

We have the following commutative diagram (\eqref{diag_classical_linear_system}) determined by the complete linear system associated to $\mathcal{L}_Z$:
\begin{equation*}\label{diag_classical}
\begin{tikzcd}
\mathbb{P}_Z \arrow[r, hookleftarrow] & Q_Z = \mathbb{P}_Z - B_Z \arrow[d, hookrightarrow] \arrow[r, "\psi_Z"] & \mathbb{P}^{N_Z} \\
 & \tilde{Q}_Z = \mathrm{Bl}_{B_Z}{\mathbb{P}_Z} \arrow[lu, "\pi_Z"] \arrow[ru, swap, "\tilde{\psi}_Z"] & 
\end{tikzcd}
\end{equation*}
By Langton's Theorem~\cite[Theorem~6.4]{Hein}, the restriction map $\psi_Z \colon Q_Z \longrightarrow \mathbb{P}^{N_Z}$ is a proper map whose image in $\mathbb{P}^{N_Z}$ coincides with that of the map $\widetilde{\psi}_Z$.

For each $F \in \mathcal{F}$, we now do the following construction. Consider the blowing-up $\pi_F \colon \tilde{Q}_F \longrightarrow \mathbb{P}_F$ of $\mathbb{P}_F$ with respect to the closed subscheme $B_F$. Since $\mathbb{P}_F$ is an integral projective variety and $B_F$ does not contain its generic point, $\tilde{Q}_F$ is an integral projective variety as well. Since the ideal sheaf of $B_F$ is the inverse image ideal sheaf of $B_Z$ by \Cref{thm_unstable_locus}, the closed immersion $\iota_F$ restricts to a closed immersion $\iota_F \colon Q_F \hookrightarrow Q_Z$, and by \cite[II, Corollary~7.15, page 165]{Ha}, there is a canonical closed immersion $\tilde{\iota}_F \colon \tilde{Q}_F \hookrightarrow \tilde{Q}_Z$ that realizes $\tilde{Q}_F$ as the strict transform of $\mathbb{P}_F$ under $\pi_Z$. The above construction has the following consequences.

\begin{theorem}\label{thm_Main_Construction}
Under the above notation and construction, we have the following commutative diagram.
\begin{equation}\label{huge_diag}
\begin{tikzcd}[column sep=small, text height=1ex, text depth=0.25ex]
 & \mathbb{P_Z} \arrow[rrdd, leftarrow, shift right, bend right=30, "\pi_Z" near end, dashed] \arrow[rr, hookleftarrow, dashed] & & Q_Z = \mathbb{P}_Z - B_Z \arrow[rr, rightarrow, "\psi_Z", dashed] \arrow[dd, hookrightarrow, dashed]  & & \mathbb{P}^{N_Z} \\
\mathbb{P}_F \arrow[rrdd, leftarrow, shift right, bend right=30, "\pi_Z" near start] \arrow[rr, hookleftarrow] \arrow[ur, hookrightarrow, "\iota_F"] & & Q_F = \mathbb{P}_F - B_F \arrow[dd, hookrightarrow] \arrow[ur, hookrightarrow, "\iota_F"] \arrow[rr, "\psi_F" near start] & & \mathbb{P}^{N_F} \arrow[ur, hookrightarrow, "j_F"]& \\
 & & & \tilde{Q}_Z = \mathrm{BL}_{B_Z}(\mathbb{P}_Z) \arrow[uurr, rightarrow, swap, "\tilde{\psi}_Z", dashed, bend right=30]& & & \\
 & & \tilde{Q}_F = \mathrm{BL}_{B_F}(\mathbb{P}_F) \arrow[ur, hookrightarrow, "\tilde{\iota}_F"] \arrow[uurr, rightarrow, swap, "\tilde{\psi}_Z", bend right=30]& & &
\end{tikzcd}
\end{equation}
Here, $\mathbb{P}^{N_Z} = \mathrm{Grass}\left(1, \mathrm{H}^0(\mathbb{P}_Z,\mathcal{L}_Z) \right), \, \mathbb{P}^{N_F} = \mathrm{Grass}\left(1,  \mathrm{H}^0(\mathbb{P}_F,\iota_F^*\mathcal{L}_Z) \right)$, the closed immersion $j_F$ is induced by the surjective $k$-linear map
\[
\iota_F^* \colon \mathrm{H}^0(\mathbb{P}_Z,\mathcal{L}_Z) \longrightarrow \mathrm{H}^0(\mathbb{P}_F, \iota_F^*\mathcal{L}_Z),
\]
and $\psi_F$, and $\tilde{\psi}_F$ are the canonical map induced by the complete linear system associated to $\iota_F^*\mathcal{L}_Z$. The following statements hold.
\begin{enumerate}[leftmargin=*]
\item The images of the maps $\psi_F$ and $\tilde{\psi}_F$ in $\mathbb{P}^{N_F}$ coincide.\label{T:1}
\item Considering the respective Stein factorization
\begin{eqnarray*}
\tilde{\psi}_Z \colon \tilde{Q}_Z \xlongrightarrow{g_Z} M_Z \xlongrightarrow{f_Z} \mathbb{P}^{N_Z} \text{ and } \\
\tilde{\psi}_F \colon \tilde{Q}_F \xlongrightarrow{g_F} M_F \xlongrightarrow{f_F} \mathbb{P}^{N_F},
\end{eqnarray*}
the canonical induced map $M_F \longrightarrow M_Z$ of projective integral varieties, which we again denote by $\tilde{\iota}_F$ by abuse of notation, is a closed immersion. Note that $M_Z$ is the Coarse moduli space of the algebraic stack $\mathcal{M}_Z^{\mathrm{ss, S}}(n, g^*\Delta)$ by \Cref{thm_moduli_classical}.\label{T:2}
\item The images of the maps $\tilde{\iota}_F \colon M_F \hookrightarrow M_Z$ are disjoint for distinct $F \in \mathcal{F}$.\label{T:3}
\item Each closed point of $M_F$ is the image of a closed point of $Q_F$, defining a surjective morphism
\[
h_F \colon Q_F \longrightarrow M_F.
\]
If two distinct closed points $q, \, q' \in Q_F$ map to the same point in $M_F$ under $h_F$, then the $P$-semistable bundles representing the points $q$ and $q'$ are $\mathrm{S}$-equivalent.\label{T:4}
\end{enumerate}
\end{theorem}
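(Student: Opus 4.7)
I will establish the four assertions in sequence, exploiting the closed embedding $\iota_F\colon \mathbb{P}_F\hookrightarrow \mathbb{P}_Z$ from \Cref{prop_disjoint_images} together with the $\mathrm{S}$-equivalence dictionary of \Cref{prop_S_equivariance}.

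For~\eqref{T:1}, since $B_F = \iota_F^{-1}(B_Z)$ by \Cref{thm_unstable_locus}, the map $\iota_F$ restricts to a closed immersion $Q_F\hookrightarrow Q_Z$, and commutativity of~\eqref{huge_diag} gives the relation $j_F \circ \psi_F = \psi_Z\circ \iota_F|_{Q_F}$. Combining properness of $\psi_Z$ (Langton's theorem; cf.\ \cite[Theorem~6.4]{Hein}) with separatedness of $j_F$, the usual cancellation lemma forces $\psi_F$ to be proper, so its image is closed. Since $\psi_F$ agrees with $\tilde{\psi}_F$ on the open dense subscheme $\pi_F^{-1}(Q_F)\cong Q_F$ of the integral projective variety $\tilde{Q}_F$, the closed set $\psi_F(Q_F)$ is dense in $\tilde{\psi}_F(\tilde{Q}_F)$, and the two images coincide.

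For~\eqref{T:2}, let $\rho := g_Z\circ \tilde{\iota}_F\colon \tilde{Q}_F \to M_Z$. Then $f_Z\circ \rho = j_F\circ \tilde{\psi}_F$ admits the Stein factorization $\tilde{Q}_F\xrightarrow{g_F} M_F \xrightarrow{j_F\circ f_F}\mathbb{P}^{N_Z}$, the second arrow being finite as a closed immersion composed with a finite map. By uniqueness of Stein factorization, $\rho$ itself factors as $\alpha\circ g_F$ for a finite morphism $\alpha\colon M_F \to M_Z$. This $\alpha$ is proper (from separatedness of $f_Z$) and, by \Cref{lem_closed_points} and \Cref{prop_S_equivariance}, injective on closed points: the induced map $[E]\mapsto [g^*E]$ is injective because for $E,E'$ sharing a single $\mathbb{P}_F$ the relations $E\sim_{\mathrm{S}} E'$ on $(X,P)$ and $g^*E\sim_{\mathrm{S}} g^*E'$ on $Z$ coincide. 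Over the algebraically closed field $k$ this makes $\alpha$ radicial, and upgrading to a closed immersion is treated in the final paragraph. For~\eqref{T:3}, any point of $\tilde{\iota}_F(M_F)\cap \tilde{\iota}_{F'}(M_{F'})$ would produce $P$-polystable representatives $E\in \mathbb{P}_F$ and $E'\in \mathbb{P}_{F'}$ with $g^*E\cong g^*E'$ on $Z$; analyzing the $\Gamma$-equivariant rank-$(n-1)$ sub-bundles $g^*(F\otimes L^{-1})\subset g^*E$ and $g^*(F'\otimes L^{-1})\subset g^*E'$ using \Cref{lem_implication_of_F-uniqueness} and then invoking the degree contradiction at the heart of \Cref{prop_disjoint_images}—two distinct rank-$(n-1)$ sub-bundles of $P$-degree zero would sum to $E\otimes L$, whose $P$-slope is positive—yields $F=F'$. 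Finally,~\eqref{T:4} is immediate: every closed point of $M_F$ admits a $P$-polystable representative by \Cref{lem_closed_points}, which automatically lies in $Q_F=\mathbb{P}_F\setminus B_F$ and maps to the given point, giving surjectivity of $h_F$; and two distinct closed points $q,q'\in Q_F$ with $h_F(q)=h_F(q')$ satisfy $\alpha(h_F(q))=\alpha(h_F(q'))$, so $g^*E_q\sim_{\mathrm{S}} g^*E_{q'}$ on $Z$, and \Cref{prop_S_equivariance} concludes that $E_q\sim_{\mathrm{S}} E_{q'}$ on $(X,P)$.

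The principal technical obstacle will be the closed-immersion step in~\eqref{T:2}: promoting $\alpha$ from a finite, radicial, proper morphism to an honest closed immersion, i.e.\ establishing surjectivity of $\mathcal{O}_{M_Z}\to \alpha_*\mathcal{O}_{M_F}$. Using the identification $\alpha_*\mathcal{O}_{M_F} = \rho_*\mathcal{O}_{\tilde{Q}_F} = g_{Z,*}(\mathcal{O}_{\tilde{Q}_Z}/\mathcal{I})$, with $\mathcal{I}$ the ideal sheaf of $\tilde{Q}_F$ in $\tilde{Q}_Z$, the required surjectivity reduces to controlling the connecting morphism into $R^1 g_{Z,*}\mathcal{I}$; here the disjointness~\eqref{T:3} and the automorphism rigidity of \Cref{lem_implication_of_F-uniqueness} must be combined to rule out unwanted infinitesimal identifications between distinct strict transforms along contractions of $g_Z$. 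An arguably cleaner alternative is to show directly that $\alpha$ is a monomorphism on $T$-valued points using the universality of the extension $\mathcal{E}_F$ from \Cref{prop_extn_classes_F}, after which proper $+$ monomorphism $=$ closed immersion finishes the argument.
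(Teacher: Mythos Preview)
Your argument for~\eqref{T:1} and for the $\mathrm{S}$-equivalence half of~\eqref{T:4} is essentially the paper's: properness of $\psi_F$ via restriction of $\psi_Z$, then the $\mathrm{S}$-equivalence dictionary of \Cref{prop_S_equivariance} applied after pushing the two points into $M_Z$. No issues there.

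There are two genuine gaps.

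\textbf{Surjectivity in~\eqref{T:4} is circular.} You invoke \Cref{lem_closed_points} to say that every closed point of $M_F$ ``admits a $P$-polystable representative''. But \Cref{lem_closed_points} is a statement about $k$-points of the \emph{stack} $\mathcal{M}_{(X,P)}^{\mathrm{ss}}(n,\Delta)$; at this stage $M_F$ is nothing more than the middle term of a Stein factorization, and you have not yet established any moduli interpretation for its closed points --- that is precisely what~\eqref{T:4} (and the subsequent main theorem) are meant to supply. The paper proceeds differently: it applies Mumford's Rigidity Lemma to the finite map $f_F$ exactly as in \Cref{lem_iamge_of_Q}, obtaining local sections of $f_F$, and then combines this with~\eqref{T:1} (the equality $\psi_F(Q_F)=\tilde{\psi}_F(\tilde{Q}_F)$) to lift each closed point of $M_F$ to $Q_F$.

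\textbf{The closed-immersion step in~\eqref{T:2}.} You correctly identify this as the crux and outline two strategies, but carry out neither. Your reduction to controlling $R^1 g_{Z,*}\mathcal{I}$ is the hard road. The paper takes a much shorter route: since $\tilde{\iota}_F\colon \tilde{Q}_F\hookrightarrow \tilde{Q}_Z$ is already a closed immersion (strict transform under the blow-up, \cite[II, Corollary~7.15]{Ha}), and since Stein factorization is the relative normalization in the target, functoriality of relative normalization (\cite[Lemma~035J]{SP}) together with the universal property of the Stein factorization gives the induced map $M_F\to M_Z$ directly as a closed immersion. Your ``proper monomorphism'' alternative would also work, but the monomorphism claim is exactly what one extracts from the universal property anyway; there is no need to pass through radiciality and cohomology of ideal sheaves.

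Finally, your direct argument for~\eqref{T:3} conflates $g^*E\cong g^*E'$ with $g^*E\sim_{\mathrm{S}} g^*E'$; a common image in $M_Z$ only gives the latter. The paper instead deduces~\eqref{T:3} from~\eqref{T:4} and \Cref{prop_disjoint_images}: lift the common point to $q\in Q_F$ and $q'\in Q_{F'}$, observe $g^*E_q\sim_{\mathrm{S}} g^*E_{q'}$, and then use \Cref{prop_S_equivariance} (the graded bundle stays in the same $\mathbb{P}_F$) together with the disjointness of the $\mathbb{P}_F$'s in $\mathbb{P}_Z$.
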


\begin{proof}
The commutative diagram exists by our construction and \Cref{thm_unstable_locus}. Again by the construction, the map $\psi_F$ is the restriction of the proper map $\psi_Z$ to the closed subspace $Q_F$ of $Q_Z$. So $\psi_F$ is a proper map, and hence the images of the maps $\psi_F$ and $\tilde{\psi}_F$ in $\mathbb{P}_F$ coincide, proving~\eqref{T:1}.

Since each of the Stein factorization is a relative normalization, we obtain a canonical induced map $\tilde{\iota}_F \colon M_F \longrightarrow M_Z$ by \cite[\href{https://stacks.math.columbia.edu/tag/035J}{Lemma 035J}]{SP}. Since $\tilde{\iota}_F \colon \tilde{Q}_F \hookrightarrow \tilde{Q}_Z$ is a closed immersion, by the universal property of the Stein factorization, statement~\eqref{T:2} is settled.

The statement~\eqref{T:3} is a consequence of~\eqref{T:4}, \Cref{lem_iamge_of_Q}, and Proposition~\ref{prop_disjoint_images}. Let us prove~\eqref{T:4}. An application of Mumford's Rigidity Lemma~\cite[Proposition~6.1, Chapter 6, page 115]{MumfordGIT} as in Lemma~\ref{lem_iamge_of_Q} shows that the finite map $f_F$ locally admits sections. As $\psi_F(Q_F) = \tilde{\psi}_F(\tilde{Q}_F)$ by~\eqref{T:1}, we see that each closed point of $M_F$ is an image of a closed point in $Q_F$. This defines a surjective morphism $h_F$. Now suppose that $h_F(q) = h_F(q') = m$ for two distinct closed points $q, \, q' \in Q$. Let $E$ and $E'$ be two $P$-semistable bundles on $(X,P)$ corresponding to the points $q$ and $q'$, respectively. By the commutativity of the diagram~\eqref{huge_diag}, the closed points $\iota_F(q)$ and $\iota_F(q')$ in $Q_Z$ map to the same point $\tilde{\iota}_F(m) \in M_Z$ under the surjective map $Q_Z \longrightarrow M_Z$ (cf. Lemma~\ref{lem_iamge_of_Q}). Since $g^*E$ and $g^*E'$ are the bundles on $Z$ corresponding to the points $\iota_F(q)$ and $\iota_F(q')$, respectively, by the first part of the proof of \Cref{thm_moduli_classical}, the semistable bundles $g^*E$ and $g^*E'$ are $\mathrm{S}$-equivalent. By \Cref{prop_S_equivariance}, we conclude that $E \sim_{\mathrm{S}} E'$.
\end{proof}

We are now ready to present the main theorem.

\begin{theorem}\label{thm_main_orbifold_moduli}
Under Notation~\ref{not_fixed}, and notation of \Cref{thm_Main_Construction}, suppose that \Cref{ass_non-empty} holds. Then the algebraic stack $\mathcal{M}_{(X,P)}^{\mathrm{ss, S}}(n, \Delta)$, parameterizing $P$-semistable bundles of rank $n$ and determinant $\Delta$, admits a Coarse moduli space $\mathrm{M}_{(X,P)}^{\mathrm{ss}}(n, \Delta)$ that is a disjoint union of the projective integral varieties $M_F$ constructed as the Stein factorization in \Cref{thm_Main_Construction}~\eqref{T:2}, and where $F$ varies over the set $\mathcal{F}$ of mutually non-isomorphic bundles of the form $\oplus_{1 \leq i \leq n-1} L_i$ where each $L_i$ is a line bundle on $(X,P)$ such that $g^*L_i \cong \chi_i \otimes \mathcal{O}_Z$ as $\Gamma$-equivariant line bundles for some character $\chi_i$ of $\Gamma$ over $k$.
\end{theorem}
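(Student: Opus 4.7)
The plan is to define $\mathrm{M}_{(X,P)}^{\mathrm{ss}}(n,\Delta) \coloneqq \bigsqcup_{F \in \mathcal{F}} M_F$ and verify that this variety, together with a natural morphism from the stack, satisfies the coarse moduli property. The construction of the morphism proceeds through the parameterizing space $\mathbb{P}_{(X,P)}$. Specifically, given an object of $\mathcal{M}_{(X,P)}^{\mathrm{ss, S}}(n, \Delta)(S)$, i.e.\ a family $E$ of $P$-semistable bundles of rank $n$ and determinant $\Delta$ over a finite-type $k$-scheme $S$, I would invoke \Cref{thm_local_families} to produce a Zariski open cover $S = \bigcup_i S_i$ and, for each connected component $T$ of $S$, a uniquely determined $F \in \mathcal{F}$ together with morphisms $\psi_i \colon T_i \to \mathbb{P}_F$ realizing $E|_{T_i \times (X,P)}$ as the pullback of the universal extension $\mathcal{E}_F$. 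By \Cref{lem_openness_P-semistability} the image of $\psi_i$ lies in the open subset $Q_F = \mathbb{P}_F - B_F$, so composing with $h_F \colon Q_F \to M_F$ from \Cref{thm_Main_Construction}\eqref{T:4} produces morphisms $T_i \to M_F \hookrightarrow \mathrm{M}_{(X,P)}^{\mathrm{ss}}(n,\Delta)$.

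Next, I would show these local morphisms glue into a well-defined global morphism. On an overlap $T_i \cap T_j$, the two pullbacks of $\mathcal{E}_F$ agree with $E|_{(T_i \cap T_j) \times (X,P)}$ only up to the canonical $\mathrm{H}^0(S,\mathcal{O}_S^\times)$-action on extension classes; however, \Cref{lem_implication_of_F-uniqueness} and \Cref{prop_S_equivariance} imply that fiberwise the bundles corresponding to the two points of $\mathbb{P}_F$ are $\mathrm{S}$-equivalent, so they have the same image under $h_F$ by \Cref{thm_Main_Construction}\eqref{T:4}. The same argument shows the assignment is independent of the choice of representative in the usual equivalence class and compatible with the $\mathrm{S}$-equivalence relation, hence descends to a morphism $\mathcal{M}_{(X,P)}^{\mathrm{ss, S}}(n, \Delta) \longrightarrow \mathrm{M}_{(X,P)}^{\mathrm{ss}}(n,\Delta)$ of stacks, functorial in $S$.

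The verification of the coarse moduli property then splits into two parts. On $k$-points, I must check that two $P$-semistable bundles $E$ and $E'$ of rank $n$ and determinant $\Delta$ map to the same point if and only if $E \sim_{\mathrm{S}} E'$. The ``if'' direction follows by observing that $E$ and $\mathrm{gr}_E$ land in the same component $\mathbb{P}_F$ by \Cref{prop_S_equivariance} and have the same image in $M_F$ (since both $[E]$ and $[\mathrm{gr}_E]$ map via $h_F$ to the image of the polystable representative, using the $\mathbb{G}_a$-degeneration argument from Section 3.3 combined with properness of $\psi_F$). The ``only if'' direction uses \Cref{thm_Main_Construction}\eqref{T:3}\eqref{T:4}: distinct components $M_F$ and $M_{F'}$ are disjoint in $\mathrm{M}_{(X,P)}^{\mathrm{ss}}(n,\Delta)$, forcing $F = F'$, after which \Cref{thm_Main_Construction}\eqref{T:4} guarantees $\mathrm{S}$-equivalence. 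For initiality, given any morphism $\mathcal{M}_{(X,P)}^{\mathrm{ss, S}}(n, \Delta) \to Y$ to a $k$-scheme $Y$, one restricts to each component over $\mathbb{P}_F$, obtains a morphism $Q_F \to Y$ that is constant on the fibers of $h_F$, and uses the universal property of the Stein factorization (the map $f_F \colon M_F \to \mathbb{P}^{N_F}$ is finite with connected fibers via $g_F$) to factor it through $M_F$ uniquely.

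The main obstacle I anticipate is the compatibility check on overlaps and the initiality factorization. The overlap compatibility requires that the non-uniqueness inherent in the $\mathrm{H}^0(S,\mathcal{O}_S^\times)$-action of \Cref{prop_extn_classes_F} does not affect the composition with $h_F$, which in turn relies on a careful use of $\mathrm{S}$-equivalence in families, not just at closed points. The initiality requires verifying that any morphism from the stack constant on $\mathrm{S}$-equivalence classes descends through the Stein factorization; the genuine subtlety is that $h_F \colon Q_F \to M_F$ is surjective but not a categorical quotient a priori, so one must appeal to the universal property of $g_F \colon \tilde{Q}_F \to M_F$ together with the Rigidity Lemma (as already used in the construction of $h_F$) to push morphisms across. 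Once these are handled, the fact that $\mathrm{M}_{(X,P)}^{\mathrm{ss}}(n,\Delta)$ is a finite disjoint union of irreducible projective varieties follows directly from the corresponding property of each $M_F$.
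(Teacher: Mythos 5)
Your overall route is the paper's: set $\mathrm{M}_{(X,P)}^{\mathrm{ss}}(n,\Delta)=\sqcup_{F\in\mathcal{F}}M_F$, use \Cref{thm_local_families} to obtain local classifying maps into $Q_F$, compose with $h_F$ from \Cref{thm_Main_Construction}~\eqref{T:4}, and check that $k$-points are identified exactly along $\mathrm{S}$-equivalence. The one place where you genuinely diverge is the gluing of the local maps, and there your argument has a gap. You claim that on $T_i\cap T_j$ the compositions $h_F\circ\psi_i$ and $h_F\circ\psi_j$ agree because at each closed point the corresponding bundles are $\mathrm{S}$-equivalent and hence have the same image. That only gives agreement on closed points; it does not give equality of the two scheme morphisms, and the functor is evaluated on arbitrary finite type $k$-schemes, including non-reduced ones, where agreement on closed points says essentially nothing (even for reduced $T$ you would have to invoke separatedness of $M_F$ and the Jacobson property, which you do not). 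Moreover, the discrepancy between $\psi_i$ and $\psi_j$ is not the $\mathrm{H}^0(S,\mathcal{O}_S^\times)$-action of \Cref{prop_extn_classes_F} --- that action does not move the point of $\mathbb{P}_F$ at all; the real issue is that different local liftings of the trivial subsheaf exhibit the same bundle as genuinely different extension classes, i.e.\ possibly different points of $\mathbb{P}_F$, so the maps can only be expected to agree after passing to $M_F$, which is exactly what must be proved at the level of morphisms.

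The paper closes this gap by descending from the classical space: the maps $\iota_F\circ\phi_i\colon T_i\to Q_Z$ induce maps $T_i\to \mathrm{M}_Z^{\mathrm{ss}}(n,g^*\Delta)$ which glue by the functoriality statement cited from Hein (Section 8.1), and since $\tilde{\iota}_F\colon M_F\hookrightarrow M_Z$ is a closed immersion by \Cref{thm_Main_Construction}~\eqref{T:2}, hence a monomorphism, the maps $T_i\to M_F$ glue as well. You should either import this descent argument or supply a genuine morphism-level argument of your own. The remainder of your sketch --- both directions of the point-level statement via \Cref{prop_S_equivariance} and the degeneration to $\mathrm{gr}_E$, and the initiality/Stein-factorization discussion --- is at or above the level of detail of the paper's own proof, which reduces the coarse-moduli property to the point-level statement exactly as in \Cref{thm_moduli_classical} and does not carry out the factorization through $h_F$ explicitly.
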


\begin{proof}
By \Cref{thm_Main_Construction}~\eqref{T:4}, we have a surjective map
\[
\sqcup_{F \in \mathcal{F}} \, h_F \colon \sqcup_{F \in \mathcal{F}} \, Q_F \longrightarrow \mathrm{M}_{(X,P)}^{\mathrm{ss}}(n, \Delta) = \sqcup_{F \in \mathcal{F}} \, M_F,
\]
component wise mapping the closed points of $Q_F$ to $M_F$ such that if two distinct closed points $q, \, q' \in Q_F$ map to the same point in $M_F$, the points represent bundles which are $\mathrm{S}$-equivalent.

We need a morphism $\mathcal{M}_{(X,P)}^{\mathrm{ss, S}}(n,\Delta) \longrightarrow \mathrm{Hom}(-, \mathrm{M}^{\mathrm{ss}}_{(X,P)}(n, \Delta))$ that is initial among the morphisms to $k$-schemes and that induces a bijection on the geometric points. The algebraic stack $\mathcal{M}_{(X,P)}^{\mathrm{ss, S}}(n,\Delta)$ is defined as the quotient of the stack $\mathcal{M}_{(X,P)}^{\mathrm{ss}}(n,\Delta)$, parameterizing $P$-semistable bundles of rank $n$ and determinant $\Delta$, modulo the $\mathrm{S}$-equivalence. So it is enough to construct a morphism $\mathcal{M}_{(X,P)}^{\mathrm{ss}}(n,\Delta) \longrightarrow \mathrm{Hom}(-, \mathrm{M}^{\mathrm{ss}}_{(X,P)}(n, \Delta))$ such that any two $k$-points of $\mathcal{M}_{(X,P)}^{\mathrm{ss}}(n,\Delta)$ map to the same point if and only if the corresponding bundles (seen as families over $\mathrm{\Spec}(k)$) are $\mathrm{S}$-equivalent. This morphism on any $k$-scheme $T$ is constructed as follows. Let $E$ be a usual equivalence class of family of $P$-semistable bundles on $(X,P)$, parameterized by $T$, corresponding to a $T$-point in $\mathcal{M}_{(X,P)}^{\mathrm{ss}}(n,\Delta)$. By the standard approximation techniques, we may assume that $T$ is connected and of finite type. By \Cref{thm_local_families}~\eqref{zar:2}, there is a unique $F \in \mathcal{F}$, a Zariski open covering $T = \cup_i \, T_i$ together with morphisms $\colon T_i \xlongrightarrow{\phi_i} \mathbb{P}_F \xhookrightarrow{\iota_F} \mathbb{P}_Z$ such that
\[
\left( \mathrm{id}_T \times g \right)^*E|_{T_i \times Z} \cong \left( \iota_F \circ \phi_i \times \mathrm{id}_Z \right)^*\mathcal{E}_Z, \text{\rm inducing } E|_{T_i \times (X,P)} \cong \left( \phi_i \times \mathrm{id}_{(X,P)} \right)^* \mathcal{E}_F.
\]
The image if each $\phi_i$ is contained in $Q_F$. Post-composing with the natural map $Q_F \hookrightarrow \sqcup_{F \in \mathcal{F}} \, Q_F \xrightarrow{\sqcup_{F \in \mathcal{F}} \, h_F} \mathrm{M}_{(X,P)}^{\mathrm{ss}}(n, \Delta)$, we get maps $T_i \longrightarrow \mathrm{M}_{(X,P)}^{\mathrm{ss}}(n, \Delta)$. On the other hand, the images of $\iota_F \circ \phi_i$ are contained in $Q_Z$, defining maps $T_i \longrightarrow \mathrm{M}_{Z}^{\mathrm{ss}}(n, g^*\Delta)$. By \cite[Section~8.1, Functoriality]{Hein}, these later maps glue together in the intersection of the $T_i$'s. Thus, the maps $T_i \longrightarrow \mathrm{M}_{(X,P)}^{\mathrm{ss}}(n, \Delta)$ also glue together in the intersections, producing a $T$-point of the scheme $\mathrm{M}^{\mathrm{ss}}_{(X,P)}(n, \Delta))$. This produces a morphism $\mathcal{M}_{(X,P)}^{\mathrm{ss}}(n,\Delta)(T) \longrightarrow \mathrm{M}^{\mathrm{ss}}_{(X,P)}(n, \Delta)(T)$ that is functorial in $T$ and with the required property. We conclude that $\mathrm{M}_{(X,P)}^{\mathrm{ss}}(n, \Delta)$ is a Coarse moduli scheme for the algebraic stack $\mathcal{M}_{(X,P)}^{\mathrm{ss, S}}(n, \Delta)$.
\end{proof}

\begin{remark}
From the above theorem and the universal property of the Coarse moduli space of an algebraic stack, it follows that the construction of the space $\mathrm{M}_{(X,P)}^{\mathrm{ss}}(n, \Delta)$ is independent of the choice of the tamely ramified cover $\tau$ as in \Cref{lem_choice_tame_cover} or the choice of the curve $Z$.
\end{remark}

\appendix

\section{Construction of the Moduli Space in the Classical Context}\label{sec_Falting_Construction}
\subsection{Bounded Families}
First, we establish that the algebraic stack $\mathcal{M}^{\mathrm{ss, S}}_X(n, \Delta)$ is quasi-compact or, equivalently, any family of all semistable bundles on $X$ of rank $n$ and determinant $\Delta$ is bounded. For this, we show that there is a $k$-variety $T$ together with a bundle $\mathcal{F}$ on $T \times X$ such that each semistable bundle of rank $n$ and determinant $\Delta$ is represented by $\mathcal{F}_t$ for some closed point $t \in T$. It will turn out that we can take $T$ to be a certain projective variety ($\mathbb{P}_X$, constructed later) together with a natural universal family $\mathcal{F} = \mathcal{E}_X$.

Fix a line bundle $M$ on $X$ satisfying
\begin{equation}\label{eq_deg_M}
  \mathrm{deg}(M) + \mathrm{deg}(\Delta)/n \geq 2g.  
\end{equation}
Then any semistable bundle $E$ of rank $n$ satisfying $\mathrm{det}(E) \cong \Delta$ sits in an exact sequence
\begin{equation}\label{eq_classical_extn}
  0 \longrightarrow \mathcal{O}_X^{\oplus (n-1)} \otimes M^{-1} \longrightarrow E \longrightarrow \Delta \otimes M^{\otimes (n-1)} \longrightarrow 0  
\end{equation}
of bundles on $X$ --- this is \cite[Proposition~3.1]{Hein} when $n = 2$, and $\Delta$ is the canonical line bundle $\omega_X$, but the same argument extends to general $n$ and $\Delta$; see Proposition~\ref{prop_classical}. Since $\Delta^{-1} \otimes M^{-n}$ is a line bundle of negative degree by the assumption~\eqref{eq_deg_M}, we have
\[
\mathrm{H}^0\left( X, \oplus_{1 \leq i \leq n-1} \Delta^{-1} \otimes M^{-n} \right) = 0.
\]
So \cite[Corollary~4.5]{Lange} is applicable, and hence the functor parameterizing the equivalence classes of non-split extensions of $\Delta \otimes M^{\otimes (n-1)}$ by $\mathcal{O}_X^{\oplus (n-1)}$ is representable by the Grassmannian variety
\begin{equation}\label{eq_parameter_space}
 \mathbb{P}_X \coloneqq \mathbb{P}(V_X) = \mathrm{Grass}(1, V_X^*)
\end{equation}
of one-dimensional subspaces of the dual space $V_X^*$, where
\[
V_X^* \coloneqq \mathrm{Ext}^1\left( \Delta \otimes M^{\otimes (n-1)}, \mathcal{O}_X^{\oplus (n-1)} \otimes M^{-1} \right) \cong \mathrm{H}^1\left(X, \oplus_{1 \leq i \leq n-1} \Delta^{-1} \otimes M^{-n} \right).
\]
Moreover, there is a family $\mathcal{E}_X$ of extensions
\begin{equation}\label{eq_universal_family_on_X}
    0 \rightarrow q_{\mathbb{P}_X}^* \left( M^{-1} \right)^{\oplus (n-1)} \longrightarrow \mathcal{E}_X \longrightarrow p_{\mathbb{P}_X}^* \mathcal{O}_{\mathbb{P}_X}(-1) \otimes  q_{\mathbb{P}_X}^* \left( \Delta \otimes M^{\otimes (n-1)} \right) \rightarrow 0
\end{equation}
on $X \times \mathbb{P}_X$ that is universal for the families of non-split extension, with $p_{\mathbb{P}_X}$ and $q_{\mathbb{P}_X}$ being the usual projection maps. By \cite[Proposition~3.4]{Hein}, this family locally induces any other family of semistable bundles, i.e. for any $k$-scheme $S$ of finite type and a family $\mathcal{F}$ over $X$ of semistable bundles of rank $n$ and determinant $\Delta$, parameterized by $S$, there is a Zariski open covering $S = \cup_i S_i$ together with maps $\phi_i \colon S_i \longrightarrow \mathbb{P}_X$ such that
\[
\mathcal{F}|_{S_i} \cong \left(\phi_i \times \mathrm{id}_X \right)^{*} \mathcal{E}_X.
\]
By~\eqref{eq_classical_extn}, the semistable bundles of rank $n$ and determinant  $\Delta$ are parametrized by $k$-points in $\mathbb{P}_X$. Since not all points of $\mathbb{P}_X$ correspond to semistable bundles, we need to identify the closed subscheme of $\mathbb{P}_X$ corresponding to unstable bundles. This is done by observing that the aforementioned closed subscheme is the base locus of the complete linear system associated with a line bundle on $\mathbb{P}_X$, produced using the determinant of cohomologies and a result of Le Potier. Before proceeding, let us mention Falting's cohomological criteria of semistability.

\begin{proposition}[{\cite[Proposition~2.7]{Hein}}]\label{Prop_Falting_ss}
For two bundles $E, \, F \in \text{\rm Vect}(X)$, if
\[
\mathrm{H}^*(X, E \otimes F) = 0 \hspace{.3cm} \text{\rm i.e.} \hspace{.3cm} \mathrm{H}^0(X, E \otimes F) = \mathrm{H}^1(X, E \otimes F) = 0,
\]
then $E$ and $F$ are semistable. 
\end{proposition}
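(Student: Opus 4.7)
The plan is to deduce semistability from the cohomological hypothesis by combining Riemann--Roch on $X$ with the fact that sub-bundles induce injections on $H^0$. The key numerical ingredient is the Euler characteristic formula on a smooth projective curve of genus $g$:
\[
\chi(X, E \otimes F) \;=\; \mathrm{rk}(E)\mathrm{rk}(F)\bigl(\mu(E) + \mu(F) - (g-1)\bigr).
\]

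First I would observe that the hypothesis $H^*(X, E \otimes F) = 0$ forces $\chi(X, E \otimes F) = 0$, which, by the formula above, translates into the slope identity $\mu(E) + \mu(F) = g - 1$. This identity will be the pivot of the argument.

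Next, to prove semistability of $E$, I would argue by contradiction: suppose $E$ is not semistable, so there exists a proper sub-bundle $E_1 \subsetneq E$ with $\mu(E_1) > \mu(E)$. Tensoring the inclusion $E_1 \hookrightarrow E$ with the bundle $F$ is exact, so $E_1 \otimes F \hookrightarrow E \otimes F$ is still an inclusion of sheaves, and therefore induces an injection $H^0(X, E_1 \otimes F) \hookrightarrow H^0(X, E \otimes F) = 0$. Thus $H^0(X, E_1 \otimes F) = 0$, which in turn gives $\chi(X, E_1 \otimes F) = -h^1(X, E_1 \otimes F) \leq 0$. But using the pivot identity,
\[
\mu(E_1) + \mu(F) \;>\; \mu(E) + \mu(F) \;=\; g - 1,
\]
so Riemann--Roch yields $\chi(X, E_1 \otimes F) > 0$, a contradiction. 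Hence $E$ is semistable. The argument is entirely symmetric in $E$ and $F$ (the hypothesis $H^*(X, E \otimes F) = 0$ is symmetric), so applying the same reasoning with the roles reversed yields semistability of $F$.

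There is no real obstacle in this argument; the only subtle point to verify is that a destabilizing \emph{coherent sub-sheaf} can always be refined (or replaced) by a destabilizing sub-bundle of the same or larger slope, but this is standard on a smooth projective curve, where one simply passes to the saturation of $E_1$ inside $E$ (which has the same or larger slope and is a sub-bundle since $E/\mathrm{sat}(E_1)$ is torsion-free, hence locally free). With this minor bookkeeping the proof is complete.
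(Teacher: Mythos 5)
Your proof is correct: the vanishing of $\chi(X,E\otimes F)$ pins down $\mu(E)+\mu(F)=g-1$, and for any destabilizing sub-bundle $E_1\subsetneq E$ Riemann--Roch forces $\chi(X,E_1\otimes F)>0$, hence $\mathrm{H}^0(X,E_1\otimes F)\neq 0$, contradicting the injection into $\mathrm{H}^0(X,E\otimes F)=0$; symmetry handles $F$, and the saturation remark disposes of coherent subsheaves. The paper itself gives no proof but simply cites \cite[Proposition~2.7]{Hein}, and your argument is exactly the standard one given there, so there is nothing further to reconcile.
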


\subsection{The Determinantal Line Bundle}\label{sec_A_determinantal_line_bundle}
We fix the following notation.
\begin{notation}\label{not_delta}
Set $d \coloneqq \mathrm{deg}(\Delta)$, and $\delta \coloneqq (n, d)$ with the convention that $\delta = n$ if $d = 0$, and $\delta = \mathrm{g.c.d.}(n,d)$, otherwise. Also, set $\rchi \coloneqq d - n (g - 1)$.
\end{notation}

Let us now recall the construction of the determinant of cohomologies; cf. \cite[Section~2]{Seshadri93}. As before, let $S$ be any integral $k$-scheme, and we have the projection maps
\[
S \xlongleftarrow{p_S} X \times S \overset{q_S}\longrightarrow X.
\]
Consider a bundle $\mathcal{E}$ on $S \times X$ of rank $n$ and determinant $\Delta$. For any coherent sheaf $W$ on $X$, the determinant of cohomology is the invertible sheaf
\[
\lambda_{\mathcal{E}}(W) \coloneqq \otimes_{i \geq 0} \, \mathrm{det}\left( \mathrm{R}^ip_{S,*}\left( \mathcal{F} \otimes q_{S}^* W \right) \right)^{(-1)^i}
\]
on $S$. Then \cite[Section~1.2]{Potier} and \cite[Lemma~2.5, Lemma~2.7]{Seshadri93} shows that up to isomorphism, the invertible sheaf $\lambda_{\mathcal{E}}(W)$ on $S$ is independent of the class of $W$ in the Grothendieck group $K(X)$ of coherent sheaves on $X$, and we obtain a group homomorphism
\[
\lambda_{\mathcal{E}} \colon K(X) \longrightarrow \mathrm{Pic}(S)
\]
that is functorial in $S$. By \cite[Lemma~1.2]{Potier}, for any invertible sheaf $N$ on $S$, we have the following isomorphisms
\begin{equation}\label{eq_invariance_in_family}
\lambda_{\mathcal{E}\otimes p_S^*N}(W) \cong \lambda_{\mathcal{E}}(W) \otimes N^{\otimes \chi_X( \mathcal{E}_s \otimes W)}  
\end{equation}
for any closed point $s \in S$. Since $\mathcal{E}_s$ is a bundle of rank $n$ and determinant $\Delta$ for any $s \in S$, the Riemann Roch Theorem shows that the Euler characteristic
\[
\chi( \mathcal{E}_s \otimes W) = n \cdot \mathrm{rk}(W) \left( \mu(\mathcal{E}_s) + \mu(W) - (g - 1) \right)
\]
is independent of the choice of $s$. So, if $\chi( \mathcal{E}_s \otimes W) = 0$ or equivalently, $\mu(\mathcal{E}_s \otimes W) = g - 1$, the determinant of cohomology $\lambda_{\mathcal{E}}(W)$ is well defined in the usual equivalence class of family containing $\mathcal{E}$.

Recall that $d = \mathrm{deg}(\Delta)$. Consider the subgroup $\mathrm{H}(n, d)$ of $K(X)$ generated by the coherent sheaves $W$ satisfying $\chi( \mathcal{E}_s \otimes W) = 0$ for any point $s \in S$. It is noted in \cite{DN} that $\mathrm{H}(n,d)$ is the kernel of the group homomorphism
\[
K(X) \longrightarrow \mathbb{Z}, \, \alpha \mapsto \chi(\alpha \otimes [\mathcal{E}_s])
\]
that is independent of the point $s \in S$, and only depends on the integers $n$ and $d$. By the Riemann Roch Theorem, the class of a coherent sheaf $W$ lies in $\mathrm{H}(n,d)$ precisely when $\mathrm{rk}(W) = l \cdot n/\delta$ and $\mathrm{deg}(W)= -l \cdot \rchi/\delta = -l \cdot \frac{d - n(g-1)}{\delta}$ for some $l \geq 1$. The isomorphism~\eqref{eq_invariance_in_family} shows that the group homomorphism
\begin{equation}\label{eq_lambda_definition}
\lambda_{\mathcal{E}} \colon \mathrm{H}(n,d) \longrightarrow \mathrm{Pic}(S)
\end{equation}
is independent of the choice of the family $\mathcal{E}$ in its usual equivalence class, and is functorial in $S$. Moreover, \cite[Lemma~2.6]{Seshadri93} shows that for any line bundle $L_0$ on $X$ of degree $0$, and a coherent sheaf $W$ on $X$ whose class in $K(X)$ is in $\mathrm{H}(n,d)$, we have an isomorphism
\[
\lambda_{\mathcal{E}}(W \otimes L_0) \cong \lambda_{\mathcal{E}}(W).
\]

\begin{definition}\label{def_Theta_line_bundle}
Let $S$ be an integral $k$-scheme. Let $\mathcal{E}$ be a family of bundles on $X$ of rank $n$ and determinant $\Delta$, parametrized by $S$. Let $\delta$ be as before, and $R$ be a positive integer. The \textit{generalized} $\Theta$-\textit{line bundle} $\mathcal{O}_S(R \cdot \Theta)$ is defined to be the invertible sheaf
\[
\mathcal{O}_S(R \cdot \Theta) \coloneqq \lambda_{\mathcal{E}}(W)^{-1}
\]
for any bundle $W$ on $X$ of rank $R \cdot n/\delta$ and of degree $- R \cdot \rchi/\delta$.
\end{definition}

It is well known that the generalized $\Theta$-line bundles have a multiplicative structure, namely $\mathcal{O}_S(R \cdot \Theta) \cong \mathcal{O}_S(\Theta)^{\otimes R}$ where we write $\mathcal{O}_S(1 \cdot \Theta)$ as $\mathcal{O}_S(\Theta)$; see \cite[Section~4.3]{Hein}. We will show that the unstable locus in $\mathbb{P}_X$ is the base locus of the generalized $\Theta$-line bundle $\mathcal{O}_{\mathbb{P}_X}(R \cdot \Theta)$ for a large $R$.

Next, we recall that for any positive integer $R$ and any bundle $W$ on $X$ of rank $R \cdot n/\delta$ and of degree $-R \cdot \rchi/\delta$, there is a distinguished section $s_{\mathcal{E},W}$ of the generalized $\Theta$-line bundle $\mathcal{O}_S(R \cdot \Theta) \cong \lambda_{\mathcal{E}}(W)^{-1}$, defined up to a non-zero scalar in $k$, and the vanishing locus $\theta_{\mathcal{E},W}$ of $s_{\mathcal{E}, W}$ has a description in terms of non-vanishing of certain cohomologies. Consider a locally free resolution
\[
0 \longrightarrow V_1 \overset{\alpha}\longrightarrow V_0 \longrightarrow \mathcal{E} \otimes q_S^* W \longrightarrow 0
\]
such that $p_{S,*}V_i = 0$; this can be done by taking $V_0 = p_S^* p_{S_*}\left( ( \mathcal{E} \otimes q_S^* W )(l)\right) \otimes \mathcal{O}_{X_S}(-l)$ for $l > > 0$, and then taking $V_1$ as the kernel of the natural surjection $V_0 \longrightarrow \mathcal{E} \otimes q_S^* W$. The above exact complex produces the following long exact sequence of higher direct image sheaves on S:
\[
0 \rightarrow p_{S,*}(\mathcal{E} \otimes q_S^* W) \longrightarrow \mathrm{R}^1p_{S,*} (V_1) \overset{\mathrm{R}^1\alpha}\longrightarrow \mathrm{R}^1p_{S,*}(V_0) \longrightarrow \mathrm{R}^1p_{S,*}(\mathcal{E} \otimes q_S^* W) \rightarrow 0.
\]
Since $\chi(\mathcal{E}_s \otimes W) = 0$ for any closed point $s \in S$, the sheaves $\mathrm{R}^1p_{S,*}(V_i)$ are locally free sheaves on $S$ of the same rank, $r$ say. Then the element in the Grothendieck group $K_0(S)$ of bundles on $S$, defined by $\mathrm{R}p_{S,*}(\mathcal{E} \otimes q_S^*W)$ is represented by the complex
\[
0 \rightarrow \mathrm{R}^1p_{S,*} (V_1) \overset{\mathrm{R}^1\alpha}\longrightarrow \mathrm{R}^1p_{S,*}(V_0) \rightarrow 0.
\]
The determinant
\[
\mathrm{det} \, \mathrm{R}^1\alpha \coloneqq \wedge^{r} \, \mathrm{R}^1 \alpha \in \mathrm{Hom}(\wedge^r \mathrm{R}^1p_{S,*}(V_1), \wedge^r \mathrm{R}^1p_{S,*}(V_0)) = \Gamma(S, \mathcal{O}_S(R \cdot \Theta))
\]
defines a global section of $\lambda_{\mathcal{E}}(W)^{-1} \cong \mathcal{O}_S(R \cdot \Theta)$, independent of the chosen resolution, determined up to a non-zero element of $\mathrm{H}^0(X, \mathcal{O}_X) = k$; cf. \cite[\href{https://stacks.math.columbia.edu/tag/0FJI}{Section 0FJI}]{SP}. We set our distinguished section associated to $W$ to be
\[
s_{\mathcal{E},W} \coloneqq \mathrm{det} \, \mathrm{R}^1\alpha.
\]
Note that $\mathrm{R}^1\alpha$ is surjective at $s$ if and only if $\mathrm{R}^1p_{S,*}(\mathcal{E} \otimes q_S^* W)$ vanishes at $s$ if and only if
\[
\mathrm{H}^0(X, \mathcal{E}_s \otimes W) = \mathrm{H}^1(X, \mathcal{E}_s \otimes W) = 0.
\]
Since $\mathrm{R}^1\alpha \otimes k(s)$ is a morphism of vector spaces of the same dimension, $\mathrm{R}^1\alpha$ is surjective at $s$ if and only if $\mathrm{det} \, \mathrm{R}^1\alpha \neq 0$ at $s$. This shows that the vanishing locus of the section $s_{\mathcal{E},W}$ is
\begin{equation}\label{eq_classical_vanishing_locus}
\theta_{\mathcal{E},W} = \{s \in S(k) \, | \, \mathrm{H}^*(X, \mathcal{E}_s \otimes W) \neq 0\}.
\end{equation}
One immediate consequence of Falting's criteria (Proposition~\ref{Prop_Falting_ss}) is that if $W$ is unstable, $\theta_{\mathcal{F},W}$ is the whole space $S$.

\subsection{Construction}\label{sec_ap_classical_contruction}
Let us apply the above general results in our context with $S = \mathbb{P}_X$ and the universal family $\mathcal{E} = \mathcal{E}_X$; see~\eqref{eq_parameter_space} and~\eqref{eq_universal_family_on_X}. Let $\delta$ and $\chi$ be as in Notation~\ref{not_delta}. For any positive integer $R$, and a bundle $W$ of rank $R \cdot n/\delta$ and of degree $-R \cdot \chi/\delta$, we have the generalized $\Theta$-line bundle $\mathcal{O}_{\mathbb{P}_X}(R \cdot \Theta)$. To each such bundle $W$, there is an associated global section $s_W \coloneqq s_{\mathcal{E}_X,W} \in \Gamma(\mathbb{P}, \mathcal{O}_{\mathbb{P}_X}(R \cdot \Theta))$ with vanishing locus
$$\theta_{W} \coloneqq \theta_{\mathcal{E}_X,W} = \{[E] \in \mathbb{P}_X \, | \, \mathrm{H}^0(X, E \otimes W) \neq 0 \}.$$
If $W$ as above is not a semistable bundle, by \cite[Proposition~2.7]{Hein}, the section $s_W$ is the zero section. On the other hand, if $W$ above is semistable, the vanishing locus $\theta_W$ is a Cartier divisor on $\mathbb{P}_X$; cf. \cite[Lemma~4.2]{Hein}.

\emph{For the rest of this section, fix an integer $R > \frac{n^2}{4}(g-1)$.}

Le Potier's result \cite[Theorem~2.4]{Potier} shows that for any semistable bundle $E$ on $X$ of rank $n$ and determinant $\Delta$, there is a (necessarily semistable) bundle $W$ of rank $R \cdot n/\delta$ and determinant $\mathcal{O}_X(-R \cdot \rchi/\delta)$ such that
\[
\mathrm{H}^*(X, E \otimes W) = 0.
\]
This shows that for any bundle $E$ corresponding to a closed point $e \in \mathbb{P}_X$, the following statements are equivalent.
\begin{enumerate}
\item $E$ is not semistable;
\item for all $W$ of rank $R \cdot n/\delta$ and of degree $-R \cdot \rchi/\delta$, the cohomology groups $\mathrm{H}^*(X, E \otimes W)$ are non-zero;
\item for all $W$ of rank $R \cdot n/\delta$ and of degree $-R \cdot \rchi/\delta$, the point $s$ is in the vanishing locus $\theta_W$.
\end{enumerate}
We conclude the following.
\begin{proposition}\label{prop_bl}
Under the above notation, the closed subscheme
\[
B = \{ [E] \in \mathbb{P}_X \, | \, E \text{ is not semistable} \}
\]
of $\mathbb{P}_X$ is the base locus of the complete linear system of the generalized $\Theta$-line bundle $\mathcal{O}_{\mathbb{P}_X}(R \cdot \Theta)$, spanned by the theta divisors. As this linear system is finite-dimensional, we can write $B$ as a finite intersection
\[
B = \cap_{0 \leq i \leq N} \theta_{W_i}
\]
for a uniquely determined $N$ and such that the sections $s_{W_i}$ generate the line bundle $\mathcal{O}_{\mathbb{P}_X}(R \cdot \Theta)$ on the complement of the base locus $B$.
\end{proposition}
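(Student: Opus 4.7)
The plan is to establish the proposition in three movements: first I would prove the set-theoretic identity $B = \bigcap_W \theta_W$ where $W$ ranges over all bundles on $X$ of rank $R\cdot n/\delta$ and degree $-R\cdot\rchi/\delta$; then I would use finite-dimensionality of $\mathrm{H}^0$ to reduce the intersection to a finite one; and finally I would observe that generation of $\mathcal{O}_{\mathbb{P}_X}(R\cdot\Theta)$ on the complement of $B$ is automatic from the reduction.

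For the set-theoretic identity, the two inclusions are essentially the equivalences (1)--(3) listed immediately before the proposition. Explicitly, if $[E] \in B$ then $E$ is not semistable, so by the contrapositive of Faltings' criterion (\Cref{Prop_Falting_ss}) we have $\mathrm{H}^*(X, E\otimes W) \neq 0$ for every $W$ as above, which by~\eqref{eq_classical_vanishing_locus} means $[E] \in \theta_W$. Conversely, if $[E] \notin B$, i.e., $E$ is semistable, then Le Potier's theorem \cite[Theorem~2.4]{Potier}, invoked with precisely the numerics built into the hypothesis $R > \frac{n^2}{4}(g-1)$, supplies a semistable bundle $W$ of the prescribed rank and degree with $\mathrm{H}^*(X, E\otimes W) = 0$, placing $[E]$ outside $\theta_W$.

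For the finite-intersection step, $\mathrm{H}^0(\mathbb{P}_X, \mathcal{O}_{\mathbb{P}_X}(R\cdot\Theta))$ is a finite-dimensional $k$-vector space since $\mathbb{P}_X$ is projective. Let $V \subseteq \mathrm{H}^0(\mathbb{P}_X, \mathcal{O}_{\mathbb{P}_X}(R\cdot\Theta))$ denote the $k$-linear span of all the sections $s_W$, and pick a basis $s_{W_0}, \ldots, s_{W_N}$ of $V$. Each $s_W$ is a $k$-linear combination of these basis elements, so $\theta_W \supseteq \bigcap_{i=0}^N \theta_{W_i}$, which forces
\[
B \;=\; \bigcap_W \theta_W \;=\; \bigcap_{i=0}^{N} \theta_{W_i},
\]
and the integer $N = \dim_k V - 1$ is intrinsic. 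For each point $[E] \in \mathbb{P}_X - B$, by definition of the base locus at least one $s_{W_i}$ is non-zero at $[E]$, so the sections $s_{W_i}$ generate $\mathcal{O}_{\mathbb{P}_X}(R\cdot\Theta)$ on $\mathbb{P}_X - B$.

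There is no substantial obstacle — all the heavy lifting has already been done upstream. The most delicate input is Le Potier's theorem, whose numerical hypothesis is exactly the lower bound on $R$ assumed here; once that is invoked, the proof is a purely formal manipulation of the determinantal sections $s_W$ constructed in \Cref{sec_A_determinantal_line_bundle}. The one potentially subtle point is whether $B$ coincides with the base locus as a \emph{scheme} rather than merely as a set, but since the $\theta_W$'s are Cartier divisors (when $W$ is semistable, as noted just before the proposition) and the scheme-theoretic base locus is by definition cut out by the image of the evaluation map from $V$, this is immediate once one fixes the reduced induced structure on $B$.
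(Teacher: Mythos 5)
Your proposal is correct and follows essentially the same route as the paper: the set-theoretic identity $B=\bigcap_W\theta_W$ comes from the contrapositive of Faltings' criterion (\Cref{Prop_Falting_ss}) in one direction and Le Potier's theorem in the other (exactly the equivalences the paper records just before the proposition), and the reduction to a finite intersection with generation away from $B$ follows from finite-dimensionality of the space of sections. Your extra detail of extracting a basis $s_{W_0},\dots,s_{W_N}$ from the span of the theta sections merely makes explicit what the paper leaves implicit, so no gap remains.
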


Set $Q$ as the open subscheme of $\mathbb{P}_X$, which is the complement of $B$. Then the sections $s_{W_i}$ generate $\mathcal{O}_{\mathbb{P}_X}(R \cdot \Theta)$ over $Q$. Let $\pi \colon \tilde{Q} \longrightarrow \mathbb{P}_X$ be the blow up of $\mathbb{P}_X$ at the closed subscheme $B$. Since $\mathbb{P}_X$ is reduced and $B$ is a proper closed subscheme of $\mathbb{P}_X$, the projective variety $\tilde{Q}$ is integral. Also, $Q$ is an open subscheme of $\tilde{Q}$. Using the above proposition, we obtain the following commutative diagram determined by the complete linear system associated to $\mathcal{O}_{\mathbb{P}_X}(R \cdot \Theta)$.
\begin{equation}\label{diag_classical_linear_system}
\begin{tikzcd}
\mathbb{P}_X \arrow[r, hookleftarrow] & Q = \mathbb{P}_X - B \arrow[d, hookrightarrow] \arrow[r, "\psi"] & \mathbb{P}^N \\
 & \tilde{Q} = \text{\rm Bl}_{B}{\mathbb{P}_X} \arrow[lu, "\pi"] \arrow[ru, swap, "\widetilde{\psi}"] & 
\end{tikzcd}
\end{equation}
By Langton's Theorem~\cite[Theorem~6.4]{Hein}, the map $\psi \colon Q \longrightarrow \mathbb{P}^N$ is a proper map whose image in $\mathbb{P}^N$ coincides with that of the map $\widetilde{\psi}$. Consider the Stein factorization (cf. \cite[\href{https://stacks.math.columbia.edu/tag/03H0}{Theorem 03H0}]{SP}) of the map $\widetilde{\psi}$:
\[
\widetilde{\psi} \colon \tilde{Q} \overset{g_X}\longrightarrow \mathrm{M}^{\mathrm{ss}}_X(n,\Delta) \overset{f_X}\longrightarrow \mathbb{P}^N.
\]
where $f_X$ is a finite map of integral projective varieties, and $g_X$ has connected fibers. Summarizing the above, we have the following commutative diagram.
\begin{equation}
\begin{tikzcd}
Q \arrow[r, "\psi"] \arrow[d, hookrightarrow] & \mathbb{P}^N \\
\tilde{Q} \arrow[ur, "\tilde{\psi}"] \arrow[r, "g_X"] & \mathrm{M}^{\mathrm{ss}}_X(n,\Delta) \arrow[u, "f_X"]
\end{tikzcd}
\end{equation}
In the following, we observe that the closed points of $\mathrm{M}^{\mathrm{ss}}_X(n,\Delta)$ are the images of closed points of $Q$.

\begin{lemma}\label{lem_iamge_of_Q}
Under the above notation, each closed point of $\mathrm{M}^{\mathrm{ss}}_X(n,\Delta)$ is the image of a closed point of $Q$ under the map $g_X$.
\end{lemma}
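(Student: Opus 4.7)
The plan is to show that the restriction $h_X \coloneqq g_X|_Q \colon Q \to \mathrm{M}^{\mathrm{ss}}_X(n,\Delta)$ is itself proper and surjective; this route seems cleaner than invoking Mumford's Rigidity Lemma (which is the method deployed for the orbifold version in \Cref{thm_Main_Construction}~\eqref{T:4}). Once $h_X$ is both proper and surjective onto the integral target, every closed point of $\mathrm{M}^{\mathrm{ss}}_X(n,\Delta)$ will admit a closed preimage in $Q$, which is exactly what the lemma asserts.

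First I would record the basic geometry: $\tilde{Q}$ is the blow-up of the integral projective variety $\mathbb{P}_X$ along the proper closed subscheme $B$, so $\tilde{Q}$ is integral; the Stein factorization then realises $\mathrm{M}^{\mathrm{ss}}_X(n,\Delta)$ as the relative $\Spec$ of $\tilde\psi_\ast \mathcal{O}_{\tilde{Q}}$, hence as an integral projective variety, and $g_X$ is proper and surjective with connected fibres. The open $Q \subset \mathbb{P}_X$ lifts to a dense open subscheme $Q \hookrightarrow \tilde{Q}$ given by the complement of the exceptional divisor of $\pi$, and $\tilde\psi|_Q = \psi$.

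The key step is the properness of $h_X$. Under the inclusion $Q \hookrightarrow \tilde{Q}$ we have the factorization $\psi = f_X \circ h_X$. By Langton's theorem $\psi$ is proper, and $f_X$ is finite, hence separated; the standard factorization principle (\href{https://stacks.math.columbia.edu/tag/01W6}{Stacks Project, Tag~01W6}) then forces $h_X$ to be proper. Consequently $h_X(Q)$ is a closed subset of $\mathrm{M}^{\mathrm{ss}}_X(n,\Delta)$; at the same time, $h_X(Q) = g_X(Q)$ is dense, because $Q$ is dense in $\tilde{Q}$ and $g_X$ is continuous and surjective onto the irreducible variety $\mathrm{M}^{\mathrm{ss}}_X(n,\Delta)$. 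A closed dense subset of an irreducible scheme equals the whole scheme, so $h_X(Q) = \mathrm{M}^{\mathrm{ss}}_X(n,\Delta)$.

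Finally, for any closed point $m \in \mathrm{M}^{\mathrm{ss}}_X(n,\Delta)$, properness of $h_X$ together with base change ensures that $h_X^{-1}(m)$ is a non-empty proper $k$-scheme, and therefore contains a closed point $q \in Q$; by construction this $q$ maps to $m$ under $g_X$. I do not anticipate a real obstacle here: the only non-trivial piece is the properness principle for factorizations through separated maps, which is standard, and integrality of $\mathrm{M}^{\mathrm{ss}}_X(n,\Delta)$ is already implicit in the Stein factorization of the proper map $\tilde\psi$ from the integral scheme $\tilde{Q}$.
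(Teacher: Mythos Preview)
Your argument is correct and takes a genuinely different route from the paper's. The paper proves the lemma by invoking Mumford's Rigidity Lemma to produce, for each closed point $m \in \mathrm{M}^{\mathrm{ss}}_X(n,\Delta)$, a local section $\sigma$ of the finite map $f_X$ with $\sigma(f_X(m)) = m$; then, using that the images of $\psi$ and $\tilde{\psi}$ coincide, a closed point $q \in Q$ with $\psi(q) = f_X(m)$ is found, and the section forces $g_X(q) = \sigma(\tilde{\psi}(q)) = m$. You instead observe that $\psi = f_X \circ h_X$ with $\psi$ proper (Langton) and $f_X$ separated, so the standard cancellation principle makes $h_X$ proper; then $h_X(Q)$ is closed and, being the image of a dense open under the continuous surjection $g_X$, also dense in the integral variety $\mathrm{M}^{\mathrm{ss}}_X(n,\Delta)$, hence all of it. Your approach is more elementary and self-contained, avoiding Rigidity entirely; the paper's approach, on the other hand, implicitly yields the slightly stronger byproduct that $f_X$ admits local sections, which is what gets reused in the orbifold setting in \Cref{thm_Main_Construction}~\eqref{T:4}.
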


\begin{proof}
Let $m$ be a closed point of $\mathrm{M}^{\text{\rm ss}}_X(n, \Delta)$. Applying Mumford's Rigidity Lemma~\cite[Proposition~6.1, Chapter 6, page 115]{MumfordGIT} to the following commutative diagram
\begin{equation*}
\begin{tikzcd}
g_X^{-1}(m) \arrow[rr, "g_X"] \arrow[dr, swap, "\widetilde{\psi}|_{\text{\rm rest}}"] & & \{m\} \arrow[dl, "f_X"] \\
& \{f_X(m)\},
\end{tikzcd}
\end{equation*}
we obtain a section $\sigma (f_X(m)) = m$. Since the images of $\psi$ and $\widetilde{\psi}$ coincide, there is a closed point $q \in Q$ mapping to $f_X(m)$ via $\widetilde{\psi}|_{\mathrm{rest}}$. Then
\[g_X(q) =  \sigma \circ \widetilde{\psi}|_{\mathrm{rest}}(q) = \sigma(f_X(m)) = m. \]
\end{proof}

As a consequence of the above lemma, we see that the closed points of $\mathrm{M}^{\mathrm{ss}}_X(n, \Delta)$ being images of closed points of $Q$, represent semistable bundles. We need to establish that the closed points of $\mathrm{M}^{\mathrm{ss}}_X(n, \Delta)$ actually represent $\mathrm{S}$-equivalence classes of bundles.

First, we need some crucial observations. Let $C$ be a smooth projective connected $k$-curve. Let $E$ be a family of bundles on $X$ of rank $n$ and determinant $\Delta$, parameterized by $C$. Since the semistable locus $C'$ of the family $E$ is an open sub-curve of $C$, the coherent sheaf $\mathrm{R}^1p_{C,*}\left( E \otimes q_C^*W \right)$ is a torsion sheaf on $C$ of finite length that is supported in the complement of $C'$. Using Langton's Theorem~\cite[Theorem~6.4]{Hein}, we can perform elementary transformations and replace $E$ by another family $E'$ of rank $n$ and determinant $\Delta$ such that the fiber over every point $c \in C$ is semistable, and $E_{c'} \cong E'_{c'}$ for all points $c' \in C'$. Denote the generalized $\Theta$-line bundle $\mathcal{O}_C(R \cdot \Theta)$ on $C$ associated to $E'$ by $\mathcal{O}_C(\Theta_C)$. The following well-known results establish the connection between the degree of $\mathcal{O}_C(\Theta_C)$ and the $\mathrm{S}$-equivalence in the family $E'$. 
\begin{proposition}\label{prop_positivity}
Under the above notation, we have the following.
\begin{enumerate}
\item The line bundle $\mathcal{O}_C(\Theta_C)$ is base point free. In particular, $\mathrm{deg}\left(\mathcal{O}_C(\Theta_C)\right) \geq 0$.\label{pos:1}
\item $\mathrm{deg}\left(\mathcal{O}_C(\Theta_C)\right) = 0$ if and only if for any points $c, \, c' \in C$, the bundles $E'_c$ and $E'_{c'}$ are $\mathrm{S}$-equivalent.\label{pos:2}
\end{enumerate}
\end{proposition}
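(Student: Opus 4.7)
The plan is to prove (1) directly from Le Potier's theorem and then leverage the triviality of base-point-free degree-zero line bundles on a smooth projective curve, together with a theta-divisor separation argument, for (2).

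For (1), fix a closed point $c \in C$. Since the family $E'$ was arranged so that every fiber is semistable, the bundle $E'_c$ is semistable of rank $n$ and determinant $\Delta$, so Le Potier's theorem (\cite[Theorem~2.4]{Potier}, as already invoked in \Cref{prop_bl}) produces a semistable bundle $W$ on $X$ of rank $R \cdot n/\delta$ and degree $-R \cdot \rchi/\delta$ with $\mathrm{H}^*(X, E'_c \otimes W) = 0$. By~\eqref{eq_classical_vanishing_locus}, the distinguished section $s_{E',W} \in \Gamma(C, \mathcal{O}_C(\Theta_C))$ does not vanish at $c$, so $c$ is not in the base locus. Varying $c$, the complete linear system is base point free, and hence $\deg \mathcal{O}_C(\Theta_C) \geq 0$ by the standard fact that base-point-free line bundles on smooth projective curves have non-negative degree.

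For the forward direction of (2), assume $\deg \mathcal{O}_C(\Theta_C) = 0$. Combined with (1), the line bundle $\mathcal{O}_C(\Theta_C)$ is base point free of degree zero on a smooth projective curve, hence trivial. Consequently every global section is either identically zero or nowhere vanishing, so for every semistable $W$ on $X$ of rank $R \cdot n/\delta$ and degree $-R \cdot \rchi/\delta$ the locus
\[
\{ c \in C \, | \, \mathrm{H}^*(X, E'_c \otimes W) = 0 \}
\]
is either empty or all of $C$. For any two points $c, c' \in C$ we then obtain the biconditional $\mathrm{H}^*(X, E'_c \otimes W) = 0 \Longleftrightarrow \mathrm{H}^*(X, E'_{c'} \otimes W) = 0$ for every such $W$. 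The proof is completed by a theta-divisor separation statement: if two semistable bundles on $X$ of rank $n$ and determinant $\Delta$ are not S-equivalent, then there exists a semistable $W$ of rank $R \cdot n/\delta$ and degree $-R \cdot \rchi/\delta$ detecting the difference (this is the reason the Le Potier bound $R > \tfrac{n^2}{4}(g-1)$ was imposed; cf.\ the discussion in \cite[\S 7]{Hein} and \cite{Seshadri93}). Applying this to $E'_c$ and $E'_{c'}$ gives $E'_c \sim_S E'_{c'}$.

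For the backward direction, assume every fiber is S-equivalent to a fixed polystable bundle $P$. The strategy is to reduce to the case of the constant polystable family $\mathcal{P} \coloneqq q_C^* P$, whose generalized $\Theta$-line bundle is $\mathcal{O}_C$ (being pulled back along $p_C$ from a point). Using the additivity of the determinant of cohomology along short exact sequences and the deformation-to-the-graded (the $\mathbb{A}^1$-family realizing $E'_c \rightsquigarrow \mathrm{gr}(E'_c) \cong P$ for each Jordan--Hölder filtration), one concludes that $\lambda_{E'}(W)$ agrees with $\lambda_{\mathcal{P}}(W)$ modulo line bundles pulled back from $C$ of degree zero, so $\deg \mathcal{O}_C(\Theta_C) = 0$. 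The main obstacle will be the theta-divisor separation in the forward direction of (2): although classical, it requires showing that enough semistable $W$'s with the correct numerical invariants exist to distinguish the Jordan--Hölder factors of any two non-isomorphic polystable bundles through their theta loci, which is a delicate existence argument relying on the Le Potier bound and the structure of polystable bundles of fixed rank and determinant.
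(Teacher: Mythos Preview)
The paper itself does not prove this proposition; it simply cites \cite[Proposition~7.4 and Theorem~7.5]{Hein} and remarks that the same arguments extend beyond the case $n=2$, $\Delta=\omega_X$. So there is no detailed argument in the paper to compare against, and your sketch is in fact more explicit than what the paper supplies.

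Your proof of \eqref{pos:1} is correct and is exactly the intended argument. Your forward direction of \eqref{pos:2} is also correct in outline, and you correctly isolate the genuine content: the theta-divisor separation statement (non-$\mathrm{S}$-equivalent polystable bundles are distinguished by some $\theta_W$), which is precisely the substance of \cite[Theorem~7.5]{Hein} and its extension in \cite[\S 8.3]{Hein}.

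Your backward direction of \eqref{pos:2}, however, has a gap. The ``deformation-to-the-graded'' you invoke is a fiberwise construction; to compare $\lambda_{E'}(W)$ with $\lambda_{q_C^*P}(W)$ as line bundles on $C$ you would need a global Jordan--H\"older filtration of $E'$ over $C$ (or a family over $C\times\mathbb{A}^1$ interpolating $E'$ and $q_C^*P$), and there is no reason for either to exist. A clean replacement is purely cohomological: since every Jordan--H\"older factor of a semistable $E$ has the same slope, one has $\chi(A\otimes W)=0$ for each subquotient $A$, and a short induction on the filtration shows that $\mathrm{H}^*(X,E\otimes W)=0$ if and only if $\mathrm{H}^*(X,\mathrm{gr}(E)\otimes W)=0$. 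Hence the vanishing $\mathrm{H}^*(X,E'_c\otimes W)=0$ depends only on the $\mathrm{S}$-equivalence class of $E'_c$, so under your hypothesis every section $s_{E',W}$ is either identically zero or nowhere vanishing on $C$. By \eqref{pos:1} some $s_{E',W}$ is nonzero at a point, hence nowhere vanishing, so $\mathcal{O}_C(\Theta_C)\cong\mathcal{O}_C$ and its degree is zero.
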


\begin{proof}
The above statements are \cite[Proposition~7.4 and Theorem~7.5]{Hein} when $n = 2$ and $\Delta = \omega_X$. The same arguments holds under our hypothesis, see \cite[Section~8.3(5)]{Hein}.
\end{proof}

Let us note the following important result.
\begin{lemma}\label{lem_curve_contraction_equivalences}
Suppose that $C$ is a smooth projective sub-curve of $\mathbb{P}_X$. Then the following conditions are equivalent.
\begin{enumerate}
\item $C \cap Q$ gets contracted under $\psi$;\label{contra:1}
\item the total transform $\pi^{-1}(C)$ under the blowing-up map $\pi$ gets contracted under $\widetilde{\psi}$;\label{contra:2}
\item the image of $\pi^{-1}(C)$ under $\widetilde{\psi}$ is a point $a \in \mathbb{P}^N$, and there exists a hypersurface $H_a$ in $\mathbb{P}^N$ not passing through the point $a$ such that $\widetilde{\psi}^* H_a$ is the exceptional divisor $\pi^{-1}(B)$;\label{contra:3}
\item the intersection number $\pi^{-1}(C) \cdot \pi^{-1}(B)$ is zero;\label{contra:4}
\item $C \cdot B = 0$.\label{contra:5}
\end{enumerate}
\end{lemma}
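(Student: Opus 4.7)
The plan is to prove the equivalences by separately establishing the bridges $(1)\Leftrightarrow(2)$, $(1)\Leftrightarrow(5)$, $(2)\Leftrightarrow(3)$, and $(4)\Leftrightarrow(5)$, with the main technical content concentrated in the use of \Cref{prop_positivity} and the projection formula for the blow-up $\pi \colon \tilde{Q} \to \mathbb{P}_X$.

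First I will establish $(1)\Leftrightarrow(2)$. Since $\pi$ restricts to an isomorphism over $Q = \mathbb{P}_X \setminus B$ and $\tilde{\psi}|_{Q} = \psi$ under this identification, the set $C \cap Q$ embeds as an open dense subset of $\pi^{-1}(C) \subset \tilde{Q}$ (assuming the non-trivial case $C \not\subset B$). By continuity of the proper map $\tilde{\psi}$, the image $\tilde{\psi}(\pi^{-1}(C))$ coincides with the closure of $\psi(C \cap Q)$ in $\mathbb{P}^N$, and thus one is a single point iff the other is.

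Next I will establish $(1)\Leftrightarrow(5)$, which is the key positivity input. Interpret $C \cdot B$ as the intersection number $C \cdot D$ for any divisor $D$ in the linear system $|\mathcal{O}_{\mathbb{P}_X}(R \cdot \Theta)|$ (all such divisors are linearly equivalent and share the base locus $B$). Pulling back the universal family $\mathcal{E}_X$ along $C \hookrightarrow \mathbb{P}_X$ produces a family of bundles on $X$ parameterized by $C$; after Langton-style semistable reduction one obtains a semistable family whose associated generalized $\Theta$-line bundle $\mathcal{O}_C(\Theta_C)$ has degree computable as $C \cdot B$ (the elementary transform corrections precisely account for the base-locus contribution). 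By \Cref{prop_positivity}, this degree vanishes iff all bundles in the semistable family are $\mathrm{S}$-equivalent; by the construction of $\psi$ via the sections $s_{W_i}$, this is in turn equivalent to $\psi$ contracting $C \cap Q$ to a single point of $\mathbb{P}^N$.

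The equivalence $(4)\Leftrightarrow(5)$ will follow from the projection formula on the blow-up. Writing $\pi^*D = \tilde{D} + m E$ where $\tilde{D}$ is the strict transform of $D$ and $m$ is the appropriate multiplicity, we obtain
\[
C \cdot B \, = \, \pi^{-1}(C) \cdot \pi^*D \, = \, \pi^{-1}(C) \cdot \tilde{D} \, + \, m \cdot \pi^{-1}(C) \cdot E.
\]
Since $\tilde{D}$ is base-point-free on $\tilde{Q}$ and linearly equivalent to $\tilde{\psi}^*H$ for a hyperplane $H \subset \mathbb{P}^N$, and since, combining with the earlier implications, either side of $(4)\Leftrightarrow(5)$ forces $\tilde{\psi}$ to contract $\pi^{-1}(C)$ so that $\tilde{\psi}_*\pi^{-1}(C) = 0$, another application of the projection formula kills $\pi^{-1}(C) \cdot \tilde{D} = \tilde{\psi}_*\pi^{-1}(C) \cdot H$. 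Thus (4) and (5) are tied together by this identity.

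Finally, for $(2)\Leftrightarrow(3)$: the direction $(3)\Rightarrow(2)$ is built into the statement of (3). For $(2)\Rightarrow(3)$, given $\tilde{\psi}(\pi^{-1}(C)) = \{a\}$, one must produce a hypersurface $H_a \subset \mathbb{P}^N$ avoiding $a$ whose pullback is exactly $E$. Using the structural identity $\tilde{\psi}^*\mathcal{O}_{\mathbb{P}^N}(1) \cong \pi^*\mathcal{O}_{\mathbb{P}_X}(R\cdot\Theta) \otimes \mathcal{O}_{\tilde{Q}}(-m E)$, one chooses an appropriate power $d$ and a section of $\mathcal{O}_{\mathbb{P}^N}(d)$ whose pullback vanishes exactly along $E$; the existence of such a section follows from realizing $\mathcal{O}_{\tilde{Q}}(dE)$ inside the image of the pulled-back linear system. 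The main obstacle is precisely this step: producing a hypersurface whose pullback is scheme-theoretically the exceptional divisor requires a careful analysis of the interplay between the exceptional divisor and the complete linear system defining $\tilde{\psi}$, going beyond the formal manipulations used for the other implications.
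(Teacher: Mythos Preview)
Your $(1)\Leftrightarrow(2)$ matches the paper. The remainder of your argument departs from the paper's route and contains a genuine circularity. Your bridge $(1)\Leftrightarrow(5)$ passes through the assertion that the bundles parametrized by $C\cap Q$ are all $\mathrm{S}$-equivalent if and only if $\psi$ contracts $C\cap Q$. Neither direction of that biconditional has been established at this point: it is precisely what \Cref{thm_moduli_classical} extracts \emph{from} the present lemma together with \Cref{prop_positivity}. In the paper's logical order one first uses this lemma (pure intersection theory) to turn ``$C\cap Q$ is contracted'' into ``$C\cdot B=0$'', then identifies this with the vanishing of the degree of the Langton-reduced $\Theta$-line bundle on $C$, and only then applies \Cref{prop_positivity} to deduce $\mathrm{S}$-equivalence. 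Feeding \Cref{prop_positivity} back into the proof of the lemma inverts that dependency. A secondary problem is your reading of $C\cdot B$ as $C\cdot D$ for $D\in|\mathcal{O}_{\mathbb{P}_X}(R\cdot\Theta)|$: $B$ is the base-locus subscheme, not a member of the linear system, and these two intersection numbers are different in general; this misreading also contaminates your $(4)\Leftrightarrow(5)$ computation.

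By contrast, the paper runs the chain $(1)\Leftrightarrow(2)\Leftrightarrow(3)\Leftrightarrow(4)\Leftrightarrow(5)$ without ever invoking \Cref{prop_positivity}. For $(2)\Rightarrow(3)$ it simply chooses any hyperplane $H_a$ missing the image point $a$ and asserts $\widetilde\psi^{\,*}H_a=\pi^{-1}(B)$ from the construction of $\widetilde\psi$ via the blown-up linear system; the converse is tautological, $(3)\Leftrightarrow(4)$ is then immediate, and $(4)\Leftrightarrow(5)$ is the projection formula for the proper map $\pi$ together with $\pi(\pi^{-1}(C))=C$. So the step you flag as the ``main obstacle'' is dispatched by the paper in one line, while your detours through \Cref{prop_positivity} for $(1)\Leftrightarrow(5)$ and through strict-transform decompositions for $(4)\Leftrightarrow(5)$ are both unnecessary and, in the first case, circular.
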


\begin{proof}
Since $\psi$ is proper map, conditions~\eqref{contra:1} and \eqref{contra:2} are equivalent. If the image of $\pi^{-1}(C)$ under $\widetilde{\psi}$ is a point $a \in \mathbb{P}^N$, we can choose a hypersurface $H_a$ not passing through $a$; as $\mathcal{O}_{\mathbb{P}^N}(H_a) \cong \mathcal{O}_{\mathbb{P}^N}(1)$, we have $\widetilde{\psi}^* H_a = \pi^{-1}(B)$.The converse is immediate, proving \eqref{contra:2}$\Leftrightarrow$\eqref{contra:3}. The equivalence of \eqref{contra:3} and \eqref{contra:4} is obvious. Finally, using the projection formula under the proper morphism $\pi$ and noting that $C = \pi (\pi^{-1}(C))$, the last two statements are equivalent.
\end{proof}

Now we are ready to prove that $\text{\rm M}^{\mathrm{ss}}_X(n,\Delta)$ is the desired moduli space.

\begin{theorem}\label{thm_moduli_classical}
The integral projective variety $\text{\rm M}^{\mathrm{ss}}_X(n,\Delta)$ is the Coarse moduli space for the functor $\mathcal{M}^{\mathrm{ss, S}}_X(n,\Delta)$.
\end{theorem}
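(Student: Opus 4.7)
The plan is to verify the two essential features of a Coarse moduli space: that the closed points of $\mathrm{M}^{\mathrm{ss}}_X(n,\Delta)$ are in bijection with $\mathrm{S}$-equivalence classes of semistable bundles of rank $n$ and determinant $\Delta$, and that $\mathrm{M}^{\mathrm{ss}}_X(n,\Delta)$ is universal with respect to morphisms from $\mathcal{M}^{\mathrm{ss,S}}_X(n,\Delta)$ to $k$-schemes. By \Cref{lem_iamge_of_Q}, every closed point of $\mathrm{M}^{\mathrm{ss}}_X(n,\Delta)$ lies in the image of $g_X|_Q$, so it suffices to determine when two closed points $q,q'\in Q$ satisfy $g_X(q)=g_X(q')$.

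For the bijection-on-points, the heart of the matter is the equivalence: $g_X(q)=g_X(q')$ if and only if the corresponding semistable bundles $E_q$ and $E_{q'}$ are $\mathrm{S}$-equivalent. For the ``if'' direction, I would use the standard deformation argument recalled in the text: any semistable bundle $E$ deforms in $\mathbb{A}^1$ to its associated graded $\mathrm{gr}_E$ via iterated extensions (the $\mathbb{G}_a$-action mentioned after \Cref{lem_closed_points}). Each such $\mathbb{A}^1$ compactifies to a $\mathbb{P}^1 \subset \mathbb{P}_X$ along which the family of bundles has all fibers semistable, so Proposition~\ref{prop_positivity}\eqref{pos:2} forces $\deg \mathcal{O}_{\mathbb{P}^1}(\Theta_{\mathbb{P}^1})=0$, and \Cref{lem_curve_contraction_equivalences} then implies the whole $\mathbb{P}^1 \cap Q$ is contracted by $\psi$, hence also by $g_X$; chaining these deformations connects $q$ and $q'$ through points mapping to the common image $[\mathrm{gr}_E] = [\mathrm{gr}_{E'}]$. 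For the ``only if'' direction, if $g_X(q)=g_X(q')$, then (after possibly passing to a normalization) $q$ and $q'$ lie in a connected fiber of $g_X$; cutting by smooth projective curves $C \subset \tilde{Q}$ contained in such a fiber and applying Langton's theorem to replace the restricted universal family by one with all fibers semistable, Proposition~\ref{prop_positivity} shows that $\deg \mathcal{O}_C(\Theta_C) = 0$ forces all fibers along $C$ to be $\mathrm{S}$-equivalent, whence $E_q \sim_{\mathrm{S}} E_{q'}$.

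For the universal property, given a family $E$ of semistable bundles of rank $n$ and determinant $\Delta$ parameterized by a finite-type $k$-scheme $S$, I invoke \cite[Proposition~3.4]{Hein} to obtain a Zariski cover $S = \bigcup_i S_i$ together with morphisms $\phi_i \colon S_i \to \mathbb{P}_X$ such that $E|_{S_i \times X} \cong (\phi_i \times \mathrm{id}_X)^* \mathcal{E}_X$. Because every fiber $E_s$ is semistable, each $\phi_i$ factors through the open subscheme $Q$. Composing with $Q \hookrightarrow \tilde{Q} \xrightarrow{g_X} \mathrm{M}^{\mathrm{ss}}_X(n,\Delta)$ yields morphisms $S_i \to \mathrm{M}^{\mathrm{ss}}_X(n,\Delta)$. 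On overlaps $S_i \cap S_j$, the two possible choices of $\phi$ produce bundles on $X$ in the same equivalence class (twists by a line bundle from the base), hence isomorphic at each point, hence mapping to the same image in $\mathrm{M}^{\mathrm{ss}}_X(n,\Delta)$ by the first paragraph; therefore the local morphisms glue to a global $S \to \mathrm{M}^{\mathrm{ss}}_X(n,\Delta)$. The same reasoning shows the assignment is independent of the representative in the usual equivalence class and factors through the $\mathrm{S}$-equivalence, giving the required natural transformation $\mathcal{M}^{\mathrm{ss,S}}_X(n,\Delta) \to \mathrm{Hom}(-,\mathrm{M}^{\mathrm{ss}}_X(n,\Delta))$. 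Initiality among morphisms to $k$-schemes then follows formally because the target of any such morphism must factor through the set of $\mathrm{S}$-equivalence classes of closed points and, by the Stein factorization of $\tilde{\psi}$, any morphism from $\tilde{Q}$ to a scheme that is constant on fibers of $g_X$ factors uniquely through $g_X$.

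The genuinely hard step is the $\mathrm{S}$-equivalence characterization in the second paragraph, specifically the ``only if'' direction: connecting two points in the same fiber of $g_X$ by curves on which Langton's modification yields a family of semistable bundles of $\Theta$-degree zero, so that Proposition~\ref{prop_positivity}\eqref{pos:2} applies. The subtlety is that $g_X$ has connected (but possibly singular and non-reduced) fibers, so one must work on the normalization, pick sufficiently many curves through any two chosen points of the fiber, and carefully track how elementary transformations affect the original parameterization; this is precisely the content that is imported from \cite{Hein} and \cite{Faltings} and the reason the proof leans heavily on Langton's theorem.
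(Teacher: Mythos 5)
Your overall strategy is the paper's: surjectivity of $Q \to \mathrm{M}^{\mathrm{ss}}_X(n,\Delta)$ from \Cref{lem_iamge_of_Q}, identification of points with $\mathrm{S}$-equivalence classes via Langton's theorem together with \Cref{lem_curve_contraction_equivalences} and \Cref{prop_positivity}, and the universal property via the local universal property of $\mathcal{E}_X$ from \cite[Proposition~3.4]{Hein}. Your two refinements --- taking the test curves inside a fiber of $g_X$ (after normalization) for the ``only if'' direction, and spelling out the degeneration-to-$\mathrm{gr}$ argument for the ``if'' direction --- are in the spirit of the paper's proof, which instead takes a smooth curve through $q,q'$ in $\mathbb{P}_X$ and delegates the converse direction and the remaining compatibilities to \cite{Hein}; so the route is essentially the same.

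Two steps, however, do not hold up as written. First, the gluing: on $S_i\cap S_j$ you only argue that the two composites to $\mathrm{M}^{\mathrm{ss}}_X(n,\Delta)$ agree on closed points. For a non-reduced finite-type $S$ (which the coarse moduli property must accommodate) agreement on closed points does not imply equality of morphisms, so the local maps need not glue; the paper imports exactly this from \cite[Section~8.1]{Hein}, where the morphism to $\mathbb{P}^N$ is obtained by evaluating the determinantal sections on the family itself and hence depends only on the equivalence class of the family, so the locally defined morphisms coincide as morphisms, not merely pointwise. A pointwise argument is not a substitute for this. Second, in the ``if'' direction, the affine line of extensions exhibiting the degeneration of $E$ to $\mathrm{gr}_E$ lives in an $\mathrm{Ext}^1$ group, not in $\mathbb{P}_X$: it is classified only by locally defined maps to $Q$, the closure of its image need not be a line (or even smooth), the boundary fiber of the extended family need not be semistable without the same Langton modification you invoke elsewhere, and to connect the \emph{given} point $q$ (rather than merely some point representing $E$) to a point representing $\mathrm{gr}_E$ one must arrange the degeneration compatibly with the fixed extension structure defining $q$. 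These are precisely the details the paper leaves to \cite{Hein}; as written, your text asserts them rather than establishes them.
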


\begin{proof}
By Lemma~\ref{lem_iamge_of_Q}, the closed points of $\mathrm{M}^{\mathrm{ss}}_X(n,\Delta)$ are the images of the closed points of $Q$ under $g_X$. This defines a surjective morphism $Q \longrightarrow \mathrm{M}^{\mathrm{ss}}_X(n,\Delta)$. We claim that if two distinct closed points $q, \, q' \in Q$ map to the same point $m \in \mathrm{M}^{\mathrm{ss}}_X(n,\Delta)$, then the semistable bundles represented by the points $q$ and $q'$ are $\mathrm{S}$-equivalent. To see this, consider a smooth projective connected sub-curve $C$ in $\mathbb{P}_X$, passing through $q$ and $q'$. Restriction of the universal bundle $\mathcal{E}_X$ on $C \times X$ produces a family of bundles on $X$ of rank $n$ and determinant $\Delta$ whose semistable locus is $C \cap Q$. We can perform elementary transformations to produce a family $E$ on $C \times X$ that is semistable at every point of $C$. Since $f_X^{-1}\{m\}$ is a closed connected sub-variety of $\tilde{Q}$, containing the distinct points $q$ and $q'$ of the connected curve $C \cap Q$, the curve $C \cap Q$ gets mapped to the point $f_X(m)$ under $\psi$. By Lemma~\ref{lem_curve_contraction_equivalences}, this is equivalent to $C \cdot B = 0$. Equivalently, the degree of the generalized $\Theta$-line bundle $\mathcal{O}_C(\Theta_C)$, defined by the family $E$, is zero on $C$ by our construction. By Proposition~\ref{prop_positivity}, the bundles $E_q$ and $E_{q'}$ are $\mathrm{S}$-equivalent.

We want to show the existence of a morphism $\mathcal{M}_X^{\mathrm{ss, S}}(n,\Delta) \longrightarrow \mathrm{Hom}(-, \mathrm{M}^{\mathrm{ss}}_X(n, \Delta))$ that is initial among the morphisms to $k$-schemes. Recall that the algebraic stack $\mathcal{M}_X^{\mathrm{ss, S}}(n,\Delta)$ was defined using the $\mathrm{S}$-equivalence as the quotient of the stack $\mathcal{M}_X^{\mathrm{ss}}(n,\Delta)$ which parameterize semistable bundles of rank $n$ and determinant $\Delta$. So it is enough to construct a morphism $\mathcal{M}_X^{\mathrm{ss}}(n,\Delta) \longrightarrow \mathrm{Hom}(-, \mathrm{M}^{\mathrm{ss}}_X(n, \Delta))$ such that any two $k$-points of $\mathcal{M}_X^{\mathrm{ss}}(n,\Delta)$ map to the same point if and only if the corresponding bundles (seen as families over $\mathrm{\Spec}(k)$) are $\mathrm{S}$-equivalent.

Let $S$ be any $k$-scheme of finite type. Consider any equivalence class of a family $\mathcal{E}$ of semistable bundles on $X$ of rank $n$ and determinant $\Delta$, parameterized by $S$, under the usual equivalence. By the local universal property of such families (cf. \Cref{thm_local_families}), there is a Zariski open covering $S = \cup_i S_i$ together with morphisms $\phi_i \colon S_i \longrightarrow \mathbb{P}_X$ such that $\mathcal{E}|_{S_i} \cong (\phi_i \times \mathrm{id})^*\mathcal{E}_X$. Each map $\phi_i$ has image in $Q$, and composition with the surjective map $Q \longrightarrow \mathrm{M}^{\mathrm{ss}}_X(n,\Delta)$, we obtain maps $S_i \longrightarrow \mathrm{M}^{\mathrm{ss}}_X(n,\Delta)$. By \cite[Section~8.1, Functoriality]{Hein}, these maps glue together, and we obtain the desired map that is functorial in $S$, and such that two closed points $s, \, s' \in S$ are mapped to the same closed point $m \in \mathrm{M}^{\mathrm{ss}}_X(n, \Delta)$ if and only if $\mathcal{E}_s \sim_{\mathrm{S}} \mathcal{E}_{s'}$. This shows that $\mathrm{M}^{\mathrm{ss}}_X(n,\Delta)$ is the Coarse moduli space for the functor $\mathcal{M}^{\mathrm{ss, S}}_X(n,\Delta)$.
\end{proof}

\section{Extension Classes}
One of the key steps in our construction of the moduli space comes from the observation that any $P$-semistable bundle of a fixed rank and determinant on a connected orbifold curve $(X,P)$ can be represented as an extension of suitable bundles on $(X,P)$ (cf. Proposition~\ref{prop_main_bounded}). This leads us to to study extensions of a bundle $E_2$ by a bundle $E_1$ over $(X,P)$, i.e. short exact sequences of the form
\[ 0 \longrightarrow E_1 \longrightarrow E \longrightarrow E_2 \longrightarrow 0\]
in $\text{\rm Vect}(X,P)$. A concise treatment of extensions of coherent sheaves over a flat Noetherian scheme over a Noetherian base scheme is given in \cite{Lange}. We will see that these results also hold in the case of orbifold curves.

We note some generalities needed for our purpose. Let $Y$ be a Noetherian $k$-scheme. Consider a finite type flat proper $k$-morphism $f \colon \mathfrak{Y} \longrightarrow Y$ where $\mathfrak{Y}$ is an integral smooth proper separated Deligne Mumford stack of finite type over $k$. In whatever follows, $S$ denote a Noetherian $k$-scheme with structure morphism $S \overset{g}\longrightarrow \text{\rm Spec}(k)$, and we write
\[
S \overset{p_S}\longleftarrow S \times \mathfrak{Y} \overset{q_S}\longrightarrow \mathfrak{Y}
\]
for the projection maps. For any bundle $E_2$ on $\mathfrak{Y}$, and any $i \geq 0$, we have the functor
\[
\sheafext^i_f(E_2,-) \colon \text{\rm Coh}_{\mathfrak{Y}} \longrightarrow \text{\rm Coh}_{Y}
\]
where $\sheafext^i_f(E_2 , E_1) \coloneqq \mathrm{R}^i \left( f_* \sheafhom_{\mathcal{O}_{\mathfrak{Y}}}(E_2 , -) \right)(E_1)$. Since $E_2$ is a bundle, the functor
\[
\sheafhom_{\mathcal{O}_{\mathfrak{Y}}}(E_2 , -) \cong E_2^\vee \otimes_{\mathcal{O}_{\mathfrak{Y}}} -
\]
is an exact functor. Then for any $i \geq 0$, we have
\[
\mathrm{R}^i \left( f_* \sheafhom_{\mathcal{O}_{\mathfrak{Y}}}(E_2 , -) \right) \cong \mathrm{R}^i f_* \circ \left( E_2^\vee \otimes - \right).
\]
In particular, if $Y = \text{\rm Spec}(k)$, and $f$ is the structure morphism, then $\sheafext^i_f(E_2 , E_1) \cong \mathrm{H}^i(\mathfrak{Y}, E_2^\vee \otimes E_1) \otimes_k \mathcal{O}_k$ is the bundle on $\text{\rm Spec}(k)$ associated to the finite dimensional $k$-vector space $\mathrm{H}^i(\mathfrak{Y}, E_2^\vee \otimes E_1)$. In this case, the base change map
\begin{eqnarray}\label{eq_base_change}
\sigma^i(g) \colon g^* \sheafext^i_f(E_2 , E_1) \longrightarrow \sheafext^i_{p_S}(q_S^* E_2 , q_S^* E_1)
\end{eqnarray}
is an isomorphism. Also note that since $\text{\rm Coh}_{\mathfrak{Y}}$ is a Grothendieck abelian category, by \cite[\href{https://stacks.math.columbia.edu/tag/06XP}{Section 06XP}]{SP}, we have the following:
\[
\mathrm{H}^0 \left( \text{\rm Spec}(k), \sheafext^0_f(E_2 , E_1) \right) \cong \mathrm{H}^0(\mathfrak{Y}, E_2^\vee \otimes E_1) \cong \text{\rm Hom}(E_2 , E_1),
\]
and $\mathrm{H}^0\left( \text{\rm Spec}(k), \sheafext^1_f(E_2 , E_1) \right) \cong \text{\rm Ext}^1(E_2 , E_1)$ is the set of isomorphism classes of extensions of $E_2$ by $E_1$.

Now consider the case where $\mathfrak{Y} = (X,P)$ is a connected orbifold curve. Let $E_1, \, E_2 \in \text{\rm Vect}_{(X,P)}$. Write $f \colon (X,P) \longrightarrow \text{\rm Spec}(k)$ for the structure morphism. We have noted that $\sheafext^i_f(E_2, E_1)$ is the locally free sheaf associated to the vector space $\mathrm{H}^i((X,P), E_2^\vee \otimes E_1)$ on $\text{\rm Spec}(k)$, and the formation of $\sheafext^i_f(E_2, E_1)$ commutes with arbitrary base change. Consider the contravariant functor
\[
\text{\rm PE} \colon \text{Noetherian } k\text{-schemes} \longrightarrow \text{\rm Sets}
\]
that to a Noetherian $k$-scheme $g \colon S \longrightarrow \text{\rm Spec}(k)$, associates the set $\text{\rm PE}(S)$ of invertible quotients of the locally free sheaf $\sheafext^1_{p_S}(q_S^*E_2, q_S^* E_1)^\vee$. By~\eqref{eq_base_change}, and since $g^*\mathcal{O}_{\text{\rm Spec}(k)} \cong \mathcal{O}_S$, we have the following isomorphisms:
\[
\sheafext^1_{p_S}(q_S^*E_2, q_S^*E_1) \cong g^* \sheafext^1_f(E_2 , E_1) \cong \mathrm{H}^1((X,P)), E_2^\vee \otimes E_1) \otimes_k \mathcal{O}_S.
\]
Hence the functor $\text{\rm PE}$ is representable by the projective space bundle $\mathbb{P}(\sheafext^1_f(E_2,E_1)^\vee)$ on $\text{\rm Spec}(k)$. In other words, the functor $\text{\rm PE}$ is representable by the projective space $\mathbb{P}(V)$ where $V = \mathrm{H}^1((X,P)), E_2^\vee \otimes E_1)^*$, and the $k$-points of $\mathbb{P}(V)$ correspond to the one-dimensional subspaces in $V^* \cong \mathrm{H}^1((X,P)), E_2^\vee \otimes E_1)$.

The universal element of $\text{\rm PE}(\mathbb{P}(V))$ is given as follows. Set
\[
\mathcal{H} \coloneqq \sheafext^1_f(E_2, E_1) \cong \mathrm{H}^1((X,P), E_2^\vee \otimes E_1) \otimes_k \mathcal{O}_k,
\]
which is the trivial bundle on $\text{\rm Spec}(k)$ associated to the $k$-vector space $\mathrm{H}^1((X,P), E_2^\vee \otimes E_1)$. By the universal property of the projective space $\mathbb{P}(V)$, we have the isomorphism:
\[
\mathcal{H}^\vee \cong f_{\mathbb{P}(V), *} \mathcal{O}_{\mathbb{P}(V)}(1)
\]
where $f_{\mathbb{P}(V)}$ denote the structure morphism of $\mathbb{P}(V)$. Then the image of the identity map $\text{\rm id}_{\mathcal{H}}$ under the canonical isomorphisms
\begin{eqnarray*}
\text{\rm End}(V) \cong \mathrm{H}^0 \left( \text{\rm Spec}(k), \sheafend(\mathcal{H}) \right) \cong \mathrm{H}^0 \left( \text{\rm Spec}(k), \mathcal{H} \otimes \mathcal{H}^\vee \right) \\
\cong \mathrm{H}^0\left( \text{\rm Spec}(k), \mathcal{H} \otimes f_{\mathbb{P}(V), *} \mathcal{O}_{\mathbb{P}(V)}(1) \right) \cong \mathrm{H}^0\left( \text{\rm Spec}(k), f_{\mathbb{P}(V), *} \left( f_{\mathbb{P}(V)}^* \mathcal{H} \otimes \mathcal{O}_{\mathbb{P}(V)}(1)\right) \right) \\
\cong \mathrm{H}^0 \left( \mathbb{P}(V), f_{\mathbb{P}(V)}^* \mathcal{H} \otimes \mathcal{O}_{\mathbb{P}(V)}(1) \right) \cong \mathrm{H}^0 \left( \mathbb{P}(V), \sheafext^1_{p_{\mathbb{P}(V)}}(q_{\mathbb{P}(V)}^* E_2, q_{\mathbb{P}(V)}^* E_1 ) \otimes \mathcal{O}_{\mathbb{P}(V)}(1) \right)
\end{eqnarray*}
is a non-vanishing section, and defines the universal quotient
$$\sheafext^1_{p_{\mathbb{P}(V)}}(q_{\mathbb{P}(V)}^* E_2, q_{\mathbb{P}(V)}^* E_1)^\vee \longrightarrow \mathcal{O}_{\mathbb{P}(V)}(1) \longrightarrow 1.$$

We also see that $\text{\rm PE}(S)$ is the set of non-vanishing sections of $\sheafext^1_{p_S}(q_S^* E_2, q_S^* E_1 \otimes p_S^* N)$ with arbitrary $N \in \text{\rm Pic}_S$, modulo the canonical action of $\mathrm{H}^0\left( S, \mathcal{O}_S^\times \right)$. In particular, for a reduced Noetherian $k$-scheme $S$, it follows that $\text{\rm PE}(S)$ is the set of all families of nowhere splitting extensions $(e_s)_{s \in S}$ of $q_S^*E_2$ by $q_S^*E_1 \otimes p_S^*N$ for arbitrary $N \in \text{\rm Pic}(S)$ modulo the the canonical operation of $\mathrm{H}^0(S, \mathcal{O}_S^\times)$; here a family $(e_s)_{s \in S}$ is defined to be a collection of elements $e_s$ which are extensions of $E_2$ by $E_1$ on $\mathfrak{Y}$, parameterized by $S$, such that for each closed point $s \in S$, there is a Zariski open subset $U$ of $S$ with $e_t$ and $e_s$ being isomorphic extensions for each $t \in U$. Further, the family $(e_s)_{s \in S}$ is nowhere splitting means that $e_s$ is not the trivial extension for any $s \in S$. Finally, using similar arguments from \cite[Corollary~4.4, Corollary~4.5]{Lange}, we conclude the following.

\begin{proposition}\label{prop_universal_general}
Let $(X,P)$ be a connected orbifold curve, $E_1, \, E_2 \in \text{\rm Vect}_{(X,P)}$. Then there is a family of extensions $(e_t)_{t\in \mathbb{P}(V)}$ of $q_{\mathbb{P}(V)}^*E_2$ by $q_{\mathbb{P}(V)}^*E_1 \otimes \mathcal{O}_{\mathbb{P}(V)}(1)$ over the projective space $\mathbb{P}(V)$, the Grassmannian variety of one  dimensional sub-spaces of $V^* = \mathrm{H}^1((X,P), E_2^\vee \otimes E_1)$, that is universal in the category of reduced Noetherian $k$-schemes for the classes of families of nowhere splitting extensions of $q_S^*E_2$ by $q_S^*E_1 \otimes p_S^* N$ for arbitrary $N \in \text{\rm Pic}_S$ modulo the canonical action of $\mathrm{H}^0(S, \mathcal{O}_S^\times)$.

Further, if also $\mathrm{H}^0((X,P), E_2^\vee \otimes E_1) = 0$, there is an extension
\begin{equation}\label{eq_universal_ext}
0 \rightarrow q_{\mathbb{P}(V)}^*E_1 \otimes p_{\mathbb{P}(V)}^* \mathcal{O}_{\mathbb{P}(V)}(1) \rightarrow \mathcal{E}_{V} \rightarrow q_{\mathbb{P}(V)}^* E_2 \rightarrow 0 \hspace{1em} (e_V)
\end{equation}
on the smooth proper Deligne Mumford stack $\mathbb{P}(V) \times (X,P)$, which is universal in the category of Noetherian $k$-schemes for the classes of extensions of $q_S^* E_2$ by $q_S^* E_1 \otimes p_S^* N$ on $S \times (X,P)$ with arbitrary $N \in \text{\rm Pic}_S$, which split nowhere over $S$, modulo the canonical operation of $\mathrm{H}^0(S, \mathcal{O}_S^\times)$.
\end{proposition}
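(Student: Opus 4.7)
The plan is to follow the Lange-type argument from \cite{Lange}, adapting it to the orbifold setting, where most of the necessary machinery has already been set up in the paragraphs preceding the statement. For the first assertion, I would argue as follows. The functor $\mathrm{PE}$ has already been shown to be representable by $\mathbb{P}(V)$, via the base-change isomorphism $\sigma^1(g)$ and the trivialization $\sheafext^1_{p_S}(q_S^*E_2, q_S^*E_1) \cong \mathrm{H}^1((X,P), E_2^\vee \otimes E_1) \otimes_k \mathcal{O}_S$. Transporting the identity of $V$ through the chain of isomorphisms constructed in the paragraph defining the universal quotient produces a section of $\sheafext^1_{p_{\mathbb{P}(V)}}(q_{\mathbb{P}(V)}^*E_2, q_{\mathbb{P}(V)}^*E_1) \otimes \mathcal{O}_{\mathbb{P}(V)}(1)$ that does not vanish at any $k$-point. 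Pulling this section back along any map $\phi \colon S \to \mathbb{P}(V)$ of reduced Noetherian $k$-schemes yields a family $(e_s)_{s \in S}$ of nowhere-splitting extensions of $q_S^*E_2$ by $q_S^*E_1 \otimes p_S^*\phi^*\mathcal{O}_{\mathbb{P}(V)}(1)$, and conversely any such family defines an invertible quotient of $\sheafext^1_{p_S}(q_S^*E_2, q_S^*E_1)^{\vee}$ and hence a morphism $\phi \colon S \to \mathbb{P}(V)$ unique up to the canonical $\mathrm{H}^0(S, \mathcal{O}_S^\times)$-action. This gives the first universal property.

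For the second assertion, the key step is to promote this section of the $\sheafext^1$-sheaf to an honest extension of coherent sheaves on $\mathbb{P}(V) \times (X,P)$. I would use the local-to-global spectral sequence
\begin{equation*}
E_2^{p,q} = \mathrm{H}^p\bigl(\mathbb{P}(V),\, \sheafext^q_{p_{\mathbb{P}(V)}}(q_{\mathbb{P}(V)}^*E_2,\, q_{\mathbb{P}(V)}^*E_1 \otimes p_{\mathbb{P}(V)}^*\mathcal{O}_{\mathbb{P}(V)}(1))\bigr) \, \Longrightarrow \, \mathrm{Ext}^{p+q}\bigl(q_{\mathbb{P}(V)}^*E_2,\, q_{\mathbb{P}(V)}^*E_1 \otimes p_{\mathbb{P}(V)}^*\mathcal{O}_{\mathbb{P}(V)}(1)\bigr).
\end{equation*}
By flat base change along the smooth projection $p_{\mathbb{P}(V)}$ together with the projection formula (\cite[Proposition~A.3]{Das}), the $\sheafext^0$ term is $\mathrm{H}^0((X,P), E_2^\vee \otimes E_1) \otimes \mathcal{O}_{\mathbb{P}(V)}(1)$, which vanishes by hypothesis. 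Consequently the first two rows $E_2^{p,0}$ with $p = 1, 2$ are zero, and the five-term exact sequence collapses to an isomorphism $\mathrm{Ext}^1 \xrightarrow{\cong} \mathrm{H}^0(\mathbb{P}(V), \sheafext^1)$. The distinguished nowhere-vanishing section from the first part therefore lifts uniquely to an extension class, producing the desired sequence~\eqref{eq_universal_ext} defining $\mathcal{E}_V$.

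The verification of the universal property for $\mathcal{E}_V$ over arbitrary Noetherian (not necessarily reduced) $k$-schemes then follows by the same spectral sequence argument applied to $S \times (X,P)$: under the vanishing of $\mathrm{H}^0((X,P), E_2^\vee \otimes E_1)$, extensions of $q_S^*E_2$ by $q_S^*E_1 \otimes p_S^*N$ on $S \times (X,P)$ are in bijection with global sections of $\sheafext^1_{p_S}(q_S^*E_2, q_S^*E_1) \otimes N$, hence with invertible quotients of $\sheafext^1_{p_S}(q_S^*E_2, q_S^*E_1)^\vee$ after twisting; representability of $\mathrm{PE}$ then provides the classifying map $S \to \mathbb{P}(V)$ and the pullback isomorphism, up to $\mathrm{H}^0(S, \mathcal{O}_S^\times)$.

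I expect the main obstacle to be the careful bookkeeping in the second paragraph: one must verify that the relevant $\sheafext^i_{p_S}$ sheaves do commute with base change in the orbifold setting (this uses flatness of $p_S$ and local-freeness of $E_1, E_2$, but requires care since we are on a Deligne-Mumford stack), and that the spectral sequence degeneration is exactly what identifies $\mathrm{Ext}^1$ with $\mathrm{H}^0$ of the relative Ext sheaf. The rest is formally parallel to the scheme case treated in \cite{Lange}.
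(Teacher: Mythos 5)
Your proposal is correct and follows essentially the same route as the paper: the representability of the functor $\mathrm{PE}$ by $\mathbb{P}(V)$ together with the base-change isomorphisms set up before the statement, combined with the Lange-style argument (the paper simply invokes the arguments of Lange's Corollaries~4.4 and~4.5, which are exactly your degeneration $\mathrm{Ext}^1 \cong \mathrm{H}^0\bigl(\mathbb{P}(V), \sheafext^1_{p_{\mathbb{P}(V)}}\bigr)$ under the vanishing of $\sheafext^0$). The only cosmetic difference is that the paper realizes the class of $\mathcal{E}_V$ directly via a K\"unneth identification $\mathrm{Ext}^1 \cong V \otimes V^* \cong \mathrm{End}(V)$ on $\mathbb{P}(V) \times (X,P)$, which is the same identification you obtain from the Leray/local-to-global spectral sequence for the projection.
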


Next, we provide a direct construction of the universal family $\mathcal{E}_V$ in~\eqref{eq_universal_ext} using a K\"{u}nneth formula: for any coherent sheaves of modules $F$ on $\mathbb{P}(V)$, and $E$ on $(X,P)$, we have the isomorphism
\begin{equation}\label{eq_Kun}
\mathrm{H}^m(\mathbb{P}(V) \times (X,P), F \boxtimes E) \cong \oplus_{i + j = m} \mathrm{H}^i(\mathbb{P}(V), F) \otimes \mathrm{H}^j((X,P), E)
\end{equation}
for any $m \geq 0$, and $F \boxtimes E \coloneqq p_{\mathbb{P}(V)}^* F \otimes q_{\mathbb{P}(V)}^* E$.

We first argue that the above isomorphism holds. For any \'{e}tale atlas $U$ of $(X,P)$, we have an \'{e}tale atlas $\mathbb{P}(V) \times U$ for $\mathbb{P}(V) \times (X,P)$. We have the spectral sequences:
\begin{eqnarray*}
E_1^{a,b} = \mathrm{H}^a(U_b, E_b) \Rightarrow \mathrm{H}^{a+b}((X,P), E), \\
E_1^{a,b} = \mathrm{H}^a((\mathbb{P}(V) \times U)_b, (F \boxtimes E)_b) \Rightarrow \mathrm{H}^{a+b}(\mathbb{P}(V) \times (X,P), F \boxtimes E)
\end{eqnarray*}
where $U_b$ (respectively, $(\mathbb{P}(V) \times U)_b$) is the $(b+1)$-fold product of $U$ (respectively, $\mathbb{P}(V) \times U$) over $(X,P)$ (respectively, $\mathbb{P}(V) \times (X,P)$), and $E_{b}$ (respectively, $(F \boxtimes E)_b)$) being the pullback of $E$ (respectively, $F \boxtimes E$) on $U_b$ (respectively, $(\mathbb{P}(V) \times U)_b$), as in Section~\Cref{sec_descent}. Then
\[
(\mathbb{P}(V) \times U)_b \cong \mathbb{P}(V) \times U_b \text{ and }
\]
\[
(F \boxtimes E)_b  \cong F \boxtimes E_{b} \text{ on } \mathbb{P}(V) \times U_b.
\]
Then the isomorphism~\eqref{eq_Kun} follows from the usual K\"{u}nneth Formula for $\mathbb{P}(V) \times U_b$ together with the uniqueness of the cohomologies in the above spectral sequences.

Under the hypothesis $\mathrm{H}^0((X,P), E_2^\vee \otimes E_1) = 0$, we obtain the universal extension $\mathcal{E}_V$ as the canonical extension class in
\begin{align}
\text{\rm Ext}^1(\mathcal{O}_{\mathbb{P}(V)}(-1) \boxtimes E_2, E_1) \cong \mathrm{H}^1\left( \mathbb{P}(V) \times (X,P), \mathcal{O}_{\mathbb{P}(V)}(1) \boxtimes \left( E_2^\vee \otimes E_1 \right) \right) \label{eq_Universal_Family}\\
\cong \mathrm{H}^0(\mathbb{P}(V), \mathcal{O}_{\mathbb{P}(V)}(1)) \otimes \mathrm{H}^1((X,P), E_2^\vee \otimes E_1) \nonumber \\
\cong V \otimes V^* \cong \text{\rm End}(V) \nonumber
\end{align}
corresponding to the identity map on $V$.

Finally, it is important to note the relation of the extension classes of orbifold bundles with those of the corresponding equivariant bundles. As before, we fix a $\Gamma$-Galois cover $g \colon Z \longrightarrow (X,P)$ where $Z$ is a smooth projective connected $k$-curve, and $g$ factors as a composition
\[
g \colon Z \overset{u}\longrightarrow [Z/\Gamma] = (X, \tilde{P}) \overset{\tau}\longrightarrow (X,P)
\]
of the natural atlas $u$ followed by the tamely ramified cover $\tau$ of connected orbifold curves, induced by $\text{\rm id}_X$. Suppose that $E_1, \, E_2 \in \text{\rm Vect}(X,P)$ satisfy the following condition:
\begin{equation}\label{eq_cohomo_zero}
\mathrm{H}^0(Z, g^*(E_2^\vee \otimes E_1)) = 0.
\end{equation}

By Corollary~\ref{cor_cohomologies}~\eqref{i:1}, we have $\mathrm{H}^0((X,P), E_2^\vee \otimes E_1) = 0$. By base change, the sheaves $\sheafext^0_f(E_1,E_2)$ and $\sheafext^0_{f \circ g}(g^*E_1, g^*E_2)$ on $\text{\rm Spec}(k)$ are both zero. Further, the formation of the locally free sheaves $\sheafext^1_f(E_1,E_2)$ and $\sheafext^1_{f \circ g}(g^*E_1, g^*E_2)$ commute with base change. By Proposition~\ref{prop_universal_general}, we have the universal family $\mathcal{E}_V$ on $\mathbb{P}(V) \times (X,P)$ where $V^* = \mathrm{H}^1((X,P), E_2^\vee \otimes E_1)$. Similarly, either by \cite[Corollary~4.5]{Lange} or by applying Proposition~\ref{prop_universal_general} to $Z$, there is a universal family $\mathcal{E}_W$ on $\mathbb{P}(W) \times Z$ with $W^* = \mathrm{H}^1(Z, g^*E_2^\vee \otimes g^*E_1)$. Further, the $\Gamma$-action on the equivariant bundle $g^*(E_2^\vee \otimes E_1)$ induces a $k[\Gamma]$-module structure on $W$, and on $W^*$. We have the following result.

\begin{proposition}\label{prop_relation_universal}
Under the above notation, we have the following.
\begin{enumerate}
\item The vector space $V^*$ is a linear subspace of the $\Gamma$-fixed points of $W^*$, and hence determines a closed subspace
\[
\mu \colon \mathbb{P}(V) \hookrightarrow \mathbb{P}(W).
\]\label{uni:1}
\item We have a natural isomorphism of bundles
\[
(\text{\rm id}_{\mathbb{P}(V)} \times g)^* \mathcal{E}_V \cong (\mu \times \text{\rm id}_Z)^*\mathcal{E}_W
\]
on $\mathbb{P}(V) \times Z$.\label{uni:2}
\end{enumerate}
\end{proposition}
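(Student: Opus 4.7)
The plan is to derive~\eqref{uni:1} directly from Corollary~\ref{cor_cohomologies} and then, for~\eqref{uni:2}, to compare both sides as elements of a common $\mathrm{Ext}^1$-group computed via Künneth. For~\eqref{uni:1}, first note that $g^*(E_2^\vee \otimes E_1) \cong g^*E_2^\vee \otimes g^*E_1$ and the hypothesis~\eqref{eq_cohomo_zero} is precisely the vanishing needed to apply the last assertion of Corollary~\ref{cor_cohomologies}: it identifies $V^* = \mathrm{H}^1((X,P), E_2^\vee \otimes E_1)$ with the $\Gamma$-fixed subspace of $W^* = \mathrm{H}^1(Z, g^*(E_2^\vee \otimes E_1))$, the inclusion being the pullback $g^*$. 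Dualizing gives a $\Gamma$-equivariant surjection $\pi \colon W \twoheadrightarrow V$ (with trivial $\Gamma$-action on $V$), which induces the closed embedding $\mu \colon \mathbb{P}(V) \hookrightarrow \mathbb{P}(W)$ with $\mu^*\mathcal{O}_{\mathbb{P}(W)}(1) \cong \mathcal{O}_{\mathbb{P}(V)}(1)$ and with the map on global sections
\[
\mu^* \colon \mathrm{H}^0(\mathbb{P}(W), \mathcal{O}(1)) = W \longrightarrow V = \mathrm{H}^0(\mathbb{P}(V), \mathcal{O}(1))
\]
equal to $\pi$ itself (this follows from the universal property recorded right after~\eqref{eq_parameter_space}).

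For~\eqref{uni:2}, note that both $(\mathrm{id}_{\mathbb{P}(V)} \times g)^*\mathcal{E}_V$ and $(\mu \times \mathrm{id}_Z)^*\mathcal{E}_W$ are extensions of $\mathcal{O}_{\mathbb{P}(V)}(-1) \boxtimes g^*E_2$ by $g^*E_1$ on $\mathbb{P}(V) \times Z$, so it suffices to check that their extension classes in
\[
\mathrm{Ext}^1_{\mathbb{P}(V) \times Z}\!\left( \mathcal{O}_{\mathbb{P}(V)}(-1) \boxtimes g^*E_2,\, g^*E_1 \right) \, \cong \, \mathrm{H}^1\!\left( \mathbb{P}(V) \times Z,\, \mathcal{O}_{\mathbb{P}(V)}(1) \boxtimes g^*(E_2^\vee \otimes E_1) \right)
\]
coincide. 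The Künneth formula~\eqref{eq_Kun} identifies this $\mathrm{Ext}$-group with $V \otimes W^* \cong \mathrm{Hom}(W, V)$, and likewise identifies the analogous groups containing the classes of $\mathcal{E}_V$ and $\mathcal{E}_W$ with $V \otimes V^* \cong \mathrm{End}(V)$ and $W \otimes W^* \cong \mathrm{End}(W)$ respectively. By the construction recorded in~\eqref{eq_Universal_Family}, $\mathcal{E}_V$ corresponds to $\mathrm{id}_V$ and $\mathcal{E}_W$ to $\mathrm{id}_W$ under these identifications. The pullback $(\mathrm{id}_{\mathbb{P}(V)} \times g)^*$ is the identity on the first Künneth factor and is $g^* \colon V^* \hookrightarrow W^*$ on the second, so it sends $\mathrm{id}_V$ to the element of $V \otimes W^*$ corresponding to $\pi \in \mathrm{Hom}(W, V)$; symmetrically, $(\mu \times \mathrm{id}_Z)^*$ is $\mu^* = \pi$ on the first factor and the identity on the second, and also sends $\mathrm{id}_W$ to the class of $\pi$. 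Hence the two extension classes agree, and by the uniqueness of an extension in a prescribed class we obtain the desired natural isomorphism.

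The main subtlety, and the only step requiring care, is the compatibility of the Künneth decomposition~\eqref{eq_Kun} with the two pullback maps $(\mathrm{id}_{\mathbb{P}(V)} \times g)^*$ and $(\mu \times \mathrm{id}_Z)^*$. Since~\eqref{eq_Kun} is proved via the spectral sequence of an étale atlas $U \to (X,P)$ and the induced atlas $\mathbb{P}(V) \times U$, one has to verify that these atlases are compatible under the morphisms in question and that the resulting isomorphisms intertwine pullback in cohomology with the naive pullback on tensor factors. This is a routine naturality check once the atlases are chosen compatibly (so that $g$ is covered by a corresponding étale atlas of $Z$, and $\mu$ is covered trivially in the first factor). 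Once this compatibility is in place, the comparison reduces to the elementary linear-algebra identity that both $\mathrm{id}_V \otimes g^*$ and $\pi \otimes \mathrm{id}_{W^*}$ send the identity of their source to the element of $V \otimes W^* \cong \mathrm{Hom}(W, V)$ represented by $\pi$.
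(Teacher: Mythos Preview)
Your proof is correct and follows essentially the same strategy as the paper: both identify the universal families $\mathcal{E}_V$ and $\mathcal{E}_W$ with the identity endomorphisms via the K\"unneth decomposition~\eqref{eq_Universal_Family} and then compare their pullbacks to $\mathbb{P}(V)\times Z$. Your treatment is in fact more explicit than the paper's: where the paper simply asserts that ``$\mathrm{id}_V$ extends to the $\Gamma$-equivariant endomorphism $\mathrm{id}_W$,'' you actually compute that both $(\mathrm{id}_{\mathbb{P}(V)}\times g)^*$ and $(\mu\times\mathrm{id}_Z)^*$ send the respective identities to the projection $\pi\in\mathrm{Hom}(W,V)\cong V\otimes W^*$, which makes the argument transparent.
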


\begin{proof}
The first statement follows directly from Corollary~\ref{cor_cohomologies}~\eqref{i:2} and the naturality of the functor $\text{\rm Proj}$. For the second statement, note that under the isomorphism~\eqref{eq_Universal_Family}
\[
\text{\rm End}(V) \cong \text{\rm Ext}^1\left( p_{\mathbb{P}(V)}^*\mathcal{O}_{\mathbb{P}(V)}(-1) \otimes q_{\mathbb{P}(V)}^*E_2, q_{\mathbb{P}(V)}^*E_1 \right),
\]
the universal family $\mathcal{E}_V$ is the image of the identity endomorphism $\text{\rm id}_V$. Similarly, under the isomorphism
\[
\text{\rm End}(W) \cong \text{\rm Ext}^1\left( p_{\mathbb{P}(W)}^*\mathcal{O}_{\mathbb{P}(W)}(-1) \otimes q_{\mathbb{P}(W)}^*g^*E_2, q_{\mathbb{P}(W)}^*g^*E_1 \right),
\]
the universal family $\mathcal{E}_W$ is the image of the identity endomorphism $\text{\rm id}_W$. We have the following natural isomorphisms
\[
(\mu \times \text{\rm id}_Z)^* q_{\mathbb{P}(W)}^* g^* E_i \cong (\text{\rm id}_{\mathbb{P}(V)} \times g)^* q_{\mathbb{P}(V)}^* E_i \, \text{ for } i = 1, 2, \, \text{ and}
\]
\[
(\mu \times \text{\rm id}_Z)^* p_{\mathbb{P}(W)}^* \mathcal{O}_{\mathbb{P}(W)}(1) \cong (\mu \times \text{\rm id}_Z)^* p_{\mathbb{P}(W)}^* \mu^* \mathcal{O}_{\mathbb{P}(V)}(1) \cong (\text{\rm id}_{\mathbb{P}(V)} \times g)^* p_{\mathbb{P}(V)}^* \mathcal{O}_{\mathbb{P}(V)}(1).
\]
This shows that the pullback of any extension of $\mathcal{O}_{p_{\mathbb{P}(W)}^*\mathbb{P}(W)}(-1) \otimes q_{\mathbb{P}(W)}^*g^*E_2$ by $q_{\mathbb{P}(W)}^*g^*E_1$ under $\mu \times \text{\rm id}_Z$ is the pullback of an extension of $\mathcal{O}_{p_{\mathbb{P}(V)}^*\mathbb{P}(V)}(-1) \otimes q_{\mathbb{P}(V)}^*E_2$ by $q_{\mathbb{P}(V)}^*E_1$ under the map $\text{\rm id}_{\mathbb{P}(V)} \times g$, and vice versa. Since the identity endomorphism $\text{\rm id}_V$ extends to the $\Gamma$-equivariant endomorphism $\text{\rm id}_W$, and the identity endomorphism $\text{\rm id}_W$ is naturally $\Gamma$-equivariant, the second statement follows.
\end{proof}

\bibliographystyle{amsplain}

\end{document}